\pdfoutput=1
\documentclass[a4paper]{book}
\usepackage[T1]{fontenc}
\usepackage{ae,aecompl}
\usepackage{amsmath,amsfonts,amssymb,amsthm,amscd,verbatim}
\usepackage{hyperref}
\usepackage{cite}
\usepackage[english]{babel}

\def\RR{{\mathbb R}}
\def\CC{{\mathbb C}}
\def\HH{{\mathbb H}}
\def\OO{{\mathbb O}}
\def\KK{{\mathbb K}}
\def\FF{{\mathcal F}}
\def\ZZ{{\mathbb Z}}

\def\Sym{{S}}
\def\Lam{{\Lambda}}
\def\End{{\rm End}}
\def\sGL{{\sf GL}}
\def\sSO{{\sf SO}}
\def\sO{{\sf O}}
\def\sSL{{\sf SL}}
\def\sSU{{\sf SU}}
\def\sSp{{\sf Sp}}
\def\sU{{\sf U}}
\def\sE{{\sf E}}
\def\sF{{\sf F}}
\def\sG{{\sf G}}
\def\fso{\frak{so}}
\def\fsu{\frak{su}}
\def\fsp{\frak{sp}}
\def\fu{\frak{u}}
\def\fe{\frak{e}}
\def\ff{\frak{f}}
\def\fg{\frak{g}}
\def\cG{{\mathcal G}}

\def\cg{{\mathcal G'}}

\def\cH{{\mathcal H}}
\def\cJ{{\mathcal J}}

\def\hx{\frak{h}_3}
\def\sx{\frak{sh}_3}
\def\hxk{\frak{h}_3\KK}
\def\sxk{\frak{sh}_3\KK}
\def\fxk{\FF(\hxk)}

\def\tr{{\rm tr}}
\def\der{{\rm der\ }}
\def\derx{{\rm der}}
\def\deg{{{\rm der}_0\ }}
\def\dev{{{\rm der}_1\ }}
\def\Aut{{\rm Aut}}

\def\cQ{{\mathcal Q}}
\def\id{{\rm id}}
\def\LC{L^\times}
\def\LF{L^\bullet}

\def\LT{L^\tau}
\def\lF{l^\bullet}
\def\tlF{{\tilde l}^\bullet}
\def\cD{{\mathcal D }}

\def\cB{{\mathcal B }}
\def\cE{{\mathcal E }}
\def\cH{{\mathcal H }}
\def\cJ{{\mathcal J }}
\def\Im{{\rm Im}}
\def\Re{{\rm Re}}
\def\ad{{\rm ad}}
\def\Ad{{\rm Ad}}
\def\Span{{\rm Span}}
\def\fm{\frak{M}}
\def\cW{{\mathcal U}}

\def\Sec{{\rm Sec}}
\def\ft{\frak{t}}

\def\fk{\frak{k}}
\def\fh{\frak{h}}
\def\fl{\frak{l}}
\def\pr{{\rm pr}}

\def\Y{{\mathcal Y}}
\def\UpsilonM{{\underline \Upsilon}}
\def\XiM{{\underline \Xi}}
\def\mhoM{{\underline \mho}}
\def\Str{{\rm Str}}
\def\str{\frak{str}}
\def\cV{\mathcal{V}}
\def\V{{V_c}}
\def\YM{{\underline\Y}}
\theoremstyle{plain}
\newtheorem{thm}{Theorem}

\newtheorem{lem}{Lemma}
\newtheorem{pro}{Proposition}
\newtheorem{cor}{Corollary}
\theoremstyle{definition}
\newtheorem{defn}{Definition}
\newtheorem{eg}{Example}

\theoremstyle{remark}
\newtheorem{note}{Remark}

\begin{document} 
\setcounter{tocdepth}{1}
\sloppy
\author{Jan Gutt \\ \small Stony Brook University }

\title{Special Riemannian geometries and the Magic Square of Lie Algebras}

\frontmatter

\maketitle
\section*{Summary}
The present work is a slightly revised version of the author's M.Sc. thesis presented at the Physics department of Warsaw University, investigating nonintegrable Riemannian geometries modelled after
certain symmetric spaces related to the Freudenthal-Tits Magic Square. The collection of four
such geometries investigated by Nurowski \cite{nurowski-2006} has been extended by further eight, together with a unified description provided in terms of rank three Jordan algebras and associated constructions. In particular,
symmetric tensors reducing the orthogonal group to adequate structure groups have been found and used to describe geometric properties of corresponding $G$-structures on manifolds. The results obtained this way include: conditions for existence of a natural complex or quaternionic K\"ahler structure; equivalence of the existence of a characteristic connection and the Killing condition on the tensor defining a $G$-structure, which holds for four of the new geometries, thus extending an analogous result of Nurowski. Moreover, the geometries which admit a characteristic connection have been classified. The paper is concluded by an algebraic construction of locally reductive examples.

\tableofcontents

\mainmatter
\chapter*{Introduction}
\addcontentsline{toc}{chapter}{Introduction}
\section*{Special geometries with characteristic torsion}
It is common in differential geometry to express a specific geometric structure on a manifold
in terms of a reduction of the frame bundle to a subbundle with some structure group $G\subset
\sGL(n)$ ($n$ being the dimension of the manifold), i.e. a $G-$structure. Such a reduction
naturally distinguishes a class of compatible connections, namely those connections on the
frame bundle, which restrict to a connection on the $G-$structure. We call
the latter integrable iff it admits a \emph{torsion-free} compatible connection.

In particular,
an $\sO(n)-$structure is always integrable, with \emph{unique} torsion-free connection, namely the Levi-Civita connection of the corresponding Riemannian metric. 
If we thus wish to reduce the orthonormal frame bundle to a $G-$structure with $G\subset\sO(n),$ we readily notice
that the latter is integrable iff the Levi-Civita connection is compatible with respect to it. It then
follows that integrability of a $G-$structure implies reduction of the Riemannian holonomy group
to a subgroup of $G.$

Such reduced Riemannian holonomy groups are classified by the celebrated Berger's theorem,
stating that the holonomy of an irreducible, simply connected, and not (locally) symmetric
Riemannian manifold is either the entire special orthogonal group, or one of the groups corresponding
to a Calabi-Yau, K\"{a}hler, hyper-K\"{a}hler, quaternion-K\"{a}hler, $\sG_2,$ or ${\sf Spin}(7)$
structure. These are usually referred to as (integrable) special Riemannian geometries and have been
extensively studied (in the context of holonomy) throughout last fifty years.

Willing to consider more general geometries, one needs to relax the torsion triviality condition. 
Observe
that the bundle $\Lam^2 TM \otimes TM,$ the torsion of a metric connection on a Riemannian manifold $M$ is a section of,
decomposes into $\sO(n)$ irreducibles as follows:
$$ \Lam^2 TM \otimes TM = \Lam^3 TM \oplus TM \oplus \mathcal{T}. $$
One thus sees 
that there are in general $2^3=8$ classes of metric connections with respect to their torsion.
In what follows, we shall focus on $G-$structures admitting a compatible connection
whose torsion is completely skew, i.e. a section of $\Lam^3 TM.$

The first structures studied in this context where those present on Berger's list, and it
is a common feature of these, that a $G-$connection with skew torsion, provided it exists,
is \emph{unique}. It has been thus called \emph{the} characteristic connection, and its
torsion tensor (determining the connection itself) -- the characteristic torsion. 

Let us now return for a moment to a general $G-$structure on a manifold $M.$ Being able
to describe $G \subset \sGL(n)$ as a stabilizer of certain set of $\RR^n$ tensors, one can
introduce the $G-$structure by means of an analogous set of tensors on $M:$ the subbundle
is then defined to consist of frames mapping the distinguished $\RR^n$ tensors into 
the distinguished tensors on $M$ (point-wise). 

To be specific, let us for the sake of simplicity assume
that $G$ is the stabilizer of a single tensor $\Y \in \otimes^p (\RR^n)^*$; a $G-$structure on
an $n-$dimensional manifold $M$ is then defined by a tensor $\Y_M \in \Sec(\otimes^p T^*M)$ such
that (locally) in some adapted coframe $\theta : TM \to \RR^n$ one has
$$ \Y_M(X_1,\dots,X_p) = \Y(\theta(X_1),\dots,\theta(X_p)) $$
for all  $X_1,\dots,X_p\in TM.$
The fibre above $x\in M$ of the corresponding frame subbundle is the set of frames $e_x : T_x M \to \RR^n$ such that $e_x^* \Y = \Y_M(x).$ The structure group is clearly the stabilizer of $\Y,$ i.e. $G.$
Finally, the compatible connections, viewed as connections in the tangent bundle, are those
with respect to which $\Y_M$ is parallel.

Proceeding again to the Riemannian case, with $G\subset \sO(n)$
being the orthogonal stabilizer of $\Y,$
we easily see
that the $G-$structure is integrable iff $\Y_M$ is parallel with respect to the Levi-Civita connection
$\nabla^{LC}.$
One is then tempted to ask whether a weaker condition on $\nabla^{LC}\Y_M$ would guarantee existence of a $G-$connection with skew torsion. It is not difficult to check,
as it has been noticed by Nurowski \cite{nurowski-2006},
that the existence of such a connection implies vanishing of the symmetric part of the derivative\footnote{
i.e. demanding that $\Y_M$ be a Killing tensor.
}:
\begin{equation} \label{nearly-int} (\nabla^{LC}_X \Y_M)(X,\dots,X) = 0 \quad \forall X \in TM, \end{equation}
a condition we shall call the \emph{nearly-integrability} of (the $G$-structure defined by) $\Y_M.$
One may hope that the converse would also hold in some cases (whether it does, is a purely algebraic
question referring to $\Y$).
As only the symmetric part of
$\Y_M$ enters the latter equation, it is clear that we should restrict our attention to \emph{symmetric} tensors $\Y$ and their orthogonal stabilizers.

\section*{Examples of geometries defined by a symmetric tensor}
The simplest interesting example, thoroughly investigated by Bobienski and Nurowski \cite{bobienski-2005}, involves an irreducible $\sSO(3)$ structure on a five-dimensional Riemannian
manifold $(M,g_M).$ 
The authors first consider the symmetric space $\sSU(3) / \sSO(3)$ and the corresponding symmetric pair:
$$ \fsu(3) = \fso(3) \oplus V,\quad \dim V = 5. $$
The adjoint action of $G=\sSO(3)$ on $V$ defines an irreducible 5-dimensional representation
of the group, which is moreover self-adjoint, so that $V\simeq V^*$ as $G-$modules via an invariant
scalar product $g : V\to V^*.$

It can be further shown, that $\sSO(3) \subset \sO(5)$ is the stabilizer of a tensor $\Upsilon
\in\Sym^3 V$ satisfying the relation
\begin{equation}\label{eqn-nur}
\Upsilon_{m(ij}\Upsilon_{kl)m} = g_{(ij} g_{kl)},
\end{equation}
where abstract index notation is assumed, together with the identification of $V$ with its dual.\footnote{ Equation (\ref{eqn-nur}) is equivalent to demanding that the trivial representation of $G$
appear only once in the decomposition of $\Sym^4 V.$
} Since the representation is irreducible, the tensor is also obviously required to satisfy
$ \Upsilon_{imm} = 0.$

As it has been already described, one defines the $G-$structure by means of a tensor $\Upsilon_M$ on $M$ such that in a (local) adapted coframe $\theta : TM \to V$ one has
$$ \Upsilon_M(X,X,X) = \Upsilon(\theta(X),\theta(X),\theta(X))\quad \&\quad
g_M(X,X) = g(\theta(X),\theta(X)).$$ Equivalently, one may simply demand that $\Upsilon_M$
satisfy the analog of equation (\ref{eqn-nur}) with respect to $g_M.$

One then finds that such five-dimensional $\sSO(3)$ geometries indeed behave in the way we are looking for: nearly integrability of $\Upsilon_M$ (recall equation (\ref{nearly-int})) implies existence of a compatible $\sSO(3)$ connection
with skew torsion, and the latter is moreover unique (that is, characteristic) \cite{bobienski-2005}.

It is now natural to ask, whether a similar setting can be found in other dimensions. Guided
by the defining identity on $\Upsilon,$ i.e. equation (\ref{eqn-nur}), Nurowski checked that the latter
can be satisfied for a symmetric third-rank tensor exactly in four distinguished
dimensions, namely: 5, 8, 14 and 26 \cite{nurowski-2006}. The corresponding symmetric spaces are:
\begin{equation}\label{family1} \frac{\sSU(3)}{\sSO(3)},\quad \frac{\sSU(3)\times\sSU(3)}{\sSU(3)}, \quad \frac{\sSU(6)}{\sSp(3)}, \quad \frac{\sE_{6(-78)}}{\sF_{4(-52)}} \end{equation}
(all four appearing on Cartan's list of irreducible symmetric Riemannian spaces, cf. \cite{loos-1969,besse-1987}). The result
of Nurowski is then that nearly
integrability implies existence of compatible connection with skew torsion in dimensions 5, 8 and
14, while such a connection is unique in dimensions 5, 14 and 26.

The next step would thus be to consider geometries modelled after other symmetric spaces from
Cartan's list. These in particular include symmetric spaces related to a construction known as
the Magic Square of Lie algebras, and investigation of corresponding special geometries is
the task proposed in \cite{nurowski-2006}.
As we shall soon see, there are three families of such
spaces, the first one being exactly (\ref{family1}).
One may expect the geometries modelled after all of these spaces
to exhibit similar properties regarding nearly integrability and characteristic connection.\footnote{
Actually, the problem of uniqueness of the characteristic connection has been recently completely solved
by Nagy \cite{nagy-2007}.
}

\section*{Freudenthal-Tits Magic Square and related symmetric spaces}

The Magic Square of Lie algebras is an outcome of the constructions developed by Freudenthal and Tits
mainly in effort to provide a direct construction of the exceptional groups. When performed over the reals, the Tits construction yields algebras corresponding to the following `magic' square
of compact Lie groups:
$$ \begin{CD}
\sSO(3) @>>> \sSU(3) @>>> \sSp(3) @>>> \sF_4 \\
@VVV @VVV @VVV @VVV \\
\sSU(3) @>>> \sSU(3)\times\sSU(3) @>>> \sSU(6) @>>> \sE_6 \\
@VVV @VVV @VVV @VVV \\
\sSp(3) @>>> \sSU(6) @>>> \sSO(12) @>>> \sE_7 \\
@VVV @VVV @VVV @VVV \\
\sF_4 @>>> \sE_6 @>>> \sE_7 @>>> \sE_8 
\end{CD} $$
together with natural inclusions denoted by the arrows
(the word `magic' referring to the symmetry of the table, which is not explicit in its  original construction, see later). 

Let us now consider the `quotient' of the second column by the first one (with respect
to the inclusions), namely the homogeneous
spaces:
$$ \frac{\sSU(3)}{\sSO(3)},\quad \frac{\sSU(3)\times\sSU(3)}{\sSU(3)}, \quad \frac{\sSU(6)}{\sSp(3)}, \quad \frac{\sE_6}{\sF_4}. $$
Observe that these are exactly the spaces we have already considered in context of a symmetric third rank tensor.
Repeating this procedure for the next pair of columns (2\&3), we obtain spaces which are unfortunately not irreducible: their isotropy representations possess a one-dimensional invariant subspace. This can be resolved by augmenting the subgroup by an extra $\sU(1),$ so that the corresponding four spaces are:
$$\frac{\sSp(3)}{\sU(3)},\quad \frac{\sSU(6)}{S(\sU(3)\times\sU(3))}, \quad \frac{\sSO(12)}{\sU(6)}, \quad \frac{\sE_7}{\sE_6\times\sU(1)}. $$
These are another four irreducible symmetric Riemmanian spaces from Cartan's list, and thus the second of three advertised families (the way this extra $\sU(1)$ sits in the groups is to be explained in the sequel). Moreover, the generator of the additional $\sU(1)$ defines a complex structure making
these symmetric spaces into K\"{a}hler manifolds.

Similar situation occurs for the last pair of columns (3\&4). Here however one has to add extra
$\sSp(1),$ so that the third family cosists of the following irreducible symmetric spaces\footnote{
The product with $\sSp(1)$ is taken with respect to the adjoint (i.e. isotropy) representation.
In fact, $G_0\sSp(1) \simeq (G_0 \times \sSp(1))/{\mathbb{Z}_2}$
}:
$$ \frac{\sF_4}{\sSp(3)\sSp(1)}, \quad \frac{\sE_6}{\sSU(6)\sSp(1)}, \quad \frac{\sE_7}{\sSO(12)\sSp(1)}, \quad \frac{\sE_8}{\sE_7\sSp(1)}. $$
In this  case the generators of the additional $\sSp(1)$ define a quaternionic structure making
these symmetric spaces into quaternion-K\"{a}hler manifolds.

These are the three families of irreducible compact symmetric Riemannian spaces. Nurowski \cite{nurowski-2006} worked out the first one, and proposed the task of investigating the next two:
\begin{quote}
`It is interesting if all these geometries admit characteristic connection. Also, we do not know what objects in $\RR^{\dim M}$ reduce the orthogonal groups $\sSO(\dim M)$ to the above mentioned structure groups. Are these symmetric tensors as it was in the case of the groups $H_k$ [i.e. $\sSO(3)$, $\sSU(3)$, $\sSp(3)$ and $\sF_4$]?'
\end{quote}
We answer these questions and provide a systematic approach rooted in the theory of Jordan algebras.\footnote{
The problem of symmetric defining invariants is addressed in a very original and self-contained way by Cvitanovi\'c in his remarkable book on Group Theory \cite{cvitanovic-2008}, where he also discusses the Magic Square. While we do not use his approach and results, we have to acknowledge that the latter work has been an important source of inspiration. In particular, many of the technical calculations contained in the present paper have been initially carried using Cvitanovi\'c's graphical notation, only later to be translated to the conventional one.
}

\section*{Overview}
The work is split into two parts (chapters),
which we have tried to make possibly independent of each other.

The first one begins citing known results on Jordan algebras and the Tits construction, providing
the minimal theory needed to make sense of the symmetric pairs corresponding to the spaces of our interest, fitting them into a single structure. We then review some further constructions, mainly due to Freudenthal, again introducing just the minimum set of objects needed to describe the isotropy representations
of the symmetric spaces. Having the latter done, we finally find the symmetric invariants giving
the desired reductions (on a Lie-algebraic level) and prove some of their properties.

The second part, building on the results outlined above, deals directly with the main subject of the work, i.e. \emph{geometries} related to the symmetric spaces. We first summarize the results on isotropy representations and the invariant tensors: we claim their existence and properties they satisfy, in such a way that no reference to the Jordan-related objects is needed. Then, we review the subject of intrinsic torsion and characteristic torsion of $G$-structures: these results are known, but not particularily accessible in the literature, so that we prove most of them, also in order to make the reader familiar with the general setting. Finally, $\fg(\KK,\KK')$-geometries are defined and their properties are investigated.
Additionally, we provide a classification of $\fg(\KK,\KK')$-geometries with characteristic torsion and
prove existence of naturally reductive examples, whose characteristic torsion does not vanish.

The most important original results obtained in the present work are: 
\begin{itemize}
\item Proposition \ref{pro-isoreps}, describing the isotropy representations.
\item Proposition \ref{pro-invs}, describing symmetric invariants giving desired reductions.
\item Propositions \ref{pro-tor1}, \ref{pro-tor2} and \ref{pro-tor3}, expressing intrinsic torsion
of $G(\KK,\KK')$-structures in terms of geometric data.
\item Propositions \ref{pro-kaehler} and \ref{pro-qkaehler}, providing a geometric condition
on existence of a natural (quaternion-)K\"ahler structure on the geometries.
\item Theorem \ref{thm-nearly}, stating the equivalence of nearly-integrability of the symmetric tensor
defining a second-family geometry and existence of a characteristic connection.
\item Theorem \ref{thm-red} and Proposition \ref{pro-incl}, providing a simple criterion for existence
of naturally reductive examples with nontrivial characteristic torsion.
 \end{itemize}

\section*{Strominger's superstrings with torsion}
Having argued about the geometric significance of studying special geometries with characteristic torsion, one should not overlook an inspiring motivation coming from physics, found by Strominger in his
1986 paper \cite{strominger-1986}.

Strominger considers the geometric setting for compactification of the common sector of type II Superstring Theory, i.e. a 6-dimensional spin manifold $(M,g)$ equipped with a global nonvanishing spinor field $\epsilon$. In absence of Yang-Mills fields and with constant dilaton, the conditions for $\epsilon$ to generate supersymmetry transformations read \cite{strominger-1986}
\begin{eqnarray}
\nabla^{LC}_X \epsilon + \frac{1}{4} H(X) \cdot \epsilon &=& 0\quad \textrm{for each}\ X\in TM \label{eq-strom} \\
H \cdot \epsilon &=& 0, \nonumber
\end{eqnarray}
where $H\in\Omega^3(M)$ is the strength of the Kalb-Ramond $B$-field, i.e. the background field coupling to massless skew-symmetric excitations of the string, and the dot indicates Clifford action of differential forms on spinor fields
($H$ moreover obeys certain integrability condition involving the Riemannian curvature).

Strominger now introduces a tangent bundle connection $\nabla^H$ whose torsion is $H$, i.e.
$$ \nabla^H_X Y = \nabla^{LC}_X Y + \frac{1}{2} H(X,Y) $$
for each $X\in TM.$ In terms of this new connection, equation (\ref{eq-strom}) reads simply
$$ \nabla^H \epsilon = 0, $$
i.e. $\epsilon$ is parallel with respect to $\nabla^H$. This in turn is equivalent to a reduction
of the holonomy\footnote{ Indeed, existence of a parallel spinor in dimension six implies holonomy reduction to $\sSU(3)$. In the conventional approach this condition is applied to the Levi-Civita connection, leading to compactification on Calabi-Yau threefolds.}
of $\nabla^H$ to (a subgroup of) $\sSU(3)$
(in its $6$-dimensional representation, not to be confused with the $8$-dimensional one mentioned earlier). We thus end up with a $\sSU(3)$-structure
with characteristic torsion, namely $H$ (indeed a compatible connection with skew torsion for a $\sSU(3)$-structure is unique). Moreover, all further equations can be cast in terms of the $\sSU(3)$ structure (i.e. an almost hermitian structure and a complex $3$-form), so that the latter describes both supersymmetry and the $B$-field \cite{strominger-1986}.

Thus, Strominger found that 6-dimensional backgrounds for supersymmetric compactification are naturally described by $\sSU(3)$-structures admitting characteristic torsion. Just as a Yang-Mills field strength is interpreted as the curvature of a connection on a principal bundle, the Kalb-Ramond field strength is then identified with a torsion of the unique connection associated with the $\sSU(3)$-structure\footnote{
A much more elaborate approach to a geometric interpretation of the $B$ field has been developed 
in the last decade  in terms of gerbes with connection \cite{segal-2001} 
}.

This development attracted the attention of mathematicians, who started studying the problem of characteristic torsion, parallel spinors and relation of torsion to curvature, initially for $G$-structures related to Berger's classical list of irreducible Riemannian holonomies (see \cite{friedrich-2003} and references therein). 
In the present work we address
only the question of characteristic torsion. 
Moreover, one should note that the dimensions of the geometries we wish to investigate situate them rather remotely from the usual area of interest of fundamental theories.\footnote{
Nevertheless, the homogeneous spaces related to the Magic Square also find their place in supergravity-related physics \cite{pioline-2006,bellucci-2006}, although without direct relation to the characteristic torsion problem.
}

\section*{Conventions}

All the vector spaces and algebras considered in this work are over the reals, unless stated otherwise.
All manifolds and maps are assumed to be smooth. Stating general results, we shall often use $\RR^n$
as a generic $n$-dimensional vector space, possibly equipped with a generic positive definite scalar product $\langle\cdot,\cdot\rangle$.

We will often make use of the abstract index notation \cite{penrose-1986}, using various sets of letters to index both spaces and tensors, so that a homomorphism of vector spaces $ f : E \to F $ can be written as
$ f^a{}_i \in E^*_i \otimes F^a, $ while its contraction with a vector $ X \in E$ is $ f(X)^a = f^a{}_i X^i \in F^a.$ These indices \emph{never} refer to any specific frame. They are simply labels, which do not assume any (numerical) values.

Most of the spaces we deal with are equipped with a symmetric scalar product, and we explicitly declare that the latter is used to identify a space with its dual. Accordingly, has such an identification been performed, the position of indices becomes irrelevant. The map $f$ given as an example above is then
written $f_{ai} \in F_a \otimes E_i $ and as such it needn't be distinguished from the adjoint $f^* : F\to E$ (of course if we have identified $E\simeq E^*$ and $F\simeq F^*$).

We will often encounter tensors $\Y \in \otimes^p E,$ with $E^*\simeq E.$ Then by $\Y(X_1,\dots,X_p),$
where $X_1,\dots,X_p\in E,$ we mean $\Y_{i_1\dots i_p} X_1^{i_1}\cdots X_p^{i_p} \in \RR$ 
and the order of the
vectors $\Y$ is contracted with does matter (unless $\Y$ is symmetric). We sometimes use also partial application as in $$\Y(X_1,\dots,X_q) \in \otimes^{p-q} E,$$ which means $$\Y_{i_1\dots i_p} X_1^{i_1}\cdot X_q^{i_q} \in E_{i_{q+1}} \otimes \cdots \otimes E_{i_p}$$
in this very order (i.e. the components of a tensor product are contracted from the left).

Being given a group $G \subset \sGL(E),$ with a Lie algebra $\fg \subset \End E,$ we denote both the action of $G$ and $\fg$ on arbitrary tensor products of $E$ and $E^*$ simply by $ g(\cdot)$ and $A(\cdot),$
where  $g\in G$ and $A \in \fg.$ This should be clarified by the example of $\Y\in\otimes^p E^*,$ where
$$ g(\Y)(X_1,\dots, X_p) = \Y(g^{-1}(X_1),\dots,g^{-1}(X_p) $$
$$ A(\Y)(X_1,\dots, X_p) = \Y(-A(X_1),X_2,\dots,X_p) + \dots + \Y(X_1,\dots,X_{p-1},-A(X_p)). $$
Whether a map is to be considered as acting via a group action, or Lie algebra action, should be clear from context. An exception is e.g. a complex structure, which can be viewed both as an orthogonal map, and as spanning a $\fu(1)$ algebra: in this case, we shall by default assume the group action.

Special indexing conventions are introduced for spaces that are to be considered over $\CC.$ These are
described in Remark \ref{not-cpx}, which shall be recalled explicitly whenever needed.

Finally, being given a manifold $M$ we will also use letters to index tensor products of $TM$ and $T^* M$ (being identified when a metric tensor is given). This is extended to the $C^\infty(M)$-modules of
tangent-bundle-valued differential forms, so that for example a local metric connection form can be written as
$ \Gamma_{ab} \in \Omega^1(M,(\Lam^2 TM)_{ab}).$

\section*{Acknowledgements}

I would like to thank my supervisor, Pawel Nurowski, for suggesting the subject of my research, introducing me into the field, his assistance, help and sharing his knowledge and experience with me throughout nearly two years of our cooperation. In particular, his critical remarks were important for the form of the present paper.

At the early stage I benefited from discussions with A. Rod Gover, who spent numerous hours patiently listening to my exposition of some of the initial results I obtained in the first months of 2007.

In autumn 2007 I had the opportunity to give a talk to the group of Ilka Agricola and Thomas Friedrich at HU Berlin. I am grateful for their hospitality, support and valuable remarks. In particular, Ilka Agricola made me aware of her lecture notes \cite{agricola-2006} containing a comprehensive presentation of the general theory of $G$-structures with skew torsion.

Finally, I would like to thank Andrzej Trautman, both for agreeing to write a review of my thesis and, even more importantly, for the beautiful lectures he gave each year in Warsaw. The latter, unique for their clarity and elegance, were a major influence on my interest in differential geometry.

\chapter{Algebraic part}

\section{The Tits construction and symmetric pairs}\label{sec-tits}
The construction of Magic Square\footnote{
Interesting generalisations can be found in \cite{barton-2000, cvitanovic-2008}
} algebras given by Tits \cite{tits-1966} produces a Lie algebra out of two basic `building
blocks': a normed division algebra, i.e. $\RR,\ \CC,\ \HH$ or $\OO,$ and
a Jordan algebra of 3x3 hermitian matrices with entries in a second
normed division algebra.
Eventually thus, it gives a Lie algebra for every pair of normed division algebras, fitting into 
a 4x4 table with rows and columns labelled by $\RR,\ \CC,\ \HH$ and $\OO.$

We shall first introduce the usual algebraic structures on aforementioned `building blocks' (in particular,
their automorphism groups and derivation algebras), whose combination will -- in a fairly natural way -- lead
to a Lie algebra structure on the outcome of Tits' construction.

All algebras and representations used in this section are to be considered over the reals.

\subsection{Four normed division algebras $\KK$} 
As a warm-up we will now recall some well-known facts which hold in general for all the four algebras
(see e.g. ~\cite{schafer-1995,baez-2001}).
Although these are rather obvious, we expose them quite carefully to show that analogous structure appears
in the Jordan case.
As the octonions are the most complex, with their noncommutativity and nonassociativity, it 
is useful to have this algebra in mind when reading subsequent statements. For their simpler subalgebras,
some terms and spaces become trivial, however the formulas, given in their most general form, are still
 correct.

We thus let $\KK$ be one of the four algebras: $\RR,\ \CC,\ \HH$ and $\OO.$ 
We introduce the commutator
$$ [p,q] = pq - qp, $$
nontrivial for the quaternions and octonions, and the associator
$$ [p,q,r] = p(qr) - (pq)r, $$
nontrivial only for the octonions (where alternativity of the latter implies that the associator is
antisymmetric in its three arguments).

These maps are useful when describing the automorphism group $\Aut(\KK),$ i.e. a subgroup of $\sGL(\KK)$
preserving the product. Its Lie algebra, the derivations $\der\KK,$ i.e. a subalgebra
of maps in $\End\KK$ satisfying the Leibniz rule, is then made accessible by the following Lemma
(see \cite{schafer-1995} for a proof):
\begin{lem}\label{lem-dmap} Let us introduce a map
$$ \cD : \Lam^2 \KK \to \End\KK $$
$$ \cD_{p, q}(r) = [[p,q],r] - 3 [p,q,r]\quad\textrm{for}\ p,q,r\in\KK. $$
Then the algebra $\der\KK$ is exactly the image of $\cD.$ Moreover, $\cD$ is equivariant with respect to
$\Aut(\KK).$
\end{lem}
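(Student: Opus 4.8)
The plan is to prove the two assertions separately: equivariance is immediate, while the equality $\der\KK = \cD(\Lam^2\KK)$ I would obtain by combining a (routine but technical) Leibniz-rule verification with a short representation-theoretic argument. For equivariance, fix $g\in\Aut(\KK)$. Since $g$ preserves the product, it commutes with both the commutator and the associator, i.e. $g([p,q])=[g(p),g(q)]$ and $g([p,q,r])=[g(p),g(q),g(r)]$. Conjugating $\cD_{p,q}$ by $g$ and applying these identities termwise gives
$$ (g\circ\cD_{p,q}\circ g^{-1})(r) = [[g(p),g(q)],r] - 3[g(p),g(q),r] = \cD_{g(p),g(q)}(r), $$
that is $g(\cD_{p,q})=\cD_{g(p),g(q)}$, which is exactly equivariance with respect to the induced actions on $\Lam^2\KK$ and $\End\KK$.

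Next I would show $\cD(\Lam^2\KK)\subseteq\der\KK$. For $\KK=\RR$ the domain $\Lam^2\RR$ is trivial, and for $\KK=\CC$ both commutator and associator vanish, so $\cD=0$ and there is nothing to check. For $\KK=\HH$ associativity forces $[p,q,r]=0$, whence $\cD_{p,q}=\ad_{[p,q]}$; a one-line computation $\ad_x(rs)=(xr)s-(rx)s+r(xs)-r(sx)=x(rs)-(rs)x=\ad_x(r)\,s+r\,\ad_x(s)$, using associativity, shows each such map is a derivation. The genuinely technical case is $\KK=\OO$, where I would verify the Leibniz rule
$$ \cD_{p,q}(rs) = \cD_{p,q}(r)\,s + r\,\cD_{p,q}(s) $$
directly, expanding both sides and reducing one to the other by means of the linearized alternative laws (the associator being alternating) together with the Moufang identities. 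This is the main obstacle: it is the one place where the nonassociativity of $\OO$ genuinely intervenes, and the particular coefficient $-3$ in the definition of $\cD$ is precisely what is needed for the non-derivation contributions to cancel.

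Finally I would deduce $\der\KK\subseteq\cD(\Lam^2\KK)$ by a softer argument. Write $I:=\cD(\Lam^2\KK)$ for the image. By the first step $I$ is an $\Aut(\KK)$-invariant subspace of $\End\KK$, and by the second step $I\subseteq\der\KK$. Now $\der\KK={\rm Lie}\,\Aut(\KK)$ carries the adjoint action of the connected group $\Aut(\KK)$, which in the nontrivial cases is the adjoint representation of the simple Lie algebra $\fso(3)$ (for $\HH$) or $\fg_2$ (for $\OO$); these are irreducible, and remain so over $\RR$ since their complexifications are simple. Hence the invariant subspace $I\subseteq\der\KK$ is either $\{0\}$ or all of $\der\KK$. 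Exhibiting a single nonzero derivation in the image, e.g. $\cD_{p,q}$ for two distinct imaginary units, rules out $I=\{0\}$ and yields $I=\der\KK$. For $\RR$ and $\CC$ both sides are $\{0\}$, so the statement holds trivially, completing the proof.
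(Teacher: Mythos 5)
The paper itself gives no proof of this lemma---it defers entirely to Schafer's book---so there is no internal argument to compare against; your proposal has to stand on its own. Its architecture is sound, and two of the three steps are complete and correct. The equivariance computation is right: an automorphism preserves the product, hence the commutator and the associator, and conjugation gives $g\circ\cD_{p,q}\circ g^{-1}=\cD_{g(p),g(q)}$. The surjectivity step is also correct, and is a genuinely nice route: since $\cD$ is \emph{linear} on $\Lam^2\KK$, its image $I$ is a subspace; equivariance makes $I$ stable under conjugation by $\Aut(\KK)$; because $\Aut(\HH)\simeq\sSO(3)$ and $\Aut(\OO)\simeq\sG_2$ are connected, such a subspace of $\der\KK$ is an ideal of the simple Lie algebra $\fso(3)$ resp. $\fg_2$, hence is $\{0\}$ or all of $\der\KK$; exhibiting one nonzero $\cD_{p,q}$ finishes it. This cleanly avoids the dimension count or innerness theorem one would otherwise have to import.

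The genuine gap is the inclusion $\cD(\Lam^2\OO)\subseteq\der\OO$, which you only announce (``I would verify the Leibniz rule\dots by means of the linearized alternative laws and the Moufang identities'') but never carry out. That verification \emph{is} the content of the lemma: it is precisely Schafer's theorem that in an alternative algebra the operator $r\mapsto[[p,q],r]-3[p,q,r]$ is a derivation, and it is the one place where the coefficient $-3$ is forced. Your remark that $-3$ is ``precisely what is needed'' is true, but as written it is an assertion, not a proof. Note moreover that your step 3 is powerless without step 2: the irreducibility argument needs $I\subseteq\der\KK$ before $I$ can be viewed as a subrepresentation of $\der\KK$ at all, so the whole proof funnels through the missing computation. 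To close it, either expand $\cD_{p,q}(rs)-\cD_{p,q}(r)s-r\,\cD_{p,q}(s)$ for $p,q,r,s\in\OO$ and cancel terms using the linearized left and right alternative laws (a finite, if tedious, calculation), or replace the promise by an honest citation of Schafer---which is in fact exactly what the paper does.
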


The automorphism groups are found to be ~\cite{baez-2001}:
\begin{eqnarray*}
\Aut(\RR) &\simeq& \{ e\} \\
\Aut(\CC) &\simeq& \ZZ_2 \\
\Aut(\HH) &\simeq& \sSO(3) \\
\Aut(\OO) &\simeq& \sG_2.
\end{eqnarray*}

More structure on $\KK$ is provided by a natural trace, namely the real part $ \Re : \KK \to \RR. $
Combined with the product, it gives rise to a positive definite scalar product
$$ \langle\cdot,\cdot\rangle : \KK \times \KK \to \RR $$
$$ \langle p,q\rangle = \Re(\bar p q). $$
One can decompose $\KK$ into the real line, spanned by the unit, and its orthogonal complement,
namely the imaginary subspace (trivial
for the reals): 
\begin{equation} \label{reim} \KK = \RR 1 \oplus \Im \KK.  \end{equation}
It is then easy to check that:
\begin{lem} The automorphisms $\Aut(\KK)$ preserve the scalar product and the decomposition (\ref{reim}),
acting \emph{irreducibly} on $\Im\KK.$
\end{lem}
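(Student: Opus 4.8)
The plan is to verify each of the three claims in turn, treating $\RR$ as a trivial case and focusing on $\CC$, $\HH$ and $\OO$, where the structure is genuine.

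First I would show that automorphisms preserve the scalar product. Since $\langle p,q\rangle = \Re(\bar p q)$, it suffices to show that any $\phi\in\Aut(\KK)$ commutes with conjugation and with the real part. The key observation is that the real line $\RR 1$ is canonically characterized: for $\phi$ an algebra automorphism, $\phi(1)=1$ necessarily (since $\phi(1)=\phi(1\cdot 1)=\phi(1)^2$, and $\phi(1)$ is invertible). More robustly, conjugation can be expressed purely in terms of the algebra product and the trace form via $\bar p = 2\Re(p)\,1 - p$, where $\Re(p)$ is determined by the quadratic norm relation $p\bar p = \langle p,p\rangle 1$ together with the minimal polynomial $p^2 - 2\Re(p)\,p + \langle p,p\rangle 1 = 0$ satisfied by every element. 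Since $\phi$ preserves this polynomial identity, it preserves $\Re$ and hence $\langle\cdot,\cdot\rangle = \Re(\bar p q)$.

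Next, preservation of the decomposition (\ref{reim}) follows immediately from $\phi(1)=1$: automorphisms fix $\RR 1$ pointwise, and since they preserve the scalar product, they preserve its orthogonal complement $\Im\KK$ as well. This step is essentially a corollary of the first.

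The main work, and the hard part, is irreducibility of the action on $\Im\KK$. For $\RR$ this is vacuous ($\Im\RR = 0$); for $\CC$ it is trivial ($\Im\CC$ is one-dimensional, and $\ZZ_2$ acts by $\pm 1$). For $\HH$ and $\OO$ one must show that $\sSO(3)$ on $\Im\HH\simeq\RR^3$ and $\sG_2$ on $\Im\OO\simeq\RR^7$ act irreducibly. The cleanest route is via the derivation algebra supplied by Lemma \ref{lem-dmap}: since $\der\KK$ is the image of $\cD$, and $\cD_{p,q}(r)=[[p,q],r]-3[p,q,r]$, I would exhibit enough derivations to move any chosen nonzero imaginary element to any other. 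Concretely, for an invariant subspace $0\neq W\subseteq\Im\KK$, taking $r\in W$ and suitable imaginary $p,q$, the commutator term $[[p,q],r]$ generates rotations that cannot preserve a proper subspace; one checks that the orbit of any nonzero imaginary vector under these infinitesimal rotations spans all of $\Im\KK$. Alternatively, and perhaps more transparently, I would identify the representations explicitly: $\Im\HH$ is the standard $3$-dimensional representation of $\sSO(3)$, which is well known to be irreducible, while $\Im\OO$ is the unique $7$-dimensional irreducible representation of $\sG_2$ (indeed the smallest nontrivial one). The obstacle is that invoking the latter classification may feel like an appeal to heavy external facts; to keep the argument self-contained I would prefer the derivation-theoretic computation, showing directly that no proper nonzero subspace of $\Im\KK$ is stable under all $\cD_{p,q}$, using the explicit multiplication rules (e.g. for octonions, that $[[e_i,e_j],e_k]$ connects the basis imaginary units transitively).
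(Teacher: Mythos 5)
The paper does not actually prove this lemma: it is introduced by ``It is then easy to check that'', with the burden carried by the standard references cited at the start of the subsection, so there is no in-paper proof to compare yours against --- your argument fills a gap the author deliberately left. Your proof is correct and is the standard one. The identity $p^2 - 2\Re(p)\,p + \langle p,p\rangle 1 = 0$, whose coefficients are intrinsic to $p$ whenever $p\notin\RR 1$, together with $\phi(1)=1$, gives automorphism-invariance of $\Re$ and of the norm (polarization then yields the full scalar product), and preservation of the splitting (\ref{reim}) follows at once. Your case split for irreducibility is also right: vacuous for $\RR$, trivial for $\CC$ since $\dim_\RR\Im\CC=1$ (a case that must be handled separately in any event, as $\der\CC=0$), and genuine only for $\HH$ and $\OO$.

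Two remarks on the derivation-theoretic route you prefer. First, its logic is sound and even gives more than needed: showing that no proper nonzero subspace is $\der\KK$-invariant implies $\Aut(\KK)$-irreducibility with no connectedness hypothesis, since a subspace invariant under the group is invariant under its Lie algebra. Second --- and this is the one place your sketch is imprecise --- in $\OO$ the map $\ad_{[p,q]} = [[p,q],\cdot]$ alone is \emph{not} a derivation; that is exactly why the correction $-3[p,q,\cdot]$ appears in Lemma \ref{lem-dmap}. So ``the commutator term generates rotations that cannot preserve a proper subspace'' is not by itself admissible: a subspace invariant under $\der\OO$ need not be invariant under the maps $\ad_{[p,q]}$. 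The Fano-plane computation has to be carried out for the full $\cD_{p,q}$, commutator and associator terms together (they act with different weights on the quaternion subalgebra generated by $p,q$ and on its orthogonal complement); it does go through, but the two terms must be tracked jointly. If you want to avoid that bookkeeping while staying self-contained, the cleanest alternative is transitivity of $\Aut(\KK)$ on the unit sphere of $\Im\KK$ --- inner automorphisms surjecting onto $\sSO(3)$ for $\HH$, and the classical transitivity of $\sG_2$ on $S^6$ for $\OO$ --- since then any invariant subspace containing one unit vector contains them all.
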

Observe moreover, that the map $\cD$ is nontrivial only on $\Lam^2\Im\KK.$
Finally, it is convenient to introduce an antisymmetric product on the imaginary subspace, being
simply a restriction of the usual one:
$$ \times : \Im\KK \times \Im\KK \to \Im\KK $$
$$ p\times q = \frac{1}{2} [p,q].$$
Clearly, this map is also preserved by the automorphisms of $\KK.$

\subsection{Four Jordan algebras $\hxk$} 
Jordan algebras emerged in an attempt to axiomatize the properties of hermitian operators representing observables in quantum mechanics. While the original program turned out to be unsuccessful, it was realized that these algebras are interesting in their own right, allowing a rich theory as somewhat a counterpart of Lie algebras \cite{jacobson-1968,albert-1948}. Indeed, they are characterized by a \emph{commutative} product satisfying an identity which may be thought of as playing a role analogous to that of the Jacobi identity for a Lie bracket:
\begin{defn}
A commutative (yet possibly not associative) algebra $\cJ$ is called Jordan iff for arbitrary two elements $a,b\in\cJ$ the following holds:
$$ (ab)a^2 = a(ba^2) \quad\textrm{
(Jordan identity).}$$
\end{defn}
In particular, 
the algebras of $n$ by $n$ hermitian matrices with entries in either the reals,
complex numbers or quaternions, equipped with anticommutator as the product, are Jordan. The three infinite families are denoted $\frak{h}_n\RR,\ \frak{h}_n\CC$ and $\frak{h}_n\HH.$ For the octonions, however, there is
only a single one, called \emph{the} exceptional Jordan algebra, or Albert algebra, namely $\hx\OO.$

In what follows we will only use 3x3 matrices, so that a corresponding Jordan algebra exists for all four
normed division algebras. Let thus $\KK$ be one of $\RR,\ \CC,\ \HH$ and $\OO.$ We define
$$ \hxk = \{ \left(\begin{matrix}\alpha &a &b \\ \bar a&\beta&c \\ \bar b& \bar c&\gamma\end{matrix}\right)
\ |\ a,b,c\in\KK;\ \alpha,\beta,\gamma\in\RR \}, $$
together with a product
$$ X \circ Y = \frac{1}{2}(XY+YX) $$
where the right hand side features usual matrix multiplication. We then have:
\begin{lem}[Jordan \cite{jordan-1934}] The algebra $(\hxk,\circ)$ is Jordan.
\end{lem}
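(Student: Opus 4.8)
The plan is to separate the four cases according to whether matrix multiplication over $\KK$ is associative, after disposing of the preliminaries common to all of them. Commutativity of $\circ$ is immediate, since $X\circ Y = \frac12(XY+YX)$ is visibly symmetric in $X,Y$. For closure I would check directly that $\hxk$ is stable under $\circ$: writing out entries with $(M^*)_{ij} = \overline{M_{ji}}$, one has $(XY)^*_{ij} = \overline{(XY)_{ji}} = \sum_k \overline{Y_{ki}}\,\overline{X_{jk}} = (Y^*X^*)_{ij}$, where only the conjugation rule $\overline{pq}=\bar q\bar p$ for single elements is used (so this relation holds over all four $\KK$, octonions included). Hence for hermitian $X,Y$ one gets $(X\circ Y)^* = \frac12(Y^*X^*+X^*Y^*) = X\circ Y$, and $(\hxk,\circ)$ is a well-defined commutative algebra.

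For $\KK\in\{\RR,\CC,\HH\}$ I would invoke the classical fact that $M_3(\KK)$ is an \emph{associative} algebra, together with the standard observation that every associative algebra $A$ becomes a Jordan algebra under $a\circ b = \frac12(ab+ba)$: after expanding $a^2 = a\cdot a$, the Jordan identity $(a\circ b)\circ a^2 = a\circ(b\circ a^2)$ collapses to an associator relation that holds identically in any associative algebra. Since $\hxk$ is a $\circ$-closed subspace of $M_3(\KK)$, it is then automatically a Jordan subalgebra. Such Jordan algebras are called \emph{special}.

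The genuine content, and the main obstacle, is the octonionic case $\hx\OO$: here $M_3(\OO)$ is \emph{not} associative, so the embedding argument fails and the Jordan identity must be established directly. The strategy is to fully linearize the Jordan identity (permissible in characteristic zero), obtaining a multilinear identity, and then verify it on a convenient spanning set of $\hx\OO$ — the diagonal idempotents $E_{ii}$ and the hermitian off-diagonal elements $x E_{ij}+\bar x E_{ji}$ for $x\in\OO$. Each resulting product of matrix entries then involves only a controlled number of octonions, so the needed cancellations can be extracted from alternativity of $\OO$: concretely from Artin's theorem (any two octonions generate an associative subalgebra), the total antisymmetry of the associator $[p,q,r]$ recorded earlier, and the Moufang identities.

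I expect this last step to be the crux. The cancellations are delicate and genuinely nonassociative — in fact they fail for $\frak{h}_n\OO$ once $n\geq 4$ — so no soft, dimension-free argument can replace the computation; the Jordan property of $\hx\OO$ is precisely the exceptional phenomenon that singles out the Albert algebra. A useful way to organise the verification is the operator form of the Jordan identity, namely $[L_a, L_{a^2}] = 0$ with $L_a(b) = a\circ b$, which reduces the task to checking that the two linear operators $L_a$ and $L_{a^2}$ commute when evaluated on the chosen generators.
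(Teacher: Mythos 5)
Your preliminary work is sound: closure of $\hxk$ under $\circ$ via the anti-automorphism property $\overline{pq}=\bar q\bar p$ (valid in all four $\KK$, octonions included), commutativity, and the reduction of the cases $\KK=\RR,\CC,\HH$ to the standard fact that the anticommutator on an associative algebra satisfies the Jordan identity are all correct, and this matches the classical treatment. Note that the paper itself offers no proof at all — the lemma is cited to Jordan (more precisely, the 1934 Jordan--von Neumann--Wigner paper) — so there is no argument in the text to compare against; your proposal must stand on its own.

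It does not, because the octonionic case is never actually proved. You correctly identify that $\hx\OO$ is the entire content of the lemma (the other three cases are special Jordan algebras, and the statement fails for $\frak{h}_n\OO$, $n\ge 4$), and you propose a reasonable strategy: linearize the Jordan identity, evaluate on the spanning set $\{E_{ii},\, xE_{ij}+\bar xE_{ji}\}$, and close the cancellations using alternativity, Artin's theorem, and the Moufang identities. But a strategy plus a list of available identities is not a proof; the claim that these tools suffice to produce the cancellations is precisely the theorem, and you concede yourself that you have not carried out the verification. As it stands, the one case that cannot be disposed of by soft arguments is left open, so the proposal has a genuine gap. To close it you would either have to execute that (long but finite) computation — the route of Jordan--von Neumann--Wigner, also found in Schafer's book — or substitute a structural argument, e.g.\ realizing $\hx\OO$ as the Jordan algebra of a cubic form (the determinant/norm $N$ with its adjoint $\sharp$, as in the Springer--McCrimmon construction), for which the Jordan identity follows from the identity $X^{\sharp\sharp}=(\det X)X$ and its linearizations; the latter objects in fact appear later in this very paper.
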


We shall now follow  the way we used to describe the algebra $\KK$ itself. We thus begin with the automorphism group $\Aut(\hxk)$ and its Lie algebra, the derivations
$\der\hxk.$ For the latter, we again have a convenient map, denoted with a slight abuse of notation by the same letter:
\begin{lem}[cf. \cite{albert-1948}]\label{lem-dmapj}
Let us introduce a map
$$ \cD : \Lam^2 \hxk \to \End\hxk $$
$$ \cD_{X, Y}(Z) = X\circ(Y\circ Z) - Y\circ(X\circ Z)\quad\textrm{for}\ X,Y,Z\in\hxk.$$
Then the algebra $\der\hxk$ is exactly the image of $\cD.$ Moreover, $\cD$ is equivariant with
respect to $\Aut(\hxk).$
\end{lem}

The automorphism groups are found to be \cite{freudenthal-1964,chevalley-1949}:
\begin{eqnarray*}
\Aut(\hx\RR) &\simeq& \sSO(3) \\
\Aut(\hx\CC) &\simeq& \sSU(3) \\
\Aut(\hx\HH) &\simeq& \sSp(3) \\
\Aut(\hx\OO) &\simeq& \sF_{4(-52)}.
\end{eqnarray*}

More structure on $\hxk$ is given by a natural trace, namely the usual matrix trace $\tr: \hxk\to\RR.$
Combined with the product, it gives rise to a positive definite scalar product
$$ \langle\cdot,\cdot\rangle : \hxk\times\hxk \to \RR $$
$$ \langle X,Y \rangle = \tr (X\circ Y). $$
Multiplication in the algebra is then symmetric with respect to the scalar product:
$$ \langle X\circ Y, Z \rangle = \langle Y, X\circ Z \rangle. $$
One can decompose $\hxk$ into the real line, spanned by the unit, and its orthogonal complement,
namely the traceless subspace:
\begin{equation}\label{r1sxk} \hxk = \RR1 \oplus \sxk, \end{equation}
where $\sxk = \ker \tr.$ One then finds that:
\begin{lem}
The automorphisms $\Aut(\hxk)$ preserve the scalar product and the decomposition (\ref{r1sxk}),
acting \emph{irreducibly} on $\sxk.$
\end{lem}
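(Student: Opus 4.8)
The plan is to separate the two claims: that $\phi\in\Aut(\hxk)$ preserves the scalar product and the splitting $\hxk=\RR1\oplus\sxk$ is a formal consequence of the automorphism property, whereas irreducibility on $\sxk$ is a genuinely representation-theoretic fact to be read off for each of the four groups. First I would record that every automorphism fixes the unit: from $\phi(1)\circ\phi(X)=\phi(1\circ X)=\phi(X)$ for all $X$, the element $\phi(1)$ is a unit for the image $\phi(\hxk)=\hxk$, so $\phi(1)=1$ by uniqueness. Next I would show that $\tr$ is an automorphism invariant, the point being that $\tr$ is the \emph{generic trace} of the degree-three Jordan algebra $\hxk$: every $X$ satisfies a Cayley--Hamilton type cubic
$$ X^3 - \tr(X)\,X^2 + \sigma(X)\,X - \det(X)\,1 = 0, $$
with $X^2=X\circ X$, $X^3=X\circ X^2$, and $\sigma,\det$ the remaining characteristic coefficients. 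Since $\phi$ preserves $\circ$ and fixes $1$, it intertwines the singly-generated (associative) powers, so applying $\phi$ shows $\phi(X)$ solves this same cubic; but $\phi(X)$ also solves its own cubic with coefficients $\tr(\phi(X)),\sigma(\phi(X)),\det(\phi(X))$. For regular $X$ the elements $1,\phi(X),\phi(X)^2$ are independent, the cubic is then the minimal polynomial, and its coefficients are unique, forcing $\tr(\phi(X))=\tr(X)$ on an open dense set, hence everywhere by linearity of $\tr$.

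With trace invariance established the remaining structural claims are immediate. Since the product is preserved by hypothesis and the trace by the previous step, $\langle\phi(X),\phi(Y)\rangle=\tr(\phi(X)\circ\phi(Y))=\tr(\phi(X\circ Y))=\tr(X\circ Y)=\langle X,Y\rangle$, so $\phi$ is orthogonal. Moreover $\phi(\RR1)=\RR1$ because $\phi(1)=1$, and $\phi(\sxk)=\phi(\ker\tr)=\ker\tr=\sxk$ because $\tr\circ\phi=\tr$; thus the orthogonal decomposition $\hxk=\RR1\oplus\sxk$ is preserved.

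It remains to prove that $\Aut(\hxk)$ acts irreducibly on $\sxk$, which I expect to be the main obstacle, being representation-theoretic rather than formal. Using the identifications of the automorphism groups quoted above, the four modules are the traceless symmetric real matrices for $\sSO(3)$ (the $\Sym^2_0(\RR^3)$, i.e. spin-two, representation, $\dim 5$), the traceless Hermitian complex matrices for $\sSU(3)$ (the real adjoint representation, $\dim 8$), the $14$-dimensional fundamental module for $\sSp(3)$, and the $26$-dimensional minimal module for $\sF_{4(-52)}$. Each is classically irreducible; the uniform way to see this is to pass to the complexification $(\sxk)^\CC$, recognise it as the irreducible module of the complexified derivation algebra $\der\hxk$ with the appropriate highest weight (fundamental in the $\fso(3),\fsp(3),\ff_4$ cases, adjoint in the $\fsu(3)$ case), and check that these complex irreducibles stay irreducible over $\RR$. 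Alternatively one may argue directly that a nonzero invariant subspace must exhaust $\sxk$, using transitivity of $\Aut(\hxk)$ on primitive idempotents together with the fact that the traceless parts $E-\tfrac13 1$ of such idempotents span $\sxk$; but verifying this transitivity and spanning is precisely where the individual structure of the four algebras re-enters, so in practice the irreducibility is most honestly checked case by case.
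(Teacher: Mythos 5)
Your proof is correct, but note that the paper never actually proves this lemma: it is stated as a recalled, standard fact (the whole subsection is a review of known Jordan-algebra structure, with citations to Albert, Chevalley--Schafer and Freudenthal), so there is no in-paper argument to compare against. Your write-up therefore supplies what the paper omits, and it does so along the standard lines. The formal half is clean: uniqueness of the unit gives $\phi(1)=1$; the generic cubic $X^3-T(X)X^2+S(X,X)X-N(X,X,X)1=0$ (which the paper itself records later as Lemma 8) plus power-associativity and invertibility of $\phi$ forces equality of characteristic coefficients on the dense set of regular elements, hence $\tr\circ\phi=\tr$ everywhere; orthogonality and preservation of $\RR 1\oplus\sxk$ then follow in one line each. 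A marginally shorter route to trace invariance, worth knowing, is $L_{\phi(X)}=\phi\circ L_X\circ\phi^{-1}$, so $\mathrm{Tr}\,L_{\phi(X)}=\mathrm{Tr}\,L_X$, combined with the proportionality $\mathrm{Tr}\,L_X\propto\tr X$; but your minimal-polynomial argument is equally standard and self-contained.

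Two small imprecisions in the representation-theoretic half. First, the $5$-dimensional module of $\sSO(3)$ is not a fundamental representation (it is the spin-two module, $\Sym^2_0\RR^3$); only in the $\fsp(3)$ case ($\omega_2$, dimension $14$) and the $\ff_4$ case (dimension $26$) is the highest weight fundamental, while the $\fsu(3)$ case is adjoint, as you say. Second, the phrase ``check that these complex irreducibles stay irreducible over $\RR$'' has the logic backwards: what you need, and what holds trivially, is that a real representation whose complexification is irreducible is itself irreducible (a proper invariant real subspace would complexify to a proper invariant complex subspace); no further checking is required once complex irreducibility of $(\sxk)^\CC$ is identified. Neither point affects the validity of the proof.
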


Observe moreover, that the map $\cD$ is nontrivial only on $\Lam^2 \sxk.$ Finally, it is convenient to
introduce a symmetric product on the traceless subspace, being simply a restriction of the usual one:
$$ \times : \sxk \times \sxk \to \sxk $$
$$ X\times Y = X\circ Y - \frac{1}{3}\langle X,Y \rangle. $$
Clearly, this map is also preserved by the automorphisms of $\hxk.$

\subsection{The Magic Square of Lie algebras}
We are now ready to construct the Magic Square. As the algebras to be constructed are parametrized by
a pair of normed division algebras, let us introduce two symbols, $\KK$ and $\KK',$ allowing each of them
to be one of $\RR,\ \CC,\ \HH$ and $\OO.$

Taking $\KK'$ and $\hxk$ as our building blocks, we can form the product algebra $\KK' \otimes \hxk.$
The automorphism group of the latter is simply the product of $\Aut(\KK')$ and $\Aut(\hxk),$ so that
the corresponding derivation algebra is $$
\der(\KK'\otimes\hxk) = \der\KK' \oplus \der\hxk.
$$
Recalling the decompositions of $\KK'$ and $\hxk,$ we have in particular the largest irreducible subspace
$ \Im\KK' \otimes \sxk \subset \KK'\otimes\hxk. $
Let us consider the direct sum of the derivation algebra and its irreducible module:
$$ \fm(\KK,\KK') = \der\KK' \oplus \der\hxk \oplus \Im\KK'\otimes\sxk. $$
Our aim will be to equip the latter with a Lie algebra structure. We keep $\der\KK'\oplus\der\hxk$
as a subalgebra with its original Lie bracket. Its action on the module provides the mixed bracket:
\begin{equation} \label{bktdx} [ d + D, p\otimes X ] = d(p)\otimes X + p\otimes D(X) \end{equation}
for $d\in\der\KK',\ D\in\der\hxk,\ p\in\Im\KK'$ and $X\in\sxk.$

We still need a bracket of two elements of $\Im\KK'\otimes\sxk.$ Recalling the structure introduced so far,
one sees that there are three natural (equivariant w.r.t. the derivations) maps form $\Lam^2 (\Im\KK'\otimes\sxk)$ to $\fm(\KK,\KK'):$
\begin{eqnarray*}
(p\otimes X)\wedge(q\otimes Y) &\mapsto& \langle X, Y \rangle \cD_{p, q} \\
(p\otimes X)\wedge(q\otimes Y) &\mapsto& \langle p, q \rangle \cD_{X, Y} \\
(p\otimes X)\wedge(q\otimes Y) &\mapsto& (p\times q) \otimes (X\times Y),
\end{eqnarray*}
and the most general derivation-equivariant
bracket is a linear combination thereof. The factors are determined by demanding
that the Jacobi identity be satisfied, so that finally we arrive at the following
\begin{lem}[Tits \cite{tits-1966}, cf. \cite{jacobson-1971}]
The space $\fm(\KK,\KK')$ becomes a Lie algebra with the bracket defined by:
\begin{enumerate}
\item{The natural bracket on $\der\KK'\oplus\der\hxk$}
\item{Equation (\ref{bktdx}) for an element of $\der\KK'\oplus\der\hxk$ and an element of $\Im\KK'\otimes\sxk.$}
\item{The following bracket for two elements $p\otimes X$ and $q\otimes Y$ of $\Im\KK'\otimes\sxk:$
\begin{equation}\label{bkt-tits} [ p\otimes X, q\otimes Y] = 
\frac{1}{12}\langle X,Y \rangle \cD_{p,q}
-\langle p,q\rangle \cD_{X,Y} + 
 (p\times q)\otimes (X\times Y).\end{equation}}
\end{enumerate}
\end{lem}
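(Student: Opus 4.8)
The plan is to verify directly that the bracket defined in the lemma satisfies antisymmetry and the Jacobi identity, exploiting equivariance to reduce the work to a small number of cases. Antisymmetry is clear: the first two pieces of the bracket are standard, while on $\Im\KK'\otimes\sxk$ the three bilinear maps are each antisymmetric because $\langle X,Y\rangle\cD_{p,q}$ picks up a sign from $\cD_{p,q}=-\cD_{q,p}$ (an element of $\Lam^2$), $\langle p,q\rangle\cD_{X,Y}$ likewise, and $(p\times q)\otimes(X\times Y)$ swaps to $(q\times p)\otimes(Y\times X)=(p\times q)\otimes(X\times Y)$ since both $\times$ products are symmetric on one factor and antisymmetric on the other in the appropriate sense. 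So the real content is the Jacobi identity.

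First I would split the Jacobi identity $[[u,v],w]+[[v,w],u]+[[w,u],v]=0$ according to how many of the three arguments lie in $\fk:=\der\KK'\oplus\der\hxk$ versus in the module $\fm_1:=\Im\KK'\otimes\sxk$. The case of three elements of $\fk$ is just the Jacobi identity of the subalgebra, and is free. The case of two elements of $\fk$ and one of $\fm_1$ reduces to the statement that (\ref{bktdx}) makes $\fm_1$ a genuine $\fk$-module, i.e. that $[d+D,\cdot]$ acts as a derivation -- this follows because $\der\KK'$ and $\der\hxk$ act on the tensor factors separately and the module bracket is built from that tensor action. The case of one element of $\fk$ and two of $\fm_1$ amounts to checking that the bracket (\ref{bkt-tits}) is equivariant under $\fk$; but this is guaranteed term-by-term, since Lemma \ref{lem-dmap} and Lemma \ref{lem-dmapj} assert that $\cD$ is $\Aut$-equivariant (hence its differential is $\der$-equivariant), the scalar products $\langle\cdot,\cdot\rangle$ are invariant, and the cross products $\times$ are preserved by the derivations.

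The only substantial case is three elements of $\fm_1$, say $p\otimes X$, $q\otimes Y$, $r\otimes Z$. Here I would expand each nested bracket using (\ref{bkt-tits}) followed by (\ref{bktdx}), and collect the resulting terms by their target summand in $\fm(\KK,\KK')$. The $\der\KK'$-valued part, the $\der\hxk$-valued part, and the $\fm_1$-valued part must each cancel separately after summing over the three cyclic permutations. For the $\fm_1$-valued part one uses identities expressing $(p\times q)\times r$ and $X\times(Y\times Z)$ in terms of the scalar products together with the derivation action of $\cD$; for the $\fk$-valued parts one needs the analogues of the Jacobi/Malcev-type identities satisfied by $\cD_{p,q}$ acting on $\KK$ and by $\cD_{X,Y}$ acting on $\hxk$, combined with how $\cD$ contracts against $\times$. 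The specific coefficient $\tfrac{1}{12}$ is precisely what is forced by requiring these cancellations; I would treat the coefficients as unknowns and read off their ratios from the vanishing conditions, recovering $\tfrac{1}{12}$, $-1$, $1$ as the unique (up to overall scale) solution.

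The main obstacle will be the triple-module case, and specifically assembling the correct algebraic identities relating $\cD$, $\times$, and $\langle\cdot,\cdot\rangle$ on each of $\KK'$ and $\hxk$. These are where the normed-division-algebra structure (alternativity, the Moufang-type identities) and the Jordan identity genuinely enter; the nonassociativity of $\OO$ and nonassociativity of the Jordan product make the relevant identities nontrivial rather than formal. I would isolate these as auxiliary lemmas -- one for the $\KK'$ side and one for the $\hxk$ side -- and then the cyclic cancellation becomes a bookkeeping exercise. Since this is the classical Tits construction, I would also be prepared to simply cite \cite{tits-1966} or \cite{jacobson-1971} for the verification rather than reproduce the full computation, noting only that the stated coefficients are the ones making the three cyclic sums vanish.
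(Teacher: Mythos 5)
Your proposal is correct and takes essentially the same route as the paper, which in fact offers no verification at all: it merely observes that the three terms are the only derivation-equivariant maps $\Lam^2(\Im\KK'\otimes\sxk)\to\fm(\KK,\KK')$, states that the coefficients are forced by the Jacobi identity, and cites Tits and Jacobson for the result --- precisely the fallback you allow yourself, with your case analysis (equivariance disposing of every Jacobi configuration except three module elements) being the standard way the cited verification is organized. One slip worth correcting: in your antisymmetry check the displayed identity should read $(q\times p)\otimes(Y\times X)=-(p\times q)\otimes(X\times Y)$, since $\times$ is antisymmetric on $\Im\KK'$ but symmetric on $\sxk$; the minus sign is exactly what antisymmetry of the bracket requires, as your verbal explanation already indicates.
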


The outcome of this procedure is summarized in the following
\begin{pro}[Tits \cite{tits-1966}]
The algebras $\fm(\KK,\KK')$ are isomorphic (over the reals) to the ones in the table, with columns indexed
by $\KK'$ and rows by $\KK:$
\begin{center}\begin{tabular}{r|cccc}
 & $\RR$ & $\CC$ & $\HH$ & $\OO$ \\
 \hline
$\RR$ & $\fso(3)$ & $\fsu(3)$ & $\fsp(3)$ & $\ff_4$ \\
$\CC$ & $\fsu(3)$ & $\fsu(3)\oplus\fsu(3)$ & $\fsu(6)$ & $\fe_6$ \\
$\HH$ & $\fsp(3)$ & $\fsu(6)$ & $\fso(12)$ & $\fe_7$ \\
$\OO$ & $\ff_4$ & $\fe_6$ & $\fe_7$ & $\fe_8$
\end{tabular}\end{center}
\end{pro}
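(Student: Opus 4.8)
The claim identifies the Tits algebras $\fm(\KK,\KK')$ with the entries of the Magic Square table, so the task is essentially to compute the dimension and determine the isomorphism type of each of the sixteen algebras. The plan is to proceed entry-by-entry, exploiting the fact that once we know $\dim\fm(\KK,\KK')$ together with the structure of the subalgebra $\der\KK'\oplus\der\hxk$ and its representation on $\Im\KK'\otimes\sxk$, we have enough data to pin down the (compact real form of the) simple Lie algebra, since in each case the candidate is essentially determined by its dimension and the embedded reductive subalgebra.

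First I would record the relevant dimensions. From the earlier material, $\dim\Im\KK' = 0,1,3,7$ for $\KK'=\RR,\CC,\HH,\OO$, while $\der\KK'$ has dimension $0,0,3,14$; for the Jordan side, $\dim\sxk$ equals $5,8,14,26$ (one checks $\dim\hxk = 3 + 3\dim\KK$, hence $\dim\sxk = 2 + 3\dim\KK$) and $\der\hxk$ has dimension $3,8,21,52$, matching $\fso(3),\fsu(3),\fsp(3),\ff_4$ as stated by the automorphism-group computation. The total dimension is then
\begin{equation*}
\dim\fm(\KK,\KK') = \dim\der\KK' + \dim\der\hxk + \dim\Im\KK'\cdot\dim\sxk,
\end{equation*}
and I would tabulate these sixteen numbers. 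For example, $\fm(\OO,\OO)$ gives $14 + 52 + 7\cdot 26 = 248 = \dim\fe_8$, and $\fm(\HH,\OO)$ gives $3 + 52 + 3\cdot 26 = 133 = \dim\fe_7$, already singling out the exceptional targets by dimension alone.

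Next I would identify each algebra as a specific simple Lie algebra. The diagonal and first row/column entries where one argument is $\RR$ are the easiest: for $\KK'=\RR$ the module $\Im\KK'\otimes\sxk$ vanishes and $\fm(\KK,\RR)=\der\hxk$, which is literally $\Aut(\hxk)$'s Lie algebra as computed above, giving the first column $\fso(3),\fsu(3),\fsp(3),\ff_4$. For the remaining entries I would verify semisimplicity (checking that the Killing form of the constructed bracket \eqref{bkt-tits} is nondegenerate, which amounts to the coefficient $\tfrac{1}{12}$ being the Jacobi-enforced value), confirm that the subalgebra $\der\KK'\oplus\der\hxk$ sits inside the claimed target with the correct branching of the module, and then invoke the classification of simple Lie algebras to conclude that dimension plus maximal-rank reductive subalgebra determines the type uniquely. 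The symmetry of the table, i.e. $\fm(\KK,\KK')\simeq\fm(\KK',\KK)$, is not manifest from the construction and I would either verify it case-by-case through these identifications or cite the symmetric (triality-based) reformulation.

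The main obstacle is the identification of the genuinely exceptional and non-classical mixed entries, such as $\fm(\CC,\OO)=\fe_6$ or $\fm(\HH,\HH)=\fso(12)$, where neither the subalgebra nor the dimension alone makes the isomorphism transparent; here one must produce an explicit isomorphism or match root-system data. The cleanest route is to verify that the constructed bracket is simple of the asserted dimension and contains the advertised reductive subalgebra acting with the advertised isotropy representation, and then appeal to the Cartan classification together with known maximal subalgebra tables. Since a full verification is lengthy and classical, I would ultimately defer the detailed matching to the original references \cite{tits-1966,jacobson-1971}, presenting the dimension count and the structural identification of the subalgebra as the verifiable core of the argument.
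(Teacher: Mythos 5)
The first thing to note is that the paper does not prove this Proposition at all: it is stated as a classical result and attributed directly to Tits \cite{tits-1966}, so there is no internal proof to compare your proposal against. Your outline is therefore being measured against the literature it (and the paper) ultimately defers to. Your verifiable core is sound: the dimension formula $\dim\fm(\KK,\KK') = \dim\der\KK' + \dim\der\hxk + \dim\Im\KK'\cdot\dim\sxk$ is exactly what the construction gives, and your arithmetic checks out for all sixteen entries (modulo a harmless transposition: what you call $\fm(\HH,\OO)$ and compute as $3+52+3\cdot 26$ is, in the paper's convention, $\fm(\OO,\HH)$; by the symmetry of the table both equal $133$).

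However, the identification step as you state it contains a concrete gap. You claim that ``dimension plus maximal-rank reductive subalgebra determines the type uniquely,'' but neither half of this is right as stated. First, dimension alone has genuine coincidences among compact simple Lie algebras precisely at two of the entries you need: $\dim\fsp(3) = \dim\fso(7) = 21$ (relevant for $\fm(\RR,\HH)$) and $\dim\fe_6 = \dim\fso(13) = \dim\fsp(6) = 78$ (relevant for $\fm(\CC,\OO)$ and $\fm(\OO,\CC)$). Second, the visible subalgebra $\der\KK'\oplus\der\hxk$ is \emph{not} of maximal rank in general --- e.g.\ $\fg_2\oplus\ff_4$ has rank $6$ inside $\fe_8$ of rank $8$, and $\fsp(1)\oplus\fso(3)$ has rank $2$ inside the rank-$3$ algebra $\fm(\RR,\HH)$ --- so maximal-rank subalgebra tables do not apply; one would need maximal (non-maximal-rank) subalgebra data or an explicit root-space analysis. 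Two smaller points: nondegeneracy of the Killing form gives semisimplicity but compactness requires negative definiteness, and semisimplicity does not give simplicity (indeed $\fm(\CC,\CC)\simeq\fsu(3)\oplus\fsu(3)$ is not simple, so any uniform ``it is simple of dimension $d$'' argument must exclude this case). Since you end by deferring the hard identifications to \cite{tits-1966,jacobson-1971}, your proposal lands where the paper itself does --- a citation --- but the intermediate methodology would need to be repaired along the lines above before it could count as a proof.
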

Two remarks have to be made in this place. First, the algebras in the square appear in their compact
form, an important virtue of Tits' construction. Second, as we have warned before, certain parts of the
construction become trivial for $\KK'$ being $\RR$ or $\CC.$ In particular, we have $\der\RR = 0$ and $\Im\RR = 0,$ so that simply $\fm(\KK,\RR) = \der\hxk.$

\subsection{Cayley-Dickson and induced decompositions}
We shall now recall the Cayley-Dickson decomposition of the complex numbers, quaternions and octonions, namely:
$$ \CC = \RR \oplus i\RR,\quad \HH = \CC \oplus j\CC,\quad \OO = \HH \oplus l\HH, $$
and check that, when applied to $\KK',$ they naturally lead to decompositions of the magic square algebras $\fm(\KK,\KK')$ into symmetric pairs.
Let us then from now on assume $\KK'\neq\RR.$ One easily checks that the Cayley-Dickson construction defines a $\mathbb{Z}_2$ grading on $\KK',$ that is:
$$ \KK' = \KK'_0 \oplus \KK'_1 $$
$$ \KK'_i \cdot \KK'_j = \KK'_{i+j} $$
where $i,j\in\mathbb{Z}_2,$ and moreover $\KK'_i$ and $\KK'_j$ are orthogonal with respect to
$\langle\cdot,\cdot\rangle$ whenever $i\neq j.$

Using the latter and Lemma \ref{lem-dmap}, we can decompose the derivation algebra of
$\KK'$ into a symmetric pair:
\begin{lem} \label{lem-der-grad}
Let us define
\begin{eqnarray*} 
\deg\KK' &=& \cD(\Lam^2 \KK'_0 \oplus \Lam^2 \KK'_1 ) \\
\dev\KK' &=& \cD(\KK'_0 \wedge \KK'_1).
\end{eqnarray*}
Then
$$ \der\KK' = \deg\KK' \oplus \dev\KK' $$
with
\begin{equation}\label{der-grad-bkt} [\derx_i\ \KK',\derx_j\ \KK'] = \derx_{i+j}\ \KK' \end{equation}
\begin{equation}\label{der-grad-act} \derx_i\ \KK'(\KK'_j) \subset \KK'_{i+j} \end{equation}
where $i,j\in\mathbb{Z}_2.$
\end{lem}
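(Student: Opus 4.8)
The plan is to leverage the fact, provided by Lemma \ref{lem-dmap}, that $\der\KK'$ is precisely the image of the linear map $\cD$, and to show that $\cD$ intertwines the obvious $\ZZ_2$-gradings on its source and target; all three assertions then drop out formally. First I would transport the grading of $\KK'$ to $\End\KK'$ by declaring $\phi\in\End\KK'$ to be homogeneous of degree $i$ exactly when $\phi(\KK'_j)\subset\KK'_{i+j}$ for $j\in\ZZ_2$, and writing $\End\KK'=(\End\KK')_0\oplus(\End\KK')_1$ for the resulting direct-sum decomposition (here $(\End\KK')_0$ preserves, and $(\End\KK')_1$ interchanges, the two summands of $\KK'$). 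Composition adds degrees, so this makes $\End\KK'$ a graded associative, hence graded Lie, algebra: $[(\End\KK')_i,(\End\KK')_j]\subset(\End\KK')_{i+j}$. On the source side I grade $\Lam^2\KK'=(\Lam^2\KK'_0\oplus\Lam^2\KK'_1)\oplus(\KK'_0\wedge\KK'_1)$ by placing the first summand in degree $0$ and the second in degree $1$.

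The \emph{key step} is to check that $\cD$ is a morphism of graded spaces, i.e. that $\cD_{p,q}\in(\End\KK')_{|p|+|q|}$ for homogeneous $p,q$, writing $|\cdot|$ for the degree. This rests solely on the grading hypothesis $\KK'_i\cdot\KK'_j\subset\KK'_{i+j}$: it forces the commutator $[p,q]=pq-qp$ to be homogeneous of degree $|p|+|q|$ and, applied three times, the associator $[p,q,r]=p(qr)-(pq)r$ to be homogeneous of degree $|p|+|q|+|r|$. Reading off the formula $\cD_{p,q}(r)=[[p,q],r]-3[p,q,r]$ of Lemma \ref{lem-dmap}, both terms carry a homogeneous $r$ into $\KK'_{|p|+|q|+|r|}$, so $\cD_{p,q}$ indeed raises degree by $|p|+|q|$. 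By linearity of $\cD$ this yields $\deg\KK'=\cD(\Lam^2\KK'_0\oplus\Lam^2\KK'_1)\subset(\End\KK')_0$ and $\dev\KK'=\cD(\KK'_0\wedge\KK'_1)\subset(\End\KK')_1.$

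From here the three assertions follow formally. Since $\der\KK'=\cD(\Lam^2\KK')=\deg\KK'+\dev\KK'$ by Lemma \ref{lem-dmap}, and the two summands sit in the complementary components $(\End\KK')_0$ and $(\End\KK')_1$, the sum is direct; this is the decomposition $\der\KK'=\deg\KK'\oplus\dev\KK',$ and it exhibits $\der\KK'$ as a graded subspace of $\End\KK'$ whose homogeneous parts are $\derx_i\ \KK'=\der\KK'\cap(\End\KK')_i.$ The inclusion (\ref{der-grad-act}) is then merely the definition of degree $i$ applied to $\derx_i\ \KK'\subset(\End\KK')_i.$ Finally, because $\der\KK'$ is a Lie subalgebra of $\End\KK'$ and the bracket respects the grading, $[\derx_i\ \KK',\derx_j\ \KK']\subset\der\KK'\cap(\End\KK')_{i+j}=\derx_{i+j}\ \KK',$ which is (\ref{der-grad-bkt}).

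The only non-routine point, and the main thing to get right, is the graded-ness of $\cD$ in the key step; everything afterwards is bookkeeping with the induced grading on $\End\KK'$. It is worth noting that this computation uses nothing beyond $\KK'_i\cdot\KK'_j\subset\KK'_{i+j}$, so the argument is uniform across $\KK'=\CC,\HH,\OO$ even though the associator is nonzero only for $\OO.$
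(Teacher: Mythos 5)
Your proof is correct, and its computational core coincides with the paper's: both rest on the single observation that, by the formula of Lemma \ref{lem-dmap} and the Cayley-Dickson grading $\KK'_i\cdot\KK'_j\subset\KK'_{i+j}$, the commutator and associator are homogeneous, hence $\cD(\KK'_{i_1}\wedge\KK'_{i_2})(\KK'_j)\subset\KK'_{i_1+i_2+j}$; from this both arguments obtain (\ref{der-grad-act}), the equality $\deg\KK' + \dev\KK' = \der\KK'$ (via Lemma \ref{lem-dmap}), and directness of the sum. Where you genuinely diverge is in the bracket relation (\ref{der-grad-bkt}): the paper deduces it from the equivariance of $\cD$ recorded in Lemma \ref{lem-dmap} (infinitesimally, $[d,\cD_{p,q}]=\cD_{d(p),q}+\cD_{p,d(q)}$, and a derivation of degree $i$ shifts the degrees of $p$ and $q$ by $i$), whereas you deduce it from the general graded-algebra fact that composition in $\End\KK'$ adds degrees, combined with the standard facts that $\der\KK'$ is a Lie subalgebra of $\End\KK'$ and that $\derx_i\ \KK'=\der\KK'\cap(\End\KK')_i$. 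Your route is somewhat more self-contained: it never invokes equivariance of $\cD$, and it actually proves the general statement that the degree-preserving and degree-exchanging parts of the derivation algebra of \emph{any} $\ZZ_2$-graded algebra form a $\ZZ_2$-graded Lie algebra; the paper's appeal to equivariance is shorter only because Lemma \ref{lem-dmap} already supplies it. One shared caveat: like the paper's own proof, yours establishes (\ref{der-grad-bkt}) as the inclusion $[\derx_i\ \KK',\derx_j\ \KK']\subset\derx_{i+j}\ \KK'$ rather than the literal equality displayed in the statement -- an equality which in fact fails, e.g. for $\KK'=\HH$, where $\deg\HH=\Span\{\ad_i\}$ is abelian so that $[\deg\HH,\deg\HH]=0\neq\deg\HH$; the inclusion is what is meant and what is used in the sequel.
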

\begin{proof} Using the formula for $\cD,$ we have
$$ \cD(\KK'_{i_1} \wedge \KK'_{i_2})(\KK'_j) \subset \KK'_{i_1+i_2+j}, $$
which implies (\ref{der-grad-act}). Then, using equivariance of $\cD,$ (\ref{der-grad-bkt})
follows immediately. 
Now, since $\der\KK'$ is the image of $\Lam^2\KK'$ under $\cD,$ it is clear
that $\deg\KK' + \dev\KK' = \der\KK'.$ It remains to check that the intersection of these
spaces is zero. But that already follows from (\ref{der-grad-act}).
\end{proof}

We are now interested in identifying the algebras $\deg\KK'.$ 
Note first, that it is a property of the Cayley-Dickson construction, that every element of the even subspace may be represented as a product of two elements of the odd subspace -- thus the representation of $\deg\KK'$ on $\KK'_1$ is necessarily faithful\footnote{
Indeed, assume that $d\in\deg\KK'$ acts trivially on $\KK'_1$. Then it acts trivially on entire $\KK'$
and thus $d=0$.
}
and there exists an injective homomorphism
$ \deg\KK' \to \fso(\KK'_1, \langle\cdot,\cdot\rangle).$ In fact, one finds that it is also
surjective:
\begin{lem} \label{lem-deg} $\deg\KK'\simeq\fso(\KK'_1,\langle\cdot,\cdot\rangle)$
\end{lem}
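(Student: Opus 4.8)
The map in question, call it $\rho$, is the restriction of the (grading-preserving, skew-symmetric) derivation action to $\KK'_1$; since it has already been exhibited above as an injective homomorphism $\deg\KK'\hookrightarrow\fso(\KK'_1,\langle\cdot,\cdot\rangle)$, and both algebras are finite-dimensional, the lemma is equivalent to the numerical identity $\dim\deg\KK' = \dim\fso(\KK'_1) = \binom{\dim\KK'_1}{2}$. The plan is therefore to reduce everything to a dimension count.

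By Lemma \ref{lem-der-grad} the sum $\der\KK' = \deg\KK'\oplus\dev\KK'$ is direct, so $\dim\deg\KK' = \dim\der\KK' - \dim\dev\KK'$. The total $\dim\der\KK'$ is already known from the automorphism groups listed above (it is $0,3,14$ for $\KK'=\CC,\HH,\OO$), while $\dim\KK'_1 = \tfrac12\dim\KK'$ gives the target values $\binom{\dim\KK'_1}{2} = 0,1,6$. Hence it suffices to show that $\dim\dev\KK' = \dim\cD(\KK'_0\wedge\KK'_1)$ equals $0,2,8$ respectively.

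To compute this rank I would analyse the kernel of $\cD$ on $\KK'_0\wedge\KK'_1\cong\KK'_0\otimes\KK'_1$. First, since $[1,q]=0$ and the associator with a unit entry vanishes, one has $\cD_{1,q}=0$, so $1\wedge\KK'_1$ lies in the kernel and the question collapses to the rank of $\cD$ on $\Im\KK'_0\wedge\KK'_1$. In the associative cases $\KK'=\CC,\HH$ the associator term in $\cD$ drops out, $\cD_{w,x}$ is simply the inner derivation $\ad[w,x]$, and a short direct evaluation on the (one- resp. two-dimensional) space $\Im\KK'_0$ yields the values $0$ and $2$ at once.

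The genuine obstacle is the octonionic case, where the associator term $-3[x,y,r]$ in $\cD$ is active and cannot be discarded. Here $\Im\KK'_0\wedge\KK'_1 = \Im\HH\wedge\ell\HH$ is twelve-dimensional, and one must show that $\cD$ has precisely a four-dimensional kernel on it, i.e. that $\dim\dev\OO = 8$; this is the one place where an explicit computation with octonion multiplication (most conveniently carried out in the Cayley--Dickson coordinates $\KK'_1 = \ell\KK'_0$) seems unavoidable. An alternative that bypasses $\dev\KK'$ altogether is to prove directly that $\cD$ is injective on $\Lam^2\KK'_1$: since $\dim\Lam^2\KK'_1 = \binom{\dim\KK'_1}{2} = \dim\fso(\KK'_1)$ and $\rho$ is injective, injectivity of $\cD|_{\Lam^2\KK'_1}$ would force the rotations $\cD_{x,y}|_{\KK'_1}$ (with $x,y\in\KK'_1$) to span $\fso(\KK'_1)$, giving surjectivity of $\rho$ immediately; but verifying this injectivity runs into exactly the same octonionic computation.
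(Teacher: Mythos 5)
Your reduction is sound: injectivity of the restriction map $\deg\KK'\to\fso(\KK'_1,\langle\cdot,\cdot\rangle)$ (established in the text before the lemma) together with the direct decomposition $\der\KK'=\deg\KK'\oplus\dev\KK'$ of Lemma \ref{lem-der-grad} does make the lemma equivalent to $\dim\dev\KK'=0,2,8$ for $\KK'=\CC,\HH,\OO$, and your handling of the associative cases is correct. But the proposal stops exactly where the lemma has content: for $\OO$ you state that one \emph{must show} the kernel of $\cD$ on the twelve-dimensional space $\Im\HH\wedge l\HH$ is four-dimensional, declare that an explicit octonionic computation ``seems unavoidable,'' and then do not perform it. Since the $\CC$ and $\HH$ cases are essentially trivial, that computation \emph{is} the proof, and leaving it as a stated obligation is a genuine gap rather than a detail. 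Two remarks that would shrink the missing work: first, since the injection already gives $\dim\deg\OO\le\dim\fso(4)=6$, i.e. $\dim\dev\OO\ge 8$, you only need the opposite inequality $\dim\dev\OO\le 8$, i.e. a four-dimensional space of \emph{relations}, not an exact kernel determination; second, such relations can be written uniformly: for every $x\in\HH$ one has $\cD_{i,i\varphi(x)}+\cD_{j,j\varphi(x)}+\cD_{k,k\varphi(x)}=0$, where $\varphi:\HH\to l\HH$ is an orthogonal intertwiner of left multiplications (this is the identity $\cB_i+\cB_j+\cB_k=0$ recorded later in the paper), and these elements are visibly linearly independent in $\Im\HH\wedge l\HH$. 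Verifying that identity, however, is precisely the octonionic work you postponed.

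For comparison, the paper closes the same two-sided injection argument from the other side: instead of bounding $\dim\dev\OO$ above, it produces the lower bound $\dim\deg\OO\ge 6$ by explicitly embedding $\fsp(1)\oplus\fsp(1)\simeq\fso(4)$ into $\deg\OO$. Concretely, $\cE(q)$ acts as $\ad_q$ on $\HH$ and as $L_q$ on $l\HH$, while $\cE'(q)$ acts as $0$ on $\HH$ and as $\varphi\circ R_{\bar q}\circ\varphi^{-1}$ on $l\HH$; one checks these are derivations by verifying the Leibniz rule on the three product types $xy$, $x\varphi(y)$ and $\varphi(x)\varphi(y)$, injectivity of $\cE\oplus\cE'$ is immediate, and the dimension count then forces both injections to be isomorphisms, exactly as in your endgame. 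The advantage of that organization is that the unavoidable octonionic verification decomposes into a handful of one-line quaternionic identities (using $\varphi(x)\varphi(y)=-x\bar y$ and anticommutation of $\Im\HH$ with $l\HH$) rather than a rank computation for $\cD$ on a twelve-dimensional space. Either route is viable, but in both the computation must actually be carried out; as submitted, your proof of the only nontrivial case is a plan, not an argument.
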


\begin{proof}
Trivial for $\KK'=\CC,$ and straightforward for $\KK'=\HH,$ with $\deg\HH$ generated by 
$\ad_i = \frac{1}{2}\cD_{j,k}.$

The octonionic case, however, is more involved. As $\fso(4) \simeq \fsp(1) \oplus \fsp(1),$
we shall construct two homomorphisms form $\fsp(1)$ to $\deg\OO$ and check that they combine
into an isomorphism from $\fso(4)$ to $\deg\OO.$

To this end,
we consider the usual Lie algebra $\fsp(1)$ of imaginary quaternions with the commutator as a bracket. This algebra acts with left and right multiplications on $\OO=\HH\oplus l\HH,$ preserving the decomposition ($l$
is some imaginary octonion orthogonal $\HH\subset\OO$). This gives rise to four irreducible four-dimensional, and necessarily equivalent, representations of the algebra,
namely:
$$ 
q \mapsto L_q|_\HH,\quad
q \mapsto L_q|_{l\HH},\quad
q \mapsto R_{\bar q}|_\HH,\quad
q \mapsto R_{\bar q}|_{l\HH}
$$
for $q\in\Im\HH\simeq\fsp(1).$
There in particular exists an orthogonal intertwiner of left multiplications
$$ \varphi : \HH \to l\HH $$
\begin{equation} \label{varphi} p\varphi(x) = \varphi(px) \end{equation}
for $p\in\Im\HH$ and $x\in\HH.$ Orthogonality implies  $\varphi(x)^2 = -|x|^2.$

We now define the maps $$\cE,\cE':\Im\HH \to \fso(\OO)$$
\begin{eqnarray*}
\cE(q)|_\HH = \ad_q & & \cE'(q)|_\HH = 0 \\
\cE(q)|_{l\HH} = L_q & & \cE'(q)|_{l\HH} = \varphi \circ R_{\bar q} \circ \varphi^{-1}.
\end{eqnarray*}
One readily checks that each of these is a Lie algebra homomorphism and that 
$\cE(q)$ commutes with $\cE'(q')$ for $q,q'\in\Im\HH$ (this is because left and right multiplications commute, and $\varphi$ is left-equivariant). 
It is also clear that the kernel of $\cE\oplus\cE' : \Im\HH \oplus \Im\HH \to \fso(\OO)$ is trivial.
Thus we find that $\cE\oplus\cE'$ is an injective Lie algebra homomorphism from $\fso(4)\simeq\fsp(1)\oplus\fsp(1)$ to $\fso(\OO).$ Moreover, the elements of its image explicitly preserve the decomposition $\OO = \HH \oplus l\HH.$

The last step is to show that $\cE_q$ and $\cE'_q$ for $q\in\Im\HH$ are derivations (for the moment we
write the first argument in a subscript). We have to check the Leibniz formula patiently
for the products $xy,$ $x\varphi(y)=-\varphi(y)x=\varphi(xy)$ and $\varphi(x)\varphi(y),$
where $x,y\in\HH.$ The first case is obvious (clearly, $\cE_q$ and $\cE_q'$ restricted to
$\HH$ are derivations of the quaternions). Evaluating the next one, we have:
\begin{eqnarray*}
\cE_q(\varphi(xy)) = &\varphi(qxy) & = [q,x]\varphi(y) + x\varphi(qy) = \cE_q(x)\varphi(y)
+ x\cE_q(\varphi(y)) \\
\cE'_q(\varphi(xy)) = &\varphi(xy\bar q)& = \cE'_q(x)\varphi(y) + x\cE'_q(\varphi(y)).
\end{eqnarray*}
To handle expressions like $\varphi(x)\varphi(y),$ we use the orthogonality of $\varphi$ and
multiply equation (\ref{varphi}) with $\varphi(px)$ on the right and $p^{-1}$ on the left to
obtain
$ \varphi(x) \varphi(px) = - p^{-1} |p|^2 |x|^2. $
Then, setting $p = yx^{-1},$ we get
$$ \varphi(x)\varphi(y) = - x \bar y. $$
We can now check the following:
\begin{eqnarray*}
\cE_q(-x\bar y) = & -[q,x\bar y] & = -qx\bar y + x \bar y q =\cE_q(\varphi(x)) \varphi(y) + \varphi(x)\cE_q(\varphi(y)) \\
\cE'_q(-x\bar y) = & 0 & = -x\bar q\bar y - x q\bar y = \cE'_q(\varphi(x))\varphi(y) + \varphi(x)\cE'_q(\varphi(y)).
\end{eqnarray*}
This way we have checked that the image of $\cE\oplus\cE'$ is indeed in $\deg\OO.$ Finally then,
there is an injective homomorphism
$$ \cE\oplus\cE' : \fso(4)\simeq\fsp(1) \oplus \fsp(1) \to \deg\OO, $$
and thus an isomorphism.
\end{proof}

The maps $\cE$ and $\cE'$ will be used once again in the sequel, so let us mention them in
a separate
\begin{cor} \label{cor-ee} There is an isomorphism of Lie algebras
$$ \cE \oplus \cE' : \fsp(1) \oplus \fsp(1) \to \deg\OO, $$
where $\cE$ and $\cE'$ have been introduced in the proof of Lemma \ref{lem-deg}. When restricted
to $\HH\subset\OO,$ the image of $\cE$ is $\der\HH\simeq\fsp(1)$ while $\cE'$ is trivial.
\end{cor}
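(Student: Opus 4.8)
The plan is to recognise that the substance of this statement has already been established inside the proof of Lemma~\ref{lem-deg}, so that little genuinely new work remains. For the first assertion, I would simply recall that $\cE\oplus\cE'$ was shown there to be an injective Lie algebra homomorphism from $\fsp(1)\oplus\fsp(1)\simeq\fso(4)$ whose image lies in $\deg\OO$ (this was the content of the lengthy Leibniz-rule verification on the products $xy$, $\varphi(xy)$ and $\varphi(x)\varphi(y)$). Since Lemma~\ref{lem-deg} identifies $\deg\OO\simeq\fso(\OO_1,\langle\cdot,\cdot\rangle)=\fso(4)$, which is six-dimensional and hence of the same dimension as the source $\fsp(1)\oplus\fsp(1)$, injectivity upgrades to an isomorphism by a dimension count. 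Thus the first sentence of the corollary is immediate.

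For the second assertion I would unwind the defining formulas for $\cE$ and $\cE'$. By definition $\cE'(q)|_\HH=0$ for every $q\in\Im\HH$, so the restriction of $\cE'$ to the subalgebra $\HH\subset\OO$ is trivial, with nothing further to prove. For $\cE$, the definition reads $\cE(q)|_\HH=\ad_q$, so the restricted image is $\{\ad_q : q\in\Im\HH\}$. It then suffices to invoke the standard fact that $\der\HH\simeq\fsp(1)$ is exhausted by the inner derivations $\ad_q$ with $q\in\Im\HH$, consistent with the generators $\ad_i=\tfrac12\cD_{j,k}$ already recorded in the proof of Lemma~\ref{lem-deg}. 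Hence the image of $\cE|_\HH$ is precisely $\der\HH$.

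There is essentially no obstacle: the only point requiring a (standard) argument is that the inner derivations $\ad_q$, $q\in\Im\HH$, fill out all of $\der\HH$. This is already contained in Lemma~\ref{lem-dmap}, since for the quaternions the associator vanishes and $\cD_{p,q}=\ad_{[p,q]}$, with $[p,q]$ ranging over $\Im\HH$; equivalently, $q\mapsto\ad_q$ is an injective linear map from the three-dimensional $\Im\HH$ into the three-dimensional $\der\HH\simeq\fso(3)$, forcing surjectivity. The corollary is therefore best viewed as a bookkeeping statement collecting and naming data produced in the proof of Lemma~\ref{lem-deg} for later reuse.
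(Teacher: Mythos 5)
Your proposal is correct and matches the paper's treatment: the paper gives no separate argument for Corollary~\ref{cor-ee}, presenting it as pure bookkeeping of what the proof of Lemma~\ref{lem-deg} already established (injectivity of $\cE\oplus\cE'$ into $\deg\OO$, upgraded to an isomorphism by the dimension count against $\fso(\OO_1)\simeq\fso(4)$), exactly as you do. Your added justification that the inner derivations $\ad_q$, $q\in\Im\HH$, exhaust $\der\HH$ via Lemma~\ref{lem-dmap} is a correct filling-in of a detail the paper leaves implicit.
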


\subsection{Symmetric pairs related to the Magic Square} \label{ss-symd}
Let us recall the definition of our magic square algebras:
$$ \fm(\KK,\KK') = \der\KK' \oplus \der\hxk \oplus \Im\KK'\otimes\sxk, $$
where, as before, $\KK'\neq\RR.$ We can now apply the Cayley-Dickson and induced decompositions
to $\KK'$ and $\der\KK'$ in the latter formula, and try to group the resulting subspaces into a
symmetric pair. A natural candidate for the latter is easily found, and we have the following
\begin{lem} The decomposition $\fm(\KK,\KK') = \fg(\KK,\KK') \oplus V(\KK,\KK'),$ with
\begin{eqnarray*}
\fg(\KK,\KK') =& \der\hxk\ \oplus& \deg\KK'\oplus\Im\KK'_0\otimes\sxk \\
V(\KK,\KK') =& & \dev\KK' \oplus \KK'_1\otimes\sxk
\end{eqnarray*}
yields a symmetric pair.
\end{lem}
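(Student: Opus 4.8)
The plan is to show that the proposed decomposition $\fm(\KK,\KK') = \fg(\KK,\KK') \oplus V(\KK,\KK')$ is a $\mathbb{Z}_2$-graded splitting, i.e. that $[\fg,\fg]\subset\fg$, $[\fg,V]\subset V$ and $[V,V]\subset\fg$, which is precisely the condition for a symmetric pair. The key observation is that the $\mathbb{Z}_2$ grading on $\KK'$ from the Cayley-Dickson construction, together with the induced grading $\der\KK' = \deg\KK' \oplus \dev\KK'$ from Lemma \ref{lem-der-grad}, induces a grading on all of $\fm(\KK,\KK')$. First I would assign degree $0$ to $\der\hxk$, degree $0$ to $\deg\KK'$, degree $1$ to $\dev\KK'$, and then grade the module $\Im\KK'\otimes\sxk = (\Im\KK'_0\otimes\sxk)\oplus(\KK'_1\otimes\sxk)$ according to the $\KK'$-grading of its first tensor factor — so $\Im\KK'_0\otimes\sxk$ has degree $0$ and $\KK'_1\otimes\sxk$ has degree $1$. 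With these assignments one checks directly that $\fg(\KK,\KK')$ collects exactly the degree-$0$ summands and $V(\KK,\KK')$ the degree-$1$ summands. (One must note that $\Im\KK' = \Im\KK'_0 \oplus \KK'_1$, since the unit of $\KK'$ lies in $\KK'_0$, so that $\Im\KK'_0 = \KK'_0\cap\Im\KK'$ is the imaginary part of the even subalgebra; this accounts correctly for all of $\Im\KK'\otimes\sxk$.)

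Next I would verify the bracket relations case by case using the Tits bracket of the preceding Lemma. For brackets within $\der\KK'\oplus\der\hxk$ the grading is immediate from equation (\ref{der-grad-bkt}) together with the fact that $\der\hxk$ is central-degree $0$ and commutes with $\der\KK'$. For the mixed bracket (\ref{bktdx}) of a derivation with an element $p\otimes X$, the $\der\hxk$ part acts only on $X$ and preserves the $\KK'$-degree of $p$, while a $\der\KK'$ part acts on $p$ via (\ref{der-grad-act}), shifting its degree by its own degree; hence the degree adds correctly. The only substantive computations are the three components of the bracket (\ref{bkt-tits}) of two elements $p\otimes X,\, q\otimes Y$ of $\Im\KK'\otimes\sxk$: the term $\langle X,Y\rangle\cD_{p,q}$ lands in $\der\KK'$, and I must check that $\cD_{p,q}$ has degree equal to $\deg(p)+\deg(q)$; the term $\langle p,q\rangle\cD_{X,Y}$ lands in $\der\hxk$ (degree $0$) and vanishes unless $\deg(p)=\deg(q)$; and the term $(p\times q)\otimes(X\times Y)$ has its degree governed by $\deg(p\times q)=\deg(p)+\deg(q)$, since $\times$ is a restriction of the $\KK'$-multiplication.

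The point requiring the most care — and the main obstacle — is the degree behaviour of $\cD_{p,q}$ as an element of $\der\KK'=\deg\KK'\oplus\dev\KK'$, and the compatibility of $p\times q\in\Im\KK'$ with the splitting $\Im\KK'=\Im\KK'_0\oplus\KK'_1$. By the definition in Lemma \ref{lem-der-grad}, $\cD_{p,q}\in\deg\KK'$ when $p,q$ lie in the same graded piece and $\cD_{p,q}\in\dev\KK'$ when they lie in opposite pieces, so $\cD$ carries degree $\deg(p)+\deg(q)$; this matches the required target exactly. For the product term one uses $\KK'_i\cdot\KK'_j\subset\KK'_{i+j}$, so that $p\times q$ lands in the degree-$(\deg p+\deg q)$ imaginary subspace, which is $\Im\KK'_0\otimes\sxk$-compatible when the degrees sum to $0$ and lands in $\KK'_1$ when they sum to $1$. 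Assembling these observations shows that the bracket respects the degree, so $[\fg,\fg]\subset\fg$, $[\fg,V]\subset V$, $[V,V]\subset\fg$, which is exactly the assertion that $(\fg(\KK,\KK'),V(\KK,\KK'))$ forms a symmetric pair.
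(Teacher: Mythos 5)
Your proof is correct and follows essentially the same route as the paper's: the paper likewise verifies $[\fg,\fg]\subset\fg$, $[\fg,V]\subset V$, $[V,V]\subset\fg$ using Lemma \ref{lem-der-grad}, the multiplicativity and orthogonality of the Cayley--Dickson decomposition, and the Tits bracket (\ref{bkt-tits}); your explicit $\mathbb{Z}_2$-degree bookkeeping is just a cleaner packaging of the same case checks.
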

\begin{proof} 
Let us for the sake of brevity omit $(\KK,\KK'),$ writing simply $\fg$ and $V.$ What
has to be checked is that:
$$ [\fg,\fg]\subset\fg,\quad [\fg,V]\subset V,\quad [V,V]\subset\fg. $$
Note first, that $\der\hxk$ commutes with $\der\KK'$ and acts only on the $\sxk$ factor in
$\Im\KK'\otimes\sxk,$ so we already have $$[\der\hxk,\fg]\subset\fg\quad\&\quad [\der\hxk,V]\subset V.$$
The other conditions basically follow from Lemma
\ref{lem-der-grad}. Indeed, the latter implies that $\deg\KK'\oplus\dev\KK'$ is
itself a symmetric pair. Moreover, as $\deg\KK'$ preserves the Cayley-Dickson decomposition,
we have $$[\deg\KK',\fg]\subset\fg\quad\&\quad[\deg\KK',V] \subset V.$$ 
Furthermore, as $\dev\KK'$ maps $\KK'_0$ into $\KK'_1,$ we have $[\Im\KK'_0\otimes\sxk,\dev\KK']
\subset\KK'_1\otimes\sxk.$ Using (\ref{bkt-tits}), the definitions of Lemma \ref{lem-der-grad}
and orthogonality of the Cayley-Dickson decomposition,
we also find that $[\Im\KK'_0\otimes\sxk,
\KK'_1\otimes\sxk] \subset \KK'_1\otimes\sxk.$ Combining these gives
$$[\Im\KK'_0\otimes\sxk,\fg]\subset\fg\quad\&\quad[\Im\KK'_0\otimes\sxk,V] \subset V.$$ 
We finally have to check the $[V,V]$ bracket. Recalling that $\dev\KK'$ is in the odd subalgebra of
the derivations, and that it maps $\KK'_1$ to $\Im\KK'_0,$ we have $[\dev\KK',V]\subset\fg.$ 
Using (\ref{bkt-tits}) once again, we also find that $[\KK'_1\otimes\sxk,\KK'_1\otimes\sxk]\subset\fg.$
Thus we have checked that $[V,V]\subset\fg,$ which completes the proof.
\end{proof}

What we claim now, is that the pairs $\fg(\KK,\KK')\oplus V(\KK,\KK')$ are exactly the
ones corresponding to the symmetric spaces described in the introduction. Recall, that we wanted the
subgroups in the three families to be, respectively, those of the first column, those of the second column times an additional $\sU(1),$ and those of the third column times an additional $\sSp(1).$ We will first show that
the algebras $\fg(\KK,\KK')$ are indeed of this form.

\begin{pro} There are isomorphisms of Lie algebras:
\begin{eqnarray*}
\fg(\KK,\CC) &\simeq& \fm(\KK,\RR) \\
\fg(\KK,\HH) &\simeq& \fm(\KK,\CC)\oplus \fu(1) \\
\fg(\KK,\OO) &\simeq& \fm(\KK,\HH)\oplus \fsp(1).
\end{eqnarray*}
\end{pro}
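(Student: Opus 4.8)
The plan is to establish the three isomorphisms by computing, in each case, both the dimension and the bracket structure of $\fg(\KK,\KK')$ and matching them against the asserted target. Recall that
$$ \fg(\KK,\KK') = \der\hxk \oplus \deg\KK' \oplus \Im\KK'_0 \otimes \sxk. $$
The first and cleanest case is $\KK'=\CC$. Here the Cayley-Dickson grading is $\CC = \RR 1 \oplus i\RR$, so $\CC_0 = \RR 1$ and $\Im\CC_0 = 0$; consequently the third summand vanishes. Moreover $\deg\CC = \cD(\Lam^2\CC_0 \oplus \Lam^2\CC_1) = 0$ since both $\Lam^2\CC_0$ and $\Lam^2\CC_1$ are trivial (each summand is one-dimensional). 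Thus $\fg(\KK,\CC) = \der\hxk = \fm(\KK,\RR)$ on the nose, by the final remark of the Magic Square subsection. This case needs essentially no computation.

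For $\KK'=\HH$ I would use Lemma~\ref{lem-deg}, which gives $\deg\HH \simeq \fso(\HH_1,\langle\cdot,\cdot\rangle)$; since $\HH_1 = j\CC$ is two-dimensional, this is $\fso(2)\simeq\fu(1)$. The remaining task is to identify $\der\hxk \oplus \Im\HH_0\otimes\sxk$ with $\fm(\KK,\CC)$. Writing $\HH_0 = \CC$, we have $\Im\HH_0 = \Im\CC = i\RR$, so $\Im\HH_0\otimes\sxk \simeq \Im\CC\otimes\sxk$, and together with $\der\hxk$ this is exactly the defining summand decomposition of $\fm(\KK,\CC) = \der\CC\oplus\der\hxk\oplus\Im\CC\otimes\sxk$ (with $\der\CC=0$). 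The content to verify is that the Tits bracket (\ref{bkt-tits}) restricted to these summands coincides with the bracket on $\fm(\KK,\CC)$, and that the $\fu(1)$ factor $\deg\HH$ commutes with everything in $\der\hxk\oplus\Im\HH_0\otimes\sxk$; the latter follows because $\deg\HH$ preserves the Cayley-Dickson grading (it fixes $\Im\HH_0\subset\HH_0$) and commutes with $\der\hxk$, while the scaling of the $\cD_{p,q}$ term is consistent since $p,q\in\Im\HH_0$ force $\cD_{p,q}\in\der\CC=0$.

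The octonionic case $\KK'=\OO$ is where the real work lies and is the main obstacle. Now Corollary~\ref{cor-ee} identifies $\deg\OO \simeq \fsp(1)\oplus\fsp(1)$, but I only want a single $\fsp(1)$ summand in the target. The resolution, as the Corollary suggests, is that one copy ($\cE$) restricts to $\der\HH\simeq\fsp(1)$ on $\HH\subset\OO$ while the other ($\cE'$) vanishes there; the first copy must be absorbed into $\fm(\KK,\HH) = \der\HH\oplus\der\hxk\oplus\Im\HH\otimes\sxk$, and the second copy becomes the additional $\fsp(1)$. Concretely I would set $\OO_0=\HH$, so $\Im\OO_0=\Im\HH$, giving $\Im\OO_0\otimes\sxk = \Im\HH\otimes\sxk$, and then check that $\der\hxk \oplus \cE(\fsp(1)) \oplus \Im\HH\otimes\sxk$ closes as a subalgebra isomorphic to $\fm(\KK,\HH)$, with $\cE'(\fsp(1))$ commuting with it. The hard part will be verifying this commutation and the bracket-matching: one must show $[\cE'(\fsp(1)),\Im\HH\otimes\sxk]=0$ and that $\cE'(\fsp(1))$ commutes with $\cE(\fsp(1))$ and $\der\hxk$. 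Commutation of $\cE$ and $\cE'$ is already recorded in the proof of Lemma~\ref{lem-deg} (left and right multiplications commute, $\varphi$ is left-equivariant); commutation of $\cE'$ with $\der\hxk$ is automatic since they act on different tensor factors; and $[\cE'(\fsp(1)),\Im\HH\otimes\sxk]=0$ should follow from the mixed bracket (\ref{bktdx}) once one checks $\cE'(q)$ annihilates $\Im\HH=\OO_0\cap\Im\OO$, which holds because $\cE'|_\HH=0$ by the Corollary. I would then conclude by comparing dimensions to confirm the decomposition is exhaustive, so that the abstract isomorphism $\deg\OO\simeq\fsp(1)\oplus\fsp(1)$ refines into the direct-sum decomposition $\fg(\KK,\OO)\simeq\fm(\KK,\HH)\oplus\fsp(1)$.
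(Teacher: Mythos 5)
Your proposal is correct and takes essentially the same route as the paper's own proof: the case $\KK'=\CC$ is read off directly from the definition, the case $\KK'=\HH$ uses Lemma~\ref{lem-deg} to identify $\deg\HH\simeq\fu(1)$ (generated by $\ad_i$, which kills $\Im\HH_0=i\RR$, giving the direct-sum splitting off $\fm(\KK,\CC)$), and the case $\KK'=\OO$ uses Corollary~\ref{cor-ee} to split $\deg\OO\simeq\fsp(1)\oplus\fsp(1)$, absorbing $\cE(\Im\HH)$ into $\fm(\KK,\HH)=\der\HH\oplus\der\hxk\oplus\Im\HH\otimes\sxk$ and keeping $\cE'(\Im\HH)$ as the commuting extra factor. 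You are, if anything, slightly more explicit than the paper about the bracket verifications (closure of the $\fm(\KK,\HH)$ summand and the vanishing of $[\cE'(\fsp(1)),\Im\HH\otimes\sxk]$ via the mixed bracket), but the decompositions and the key lemmas invoked are identical.
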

\begin{proof}
In the first case, we have immediately
$ \fg(\KK,\CC) = \der\hxk = \fm(\KK,\RR). $
In the second one, we recall that $\deg\HH \simeq \fu(1)$ is generated by $\ad_i,$ and thus
preserves $\Im\HH_0 = i\RR \subset\HH.$ It then follows that
$$ \fg(\KK,\HH) = \fu(1)\oplus \der\hxk \oplus i\RR\otimes\sxk = \fu(1) \oplus \fm(\KK,\CC),$$
with the rightmost expression being a direct sum of Lie algebras. Finally, in the last case,
we recall from corollary \ref{cor-ee}, that $\deg\OO \simeq \fsp(1)\oplus\fsp(1)$ with the first
$\fsp(1)$ acting on $\HH\subset\OO$ as the derivations $\der\HH,$ and the other one -- trivially.
We then have
$$ \fg(\KK,\OO) = \fsp(1)\oplus\der\HH\oplus\der\hxk\oplus\Im\HH\otimes\sxk =
\fsp(1) \oplus \fm(\KK,\HH)$$
with the rightmost expression being again a direct sum of Lie algebras.
\end{proof}

Our next goal is to describe the adjoint representations of $\fg(\KK,\KK')$ on $V(\KK,\KK'),$ showing,
in particular, that they are faithful and irreducible. It will then turn out, that the connected subgroups of $\sGL(V(\KK,\KK'))$ resulting from exponentiating the adjoint representation of $\fg(\KK,\KK')$ are indeed the magic square groups (admitting a nice description in terms of invariants), possibly augmented by an additional $\sU(1)$ or $\sSp(1),$ corresponding to a complex or quaternionic structure on $V(\KK,\KK').$

\section{More structure on Jordan algebras}\label{sec-fts}
Before we can perform what has just been indicated, we need to introduce some further structure on the Jordan algebras $\hxk,$ and their related Freudenthal Triple Systems (FTS, to be defined).

\subsection{The determinant and Freudenthal product on $\hxk.$}
If the dimension of a (commutative) algebra is finite, it is clear that subsequent powers of an element of the algebra cannot be linearily independent. Instead, they satisfy a characteristic equation, polynomial in the element of the algebra. In case of the algebras of our interest, $\hxk,$ the characteristic equation is of degree three:
\begin{lem}[cf. \cite{freudenthal-1964,jacobson-1968}] \label{lem-tsn}
There exist natural maps $T,S,N : \hxk\to\RR,$ respectively linear, quadratic and cubic, such that
for each $X\in\hxk$ the following holds:
$$ X^3 - T(X) X^2 + S(X,X) X - N(X,X,X) = 0. $$
They may be expressed using the product and trace, as follows:
\begin{eqnarray*}
T(X) &=& \tr X \\
S(X,X) &=& \frac{1}{2}(\tr X)^2 - \frac{1}{2}\tr X^2 \\
N(X,X,X) &=& \frac{1}{3}\tr X^3 - \frac{1}{2}\tr X^2 \tr X + \frac{1}{6} (\tr X)^3. 
\end{eqnarray*}
\end{lem}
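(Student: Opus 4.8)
The plan is to recognise the asserted identity as the Cayley--Hamilton theorem for rank-three Hermitian matrices: $T$, $S$ and $N$ are, up to sign, the coefficients of the degree-three characteristic polynomial, i.e. the first, second and third elementary symmetric functions of the (real) eigenvalues. I would first \emph{define} $T,S,N$ by the displayed trace formulas (polarising $S$ and $N$ to symmetric bi- and tri-linear forms); they are then manifestly of the claimed degrees. Two observations make them tractable. Since $\hxk$ is a Jordan algebra it is power-associative, so $X^2 = X\circ X$ and $X^3 = X\circ(X\circ X)$ are unambiguous and, for a single Hermitian matrix, coincide with ordinary matrix powers; moreover each map $X\mapsto X^k$ is $\Aut(\hxk)$-equivariant because automorphisms preserve $\circ$. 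Likewise $T,S,N$ are $\Aut(\hxk)$-invariant: automorphisms fix the unit and preserve $\langle X,Y\rangle=\tr(X\circ Y)$, and one has $\tr X=\langle X,1\rangle$, $\tr X^2=\langle X,X\rangle$, $\tr X^3=\langle X,X^2\rangle$. Consequently the map
$$ F(X) := X^3 - T(X)\,X^2 + S(X,X)\,X - N(X,X,X)\,1 $$
is a polynomial, $\Aut(\hxk)$-equivariant map $\hxk\to\hxk$, i.e. $F(\phi X)=\phi(F(X))$ for every automorphism $\phi$; the goal is to show $F\equiv 0$.

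Next I would reduce to diagonal matrices by a spectral argument. Every $X\in\hxk$ is conjugate, under an element of $\Aut(\hxk)$, to a real diagonal matrix $D=\mathrm{diag}(\lambda_1,\lambda_2,\lambda_3)$: for $\KK=\RR,\CC,\HH$ this is the ordinary spectral theorem, the diagonalisation being realised by conjugation with $\sSO(3)$, $\sSU(3)$, $\sSp(3)$ respectively, and for $\KK=\OO$ it is the spectral theorem for the Albert algebra, $\sF_4$ acting transitively on complete systems of orthogonal primitive idempotents. By equivariance it suffices to check $F(D)=0$. On $D$ one has $D^k=\mathrm{diag}(\lambda_i^k)$, and Newton's identities turn the trace formulas into $T(D)=e_1$, $S(D,D)=e_2$, $N(D,D,D)=e_3$, the elementary symmetric functions of the $\lambda_i$. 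The $i$-th diagonal entry of $F(D)$ is then $\lambda_i^3-e_1\lambda_i^2+e_2\lambda_i-e_3=\prod_j(\lambda_i-\lambda_j)=0$, so $F(D)=0$ and hence $F\equiv 0$. The same Newton-identity computation, together with $\Aut$-invariance of both sides, confirms that the trace expressions for $S$ and $N$ are the claimed elementary symmetric functions.

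The main obstacle is the octonionic case: diagonalisability of an arbitrary element of the Albert algebra by $\sF_4$ (equivalently, the spectral decomposition into orthogonal primitive idempotents) is the one genuinely non-elementary ingredient, and I would cite Freudenthal/Springer for it. A head-on verification avoiding the spectral theorem is possible in principle, but the cubic term $X^3$ produces mixed products such as $a(c\bar b)$ of three distinct octonions, whose reassociation is governed by nonvanishing associators; the required cancellations are exactly what the $\Aut$-equivariance packages away, which is why I would favour the equivariance route over direct computation.
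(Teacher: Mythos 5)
The paper offers no proof of this lemma at all: it is stated with a citation to Freudenthal and Jacobson and used as imported background, so there is no internal argument to compare yours against --- what you have written is a genuine proof where the paper has only a reference. Your argument is correct and is the standard one: define $T$, $S$, $N$ by the trace formulas, observe that $F(X)=X^3-T(X)X^2+S(X,X)X-N(X,X,X)1$ is $\Aut(\hxk)$-equivariant (automorphisms preserve $\circ$, fix the unit, and preserve the trace form --- note that this last fact is itself one of the paper's unproven standard lemmas, which your argument silently leans on), reduce to real diagonal matrices by the spectral theorem, and finish with Vieta's formulas. The reduction is watertight for $\KK=\RR,\CC,\HH$, and you correctly isolate the only deep ingredient, namely $\sF_4$-diagonalizability in the Albert algebra; in effect you trade the paper's citation for a different, comparably standard citation plus an elementary reduction, which is a reasonable economy. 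Two minor points, neither affecting correctness: (i) your parenthetical claim that Jordan powers of a single Hermitian matrix coincide with ordinary matrix powers is unnecessary and, for $\KK=\OO$, would itself require an argument (whether $X(XX)=(XX)X$ for octonionic entries is exactly the kind of associator question you rightly want to avoid); your proof never uses it, since both the lemma and your diagonal computation involve only Jordan powers, so it would be cleaner to strike that remark. (ii) The evaluation $\lambda_i^3-e_1\lambda_i^2+e_2\lambda_i-e_3=\prod_j(\lambda_i-\lambda_j)=0$ is Vieta, not Newton; Newton's identities enter only in identifying the stated power-sum expressions for $S$ and $N$ with $e_2$ and $e_3$, which you do correctly.
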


The cubic form $N$ is usually referred to as the \emph{norm}. One can however check, that for all four $\KK$ it exactly conincides with the matrix determinant, and we will use this more familiar notion in the sequel. To avoid explaining the subtleties of an octonionic determinant and proving the identity, we simply \emph{define} the symbol $\det$ using the formula given in the Lemma:
$$ \det X := N(X,X,X) $$
for $X\in\hxk.$

We are now naturally interested in the isotropy groups of the determinant.
These are
related to what is called the \emph{structure alebra} $\str(\hxk)\subset\End(\hxk),$ namely
the Lie algebra generated by all multiplications
$$L_X : Y \mapsto X \circ Y $$ for $X\in\hxk.$ 

Recalling Lemma \ref{lem-dmapj}, one has $[L_X,L_Y]\in\der\hxk,$
and naturally $[d,L_X]=L_{d(x)}$ for a derivation $d\in\der\hxk.$ It thus follows, that
$$\str(\hxk) = \der\hxk \oplus L_{\hxk}, $$
where $ L_{\hxk} = \{ L_X\ |\ X\in\hxk \}.$ Multiplications by multiples of identity generate the centre, and one can decompose
$ \str(\hxk) = L_1 \RR \oplus \str_0(\hxk),$
where 
\begin{equation} \label{def-str0}
\str_0(\hxk) = \der\hxk \oplus L_{\sxk}, 
\end{equation}
and $ L_{\sxk} = \{ L_X\ |\ X\in\sxk \}.$ The latter is called the \emph{reduced structure algebra} of $\hxk.$

Exponentiating these algebras to connected subgroups of $\sGL(\hxk),$ one obtains respectively
the structure group $\Str(\hxk)$ and
reduced structure group $\Str_0(\hxk),$ with $\Str(\hxk) = \Str_0(\hxk)\times\RR.$
We then have:
\begin{lem}[cf. \cite{chevalley-1949,freudenthal-1964,jacobson-1971}] The stabilizer of the map $\det : \hxk\to\RR$ in $\sGL(\hxk)$ is precisely the reduced structure
group $\Str_0(\hxk).$
\end{lem}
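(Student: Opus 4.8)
The plan is to prove the two inclusions $\Str_0(\hxk)\subseteq\mathrm{Stab}(\det)$ and $\mathrm{Stab}(\det)\subseteq\Str_0(\hxk)$ separately, the first being routine and the second carrying the real content. For the first inclusion I would argue infinitesimally and then exponentiate, using that $\Str_0(\hxk)$ is connected by construction: it suffices to show that the Lie algebra $\str_0(\hxk)=\der\hxk\oplus L_{\sxk}$ annihilates $\det$. Writing the directional derivative of the cubic form through its gradient, $\frac{d}{dt}\det(Y+tZ)|_{t=0}=\langle Y^{\#},Z\rangle$ where $Y^{\#}$ is the adjoint element satisfying the identity $Y^{\#}\circ Y=\det(Y)\,1$, an element $A\in\End\hxk$ lies in the infinitesimal stabilizer iff $\langle Y^{\#},A(Y)\rangle=0$ for all $Y$.

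For a derivation $d\in\der\hxk$ this is immediate, since $d$ generates an automorphism and automorphisms preserve both the product and the trace (they fix $1$ and preserve the decomposition \eqref{r1sxk}), hence they preserve $\det$. For $A=L_X$ with $X\in\sxk$, the total symmetry of the trace form under $\circ$ gives $\langle Y^{\#},X\circ Y\rangle=\langle X,Y^{\#}\circ Y\rangle=\det(Y)\langle X,1\rangle=\det(Y)\,\tr X$, which vanishes precisely because $X$ is traceless. The same computation shows that $L_X$ with $\tr X\neq0$ merely rescales $\det$, which is exactly why the central summand $L_1\RR$ of $\str(\hxk)$ must be dropped when passing to $\Str_0(\hxk)$.

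For the converse I would establish the matching inclusion of Lie algebras, $\mathfrak{s}:=\mathrm{Lie}(\mathrm{Stab}(\det))\subseteq\str_0(\hxk)$, and then recover the group statement. By definition $\mathfrak{s}$ consists of all $A$ with $N(A(Y),Y,Y)=0$ for every $Y$, i.e. all infinitesimal symmetries of the norm form; since this condition is stable under conjugation by $\mathrm{Stab}(\det)$, the space $\mathfrak{s}$ is a module over $\Str_0(\hxk)$, as is its ambient $\End\hxk$. The strategy is to decompose $\End\hxk$ into $\Str_0(\hxk)$-irreducibles, note that $\str_0(\hxk)$ already occurs as the adjoint summand lying in $\mathfrak{s}$ by the first part, and verify that no other irreducible summand can satisfy $N(A(\cdot),\cdot,\cdot)=0$; equivalently, one computes $\dim\mathfrak{s}$ directly and checks it equals $\dim\str_0(\hxk)$ (namely $8,16,35,78$ for $\KK=\RR,\CC,\HH,\OO$). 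Either route yields $\mathfrak{s}=\str_0(\hxk)$, hence that the identity components coincide.

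It remains to pass from identity components to the full groups, which one does by checking that $\mathrm{Stab}(\det)$ is connected in each of the four cases (the stabilizers being $\sSL(3,\RR)$, $\sSL(3,\CC)$, $\sSU^*(6)$ and $\sE_{6(-26)}$), or more economically by invoking the classical identification of the norm-isometry group of a rank three Jordan algebra with its reduced structure group \cite{chevalley-1949,freudenthal-1964,jacobson-1971}. The main obstacle is precisely the reverse Lie-algebra inclusion: showing that \emph{every} infinitesimal symmetry of the cubic determinant is of the structure-algebra form $\der\hxk\oplus L_{\sxk}$, with nothing exotic appearing. This is where the special nature of the rank three norm — concretely, the adjoint identity $Y^{\#}\circ Y=\det(Y)\,1$ binding the cubic form to the Jordan product — is indispensable, and it is the only step demanding genuine case analysis or representation-theoretic input rather than formal manipulation.
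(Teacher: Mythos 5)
First, a point of comparison: the paper itself does not prove this lemma. It is imported from the literature (hence the ``cf.'' citations), and the same fact is invoked again, also without proof, in the proof sketch of Lemma \ref{lem-derfts} (``a general map in $\End\hxk$ stabilizing $\det$ is $D+L_C$\dots''). So your proposal has to stand entirely on its own. Its first half does: the adjugate identity $Y^\sharp\circ Y=\det(Y)\,1$ follows from the characteristic equation of Lemma \ref{lem-tsn} together with $Y^\sharp=Y^2-(\tr Y)\,Y+S(Y,Y)\,1$ (the same expansion the paper later uses for $\bullet$); derivations annihilate $\det$ because it is a polynomial in traces of powers; and your computation $\langle Y^\sharp,X\circ Y\rangle=\langle X,Y^\sharp\circ Y\rangle=\det(Y)\,\tr X$ uses only the associativity of the trace form stated in the paper. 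This correctly yields $\str_0(\hxk)$ inside the infinitesimal stabilizer, hence $\Str_0(\hxk)\subseteq{\rm Stab}(\det)$ after exponentiating.

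The genuine gap is that all the content of the lemma sits in the reverse inclusion, and you do not prove it. Routes (a) (decompose $\End\hxk$ into $\Str_0(\hxk)$-irreducibles and check that nothing beyond $\str_0(\hxk)$ is annihilated) and (b) (compute $\dim\mathfrak{s}$ directly) are named but not carried out, and neither is a formality --- for $\KK=\OO$ one is analysing a $729$-dimensional module under a form of $\fe_6$. Route (c), citing Chevalley--Schafer, Freudenthal and Jacobson, is circular as a proof of this lemma: it merely reproduces the ``cf.'' that the lemma already carries. Worse, your closing step --- ``check that ${\rm Stab}(\det)$ is connected in each of the four cases'' --- would fail: the full stabilizer is genuinely disconnected for $\KK=\CC$, since entrywise complex conjugation $X\mapsto\bar X$ is a Jordan automorphism of $\hx\CC$ preserving $\det$, yet it is not of the form $X\mapsto gXg^*$ with $g\in\sSL(3,\CC)$ (testing on real diagonal matrices forces $g$ diagonal unitary; testing on $E_{12}+E_{21}$ and on $i(E_{12}-E_{21})$ then forces $g_1\bar g_2$ to equal both $1$ and $-1$). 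Since the paper defines $\Str_0(\hxk)$ as a \emph{connected} exponentiated group, the lemma read literally concerns the identity component of the stabilizer (equivalently, the classical reduced structure group, which is allowed to be disconnected). A correct write-up must therefore either remain at the Lie-algebra level throughout or confront the component group explicitly; your plan as written breaks at exactly that point, in addition to leaving the hard Lie-algebra inclusion unestablished.
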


These groups are found to be\footnote{
Wangberg \cite{wangberg} interestingly argues, that in certain sense $\sE_{6(-26)} =$ ``$\sSL(3,\OO)$''.
}\cite{chevalley-1949,freudenthal-1964}:
\begin{eqnarray*}
\Str_0(\hx\RR) & \simeq & \sSL(3,\RR) \\
\Str_0(\hx\CC) & \simeq & \sSL(3,\CC) \\
\Str_0(\hx\HH) & \simeq & \sSL(3,\HH) \\
\Str_0(\hx\OO) & \simeq & \sE_{6(-26)}
\end{eqnarray*}
(recall that $\sSL(3,\HH)$ is the group of quaternionic matrices with the \emph{quaternionic determinant} equal one, so that their Lie algebra $\frak{sl}(3,\HH)$ consists
of matrices with vanishing \emph{real part} of the trace).
Note that these are \emph{non-compact} forms of the second column of the magic square algebras.

Another incarnation of the natural cubic form, useful in formulas, is the symmetric Freudenthal product,
$$ \bullet : \hxk \times \hxk \to \hxk $$
$$ \langle X \bullet Y, Z \rangle = 3N(X,Y,Z) $$
for $X,Y,Z\in\hxk,$ together with a corresponding multiplication map
$$ \LF_X : Y \mapsto X\bullet Y $$
and a quadratic map
$$ \sharp : \hxk \to \hxk $$
$$ X^\sharp = X \bullet X. $$
Expressing the latter using powers of $X$ and their traces, and comparing with the formulae
of Lemma \ref{lem-tsn}, one finds that
$$ X^{\sharp\sharp} = (\det X) X. $$

\subsection{FTS $\fxk$ and the triple product.}
Freudenthal \cite{freudenthal-1954,freudenthal-1955,freudenthal-1959,freudenthal-1963}
originally constructed the group $\sE_{7(-25)}$ as the automorphism group of certain algebraic object, namely the space $\hx\OO \oplus \hx\OO \oplus \RR \oplus \RR$ equipped with a symmetric triple product, mapping \emph{three} elements of the space to a fourth one and satisfying certain identities with respect to a natural symplectic form on this space. This construction has been extended in many ways (see \cite{meyberg-1968,brown-1969,brown-1984,kantor-1978}). We shall be interested in four FTS associated with the four Jordan algebras $\hxk$ (where the case $\KK=\OO$ corresponds to the original construction).

Let us thus define the space of the FTS associated with $\hxk$ to be
$$ \fxk = (\RR\oplus\hxk) \otimes \RR^2 $$
and equip it with a natural symplectic form
$$ 
\omega(\left(\begin{matrix} x \\ X \end{matrix}\right) \otimes \left(\begin{matrix}\xi_1 \\ \xi_2 \end{matrix}\right),
\left(\begin{matrix} y \\ Y \end{matrix}\right) \otimes \left(\begin{matrix}\eta_1 \\ \eta_2 \end{matrix}\right)
) = (xy + \langle X,Y \rangle )(\xi_1\eta_2 - \xi_2\eta_1),
$$
where $X,Y\in\hxk$ and $x,\xi_1,\xi_2,y,\eta_1,\eta_2\in\RR.$

Next, we introduce a quartic map $\cQ : \fxk \to \RR,$ such that for
\begin{equation}\label{genf}
 F = 
\left(\begin{matrix} x \\ X \end{matrix}\right) \otimes \left(\begin{matrix}1 \\ 0 \end{matrix}\right)
+ \left(\begin{matrix} \tilde x \\ \tilde X \end{matrix}\right) \otimes \left(\begin{matrix}0 \\ 1 \end{matrix}\right),
\end{equation}
where $X,\tilde X\in\hxk$ and $x,\tilde x\in\RR,$
one has
$$ \cQ(F,F,F,F)= \langle X^\sharp,\tilde X^\sharp \rangle - x\det X - \tilde x \det\tilde X - \frac{1}{4}(\langle X,\tilde X\rangle - x\tilde x)^2.
$$
The symmetric triple product
$$\tau : \fxk\times\fxk\times\fxk\to\fxk $$ is then defined by
$$ \omega(F',\tau(F,F,F)) = \cQ(F',F,F,F) $$
for $F,F'\in\fxk.$

We are now interested in the automorphism groups $\Aut(\fxk),$ defined to be the ones preserving both
the symplectic form $\omega$ and triple product $\tau,$ so that
$$\tau(a(F),a(F),a(F)) = a(\tau(F,F,F))$$
for $a\in\Aut(\fxk)$ and $F\in\fxk.$ The corresponding Lie algebra, called the derivations $\der\fxk,$
is given by the following
\begin{lem}[cf. \cite{jacobson-1971,freudenthal-1954}]\label{lem-derfts}
The derivations of the FTS $\fxk$ are the following subspace of $\End\fxk:$
$$\der\fxk = \cH_0(\der\hxk\oplus\hxk) \oplus \cH_1(\hxk\oplus\hxk), $$
where the maps 
\begin{eqnarray*}
\cH_0 &:& \der\hxk\oplus\hxk \to \End\fxk \\
\cH_1 &:& \hxk\oplus\hxk \to \End\fxk
\end{eqnarray*}
are given by
$$ \cH_0(D,C) =
\left(\begin{matrix} -\tr C &  \\  & L_C \end{matrix}\right) \otimes
\left(\begin{matrix} 1 &  \\  & -1 \end{matrix}\right)
+
\left(\begin{matrix} 0 &  \\  & D \end{matrix}\right) \otimes
\left(\begin{matrix} 1 &  \\  & 1 \end{matrix}\right)
$$
$$ \cH_1(A,B) =
\left(\begin{matrix}  0& \langle A,\cdot\rangle \\ A & -2 \LF_A \end{matrix}\right) \otimes
\left(\begin{matrix}  & -1 \\ 1 &  \end{matrix}\right)
+
\left(\begin{matrix}  0& \langle B,\cdot\rangle \\ B & 2 \LF_B \end{matrix}\right) \otimes
\left(\begin{matrix}  & 1 \\ 1 &  \end{matrix}\right)
$$
as operators on $\fxk = (\RR\oplus\hxk)\otimes\RR^2.$ 
\end{lem}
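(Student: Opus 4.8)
The plan is to reduce the derivation condition to two easily testable requirements, verify the explicit generators against them, and then close the reverse inclusion by a grading/dimension argument.

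First I would unwind the definitions. Writing $W=\RR\oplus\hxk$, the symplectic form factors as $\omega=B\otimes\epsilon$, where $B((x,X),(y,Y))=xy+\langle X,Y\rangle$ is the positive definite scalar product on $W$ and $\epsilon(\xi,\eta)=\xi_1\eta_2-\xi_2\eta_1$ is the standard area form on $\RR^2$. A map $a\in\End\fxk$ is a derivation iff it is the infinitesimal generator of a one-parameter family of automorphisms, i.e. iff $a\in\fsp(\omega)$ and $3\,\tau(aF,F,F)=a\,\tau(F,F,F)$ for all $F$. The key simplification is that, for $a\in\fsp(\omega)$, this Leibniz condition is equivalent to infinitesimal invariance of the quartic, $\cQ(aF,F,F,F)=0$ for all $F$: polarising the defining relation $\omega(F',\tau(F,F,F))=\cQ(F',F,F,F)$ and using $\omega$-skewness of $a$ together with nondegeneracy of $\omega$ converts one condition into the other. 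Thus it suffices throughout to test membership in $\fsp(\omega)$ and annihilation of $\cQ$.

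For the inclusion $\supseteq$ I would check both conditions on each generator. For the symplectic part, a decomposable endomorphism $P\otimes Q$ lies in $\fsp(B\otimes\epsilon)$ exactly when either $P$ is $B$-symmetric and $Q\in\frak{sl}(2)$ (traceless), or $P$ is $B$-skew and $Q$ is scalar. All the $\RR^2$-blocks occurring in $\cH_0(D,C)$ and $\cH_1(A,B)$ are traceless save the identity block in the second summand of $\cH_0$; correspondingly the $W$-blocks $\mathrm{diag}(-\tr C,L_C)$, $\bigl(\begin{smallmatrix}0&\langle A,\cdot\rangle\\ A&-2\LF_A\end{smallmatrix}\bigr)$ and $\bigl(\begin{smallmatrix}0&\langle B,\cdot\rangle\\ B&2\LF_B\end{smallmatrix}\bigr)$ are $B$-symmetric—using that $L_C$ and $\LF_A$ are self-adjoint for $\langle\cdot,\cdot\rangle$ (the latter since $\langle A\bullet Y,Z\rangle=3N(A,Y,Z)$ is totally symmetric)—while $\mathrm{diag}(0,D)$ is $B$-skew, as $D\in\der\hxk\subset\fso(\hxk)$. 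For $\cQ$-invariance I would differentiate $\cQ(F,F,F,F)=\langle X^\sharp,\tilde X^\sharp\rangle-x\det X-\tilde x\det\tilde X-\tfrac14(\langle X,\tilde X\rangle-x\tilde x)^2$ along each generator, invoking the identities already recorded: that the reduced structure algebra annihilates $\det$, that $\langle X\bullet Y,Z\rangle=3N(X,Y,Z)$, and $X^{\sharp\sharp}=(\det X)X$.

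For the converse $\subseteq$ I would decompose an arbitrary derivation $a$ into eigenspaces of the semisimple \emph{inner} derivation $Z:=\cH_0(0,1)=\mathrm{diag}(-3,\id_{\hxk})\otimes H$, with $H=\mathrm{diag}(1,-1)$, whose eigenvalues on $\fxk$ are $\pm1,\pm3$; since $Z\in\der\fxk$, each eigencomponent of $a$ is again a derivation. The weight-zero component commutes with $Z$, hence is block-diagonal for the splitting $\fxk=(\RR\oplus\hxk)\otimes\RR^2$, and $\det$-invariance on each slot confines it to the structure algebra $\str(\hxk)=\der\hxk\oplus L_{\hxk}$, i.e. to the image of $\cH_0$ (the $\RR$-slot scalars accounting for the $L_1\RR$ trace part); the nonzero-weight components are $B$-symmetric-endomorphism valued, and $\cQ$-invariance forces their $W$-blocks into the $\LF$/$\langle\cdot,\cdot\rangle$ shape of $\cH_1$. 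Equivalently, and more quickly, $\cH_0\oplus\cH_1$ is injective with image a direct sum—the $\ad Z$-weights separate $\cH_0$ (weight $0$) from $\cH_1$, within $\cH_0$ the blocks $H$ and $\id_{\RR^2}$ separate $C$ from $D$, and within $\cH_1$ the data $A,B$ are recoverable—so the image has dimension $\dim\der\hxk+3\dim\hxk$; comparison with $\dim\der\fxk$ known from the literature (e.g. $133=\dim\fe_{7(-25)}$ for $\KK=\OO$, and $21,35,66$ for $\RR,\CC,\HH$) upgrades the already-established $\supseteq$ to equality. The main obstacle is the explicit $\cQ$-invariance computation for the $\cH_1$ generators, which genuinely couples the quadratic adjoint $\sharp$, the Freudenthal product $\bullet$ and the cubic $\det$, and so relies on the nonlinear identities rather than on formal manipulation; the completeness direction is the secondary difficulty, which is why I would prefer to finish it by the dimension count, importing $\dim\der\fxk$ from Freudenthal and Jacobson.
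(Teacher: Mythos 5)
Your proposal is correct, and it shares the paper's basic reduction: for $a\in\fsp(\omega)$, the Leibniz condition on $\tau$ is equivalent to infinitesimal invariance of $\cQ$ (polarize $\omega(F',\tau(F,F,F))=\cQ(F',F,F,F)$ and use $\omega$-skewness of $a$), and the inclusion $\supseteq$ reduces to checking that the $\cH_0$- and $\cH_1$-generators kill $\cQ$; your remark that this hinges on the nonlinear identities is exactly right --- for $\cH_1(A,0)$ the computation closes via the linearization $4\,X^\sharp\bullet(X\bullet A)=\langle X^\sharp,A\rangle X+(\det X)\,A$ of $X^{\sharp\sharp}=(\det X)X$, after which all terms cancel in pairs. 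Where you genuinely diverge is the completeness direction. The paper does not split the two inclusions: it writes a completely general $E\in\End\fxk$ in block form relative to $\fxk=(\RR\oplus\hxk)\otimes\RR^2$, expands $\cQ(E(F),F,F,F)=0$ in the components $(x,X,\tilde x,\tilde X)$, and solves the resulting system outright, the key input being that an endomorphism of $\hxk$ infinitesimally scaling $\det$ has the form $D+L_C$; the claimed span is the full solution set, and $\omega$-invariance is verified at the end. You instead stop at $\supseteq$ and close by a dimension count, importing $\dim\der\fxk=21,35,66,133$ from the literature. Your count itself is sound (the weight separation under $\ad\,\cH_0(0,1)$ and the recoverability of $(D,C)$ and $(A,B)$ do give $\dim\der\hxk+3\dim\hxk$, which matches), but it buys less than the paper's argument in two respects: first, those dimensions are obtained in the cited sources essentially by proving this very lemma, so as a self-contained argument your shortcut skirts circularity, whereas the paper's computation is self-contained modulo the structure-algebra fact; second, the paper's solution determines the infinitesimal stabilizer of $\cQ$ in all of $\End\fxk$, not merely in $\fsp(\omega)$, and that stronger output is precisely the Corollary stated right after the lemma (the stabilizer of $\cQ$ in $\sGL(\fxk)$ is $\Aut(\fxk)$), which the paper relies on later when identifying $\cg_3(\KK)$ as the stabilizer of $q$. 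Your first, sketched alternative for $\subseteq$ --- decomposition under $\ad\,\cH_0(0,1)$, block-diagonality of the weight-zero part, then the structure-algebra fact applied slot by slot --- is essentially the paper's computation organized by weights; fleshing that out instead of the dimension count would recover both the lemma and its corollary without external input.
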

\begin{proof}[Sketch of a Proof] This form of the derivation algebra of $\fxk$ is usually presented for $\KK=\OO,$ as a construction of the exceptional algebra $\fe_7.$ However, the same reasoning works equally
well for $\KK=\RR,\CC,\HH.$ Indeed, let us write a generic element of $\End\fxk$ as
$$ E = \sum_{k=1}^4 \left(\begin{matrix} 
\lambda_k & \langle H_k,\cdot\rangle \\ K_k & E_k
\end{matrix}\right) \otimes \epsilon_k, $$
for $\lambda_k\in\RR,$ $H_k,K_k\in\hxk$ and $E_k\in\End\hxk,$ where $\epsilon_k$ is a basis in $\End\RR^2$
consisting of matrices with one nonzero element.
Then, expanding the equation
$$ \cQ(E(F),F,F,F) = 0 \quad \textrm{for each}\ F\in\fxk,$$
and expressing $F$ as in (\ref{genf}), one can solve it using the fact that a general map in $\End\hxk$
stabilizing $\det$ is $D + L_C$ for $C\in\hxk, $ $D\in\der\hxk,$ and its conjugate w.r.t. $\langle\cdot,\cdot\rangle$ is $D - L_C.$ A general solution turns out to be the one given in the Lemma. One then
checks, that it also preserves $\omega,$ and thus is a derivation. Conversely, every derivation,
preserving $\omega$ and $\tau,$ must also preserve $\cQ.$
\end{proof}

The argument we have just sketched leads to the following
\begin{cor}[cf. \cite{jacobson-1971}]
The stabilizer of the map $\cQ : \fxk\to\RR$ in $\sGL(\fxk)$ is precisely the automorphism
group $\Aut(\fxk).$
\end{cor}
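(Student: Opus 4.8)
The plan is to prove the two inclusions $\Aut(\fxk)\subseteq S$ and $S\subseteq\Aut(\fxk)$, where I write $S=\mathrm{Stab}(\cQ)\subset\sGL(\fxk)$ for the stabilizer of the quartic in the full linear group. The first inclusion is immediate from the definitions. Setting $F'=F$ in the defining relation $\omega(F',\tau(F,F,F))=\cQ(F',F,F,F)$ expresses the quartic entirely in terms of $\omega$ and $\tau$, namely $\cQ(F,F,F,F)=\omega(F,\tau(F,F,F))$. Hence for $a\in\Aut(\fxk)$, which by definition preserves both $\omega$ and $\tau$,
\begin{align*}
\cQ(a(F),a(F),a(F),a(F)) &= \omega(a(F),\tau(a(F),a(F),a(F))) \\
&= \omega(a(F),a(\tau(F,F,F))) = \omega(F,\tau(F,F,F)) = \cQ(F,F,F,F),
\end{align*}
and since we are over $\RR$, polarisation propagates this from the diagonal values to the full symmetric $4$-linear form, so $a\in S$.

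For the reverse inclusion I would lean directly on the computation already carried out in the proof of Lemma \ref{lem-derfts}. Differentiating the defining condition $\cQ(a(F),a(F),a(F),a(F))=\cQ(F,F,F,F)$ of $S$ at the identity identifies the Lie algebra of $S$ with the space of those $E\in\End\fxk$ satisfying $\cQ(E(F),F,F,F)=0$ for all $F$. This is precisely the equation whose general solution was shown, in that proof, to be exactly $\der\fxk$. Consequently $S$ and $\Aut(\fxk)$ possess the same Lie algebra $\der\fxk$; combined with the first inclusion, this already forces their identity components to coincide and shows $\Aut(\fxk)$ is open (hence closed) in $S$.

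The remaining point — and the step I expect to be the main obstacle — is to rule out components of $S$ lying outside $\Aut(\fxk)$, i.e.\ to promote the infinitesimal statement to the full group. The natural reduction is to show that every $a\in S$ also preserves $\omega$: once this is granted, the defining relation for $\tau$ gives, for all $G$, the chain $\omega(a(G),\tau(a(F),a(F),a(F)))=\cQ(a(G),a(F),a(F),a(F))=\cQ(G,F,F,F)=\omega(G,\tau(F,F,F))=\omega(a(G),a(\tau(F,F,F)))$, whence $\tau(a(F),a(F),a(F))=a(\tau(F,F,F))$ by nondegeneracy of $\omega$, so $a\in\Aut(\fxk)$. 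The infinitesimal counterpart of this implication is exactly the verification in Lemma \ref{lem-derfts} that the solutions of $\cQ(E(F),F,F,F)=0$ preserve $\omega$; the genuine subtlety is that $\omega$ is a priori extra data not visibly determined by $\cQ$ alone, so at the group level one must either argue directly that $S$ preserves $\omega$ (for instance by recovering $\omega$, up to sign, as an invariant canonically attached to $\cQ$) or invoke the openness just established together with connectivity of the relevant forms to conclude $S=\Aut(\fxk)$. I expect the cleanest route to be the latter, which reduces the entire Corollary to the Lie-algebraic content of Lemma \ref{lem-derfts} supplemented by the elementary first inclusion above.
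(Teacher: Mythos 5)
Your first inclusion and your identification of the Lie algebra of $S=\mathrm{Stab}_{\sGL(\fxk)}(\cQ)$ with $\der\fxk$ are both correct, and up to that point you are on exactly the paper's route: the paper's entire justification of this Corollary is the sketched computation in Lemma \ref{lem-derfts} (an infinitesimal statement) together with the trivial inclusion and a citation, so the passage from the Lie-algebra level to the group level that you flag as ``the main obstacle'' is indeed the only genuine content left over -- the paper does not address it either.

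The gap, however, is not one you could have closed, because neither of your two proposed routes can work: the stabilizer of $\cQ$ in all of $\sGL(\fxk)$ \emph{strictly} contains $\Aut(\fxk)$ as the paper defines it. Concretely, consider the swap
$$\sigma \;=\; \id_{\RR\oplus\hxk}\otimes\left(\begin{matrix}0&1\\1&0\end{matrix}\right)\;\in\;\sGL(\fxk),$$
i.e. the exchange $(x,X)\leftrightarrow(\tilde x,\tilde X)$ in the presentation (\ref{genf}). The displayed formula for $\cQ$ is manifestly symmetric under this exchange, so $\sigma$ preserves $\cQ$; but $\sigma$ reverses the sign of $\xi_1\eta_2-\xi_2\eta_1$, hence sends $\omega\mapsto-\omega$, and then your own nondegeneracy argument shows it sends $\tau\mapsto-\tau$ as well. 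So $\sigma\in S\setminus\Aut(\fxk)$. Consequently your route (a) -- proving that every element of $S$ preserves $\omega$ -- is impossible (at best $\cQ$ determines $\omega$ only up to sign, and the sign genuinely flips), and your route (b) is both circular as stated (connectedness of $S$ is precisely what would need proving) and false ($\sigma$ lies in a component of $S$ disjoint from all of $\Aut(\fxk)$). What your argument \emph{does} prove, cleanly, is the corrected statement $S\cap\sSp(\fxk,\omega)=\Aut(\fxk)$, i.e. that $\Aut(\fxk)$ is the full stabilizer of the pair $(\omega,\cQ)$, and (via the Lemma) that $\Aut(\fxk)$ and $S$ share the same identity component. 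That weaker form is all the rest of the paper ever uses -- note that Proposition \ref{pro-invs} is itself phrased only in terms of connected components -- so the defect you uncovered lies in the Corollary as literally stated, not in your proof strategy.
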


These groups are found to be \cite{gunaydin-1993}:
\begin{eqnarray*}
\Aut(\FF(\hx\RR)) &\simeq& \sSp(6,\RR) \\
\Aut(\FF(\hx\CC)) &\simeq& \sSU(3,3) \\
\Aut(\FF(\hx\HH)) &\simeq& \sSO^*(12) \\
\Aut(\FF(\hx\OO)) &\simeq& \sE_{7(-25)}.
\end{eqnarray*}
Note that these are \emph{non-compact} forms of the third column of magic square algebras.

For later use, we also equip $\fxk$ with a scalar product
$$ 
\langle\left(\begin{matrix} x \\ X \end{matrix}\right) \otimes \left(\begin{matrix}\xi_1 \\ \xi_2 \end{matrix}\right),
\left(\begin{matrix} y \\ Y \end{matrix}\right) \otimes \left(\begin{matrix}\eta_1 \\ \eta_2 \end{matrix}\right)\rangle
 = (xy + \langle X,Y \rangle )(\xi_1\eta_1 + \xi_2\eta_2),
$$
where $X,Y\in\hxk$ and $x,\xi_1,\xi_2,y,\eta_1,\eta_2\in\RR.$

\def\cA{\mathcal{A}}
\section{Identifying the isotropy representations}

We have so far encountered three families of isotropy groups for certain structures, namely: Jordan algebra automorphisms $\Aut(\hxk),$ stabilizer of the determinant $\Str_0(\hxk)$ and FTS automorphisms $\Aut(\fxk).$ While the first family forms exactly the first column of the magic square, the other two are \emph{non-compact} forms of the second and third column. In what follows, we shall construct their compact forms in terms of \emph{unitary} defining representations.

We will need the following simple fact:
\begin{lem}\label{lem-cpx}
Let $V$ be a real vector space equipped with a scalar product $\langle\cdot,\cdot\rangle$ and $\cA\subset\End V$ a semisimple Lie algebra acting irreducibly on $V$. The scalar product gives rise to a decomposition
$$ \cA = \cA_{\Lam} \oplus \cA_{\Sym}, $$
corresponding to $\End V \simeq \Lam^2 V \oplus \Sym^2 V.$

Consider now the complexification $V^\CC = \CC\otimes V$ 
equipped with a hermitian inner product given by a sesquilinear extension of $\langle\cdot,\cdot\rangle$, 
as well as the complexification of the algebra, $\cA_\CC = \CC\otimes\cA \subset \CC\otimes\End V\simeq \End_\CC(V^\CC).$ Let now $$\cA_\fu = \cA_\CC \cap \fu(V^\CC)$$ be the antihermitian subalgebra of $\cA_\CC.$
Then 
$$ \cA_\fu = \cA_{\Lam} \oplus i\cA_{\Sym} $$
and is the compact real form of $\cA_\CC.$ It moreover acts complex-irreducibly on $V^\CC.$
\end{lem}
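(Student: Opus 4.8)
The plan is to treat the three assertions in turn, the first two being short computations and the last one the genuine point.

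First I would fix notation: $\cA_\Lam = \cA \cap \Lam^2 V$ and $\cA_\Sym = \cA \cap \Sym^2 V$ are the skew and symmetric elements of $\cA$ with respect to $\langle\cdot,\cdot\rangle$, so that the stated decomposition $\cA = \cA_\Lam \oplus \cA_\Sym$ amounts to stability of $\cA$ under the transpose $A \mapsto A^\top$. Since $(AB)^\top = B^\top A^\top$, the map $A \mapsto -A^\top$ is a Lie-algebra automorphism, whence the Cartan-type relations $[\cA_\Lam,\cA_\Lam]\subset\cA_\Lam$, $[\cA_\Lam,\cA_\Sym]\subset\cA_\Sym$ and $[\cA_\Sym,\cA_\Sym]\subset\cA_\Lam$. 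For the first assertion I would compute the Hermitian adjoint on $V^\CC$: a real operator $A$ has adjoint $A^\top$, while multiplication by $i$ has adjoint $-i$, so for $A,B\in\cA$ one gets $(A+iB)^* = A^\top - iB^\top$. Hence $A + iB$ is skew-Hermitian iff $A^\top = -A$ and $B^\top = B$, i.e. iff $A\in\cA_\Lam$ and $B\in\cA_\Sym$; this is exactly $\cA_\fu = \cA_\Lam \oplus i\cA_\Sym$.

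For the second assertion I would first check that $\cA_\fu$ is a real subalgebra: expanding $[A+iB,\,A'+iB']$ and sorting real and imaginary parts, the Cartan-type relations place the real part in $\cA_\Lam$ and the imaginary part in $i\cA_\Sym$, so the bracket again lies in $\cA_\fu$. That $\cA_\fu$ is a real form of $\cA_\CC$ follows from $\dim_\RR\cA_\fu = \dim_\RR\cA_\Lam + \dim_\RR\cA_\Sym = \dim_\RR\cA = \dim_\CC\cA_\CC$, together with $\cA_\fu + i\cA_\fu = (\cA_\Lam\oplus i\cA_\Sym) + (i\cA_\Lam \oplus \cA_\Sym) = \cA_\CC$. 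Compactness is then immediate: $\cA_\fu$ is semisimple, being a real form of the semisimple $\cA_\CC$, and it acts by skew-Hermitian operators on the Hermitian space $V^\CC$; such an algebra has negative-definite Killing form, equivalently sits inside the compact $\fu(V^\CC)$ as a closed subalgebra, so it is the compact real form.

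The third assertion is where the real work lies. Since $\cA_\fu$ is compact, $V^\CC$ is completely reducible, and because $\cA_\fu$ and $\cA_\CC$ have the same complex-invariant subspaces it suffices to show $\cA_\CC$ acts irreducibly. I would argue via the conjugation $\sigma$ of $V^\CC$ fixing $V$: as $\cA$ consists of real operators, $\sigma$ carries $\cA_\CC$-submodules to $\cA_\CC$-submodules, so for any submodule $W$ both $W\cap\sigma(W)$ and $W+\sigma(W)$ are $\sigma$-stable, hence complexifications of real $\cA$-invariant subspaces of $V$; by irreducibility of $V$ over $\RR$ each is $0$ or $V^\CC$. The only surviving possibility for a proper nonzero $W$ is $V^\CC = W\oplus\sigma(W)$, which would force an $\cA$-invariant complex structure on $V$, i.e. a nontrivial commutant. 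The main obstacle is precisely to exclude this complex- (or quaternionic-) type alternative: one must show $\End_\cA(V) = \RR\,\id$, so that $V$ is of real type. I would establish this for the modules to which the lemma is applied by exhibiting the commutant directly — it is trivial for the absolutely irreducible Jordan- and FTS-representations at hand — which rules out the complex structure and yields complex-irreducibility of $V^\CC$.
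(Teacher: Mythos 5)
Your handling of the first two assertions coincides with the paper's: the paper likewise reads off $\cA_\fu = \cA_\Lam \oplus i\cA_\Sym$ from the splitting $\cA_\CC = \cA_\Lam \oplus \cA_\Sym \oplus i\cA_\Lam \oplus i\cA_\Sym$ into antihermitian and hermitian pieces, and obtains compactness from $\cA_\fu$ being a semisimple subalgebra of $\fu(V^\CC)$. Where you genuinely diverge is the irreducibility of $V^\CC$, and there your route is the more careful one. The paper's own proof asserts that any $\cA_\fu$-invariant complex subspace of $V^\CC$ ``is necessarily of the form $W+iW$ with $W\subset V$'' and concludes at once from irreducibility of $V$; that assertion is precisely what your conjugation argument shows requires proof, and it is false without an additional hypothesis. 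Indeed, the lemma as literally stated fails for modules of complex type: take $V=\RR^4\simeq\CC^2$ with $\cA=\fsu(2)$ acting complex-linearly. Then $\cA$ is semisimple and $\RR$-irreducible, all its elements are skew so $\cA_\Sym=0$ and the hypotheses hold, and $\cA_\fu=\fsu(2)$ is indeed the compact real form of $\cA_\CC\simeq\frak{sl}(2,\CC)$ --- but $V^\CC$ splits as two copies of $\CC^2$, the $(\pm i)$-eigenspaces of the complexified invariant complex structure being invariant complex subspaces that meet $V$ trivially, hence not of the form $W+iW$.

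So your dichotomy via $W\cap\sigma(W)$ and $W+\sigma(W)$, and the resulting reduction to showing $\End_\cA(V)=\RR\,\id$, is the correct repair: real type (absolute irreducibility) must either be added as a hypothesis or verified for the modules to which the lemma is applied. That deferred verification is the only incomplete step in your proposal, and for the paper's two applications it is quick. For $\cA=\str_0(\hxk)$ acting on $\hxk$: if $T$ commutes with $\cA$, then $D(T(1))=T(D(1))=0$ for every $D\in\der\hxk$, so $T(1)\in\RR 1$, say $T(1)=\lambda 1$; then $T(X)=T(L_X 1)=L_X T(1)=\lambda X$ for all $X$, whence $T=\lambda\,\id$. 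A similar cyclic-vector argument handles $\der\fxk$ acting on $\fxk$. In short, your proof buys rigor exactly where the paper's shortcut breaks down, at the cost of a case-by-case check of triviality of the commutant, which is both necessary and easy here.
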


\begin{proof} The form of $\cA_\fu$ follows simply from the consideration of $\cA_\CC = \cA_\Lam \oplus i\cA_\Sym \oplus i\cA_\Lam \oplus \cA_\Sym$ where first two summands are antihermitian, while the other two -- hermitian. Moreover, being a semisimple subalgebra of the compact algebra $\fu(V^\CC)$, $\cA_\fu$ is compact.

Finally, assume that there is a complex subspace of $V^\CC$
preserved by $\cA_\fu.$ The subspace is necessarily of the form
$W + iW$ with $W\subset V$. An element $f\in\cA_\Lam$ maps
$w+iw'$ to $f(w) + if(w')$ while $is\in i\cA_\Sym$ maps the same to
$-s(w') + is(w).$ Clearly, $f(w),s(w')\in W$ for each $w,w'\in W.$ Hence, $W\subset V$
is fixed by the original $\cA.$ But then $W$ is either empty of $V.$ Thus $V^\CC$ is irreducible.
\end{proof}

Our isotropy represntations will turn out to be described by the following spaces:
$$ \cV_1(\KK) = \sxk,\quad \cV_2(\KK) = \CC\otimes\hxk,\quad \cV_3(\KK) = \CC\otimes\fxk, $$
where $\cV_2(\KK)$ and $\cV_3(\KK)$ are complex vector spaces equipped with  \emph{hermitian}
inner products being sesquilinear extensions of the scalar products on, respectively, $\hxk$ and $\fxk,$ defined in the previous sections. The spcae $\cV_3(\KK)$ is moreover equipped with a 
symplectic form being the linear extension of $\omega.$
The interesting algebras acting on these spaces are:
\begin{eqnarray*}
\cg_1(\KK) &= &\der\hxk \subset \End\cV_1(\KK) \\
\cg_2(\KK) &= &[\str_0(\hxk)]_\fu \subset \fsu(\cV_2(\KK)) \\
\cg_3(\KK) &= &[\der\fxk]_\fu \subset \fsp(\cV_3(\KK),\omega),
\end{eqnarray*}
where $\cg_2(\KK)$ and $\cg_3(\KK)$ are the antihermitian subalgebras of, respectively, $\CC\otimes\str_0(\hxk)$ and $\CC\otimes\der\fxk,$ as described in Lemma \ref{lem-cpx}.

Using the Lemma and examining (anti)symmetry of elements of algebras defined in the previous section, we have the explicit forms:
\begin{eqnarray}
\cg_2(\KK) &=& \der\hxk \oplus iL_{\sxk} \\
\cg_3(\KK) &=& \cH_0(\der\hxk \oplus i\hxk) \oplus \cH_1(\hxk\oplus i\hxk),
\end{eqnarray}
where the maps $\cH_0$ and $\cH_1$ of Lemma \ref{lem-derfts} have been extended by linearity.

We now exponentiate $\cg_n(\KK)$, $n=1,2,3$, to obtain connected subgroups $\cG_n(\KK)\subset\sGL(\cV_n(\KK))$. In particular,
$\cG_2(\KK)$ and $\cG_3(\KK)$ are \emph{compact} forms of the ones described in the previous section, as given in the following table:
\begin{center}
\begin{tabular}{r|ccc}
$\KK$ & $\cG_1(\KK)$ & $\cG_2(\KK)$ & $\cG_3(\KK)$ \\
\hline
$\RR$ & $\sSO(3)$ & $\sSU(3)$ & $\sSp(1)$ \\
$\CC$ & $\sSU(3)$ & $\sSU(3)\times\sSU(3)$ & $\sSU(6)$ \\
$\HH$ & $\sSp(1)$ & $\sSU(6)$ & $\sSO(12)$ \\
$\OO$ & $\sF_4$ & $\sE_{6(-52)}$ & $\sE_{7(-133)}$ 
\end{tabular}\end{center}
with 
\begin{eqnarray*}
\cG_2(\KK)&\subset&\sSU(\cV_2(\KK)) \\ \cG_3(\KK)&\subset&\sSp(\cV_3(\KK),\omega).
\end{eqnarray*}

We now arrive at the main point of this section:
\begin{pro} \label{pro-isoreps} The isotropy representations of the magic square symmetric spaces are extensions of those of $\cG_n(\KK)$ on $\cV_n(\KK)$ with $n=1,2,3$ in, respectively, first, second and third family, i.e.:
\begin{enumerate}
\item{	There exist 
isomorphisms (respectively of Lie algebras and of vector spaces)
	\begin{eqnarray*}
	F_1^\fg &:& \cg_1(\KK) \to \fg(\KK,\CC) \\
	F_1^V &:& \cV_1(\KK) \to V(\KK,\CC)
	\end{eqnarray*}
	such that, for each $A\in\cg_1(\KK)$ and $X\in\cV_1(\KK),$
	$$ [ F_1^\fg(A), F_1^V(X) ] = F_1^V (A(X)). $$}
\item{	There exist
isomorphisms (respectively of Lie algebras and of vector spaces)
	\begin{eqnarray*}
	F_2^\fg &:& \cg_2(\KK) \to \fm(\KK,\CC)\subset\fg(\KK,\HH)  \\
	F_2^V &:& \cV_2(\KK) \to V(\KK,\HH)
	\end{eqnarray*}
	such that, for each $A\in\cg_2(\KK)$ and $X\in\cV_2(\KK),$
	$$ [ F_2^\fg(A), F_2^V(X) ] = F_2^V (A(X)). $$
	Moreover, as Lie algebras, $$\fg(\KK,\HH) = F_2^\fg(\cg_2(\KK)) \oplus \fu(1).$$
	Once we use $F_2^V$ to identify $\CC\otimes\hxk$ with $V(\KK,\HH),$
	a generator $i$ of this extra $\fu(1)\subset\fg(\KK,\HH)$ acts on $\CC\otimes\hxk$
	as $\sqrt{-1}\in\CC.$ 
	In other words, it defines a complex (hermitian) structure, with respect
	to which $\fg(\KK,\HH) \subset \fu(\dim\hxk).$}
\item{	There exist
isomorphisms (respectively of Lie algebras and of vector spaces)
	\begin{eqnarray*}
	F_3^\fg &:& \cg_3(\KK) \to \fm(\KK,\HH)\subset\fg(\KK,\OO) \\
	F_3^V &:& \cV_3(\KK) \to V(\KK,\OO)
	\end{eqnarray*}
	such that, for each $A\in\cg_3(\KK)$ and $X\in\cV_3(\KK),$
	$$ [ F_3^\fg(A), F_3^V(X) ] = F_3^V (A(X)). $$	
	Moreover, as Lie algebras, $$\fg(\KK,\OO) = F_3^\fg(\cg_3(\KK)) \oplus \fsp(1). $$
	Once we use $F_3^V$ to identify $\CC\otimes\fxk$ with $V(\KK,\OO),$
	the generators $i,j,k$ of this extra $\fsp(1)\subset\fg(\KK,\HH)$ act on $\CC\otimes\fxk$
	as $I,J,K$ given explicitly by:
	\begin{equation}\label{eq-ijk}
		I(X) = i X,\quad \langle\bar X, J Y\rangle = \omega(X,Y),\quad \langle\bar X, K Y\rangle = i\omega(X,Y).
	\end{equation}
	In other words, they define a quaternionic (hermitian) structure, with respect
	to which $\fg(\KK,\OO) \subset \fsp(\dim\hxk+1) \oplus \fsp(1).$}
\end{enumerate}
\end{pro}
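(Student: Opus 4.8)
The plan is to treat the three families in turn. In each case I would first write down an explicit real-linear isomorphism $F_n^V$ of the modules, guided by the Cayley-Dickson grading of $\KK'$ and a dimension count, then the Lie-algebra map $F_n^\fg$, and finally verify the intertwining relation $[F_n^\fg(A),F_n^V(X)]=F_n^V(A(X))$ by direct computation: the right-hand side is evaluated with the mixed bracket (\ref{bktdx}) and the Tits bracket (\ref{bkt-tits}), the left-hand side with the explicit operators $\cD$, $L_X$, $\LF$, $\cH_0$, $\cH_1$ of the previous sections. Note that the preceding Proposition already gives $\fg(\KK,\CC)=\der\hxk$, $\fg(\KK,\HH)=\fm(\KK,\CC)\oplus\fu(1)$ and $\fg(\KK,\OO)=\fm(\KK,\HH)\oplus\fsp(1)$, while $\cg_2(\KK)=[\str_0(\hxk)]_\fu$ and $\cg_3(\KK)=[\der\fxk]_\fu$ are by Lemma \ref{lem-cpx} the compact forms of the second and third columns; hence the algebras being matched are already abstractly isomorphic, and the entire content is to realize the isomorphisms compatibly with the module actions and to read off the action of the extra $\fu(1)$ and $\fsp(1)$.

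The first family is immediate: since $\der\CC=0$ and $\Im\CC=\CC_1$ forces $\Im\CC_0=0$, one reads off $\fg(\KK,\CC)=\der\hxk$ and $V(\KK,\CC)=\CC_1\otimes\sxk=i\RR\otimes\sxk$, so $F_1^\fg=\id$ and $F_1^V(X)=i\otimes X$ work, equivariance being just (\ref{bktdx}). For the second family I would take $F_2^\fg$ to be the identity on $\der\hxk$ and $F_2^\fg(iL_X)=-i\otimes X$, and define $F_2^V$ by $z\otimes Y\mapsto jz\otimes Y$ on $\CC\otimes\sxk$ together with a normalized identification of $\CC1$ with $\dev\HH=\ad(\HH_1)$ (the signs and scale being pinned down by the calculation itself). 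That $F_2^\fg$ is a homomorphism follows from $[L_X,L_Y]=\cD_{X,Y}$ (Lemma \ref{lem-dmapj}) together with the Tits bracket at $p=q=i$, where $\cD_{i,i}=0$ and $i\times i=0$ leave exactly $-\cD_{X,Y}$; equivariance then reduces to evaluating $[i\otimes X,p\otimes Y]$ for $p\in\HH_1$ and comparing with $iL_X(z\otimes Y)=iz\otimes(X\circ Y)$, separating the $X\times Y$ and the $\langle X,Y\rangle$ contributions. Finally $\deg\HH=\RR\,\ad_i$ is the extra $\fu(1)$, and computing $\ad_i$ on $\dev\HH$ and on $\HH_1\otimes\sxk$ via (\ref{der-grad-act}) and (\ref{bktdx}) shows that a fixed multiple of $\ad_i$ acts as multiplication by $\sqrt{-1}$, giving the asserted complex structure.

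The third family is handled by the same recipe but is the genuine obstacle. Using $\OO=\HH\oplus l\HH$ I would identify $\OO_1=l\HH$ with $\CC\otimes\RR^2$ and $\dev\OO$ (of dimension $8$) with the complexification of the $(\RR\oplus\RR1)\otimes\RR^2$ part of $\fxk$, so as to produce $V(\KK,\OO)\cong\CC\otimes\fxk$; on the algebra side $F_3^\fg$ must carry $\cH_0(\der\hxk\oplus i\hxk)\oplus\cH_1(\hxk\oplus i\hxk)$ onto $\fm(\KK,\HH)=\der\hxk\oplus\der\HH\oplus\Im\HH\otimes\sxk$. The hard part is matching the FTS operators, namely the multiplications $L_C$, the Freudenthal multiplications $\LF_A$, and the off-diagonal $\langle A,\cdot\rangle$ entries of $\cH_1$, against the Tits brackets of $\fg(\KK,\OO)$ across the octonionic splitting, which forces one to keep track of $l\CC$ versus $lj\CC$ and of the intertwiner $\varphi$ from the proof of Lemma \ref{lem-deg}. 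The extra $\fsp(1)$ is $\cE'(\Im\HH)$, the summand of $\deg\OO$ acting trivially on $\HH\subset\OO$ by Corollary \ref{cor-ee}; showing that its generators realize $I,J,K$ of (\ref{eq-ijk}) is delicate precisely because $J$ and $K$ are defined through the symplectic form $\omega$, hence anti-linearly via $\bar X$, whereas $\cE'$ is built from right multiplications and $\varphi$. I expect the main effort to lie here: relating right multiplication by imaginary quaternions, transported by $\cE'$ across $\OO=\HH\oplus l\HH$, to the pairing of $\omega$ with the hermitian metric, and thereby confirming $I^2=J^2=K^2=-1$ and the inclusion $\fg(\KK,\OO)\subset\fsp(\dim\hxk+1)\oplus\fsp(1)$.
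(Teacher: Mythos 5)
Your overall strategy is the paper's own: build explicit intertwiners out of the Cayley--Dickson splitting of $\KK'$, then verify the relation $[F_n^\fg(A),F_n^V(X)]=F_n^V(A(X))$ by direct computation against the mixed bracket (\ref{bktdx}) and the Tits bracket (\ref{bkt-tits}). Your first family is exactly the paper's trivial identification, and your second-family maps agree up to sign and normalization conventions with the paper's $F_2^\fg(D)=D$, $F_2^\fg(iL_X)=i\otimes X$, $F_2^V(z\otimes X)=\frac{X_0}{2}\ad_{zj}+zj\otimes X_1$; the facts you invoke ($\cD_{i,i}=0$ and $i\times i=0$, so that $[i\otimes X,i\otimes Y]=-\cD_{X,Y}=[iL_X,iL_Y]$, and $\frac12\ad_i$ acting as $\sqrt{-1}$) are precisely the computations the paper performs. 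Two minor remarks there: equivariance must also be checked against the $\dev\HH$ summand of $V(\KK,\HH)$, i.e.\ the bracket $[i\otimes X,\ad_{zj}]$, not only against $[i\otimes X,p\otimes Y]$ with $p\in\HH_1$ (it works by the same identity $\cD_{i,zj}=2\ad_{izj}$); and your separate verification that $F_2^\fg$ is a homomorphism is dispensable, since the paper observes that the intertwining property together with faithfulness of both representations already forces $F^\fg$ to be an isomorphism of Lie algebras.

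The genuine gap is point 3, where you state a plan and explicitly defer ``the main effort'' --- but that is where essentially all the content of the Proposition lies. Concretely, what is missing and what the paper supplies: (i) an explicit parametrization of $\dev\OO$ --- the paper introduces $\cB_q(x)=\cD_{q,q\varphi(x)}$, sets $\cB'=\cB_i-\cB_j$, $\cB''=3\cB_k$, and proves that $\cB'\times\cB'':\HH\oplus\HH\to\dev\OO$ is a bijection (a nontrivial fact, checked by computer algebra), together with the evaluation formulas $\cB_r(x)(r)=4r\varphi(x)$, $\cB_s(x)(r)=-2r\varphi(x)$ and the adjoint action of $\deg\OO$ on $\dev\OO$; (ii) the explicit maps $F_3^\fg$ and $F_3^V=\nu\circ\mu$ with their normalizations (e.g.\ the factor $-12$ in $\nu$), which cannot be left ``to be pinned down'' in the abstract, because the whole verification consists in showing that a single choice of constants makes all blocks of $\cH_0$ and $\cH_1$ --- the $L_C$, $\LF_A$ and $\langle A,\cdot\rangle$ entries --- match the Tits brackets simultaneously; (iii) the decomposition of the Freudenthal product $X\bullet Y$ into its $\RR\oplus\sxk$ components, without which $\cH_0,\cH_1$ cannot even be written as operators on the intermediate space $(\RR\oplus\RR\oplus\sxk)\otimes\HH$; and (iv) the verification that $\cE'(q)$ acts as $\id\otimes R_{\bar q}$ and that a suitable choice of the intertwiner $\psi:\CC^2\to\HH$ converts these right multiplications into the $I,J,K$ of (\ref{eq-ijk}). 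You correctly locate the difficulty (the interplay of $\varphi$, the splitting $\OO=\HH\oplus l\HH$, and the antilinear definition of $J,K$ through $\omega$), but locating it is not resolving it: as written, the proposal proves points 1 and 2 and only conjectures point 3.
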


\begin{cor} \label{cor-reps} $ $
\begin{enumerate}
\item The adjoint representation 
$$\ad : \fg(\KK,\KK') \to \End V(\KK,\KK')$$
is faithful and irreducible.
\item
Let $G(\KK,\KK')$ be the connected subgroup of $\sGL(V(\KK,\KK'))$ obtained by exponentiation
of the image of $\fg(\KK,\KK')$ under the adjoint representation on $V(\KK,\KK').$ Let us moreover
identify $V(\KK,\KK')$ with $\cV_n(\KK)$ using $F^V_n$ with $n=1,2,3$ for, respectively, $\KK'=\CC,\HH,\OO.$
Then: 
\begin{eqnarray*}
G(\KK,\CC) &=& \cG_1(\KK) \\
G(\KK,\HH) &=& \cG_2(\KK)\cdot\sU(1) \\
G(\KK,\OO) &=& \cG_2(\KK)\cdot\sSp(1)
\end{eqnarray*}
where the $\sU(1)$ is generated by the natural complex structure on $\cV_2(\KK)$ and the $\sSp(1)$
is generated by the natural quaternionic structure (induced by $\omega$ and the hermitian inner product) on $\cV_3(\KK).$
\end{enumerate}
\end{cor}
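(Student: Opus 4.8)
The plan is to read both assertions off Proposition \ref{pro-isoreps}, which already transports the adjoint action of $\fg(\KK,\KK')$ on $V(\KK,\KK')$ to the defining action of $\cg_n(\KK)$ (for $\KK'=\CC$), or of $\cg_n(\KK)$ together with an extra $\fu(1)$ (for $\KK'=\HH$) or $\fsp(1)$ (for $\KK'=\OO$), on $\cV_n(\KK)$, via the intertwiners $F_n^\fg$ and $F_n^V$. Concretely, the relation $[F_n^\fg(A),F_n^V(X)]=F_n^V(A(X))$ together with the prescribed action of the extra generators means that, after identifying $V(\KK,\KK')$ with $\cV_n(\KK)$ via $F_n^V$ and $\End V(\KK,\KK')$ with $\End\cV_n(\KK)$, the composite $\ad\circ F_n^\fg$ is nothing but the inclusion of $\cg_n(\KK)$ (plus the extra summand) as a concrete subalgebra of $\End\cV_n(\KK)$.

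Faithfulness of $\ad$ is then immediate: since $F_n^\fg$ is a Lie-algebra isomorphism and $\ad\circ F_n^\fg$ is the above inclusion into $\End\cV_n(\KK)$, the map $\ad$ has trivial kernel. (For $n=2,3$ one notes in passing that the extra summand is genuinely complementary to $\cg_n(\KK)$ inside $\End\cV_n(\KK)$: the complex, resp. quaternionic, structure is not trace-free, whereas $\cg_n(\KK)$ lies in $\fsu(\cV_n(\KK))$, resp. $\fsp(\cV_n(\KK),\omega)$.) For irreducibility I would separate the families. For $\KK'=\CC$ it is precisely the real irreducibility of $\der\hxk=\cg_1(\KK)$ on $\sxk=\cV_1(\KK)$ recorded earlier. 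For $\KK'=\HH,\OO$ the key move is to upgrade the \emph{complex} irreducibility supplied by Lemma \ref{lem-cpx} (applied with $\cA=\str_0(\hxk)$, resp. $\der\fxk$) to real irreducibility of the full $\fg(\KK,\KK')$: any real subspace $W\subseteq\cV_n(\KK)$ invariant under the image of $\ad$ is in particular invariant under the extra $\fu(1)$, resp. the generator $I$ of the extra $\fsp(1)$, which by (\ref{eq-ijk}) acts as multiplication by $\sqrt{-1}$; hence $W$ is a \emph{complex} subspace, and complex irreducibility of $\cg_n(\KK)$ forces $W=0$ or $W=\cV_n(\KK)$.

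For part (2) I would simply exponentiate. By construction $\cg_n(\KK)$ exponentiates to $\cG_n(\KK)$; the extra $\fu(1)$, generated by multiplication by $\sqrt{-1}$, exponentiates to the circle $\sU(1)$ of unit scalars; and the extra $\fsp(1)$, spanned by $I,J,K$ of (\ref{eq-ijk}), exponentiates to the unit-quaternion group $\sSp(1)$ acting through that quaternionic structure. Since Proposition \ref{pro-isoreps} exhibits $\fg(\KK,\HH)$ and $\fg(\KK,\OO)$ as \emph{direct sums} of Lie algebras, the extra summand commutes with $\cg_n(\KK)$; hence the set-products $\cG_n(\KK)\cdot\sU(1)$ and $\cG_n(\KK)\cdot\sSp(1)$ are connected subgroups of $\sGL(\cV_n(\KK))$ whose Lie algebras are exactly $\ad(\fg(\KK,\KK'))$. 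A connected subgroup is determined by its Lie algebra, so these products coincide with $G(\KK,\KK')$, which settles the table.

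The only genuinely computational points are hidden upstream: checking (inside Proposition \ref{pro-isoreps}) that $I,J,K$ of (\ref{eq-ijk}) close into an $\fsp(1)$ and commute with $\cg_3(\KK)$, and that Lemma \ref{lem-cpx} really applies to $\str_0(\hxk)$ and $\der\fxk$, i.e. that these act irreducibly on the real $\hxk$, resp. $\fxk$. Granting those, the corollary is essentially bookkeeping; the one conceptual step to get right is the passage from complex to real irreducibility via the auxiliary complex/quaternionic structure, which is exactly why the extra $\sU(1)$ or $\sSp(1)$ has to be included.
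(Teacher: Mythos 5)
Your proposal is correct and follows exactly the route the paper intends: the corollary is stated without a separate proof precisely because it is the bookkeeping you carry out, namely transporting everything through $F_n^\fg$, $F_n^V$ (faithfulness from the concrete embeddings $\cg_n(\KK)\subset\End\cV_n(\KK)$ plus the extra summand, real irreducibility promoted from the complex irreducibility of Lemma \ref{lem-cpx} via the extra $\fu(1)$ resp.\ $I\in\fsp(1)$, and the group identities by exponentiating commuting subalgebras). One cosmetic point: for $n=3$ the directness of $\cg_3(\KK)\oplus\Span\{I,J,K\}$ inside $\End\cV_3(\KK)$ is better phrased as ``$J,K$ are complex-antilinear while $\fsp(\cV_3(\KK),\omega)$ consists of complex-linear maps, and $I$ has nonzero complex trace'' --- the quaternionic structure is not literally ``not trace-free'' ($J$ and $K$ have vanishing real trace), but this does not affect the argument.
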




The rest of this section is devoted to proving the Proposition (and may be skipped by the inpatient reader). We first need to complete the discussion of $\der\OO$ started in Lemma \ref{lem-deg}. Recall that in Corollary \ref{cor-ee} we have stated that
the subalgebra $\deg\OO$ is isomorphic to $\fsp(1)\oplus\fsp(1) \simeq \Im\HH \oplus \Im\HH$ via the maps $\cE$ and $\cE'.$ 
To parametrize $\dev\OO,$ we introduce for each $q\in\Im\HH$ a map
$$ \cB_q : \HH \to \dev\OO $$
$$ \cB_q(x) = \cD_{q,q\varphi(x)} $$
for $x\in\HH.$

Recall that $\dev\OO$ maps $\HH$ to $l\HH.$ The action  on $\Im\HH$ is given in a
useful form by the following
\begin{lem} Let $r,s$ be orthogonal unit imaginary quaternions. Then for $x\in\HH$ the following
holds:
\begin{eqnarray*}
 \cB_r(x)(r) &=& 4 r\varphi(x) \\ 
 \cB_s(x)(r) &=& -2 r \varphi(x)
\end{eqnarray*}
\end{lem}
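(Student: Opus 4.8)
The plan is to unwind the definition $\cB_q(x) = \cD_{q,\,q\varphi(x)}$ and evaluate at $r$ using the explicit formula for $\cD$ from Lemma~\ref{lem-dmap}, namely $\cD_{p,q}(r) = [[p,q],r] - 3[p,q,r]$. First I would invoke left-equivariance of $\varphi$ (established in the proof of Lemma~\ref{lem-deg}), $q\varphi(x) = \varphi(qx)$, so that every factor reduces to a $\varphi$ of a quaternion; in the final answer I will also use $r\varphi(x) = \varphi(rx)$ to reconcile the two forms. I record at the outset the three multiplication rules I will lean on: the extended left rule $a\varphi(b) = \varphi(ab)$ for all $a\in\HH$ (immediate from left-equivariance and linearity), a companion right rule for $\varphi(b)\cdot a$ (obtained from $\varphi(\HH)=l\HH\subset\Im\OO$ and the fact that octonionic conjugation is an anti-automorphism, together with the product $\varphi(x)\varphi(y)=-x\bar y$ derived in the proof of Lemma~\ref{lem-deg}), and the elementary quaternion identities for orthonormal imaginary units: $r^2=s^2=-1$, $sr=-rs$, and hence $srs=r$.

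For the first identity the point is that both slots of $\cD$ and the evaluation argument revolve around the same $r$. Consequently the associator $[r,\varphi(rx),r]$ has a repeated argument and vanishes by alternativity of $\OO$ (the associator is totally antisymmetric). It then remains only to expand the double commutator $[[r,\varphi(rx)],r]$; reducing each term with the multiplication rules above and $r^2=-1$ collapses it to $4\varphi(rx)=4r\varphi(x)$, which is the claim.

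For the second identity I would take $q=s$ with $s\perp r$, so that the associator $[s,\varphi(sx),r]$ no longer has a repeated argument and must be computed in earnest. I would evaluate the double commutator $[[s,\varphi(sx)],r]$ and the associator $[s,\varphi(sx),r]$ separately, in each case rewriting every octonionic product as a $\varphi$ of a quaternion via the left and right rules and then simplifying the quaternionic argument with $s^2=-1$ and $srs=r$ (this last identity is exactly what lets $s$ move past the $r$ sitting in the middle). Assembling the commutator contribution together with $-3$ times the associator contribution then produces $-2r\varphi(x)$.

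The main obstacle is the careful bookkeeping of the mixed products in the $s$-case: unlike the $r$-case there is no vanishing by alternativity, so one must correctly evaluate the right-multiplications $\varphi(\cdot)\cdot r$ and the full associator $[s,\varphi(sx),r]$, and track the sign convention for the associator consistently with the $\cD$-formula so that the numerical coefficients come out as stated. This is a short but genuinely nonassociative computation, and it is the only place where one cannot simply quote a vanishing or a single-rule reduction; everything else is routine substitution.
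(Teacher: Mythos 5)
Your treatment of the first identity is fine and matches the paper's: the associator $[r,r\varphi(x),r]$ dies by alternativity, and the double commutator collapses to $4r\varphi(x)$ using anticommutation of $\Im\HH$ with $l\HH$ and $r^2=-1$. The gap is in the second identity, and it sits exactly at the step you postpone, namely fixing ``the sign convention for the associator consistently with the $\cD$-formula.'' That cannot be done as you describe, because the paper's printed conventions are mutually inconsistent at precisely this point. Write $u=\varphi(x)\in l\HH$ and use only convention-independent facts: anticommutation and alternativity give $[[s,su],r]=4ru$, while anticommutation together with $s\bigl(r(sv)\bigr)=(srs)v=rv$ for all $v\in\OO$ (a Moufang identity, which the paper checks directly) gives, with the paper's definition $[p,q,r]=p(qr)-(pq)r$, the value $[s,su,r]=s\bigl((su)r\bigr)-\bigl(s(su)\bigr)r=-ru-ru=-2ru$. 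Hence the literal formula of Lemma \ref{lem-dmap} returns $[[s,su],r]-3[s,su,r]=4ru+6ru=10\,ru$, not $-2\,ru$. The claimed value $-2r\varphi(x)$ is correct only for the opposite associator convention $[p,q,r]=(pq)r-p(qr)$, i.e.\ the one for which $[[p,q],\cdot\,]-3[p,q,\cdot\,]$ coincides with Schafer's derivation $[L_p,L_q]+[L_p,R_q]+[R_p,R_q]$ (evaluating that operator sum at $r$ does yield $-2ru$). The paper's own proof silently computes with this derivation-compatible convention, in conflict with its stated definition of the associator; so the content of the step you defer is not bookkeeping but the recognition that $\cD_{p,q}$ must be a derivation, which forces the associator sign and overrules the printed one. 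A proof that merely ``tracks signs so that the coefficients come out as stated'' is circular, and tracking them as you actually commit to do gives the wrong answer.

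There is a second, methodological problem: your computational engine is unsound, and the paper's proof conspicuously avoids it. You propose to rewrite every mixed product through $a\varphi(b)=\varphi(ab)$, extended from (\ref{varphi}), plus a derived right rule. But (\ref{varphi}) cannot be iterated: applying it twice gives $p(q\varphi(x))=\varphi(p(qx))=(pq)\varphi(x)$, i.e.\ the associator $[p,q,\varphi(x)]$ would vanish for all $p,q\in\HH$, contradicting the nonassociativity of $\OO$. Indeed, in a Cayley--Dickson double one has $p(qv)=(qp)v$ for $v\in l\HH$ (left multiplications act on $l\HH$ in reversed order), so no nonzero $\varphi$ satisfies (\ref{varphi}) on the nose; the genuine intertwiner obeys $p\varphi(x)=\varphi(xp)$ instead. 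The paper's proof therefore never touches the equivariance of $\varphi$: it treats $\varphi(x)$ as an arbitrary element of $l\HH$ and uses only anticommutation, alternativity, and the displayed identity $s(r(sx'))=rx'$. Your rules happen to reproduce the correct values in this lemma only because the quaternion words that arise, $s\bar r s$ and $ss\bar r$, are invariant under reversal of order; applied to the neighbouring product $r(s\varphi(x))$ they already produce the wrong sign. So the ``routine'' part of your plan rests on an identity that is false as stated and works here by accident, while the one non-routine part is left unresolved.
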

\begin{proof}
First, we have
$$ \cD_{s,s\varphi(x)}(s) = [[s,s\varphi(x)],s] = 2[ s(s\varphi(x)) , s] = -2[\varphi(x),s]
= 4 s\varphi(x), $$
where we used the alternativity of $\OO$ (i.e. antisymmetry of the associator, equivalent
to $a^2 b = a(ab)$) and the fact that elements of $\Im\HH$ anticommute with elements of $l\HH.$
Then, we have
\begin{eqnarray*}
\cD_{s,s\varphi(x)}(r) &=& [[s,s\varphi(x)],r] - 3[s,s\varphi(x),r] \\
&=& 4 r \varphi(x) - 3 r \varphi(x) - 3 s(r(s\varphi(x))) \\
&=& -2r\varphi(x),
\end{eqnarray*}
where we used the fact that orthogonal imaginary quaternions anticommute, and checked explicitly
that $s(r(sx'))) = rx' $ for $x'\in\OO.$ This proves the Lemma.
\end{proof}

It is clear that the maps $\cB_i,$ $\cB_j$ and $\cB_k$ cannot be independent.  Indeed, one can check that $ \cB_i + \cB_j + \cB_k = 0 $ (note that the map $q\mapsto \cB_q$ is quadratic in $q,$ so that
the latter sum is simply a polarised version of $\cB$ evaluated on the $\fsp(1)$-invariant quadratic element).
We choose two combinations of $\cB_i$ and $\cB_j$, adapted to the goal of constructing the intertwiner $F^V_3:$
\begin{lem}
Let us define two maps $\cB',\cB'' : \HH \to \dev\OO,$
$$ \cB' = \cB_i - \cB_j,\quad \cB'' = 3 \cB_k = -3 (\cB_i+\cB_j). $$

Then the map
$$ \cB'\times\cB'' : \HH \oplus \HH \to \dev\OO $$
is a bijection.
\end{lem}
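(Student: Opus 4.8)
The goal is to show that $\cB'\times\cB'' : \HH \oplus \HH \to \dev\OO$ is a bijection. The plan is to exploit a dimension count combined with a proof of injectivity (equivalently, surjectivity). First I would note that both source and target are $8$-dimensional: the domain $\HH\oplus\HH$ is clearly $8$-dimensional, while by Lemma \ref{lem-der-grad} and Lemma \ref{lem-deg} one has $\der\OO = \deg\OO \oplus \dev\OO$ with $\der\OO \simeq \fg_2$ of dimension $14$ and $\deg\OO \simeq \fso(4)$ of dimension $6$, so $\dim\dev\OO = 14 - 6 = 8$. Thus it suffices to prove that the map is injective, or alternatively to produce enough independent image vectors.

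\textbf{Main strategy: evaluate on the $\Im\HH$ action.}
The cleanest route is to use the preceding Lemma, which records how $\cB_r(x)$ and $\cB_s(x)$ act on imaginary quaternions. Since an element of $\dev\OO$ is determined by more than its restriction to $\Im\HH$, I would nonetheless use this restriction as a detector of linear independence. Concretely, I would compute the action of $\cB'(x') = (\cB_i-\cB_j)(x')$ and $\cB''(y') = 3\cB_k(y')$ on a suitable imaginary unit and read off, via the formulas $\cB_r(x)(r)=4r\varphi(x)$ and $\cB_s(x)(r)=-2r\varphi(x)$, that distinct choices of $(x',y')\in\HH\oplus\HH$ produce distinct, independently-varying outputs. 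Because $\varphi$ is a bijection $\HH\to l\HH$, the dependence on $x'$ and $y'$ is nondegenerate, which should let me separate the two $\HH$-factors.

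\textbf{Executing injectivity.}
Suppose $\cB'(x') + \cB''(y') = 0$ in $\dev\OO$; I would apply this operator to several basis imaginary quaternions (say $i$, $j$, $k$) and use the Lemma to turn the operator equation into linear relations among $\varphi$-images of $x'$ and $y'$. The point is that $\cB_i$, $\cB_j$, $\cB_k$ act on a given imaginary unit $r$ with different coefficients ($+4$ when the index matches $r$, $-2$ otherwise), so the combinations $\cB_i-\cB_j$ and $\cB_i+\cB_j$ act on, e.g., $i$ and $j$ with distinct coefficient patterns. Evaluating on two well-chosen units should yield a linear system whose only solution is $x'=y'=0$ (using that $\varphi$ is injective). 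Since injectivity of a linear map between equidimensional spaces gives bijectivity, this completes the proof.

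\textbf{Main obstacle.}
The delicate point I anticipate is that the cited Lemma only gives the action of $\cB_r(x)$ on $\Im\HH$, not on all of $\OO$, so restricting to $\Im\HH$ might in principle lose information and fail to detect the full kernel. I expect this not to be a genuine problem, because $\dev\OO$ maps $\HH$ into $l\HH$ and its elements are determined by their behaviour there, but I would need to verify that the restriction map $\dev\OO \to \Hom(\Im\HH, l\HH)$ (or whatever detector I use) is itself injective, or else argue directly that the chosen evaluations already span an $8$-dimensional image. If the $\Im\HH$-restriction turns out to be insufficient, the fallback is to also record the action on $1\in\HH$ or on elements of $l\HH$, reconstructing enough of each $\cD_{q,q\varphi(x)}$ to force the kernel to vanish.
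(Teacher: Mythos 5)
Your proof is correct, but it follows a genuinely different route from the paper's: the paper disposes of this lemma with the single sentence ``the proof is by direct calculation with help of computer algebra,'' i.e.\ by brute-force expansion in an explicit octonion basis, whereas you give a structural argument that can be checked by hand. Both of your ingredients are sound. The dimension count is right: $\der\OO$ (the Lie algebra of $\sG_2$) has dimension $14$ and $\deg\OO\simeq\fso(4)$ has dimension $6$, so Lemma \ref{lem-der-grad} gives $\dim\dev\OO=8=\dim(\HH\oplus\HH)$; moreover $x\mapsto\cD_{q,q\varphi(x)}$ is linear in $x$, so $\cB'\times\cB''$ is a linear map between equidimensional spaces and injectivity suffices. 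The injectivity computation you sketch does close, using only the two evaluation formulas of the preceding lemma: if $(\cB_i-\cB_j)(x)+3\cB_k(y)=0$, then evaluating at $i$ gives $4i\varphi(x)+2i\varphi(x)-6i\varphi(y)=6i\bigl(\varphi(x)-\varphi(y)\bigr)=0$, hence $x=y$ (the octonions form a division algebra and $\varphi$ is injective), while evaluating at $j$ gives $-2j\varphi(x)-4j\varphi(x)-6j\varphi(y)=-6j\bigl(\varphi(x)+\varphi(y)\bigr)=0$, hence $x=-y$; together these force $x=y=0$. Your ``main obstacle'' is not one, and for a simpler reason than the one you give: to prove injectivity you only need the trivial implication that an operator vanishing on all of $\OO$ has vanishing values at $i$ and $j$; no injectivity of the restriction map $\dev\OO\to{\rm Hom}(\Im\HH,l\HH)$ is required. (That restriction is in fact injective --- any derivation kills $1$ and is skew-symmetric for the octonionic inner product, and an element of $\dev\OO$ maps $l\HH$ into $\HH$, so vanishing on $\Im\HH$ forces vanishing everywhere --- but your argument never needs this.) What your route buys is a transparent, computer-free proof; the paper's route buys only automation.
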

The proof is by direct calculation with help of computer algebra.
Finally, the adjoint action of $\deg\OO$ on $\dev\OO$ is given by the following
\begin{lem}
Let us use $\cB'\times\cB''$ to identify $\dev\OO$ with $\HH\oplus\HH = \RR^2 \otimes\HH. $
Then the adjoint action of $\deg\OO$ on $\dev\OO \simeq \RR^2\otimes\HH$ is:
\begin{eqnarray*}
\ad_{\cE(i)} &=& \left(\begin{matrix}0&-3\\-1&-2\end{matrix}\right) \otimes L_i \\
\ad_{\cE(j)} &=& \left(\begin{matrix}0&3\\1&-2\end{matrix}\right)\otimes L_j \\
\ad_{\cE(k)} &=& \left(\begin{matrix}-3&0\\0&1\end{matrix}\right)\otimes L_k 
\end{eqnarray*}
and, for $q\in\Im\HH,$
$$ \ad_{\cE'(q)} = \id \otimes R_{\bar q}. $$
\end{lem}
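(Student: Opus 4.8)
The plan is to reduce the whole computation to the infinitesimal equivariance of the map $\cD$. Since $\cE(q)$ and $\cE'(q)$ lie in $\deg\OO\subset\der\OO$ (Corollary \ref{cor-ee}), differentiating the $\Aut(\OO)$-equivariance of $\cD$ from Lemma \ref{lem-dmap} yields, for any derivation $d\in\der\OO$ and $p_1,p_2\in\OO$,
$$ [d,\cD_{p_1,p_2}] = \cD_{d(p_1),p_2} + \cD_{p_1,d(p_2)}. $$
Applying this with $d=\cE(r)$ or $d=\cE'(r)$ to the generator $\cB_q(x)=\cD_{q,q\varphi(x)}$ turns each bracket into a sum of two $\cD$'s, whose arguments I can evaluate from the explicit formulas for $\cE,\cE'$ on $\HH$ and $l\HH$. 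Throughout I would use the intertwining relation $p\varphi(x)=\varphi(px)$ (so that $\cB_q(x)=\cD_{q,\varphi(qx)}$) together with the associativity of $\HH$, and the key identity $\cD_{q,\varphi(y)}=\cB_q(-qy)$ for a unit $q\in\Im\HH$, obtained by inverting $z\mapsto qz$, to translate the answers back into values of $\cB$.

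The $\cE'$-part is immediate and I would do it first. Because $\cE'(r)$ annihilates $\HH$ and acts as $\varphi\circ R_{\bar r}\circ\varphi^{-1}$ on $l\HH$, the first term drops out and I obtain
$$ \ad_{\cE'(r)}\cB_q(x) = \cD_{q,\varphi((qx)\bar r)} = \cD_{q,\varphi(q(x\bar r))} = \cB_q(x\bar r), $$
using associativity in the middle step. Since this holds for every $q$, it descends to $\cB'$ and $\cB''$ and gives $\ad_{\cE'(r)}=\id\otimes R_{\bar r}$ under the identification.

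For the $\cE$-part I would treat the basis elements $\cE(i),\cE(j),\cE(k)$ and evaluate $\ad_{\cE(r)}$ on each of $\cB_i(x),\cB_j(x),\cB_k(x)$, splitting into the diagonal case $r=q$ and the orthogonal case $r\perp q$. In the diagonal case the commutator term vanishes and one finds $\ad_{\cE(q)}\cB_q(x)=\cB_q(qx)$; in the orthogonal case the term $\cD_{[r,q],\varphi(qx)}$ survives, with $[r,q]=2rq$ a third imaginary unit, and both resulting $\cD$'s must be pushed back through $\cD_{p,\varphi(y)}=\cB_p(-py)$. The final step is to pass from the $\cB_q$ to $\cB'=\cB_i-\cB_j$ and $\cB''=3\cB_k$, inverting the linear relations to $\cB_i=\tfrac12\cB'-\tfrac16\cB''$, $\cB_j=-\tfrac12\cB'-\tfrac16\cB''$, $\cB_k=\tfrac13\cB''$ (equivalently using $\cB_i+\cB_j+\cB_k=0$); reassembling then produces exactly the three claimed $2\times2$ matrices tensored with $L_i,L_j,L_k$. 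I expect the main obstacle to be precisely this bookkeeping of quaternionic signs in the orthogonal case: keeping straight the products $ij=k$, the brackets $[r,q]=2rq$, and the inversions $z\mapsto qz$ is where errors creep in, and it is the reason a direct matrix verification is needed rather than an appeal to cyclic symmetry, which the choice of $\cB',\cB''$ deliberately breaks.
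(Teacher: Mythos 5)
Your proposal is correct and follows essentially the same route as the paper: both apply the infinitesimal equivariance of $\cD$ to $\cB_q(x)=\cD_{q,\varphi(qx)}$, obtain the diagonal case $[\cE(q),\cB_q(x)]=\cB_q(qx)$ and the orthogonal case $[\cE(r),\cB_q(x)]=2\cB_{rq}(rx)-\cB_q(rx)$, handle $\cE'$ via $\cD_{q,\varphi((qx)\bar r)}=\cB_q(x\bar r)$, and finish by converting to the $\cB',\cB''$ basis. Your auxiliary identity $\cD_{p,\varphi(y)}=\cB_p(-py)$ and the inversion formulas $\cB_i=\tfrac12\cB'-\tfrac16\cB''$, $\cB_j=-\tfrac12\cB'-\tfrac16\cB''$, $\cB_k=\tfrac13\cB''$ are all correct and reproduce the stated matrices.
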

\begin{proof}
Let us compute the commutator of $\cE(s)$ and $\cB_r(x)$ for $s,r$ being unit imaginary quaternions:
\begin{eqnarray*}
[ \cE(s), \cD_{r,r\varphi(x)} ] &=& \cD_{[s,r],r\varphi(x)} + \cD_{r,[s,r]\varphi(x)}
+ \cD_{r,r\varphi(sx)} 
\\ &=& 
2 \cD_{rs,rs\varphi(sx)} - \cD_{r,r\varphi(sx)} + 2\langle r,s\rangle \cD_{r,r\varphi(rx)}, 
\end{eqnarray*}
where we used the equivariance of $\cD$ and the identity $[r,s] = 2 rs + 2 \langle r,s\rangle. $
In particular, 
\begin{eqnarray*}
[ \cE(s), \cB_s(x) ] &=& \cB_s(sx) \\ {}
[ \cE(s), \cB_r(x) ] &=& 2\cB_{sr}(sx) - \cB_r(sx) \quad\textrm{for}\ s\bot r.
\end{eqnarray*}
The commutator of $\cE'(s)$ and $\cB_r(x)$ is simply 
$$[\cE'(s),\cD_{r,r\varphi(x)}] = \cD_{r,r\varphi(x\bar s)} = \cB_r(x\bar s).$$
This proves the Lemma.
\end{proof}


We are now ready to prove the Proposition.

\begin{proof}[Proof of Proposition \ref{pro-isoreps}] $ $
We first notice, that it is enough to check the intertwining property 
$$ [ F_{1,2,3}^\fg(A), F_{1,2,3}^V(X) ] = F_{1,2,3}^V (A(X)) $$
-- then
the fact that $F_{1,2,3}^V$ is an isomorphism between the vector spaces of two faithful representations
already implies that $F_{1,2,3}^\fg$ is an isomorphism of Lie algebras.

In the first family, the algebra is $\cg_1(\KK) = \der\hxk$ and
the representation space is $\cV_1(\KK) = \sxk.$
On the other hand, since $\der\CC$ is trivial, 
we have $V(\KK,\CC) = i\RR \otimes \sxk,$
which is naturally \emph{identified} with $\sxk$
and $\fg(\KK,\CC) = \der\hxk.$
The maps $F_1^V$ and $F_1^\fg$ can be simply set to identity,
and point 1 of the Proposition follows.

In the second family, the algebra is $\cg_2(\KK) = \der\hxk \oplus i L_{\sxk}$ and
the representation space is $\cV_2(\KK)=\CC\otimes\hxk.$ On the other hand, since
$$ \deg\HH\oplus\dev\HH = \Span\{\ad_i\} \oplus \Span\{\ad_j, \ad_k\}, $$
we have
$$ \fg(\KK,\HH) = \der\hxk \oplus \Span\{\ad_i\} \oplus i\RR \otimes \sxk $$
$$ V(\KK,\HH) = \Span\{\ad_j,\ad_k\} \oplus j\CC \otimes \sxk. $$
The subspace $\fm(\KK,\CC)\subset\fg(\KK,\HH)$ is $\der\hxk\oplus i\RR\otimes\sxk.$
We now define
$$ F^\fg_2(D) = D,\quad F^\fg_2(i L_X) = i\otimes X $$
for $D\in\der\hxk,\ X\in\sxk,$  and
$$ F^V_2(z\otimes X) = \frac{X_0}{2} \ad_{zj} + zj \otimes X_1 $$
for $z\in\CC$ and $\hxk\ni X = X_0 1 + X_1$ where $X_0\in\RR,\ X_1\in\sxk.$

Now, we check that, for $D\in\der\hxk,\ X\in\sxk,$ 
$z\in\CC$  and $\hxk\ni Y= Y_0 1+Y_1$ where $Y_0 \in\RR,\ Y_1\in\sxk,$ the
intertwining property holds:
\begin{eqnarray*}
[ F^\fg_2(D), F^V_2(z\otimes Y) ]
&=& [D, \frac{Y_0}{2}\ \ad_{zj} + zj\otimes Y_1 ] \\
&=& zj\otimes D(Y_1) \\
&=& F^V_2( z\otimes D(Y_1) ) \\
&=& F^V_2( D(z\otimes Y) ). \\ {}
[ F^\fg_2(iL_X), F^V_2(z\otimes Y) ] 
&=& [i\otimes X, \frac{Y_0}{2}\ \ad_{zj} + zj\otimes Y_1 ] \\
&=& izj \otimes Y_0 X + izj\otimes (X\times Y_1) + \frac{2}{12} h(X,Y_1) \ad_{izj} \\
&=& \frac{1}{6} h(X,Y_1) \ad_{izj} + izj\otimes (Y_0 X + X\times Y_1) \\
&=& F^V_2( iz\otimes \left[\frac{1}{3}h(X,Y_1) + (Y_0 X + X\times Y_1)\right]  ) \\
&=& F^V_2( iL_X(z\otimes Y) ).
\end{eqnarray*}

It remains to notice that the 
the generator of the remaining subspace $\RR\subset\fg(\KK,\HH),$
namely $\frac{1}{2}\ad_i,$ acts as multiplication by $\sqrt{-1}$ in $\CC\otimes
\hxk:$
\begin{eqnarray*}
[\frac{1}{2}\ad_i, F^V_2(z\otimes Y) ] &=& [\frac{1}{2}\ad_i, \frac{Y_0}{2}\ \ad_{zj} + zj\otimes Y_1 ] \\
&=& \frac{Y_0}{2}\ \ad_{izj} + izj\otimes Y_1 \\
&=& F^V_2(iz \otimes Y).
\end{eqnarray*}
Thus point 2 of the Proposition follows.

In the third family, the algebra is $$\cg_3(\KK) = \cH_0(\der\hxk\oplus i\hxk) \oplus \cH_1(\hxk\oplus i\hxk)$$ and 
the representation space is $$\cV_3(\KK) = \CC\otimes\fxk \simeq (\RR\oplus\hxk)
\otimes \CC^2.$$

On the other hand, we have \begin{eqnarray*}
 \fg(\KK,\OO) &=& \der\hxk \oplus \deg\OO \oplus (\Im\HH \otimes \sxk) \\
 V(\KK,\OO) &=& \dev\OO \oplus (l\HH\otimes\sxk),\end{eqnarray*}
where the derivations are identified with certain quaternionic spaces:
\begin{eqnarray*}
\deg\OO &=& (\cE\times\cE')(\Im\HH\oplus\Im\HH) \\
\dev\OO &=& (\cB'\times\cB'')(\HH\oplus\HH). \end{eqnarray*}

To relate the latter spaces to the former ones, we note that
the space $\CC^2$ becomes an irreducible left $\HH-$module with the 
action assigning to $q\in\HH$ an operator $\tau_q : \CC^2\to\CC^2$ given by
$$
\tau_i = \left(\begin{matrix}0&i\\i&0\end{matrix}\right),\quad
\tau_j = \left(\begin{matrix}0&-1\\1&0\end{matrix}\right),\quad
\tau_k = \left(\begin{matrix}i&0\\0&-i\end{matrix}\right),\quad \tau_1 = \id.
$$ As its dimension is four, there exists an intertwiner
$$ \psi : \CC^2\to\HH $$
$$ \psi(\tau_q \xi) = L_q \psi(\xi) $$
for $q\in\HH,\xi\in\CC^2,$ where $L_q$ is the left multiplication by $q$ in $\HH.$ 

As the reader may expect, $\psi$ will relate the $\CC^2$ factor in $\CC\otimes\fxk$ to the quaternionic
factors in $V(\KK,\OO).$ One also needs to note that the $\hxk$ spaces on the FTS side
must be decomposed into $\RR\oplus\sxk$ as the Tits construction features only the traceless
subspaces. Clearly, the traces are then to be identified with certain elements of $\der\OO.$

It is convenient to identify the two representation
spaces with an intermediate one, namely $ (\RR\oplus\RR\oplus\sxk)\otimes \HH,$ so that we have the isomorphisms
$$ \mu : (\RR\oplus\hxk)\otimes\CC^2 \to (\RR\oplus\RR\oplus\sxk) \otimes \HH $$
$$  (x,X) \otimes \xi \mapsto (x,X_0,X_1) \otimes \psi(\xi) $$
for $X=X_0 1 + X_1$ where $x,X_0\in\RR,\ X_1\in\sxk,$
and
$$ \nu : (\RR\oplus\RR\oplus\sxk) \otimes \HH \to V(\KK,\OO) $$
$$  (y',y'',Y) \otimes q \mapsto \cB'(y'q) + \cB''(y''q) - 12\varphi(q) \otimes Y $$
for $y,y'\in\RR,\ Y\in\sxk$ and $q\in\HH.$
We define the intertwiner to be their composition,
$$ F_3^V = \nu \circ \mu. $$ 
Then, for $A=A_0 1 + A_1 $ where $A_0 \in \RR,\ A_1\in\sxk,$ and
$B=B_0 1 + B_1,\ C=C_01 + C_1$ in the same manner, and $D\in\der\hxk,$
we define
$$ F_3^\fg : \cg_3(\KK) \to \der\hxk \oplus \deg\OO \oplus
(\Im\HH\otimes\sxk) $$
\begin{eqnarray*}
F_3^\fg(\cH_0(D,iC) + \cH_1(A,iB)) &=& D  - B_0 \cE(i) + A_0 \cE(j) + C_0 \cE(k) \\
& &\quad + i \otimes 2B_1 - j \otimes 2A_1 +k\otimes C_1. \end{eqnarray*}
Clearly, the image of $F_3^\fg$ is
$$ \fg(\KK,\OO) \supset \fm(\KK,\HH) = \der\hxk \oplus \cE(\Im\HH) \oplus (\Im\HH\otimes\sxk), $$
so that $\fg(\KK,\OO) = \fm(\KK,\HH) \oplus \cE'(\Im\HH). $

To check the intertwining property, we must find the action of both $\cg_3(\KK)$ and
$F_3^\fg(\cg_3(\KK))=\fm(\KK,\HH)\subset\fg(\KK,\OO)$ 
on $(\RR\oplus\RR\oplus\sxk)\otimes \HH$ (via the identifications $\mu,\nu$)
and show that they agree. 

As the action on the FTS side contains the maps $L^\bullet$ and $L$ on $\hxk,$
we have to decompose them with respect to $\hxk=\RR\oplus\sxk$
and express in terms of $L^\times,$ the multiplication map corresponding to the product $\times : \sxk
\times\sxk\to\sxk.$ 
Recalling $\langle X\bullet Y,Z\rangle  = 3N(X,Y,Z)$ and using the defining indentity for $N$ (Lemma \ref{lem-tsn}), we find that
$$ X\bullet Y = X\circ Y + \frac{1}{2}\tr X\ \tr Y - \frac{1}{2}[ \langle X,Y \rangle + (\tr Y)X + (\tr X)Y ] $$
for $X,Y\in\hxk.$
Now, regarding the expressions on the r.h.s as elements of $\RR \oplus \sxk,$ we have:
\begin{eqnarray*}
X\circ Y &=& \left( X_0 Y_0 + \frac{1}{3}\langle X_1,Y_1 \rangle ,\quad  X_1 \times Y_1 + X_0 Y_1 + Y_0 X_1 \right) \\
X\bullet Y &=& \left( X_0 Y_0 - \frac{1}{6}\langle X_1,Y_1 \rangle ,\quad  X_1 \times Y_1 - \frac{1}{2} X_0 Y_1
- \frac{1}{2} Y_0 X_1 \right)
\end{eqnarray*}
and
$$ \langle X,Y \rangle  = 3 X_0 Y_0 + \langle X_1, Y_1 \rangle $$
for $X=X_0 1 + X_1$ and $Y=Y_0 1 + Y_1$ where $X_0,Y_0\in\RR$ and $X_1,Y_1\in\sxk.$

Using these formulas and the definitions of $\cH_0$ and $\cH_1,$ we find that the element
$$ \cH_0(D,iC) + \cH_1(A,iB) \in \cg_3(\KK)$$
corresponds via $\mu$ to the following 
operator on $(\RR\oplus\RR\oplus\sxk)\otimes\HH:$
\begin{eqnarray}
\label{big-op}
\mu\circ [ \cH_0(D,iC) + \cH_1(A,iB) ] \circ \mu^{-1} & =  \nonumber \\
\left[ -B_0
\left(\begin{matrix}  0 & -3 & \\ -1 & -2 & \\ & & 1 \end{matrix}\right) 
+
\left(\begin{matrix}  0&0 & h(B_1) \\ 0&0& -\frac{1}{3}h(B_1) \\ B_1 & -B_1 & 2 L^\times_{B_1} \end{matrix}\right)
\right] &\otimes&
L_i 
\nonumber\\ + \left[
 A_0
\left(\begin{matrix} 0& 3 & \\ 1&-2 & \\ & & 1  \end{matrix}\right) 
+
\left(\begin{matrix}  0&0 & h(A_1) \\ 0&0& \frac{1}{3}h(A_1) \\ A_1 & A_1 & -2 L^\times_{A_1} \end{matrix}\right) 
\right] &\otimes&
L_j 
 \\ 
+\left[
C_0
\left(\begin{matrix} -3 & & \\ & 1 & \\ & & 1 \end{matrix}\right) 
+ \left(\begin{matrix} 0& &   \\ & 0 &\frac{1}{3}h(C_1) \\ &R_1 & L^\times_{C_1} \end{matrix}\right) 
\right]
&\otimes&
L_k 
\nonumber \\ +
\left(\begin{matrix} 0 & & \\ &0& \\ & & D \end{matrix}\right) &\otimes&
1. 
\nonumber
\end{eqnarray}

We now have to find the action of the image of $\cH_0(D,iC)+\cH_1(A,iB)$ under $F_3^\fg,$ namely
$$  D  - B_0 \cE(i) + A_0 \cE(j) + C_0 \cE(k) 
+ i \otimes 2B_1 - j \otimes 2A_1 +k\otimes C_1, $$
an element of $\fg(\KK,\OO),$
while $V(\KK,\OO)$ is identified with $(\RR\oplus\RR\oplus\sxk)\otimes\HH$ via $\nu.$

Acting with the $\der\hxk\oplus\deg\OO$ part is easy, as we have already found all
the necessary formulas while discussing the octonionic derivations
in the present section. We still need to compute the following commutator
in $\fm(\KK,\OO)$:
\begin{eqnarray*}
[ r \otimes A_1,\ \cB'(q') + \cB''(q'') + \varphi(q)\otimes X_1 ] 
&=& -\frac{1}{12} h(A_1,X_1) \cB_r(rq)  \\
&+& \varphi(rq) \otimes (A_1\times X_1) \\ &-& [\cB'(q')(r) + \cB''(q'')(r)] \otimes A_1
\end{eqnarray*}
for $q,q',q''\in\HH,$ $A_1,X_1 \in \sxk$ and $r$ a unit imaginary quaternion.
Having all that done, we arrive at precisely the same operator as in (\ref{big-op}):
\begin{eqnarray*}
\nu^{-1}\circ \ad_{[
 D  - B_0 \cE(i) + A_0 \cE(j) + C_0 \cE(k) 
+ i \otimes 2B_1 - j \otimes 2A_1 +k\otimes C_1
]} \circ \nu & = \\
\left[ -B_0
\left(\begin{matrix}  0 & -3 & \\ -1 & -2 & \\ & & 1 \end{matrix}\right) 
+
\left(\begin{matrix}  0&0 & h(B_1) \\ 0&0& -\frac{1}{3}h(B_1) \\ B_1 & -B_1 & 2 L^\times_{B_1} \end{matrix}\right)
\right] &\otimes&
L_i 
\\ + \left[
 A_0
\left(\begin{matrix} 0& 3 & \\ 1&-2 & \\ & & 1  \end{matrix}\right) 
+
\left(\begin{matrix}  0&0 & h(A_1) \\ 0&0& \frac{1}{3}h(A_1) \\ A_1 & A_1 & -2 L^\times_{A_1} \end{matrix}\right) 
\right] &\otimes&
L_j 
 \\ 
+\left[
C_0
\left(\begin{matrix} -3 & & \\ & 1 & \\ & & 1 \end{matrix}\right) 
+ \left(\begin{matrix} 0& &   \\ & 0 &\frac{1}{3}h(C_1) \\ &R_1 & L^\times_{C_1} \end{matrix}\right) 
\right]
&\otimes&
L_k 
\\ +
\left(\begin{matrix} 0 & & \\ &0& \\ & & D \end{matrix}\right) &\otimes&
1. 
\end{eqnarray*}
so that
$$\ad_{F_3^\fg[\cH_0(D,iC)+\cH_1(A,iB)]} \circ (\nu\circ\mu)  =
(\nu\circ\mu)\circ [ \cH_0(D,iC) + \cH_1(A,iB) ] 
$$
as claimed.

Finally, we need to check the action of the remaining $\cE'(\Im\HH) \simeq\fsp(1)$
on $\CC\otimes\fxk.$ Using once again the formulas for octonionic derivations, we find that
$\cE'(q)$ acts on $(\RR\oplus\RR\oplus\sxk) \otimes \HH$ via
$$ \id \otimes R_{\bar q}$$
where $R$ is the right multiplication map on the quaternions.

Thus the corresponding algebra $\fsp(1)$ acting on $(\RR\oplus\hxk)\otimes\CC^2$ is
generated by
three complex structures, which we choose to be
\begin{eqnarray*}
I &=& \id \otimes (\psi^{-1}\circ R_i \circ\psi) \\
J &=& \id \otimes (\psi^{-1}\circ R_{-j} \circ\psi) \\
K &=& \id \otimes (\psi^{-1}\circ R_k \circ\psi). 
\end{eqnarray*}
These can be brought to the form demanded by the Proposition once the intertwiner $\psi:\CC^2\to\HH$ is
set to:
$$ \psi(1,1) = 1,\quad \psi(i,i) = i,\quad \psi(-1,1) = j,\quad \psi(i,-i) = k. $$
Then we indeed have, for $x\in\RR,\ X\in\hxk$ and $z_1,z_2\in\CC:$
\begin{eqnarray*}
I((x,X)\otimes (z_1,z_2)) &=& (x,X)\otimes (i z_1, i z_2) \\
J((x,X)\otimes (z_1,z_2)) &=& (x,X)\otimes (\bar z_2, -\bar z_1) \\
K((x,X)\otimes (z_1,z_2)) &=& (x,X)\otimes (i\bar z_2, -i\bar z_1).
\end{eqnarray*}
Compared with the formulas for the inner product and symplectic form, this completes the proof.
\end{proof}

\section{Constructing symmetric invariants}
Now that we have found that the isotropy representations of our symmetric spaces are in fact the natural representations of certain automorphism or isotropy groups (up to complexification), extended by the action of a natural complex or quaternionic structure, we can get the symmetric invariants almost for free.


While the situation is clear in case of the first family, the spaces 
$$ \cV_2(\KK) = \CC\otimes\hxk,\quad \cV_3(\KK) = \CC\otimes\fxk $$
of other
two families can be viewed either over $\RR$ or over $\CC$. Since we will ultimately be dealing with
real geometry, the next lemma and proposition are referring to the \emph{real} vector spaces $\cV_{2,3}(\KK)$ (although the proofs will in turn utilize the complex point of view).
These are naturally equipped with a positive-definite scalar product, denoted in both cases by $g$ and defined as follows:
$$ g : \Sym^2 \cV_2(\KK) \to \RR $$
$$ g(z\otimes X,z\otimes X) = \langle z,z \rangle \langle X,X \rangle $$
for $z\in\CC,\ X\in\hxk$ and
$$ g : \Sym^2 \cV_3(\KK) \to \RR $$
$$ g(z\otimes X,z\otimes X) = \langle z,z \rangle \langle X,X \rangle $$
for $z\in\CC,\ X\in\fxk.$ This is the scalar product the word `orthogonal' will be referring to.
The spaces $\cV_3(\KK)$ are moreover left $\HH-$modules, with the action of a quaternion $q\in\HH$
being
$ L_q \in \End\cV_3(\KK),$ such that
$$ L_i = I,\ L_j = J,\ L_k = K,\ L_1 = \id, $$
where $I,J,K$ are the maps defined in point 3 of Proposition \ref{pro-isoreps}.

Observe that the problem is to turn the invariants of $\cG_{2,3}(\KK)$ into
maps invariant under the action of the complex or quaternionic structure. This is fairly simple
in the second family. The same task for the third family invariants is fulfilled with help of the following lemma, which produces an $\HH-$equivariant tensor on $\CC\otimes\fxk$ from a symmetric tensor on $\fxk:$
\begin{lem} \label{lem-tl} Given a tensor $t \in \Sym^p \fxk^*,$ define the map
$$ t_L : \Sym^p \cV_3(\KK) \to \Sym^p\HH $$
to be the dual, with respect to the natural scalar products, of the map
$$ t_L^* : \Sym^p \HH \otimes \Sym^p \cV_3(\KK) \to \RR $$
$$ t_L^*(q^{\otimes p}, Z^{\otimes p}) = \tilde t((L_q Z)^{\otimes p}), $$
where $\tilde t \in \Sym^p \cV_3(\KK)^*$ is defined by
$$ \tilde t(Z^{\otimes p}) = \Re(z^p) \cdot t(X^{\otimes p}) $$
for $Z=z\otimes X \in \cV_3(\KK),$ where $z\in\CC$ and $X\in\fxk.$

Then $t_L$ is equivariant in the following sense:
$$ t_L \circ L_q = R_{\bar q} \circ t_L\quad \forall q\in\HH. $$
\end{lem}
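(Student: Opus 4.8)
The plan is to prove the equivariance $t_L \circ L_q = R_{\bar q}\circ t_L$ by unwinding the duality that defines $t_L$ and exploiting two compatibilities: that the $\HH$-action $L_q$ on $\cV_3(\KK)$ satisfies $L_q L_{q'} = L_{qq'}$ (so $\{L_q\}$ is a genuine left $\HH$-module structure), and that the pairing $\tilde t$ entering the definition of $t_L^*$ is built multiplicatively so that inserting $L_q$ simply composes the quaternions. Since $t_L$ is defined as the dual of $t_L^*$ with respect to the natural scalar products on $\Sym^p\HH$ and $\Sym^p\cV_3(\KK)$, it suffices to verify the adjoint identity against arbitrary test elements, i.e. to show
\begin{equation*}
\langle q^{\otimes p},\ t_L(L_{q'} Z)\rangle = \langle q^{\otimes p},\ R_{\bar{q'}}(t_L(Z))\rangle
\end{equation*}
for all $q,q'\in\HH$ and $Z\in\cV_3(\KK)$, after reducing to rank-one symmetric tensors $Z^{\otimes p}$ by polarization.

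First I would compute the left-hand side directly from the definition: $\langle q^{\otimes p}, t_L(L_{q'}Z^{\otimes p})\rangle = t_L^*(q^{\otimes p}, (L_{q'}Z)^{\otimes p}) = \tilde t\bigl((L_q L_{q'} Z)^{\otimes p}\bigr) = \tilde t\bigl((L_{qq'}Z)^{\otimes p}\bigr)$, where the key step uses the module property $L_q L_{q'} = L_{qq'}$. By the same definition this equals $t_L^*\bigl((qq')^{\otimes p}, Z^{\otimes p}\bigr) = \langle (qq')^{\otimes p}, t_L(Z^{\otimes p})\rangle$. For the right-hand side I would use that the adjoint of right multiplication $R_{\bar{q'}}$ on $\Sym^p\HH$, with respect to the natural (real) scalar product, is $R_{q'}$ (since $\langle a\bar{q'}, b\rangle = \langle a, bq'\rangle$ for the real inner product on $\HH$, because $\Re(\overline{a\bar{q'}}\,b) = \Re(q'\bar a b) = \Re(\bar a b q') = \langle a, b q'\rangle$); hence $\langle q^{\otimes p}, R_{\bar{q'}}(t_L(Z^{\otimes p}))\rangle = \langle (R_{q'} q^{\otimes p}), t_L(Z^{\otimes p})\rangle = \langle (qq')^{\otimes p}, t_L(Z^{\otimes p})\rangle$. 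The two sides now coincide, which establishes the claim on rank-one symmetric tensors, and by polarization and bilinearity on all of $\Sym^p\cV_3(\KK)$.

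The two facts I must justify carefully are precisely the main obstacles. The first, $L_q L_{q'} = L_{qq'}$, is the assertion that $\{L_i = I, L_j = J, L_k = K, L_1 = \id\}$ really generate a left $\HH$-module; this needs the quaternion relations $I^2 = J^2 = K^2 = -\id$, $IJ = K$, etc., which follow from the explicit formulas (\ref{eq-ijk}) of Proposition \ref{pro-isoreps} together with the symplectic and hermitian compatibility $\omega(IX,Y) = -\omega(X,IY)$ and the relation between $\omega$, $I$ and the inner product. I would verify these directly from (\ref{eq-ijk}), e.g. checking $KY = iJY$ matches $I J Y$ via $\langle\bar X, IJY\rangle = -\langle\overline{IX}, JY\rangle = -\omega(\overline{\cdots})$; this is a short but genuinely necessary computation. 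The second fact, that $R_{q'}$ is the scalar-product adjoint of $R_{\bar{q'}}$ on $\HH$ (hence on $\Sym^p\HH$), is the elementary quaternionic identity noted above and extends to symmetric powers because the scalar product on $\Sym^p\HH$ is the induced one and $R_{q'}$ acts diagonally.

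I expect the genuinely delicate point to be ensuring that the multiplicativity $\tilde t((L_{qq'}Z)^{\otimes p}) = t_L^*((qq')^{\otimes p}, Z^{\otimes p})$ holds on the nose rather than up to a symmetrization factor: because $\tilde t(Z^{\otimes p}) = \Re(z^p)\,t(X^{\otimes p})$ is defined only on rank-one tensors and then extended by polarization, I must confirm that $L_q$ maps $Z = z\otimes X$ to $L_q Z = (qz)\otimes(\text{something})$ or, more precisely, that the combined left action on $\cV_3(\KK) = \CC\otimes\fxk$ is compatible with the factorization $\Re(z^p)t(X^{\otimes p})$; this is where the interplay between the complex structure $I$ (acting as multiplication by $i$) and $J,K$ (acting through $\omega$) enters, and I would handle it by writing $\HH = \CC\oplus j\CC$ and tracking how $L_q$ permutes the $\CC$-linear and $\CC$-antilinear parts, verifying that the real part $\Re(z^p)$ transforms correctly under the induced quaternionic multiplication. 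Once this compatibility is pinned down, the rest of the argument is the formal adjunction above.
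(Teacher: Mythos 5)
Your proof is correct and is essentially the paper's own argument: pair $t_L((L_{q'}Z)^{\otimes p})$ against a test element $q^{\otimes p}$, use the module property $L_qL_{q'}=L_{qq'}$ together with the definition of $t_L^*$ to transfer everything to $(qq')^{\otimes p}$, identify right multiplication $R_{q'}$ and $R_{\bar q'}$ as mutual adjoints for the real scalar product $\Re(\bar a b)$, and conclude by nondegeneracy. The ``delicate point'' you flag is actually vacuous: the identity $\tilde t((L_{qq'}Z)^{\otimes p}) = t_L^*((qq')^{\otimes p}, Z^{\otimes p})$ is literally the defining formula of $t_L^*$ evaluated at the quaternion $qq'$, and the quaternion relations making $\{L_q\}$ a left $\HH$-module structure are already supplied by Proposition \ref{pro-isoreps}, so no tracking of $\CC$-linear versus antilinear parts via $\HH=\CC\oplus j\CC$ is needed.
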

\begin{proof}
Let $x,w\in\HH$ and $Z=z\otimes X\in\cV_3(\KK).$ We have
\begin{eqnarray*}
\langle t_L((L_x Z)^{\otimes p}), w^{\otimes p} \rangle
&=& t^*_L (w^{\otimes p}, (L_x Z)^{\otimes p}) \\
&=& \tilde t((L_{wx} Z)^{\otimes p})  \\
&=& t^*_L ((wx)^{\otimes p}, Z^{\otimes p}) \\
&=& \langle t_L(Z^{\otimes p}), (wx)^{\otimes p} \rangle,
\end{eqnarray*}
where $\langle\cdot,\cdot\rangle$ denotes the natural scalar product on $\Sym^p \HH$
(induced by the one on $\HH$).
Nondegeneracy of the latter implies
$$ t_L((L_x Z)^{\otimes p}) = R^*_x t_L(Z^{\otimes p}), $$
where $R^*_x = R_{\bar x}$ is dual to $R_x$ with respect to the scalar product.
\end{proof}

Finally, the invariants defining $G(\KK,\KK')$ are given by the following
\begin{pro} \label{pro-invs}
Let us identify $V(\KK,\KK')$ with $\cV_n(\KK),$ where $n=1,2,3$ for, respectively, $\KK'=\CC,\HH,\OO,$ as in Corollary \ref{cor-reps}; 
consider all of them as \emph{real} vector spaces.
Let $c_1,c_2,c_3$ be some nonzero constants (introduced to simplify
further formulas). Then:
\begin{enumerate}
\item $G(\KK,\CC)$ is the isotropy group of the tensor $$\Upsilon:\Sym^3 \cV_1(\KK) \to \RR $$
$$ \Upsilon(X,X,X) = c_1 \langle X\times X,X \rangle $$
for $X\in\cV_1(\KK).$
\item $G(\KK,\HH)$ is a connected component of
the orthogonal isotropy group of the tensor $$\Xi : \Sym^6 \cV_2(\KK) \to \RR $$
$$ \Xi(Z,Z,Z,Z,Z,Z) = c_2^2 |z|^6 (\det X)^2 $$
for $Z = z\otimes X\in\cV_2(\KK),$ where $z\in\CC$ and $X\in\hxk.$
\item $G(\KK,\OO)$ is a connected component of
the orthogonal isotropy group of the tensor $$\mho : \Sym^8 \cV_3(\KK) \to \RR$$
$$ \mho(Z,Z,Z,Z,Z,Z,Z,Z) = c_3^2 \| \cQ_L(Z,Z,Z,Z) \|^2$$
for $Z\in\cV_3(\KK),$ where $\|\cdot\|$ is the natural norm on $\Sym^4 \HH$ (induced by the scalar product) and $\cQ_L$ is defined as in Lemma \ref{lem-tl}.
\end{enumerate}
\end{pro}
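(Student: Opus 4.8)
The plan is to show, in each of the three cases, that the proposed symmetric tensor is invariant under the relevant group $G(\KK,\KK')$ and that, conversely, its full orthogonal isotropy group has $G(\KK,\KK')$ as a (the identity) connected component. The invariance half is essentially forced by the constructions of the previous sections; the genuine work is the reverse inclusion, i.e. that the isotropy group is no \emph{bigger} than expected. For the first family I would proceed directly: by Proposition \ref{pro-isoreps} we have $G(\KK,\CC)=\cG_1(\KK)=\Aut(\hxk)$ acting on $\cV_1(\KK)=\sxk$, and the trilinear form $\langle X\times X, X\rangle$ is built from the $\times$-product and scalar product, both $\Aut(\hxk)$-invariant by the lemmas of Section \ref{sec-tits}; hence $\Upsilon$ is invariant. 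For the reverse inclusion I would invoke irreducibility (Corollary \ref{cor-reps}) together with the classical fact that $\Aut(\hxk)$ is precisely the subgroup of $\sSO(\sxk)$ preserving this cubic, which for $\KK=\RR$ recovers Nurowski's equation (\ref{eqn-nur}) and in general follows from the identification $\Str_0(\hxk)$ as the stabiliser of $\det$ restricted to the traceless, scalar-product-preserving part.

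For the second and third families the strategy is to pass to the complex picture, where the invariant data are transparent, and then descend to the real orthogonal isotropy group. In the second family, viewing $\cV_2(\KK)=\CC\otimes\hxk$ over $\CC$, the group $\cG_2(\KK)\subset\sSU(\cV_2(\KK))$ stabilises $\det X$ up to phase (since $\Str_0(\hxk)$ stabilises $\det$, and its compact form $\cg_2(\KK)$ acts by the complexification), so the combination $|z|^6(\det X)^2$ is manifestly invariant under $\cG_2(\KK)$ and under the extra $\sU(1)$ of the complex structure, which multiplies $z$ by a phase. I would then argue that $\Xi$ is invariant under all of $G(\KK,\HH)=\cG_2(\KK)\cdot\sU(1)$. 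The reverse inclusion is the more delicate point: one must show that any orthogonal map fixing $\Xi$ preserves the complex structure (up to sign/conjugation) and lands in $\cG_2(\KK)\cdot\sU(1)$ on the identity component. For the third family the same scheme applies with $\cQ$ in place of $\det$: by the Corollary following Lemma \ref{lem-derfts}, $\cg_3(\KK)$ is exactly the stabiliser of $\cQ$, and Lemma \ref{lem-tl} packages $\cQ$ into the $\HH$-equivariant object $\cQ_L$, whose $\Sym^4\HH$-norm is then invariant under the $\sSp(1)$ generated by the quaternionic structure as well as under $\cG_3(\KK)$; hence $\mho$ is $G(\KK,\OO)$-invariant.

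The \emph{equivariance} clause of Lemma \ref{lem-tl}, namely $\cQ_L\circ L_q = R_{\bar q}\circ \cQ_L$, is precisely what makes $\|\cQ_L(Z,\dots,Z)\|^2$ insensitive to the $\HH$-action: applying $L_q$ to $Z$ replaces $\cQ_L$ by $R_{\bar q}\circ\cQ_L$, and right multiplication by a unit quaternion is an isometry of $\Sym^p\HH$, so the norm is unchanged. I would spell this out to confirm $\sSp(1)$-invariance in the third case (and the analogous, simpler $\sU(1)$-invariance in the second), since that is the one step where the specific algebraic form of the invariant really earns its keep.

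I expect the \textbf{main obstacle} to be the reverse inclusions for the second and third families: proving that the orthogonal isotropy group of $\Xi$ (resp.\ $\mho$) is not strictly larger than $\cG_2(\KK)\cdot\sU(1)$ (resp.\ $\cG_3(\KK)\cdot\sSp(1)$), which is why the Proposition only claims $G(\KK,\KK')$ to be a \emph{connected component} rather than the whole stabiliser. The clean way to handle this is to fix the Lie algebra rather than the group: differentiate the invariance condition to get a linear system on $\fso(\cV_n(\KK))$, decompose $\fso(\cV_n(\KK))=\fg(\KK,\KK')\oplus\mathfrak{m}$ into $\fg$-irreducibles, and show that no nonzero element of the complement $\mathfrak{m}$ annihilates the symmetric tensor. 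This reduces the problem to a finite check of which irreducible components of $\Sym^6\cV_2(\KK)^*$ (resp.\ $\Sym^8\cV_3(\KK)^*$) carry a trivial $\fg(\KK,\KK')$-summand, an algebraic computation most efficiently organised through the complexified branching rules already implicit in Lemma \ref{lem-cpx} and Proposition \ref{pro-isoreps}.
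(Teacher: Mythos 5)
Your overall architecture does match the paper's: the invariance direction is handled through equivariance (your observation that $\cQ_L\circ L_q = R_{\bar q}\circ\cQ_L$ renders $\|\cQ_L(Z,Z,Z,Z)\|^2$ insensitive to the quaternionic action is precisely the role Lemma \ref{lem-tl} plays in the paper), the first-family argument via faithfulness and the cubic $\langle X,X\times X\rangle$ is essentially the paper's, and for the other two families the paper also works infinitesimally, decomposing $\fso(\cV_n(\KK))$ relative to the distinguished complex structure and showing that nothing outside $\fg(\KK,\KK')$ annihilates the tensor.

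The gap is in how you propose to execute that last step, which is the actual content of the proposition. Your reduction --- ``a finite check of which irreducible components of $\Sym^6\cV_2(\KK)^*$ (resp.\ $\Sym^8\cV_3(\KK)^*$) carry a trivial $\fg(\KK,\KK')$-summand, organised through branching rules'' --- cannot decide the question. The stabilizer question is whether the $\fg$-equivariant map $E\mapsto E(\Y)$ is injective on the complement of $\fg(\KK,\KK')$; for an irreducible summand $\mathfrak{m}_i$ of that complement, the appearance of $\mathfrak{m}_i$ inside $\Sym^p\cV_n(\KK)^*$ is only a \emph{necessary} condition for the map to be nonzero on $\mathfrak{m}_i$, and counting trivial $\fg$-summands merely locates where invariants live --- it can never rule out that a strictly larger algebra fixes the particular tensor, since the kernel depends on the tensor itself and not on multiplicities (any candidate kernel is a $\fg$-submodule $\mathfrak{n}$ with $\fg\oplus\mathfrak{n}$ a subalgebra, and no branching computation excludes such an $\mathfrak{n}$). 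What closes this gap in the paper is genuine computation with the specific tensors. For $\KK'=\HH$ one writes $\Xi=\Lambda\cdot\bar\Lambda$ and shows, using the identities of Corollary \ref{cor-id2}, that the $\Lam^{2,0}\oplus\Lam^{0,2}$ part $A$ of a stabilizing element satisfies $\Lambda_{(\alpha\beta\gamma}A^{\bar\mu}{}_{\delta)}\bar\Lambda_{\bar\mu\bar\epsilon\bar\phi}=0$, forcing $A=0$ because $\Lambda(X,\cdot,\cdot)=0 \iff \LF_X=0 \iff X=0$, while the $\Lam^{1,1}$ part must fix $\Lambda$ up to an imaginary multiple and hence lies in $\CC\otimes\str_0(\hxk)\oplus\fu(1)$ by the known stabilizer $\Str_0(\hxk)$ of $\det$. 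For $\KK'=\OO$ one first needs a workable expression for $\mho$ at all --- Lemma \ref{lem44}, giving its $\Sym^{4,0}\otimes\Sym^{0,4}$ projection as a Young projector applied to $q\otimes q$; your plan never addresses how to compute with $\|\cQ_L(Z,Z,Z,Z)\|^2$ --- and then the contraction identities of Lemma \ref{lemq} feed two long lemmas showing that the stabilizer inside $\fsu(\V)$ is exactly $\cg_3(\KK)$ and that inside $\fu^\bot(\cV_3(\KK))$ only the span of $J$ and $K$ survives. Without this, or an equivalent enumeration of candidate intermediate subalgebras $\fg\oplus\mathfrak{n}$ each ruled out by a tensor-specific argument, the reverse inclusion for the second and third families remains unproved.
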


The rest of this section is occupied by the proof.
We first need to set up some notation.

\begin{note}\label{not-cpx}
Consider on $\cV_2(\KK)=\CC\otimes\hxk$ and $\cV_3(\KK)=\CC\otimes\fxk$  a complex structure given by multiplication by $\sqrt{-1}$ in the $\CC$ factor. 
(corresponding on $\cV_3(\KK)$ to $I$ defined in point 3 of Proposition \ref{pro-isoreps}).
In the following, it will be \emph{the} distinguished complex structure on these
spaces, the terms \emph{complex-linear} and \emph{antilinear} refer to.

Let us from now on assume $n=2,3$ and $\KK$ to be fixed. 
The (complex) space of
complex-valued oneforms on $\cV_n(\KK)$ decomposes into the complex-linear and antilinear part, which we denote using, respectively, $\V^*$ and $\bar \V^*:$
\begin{equation} \CC\otimes\cV^*_n(\KK) = \V^* \oplus \bar \V^*. \label{vbarv} \end{equation}
One extends the decomposition to complex-valued alternating and symmetric forms, so that
\begin{eqnarray*}
\Lam^p_\CC \cV^*_n(\KK) &=& \bigoplus_{r+s=p} \Lam^{r,s} \\
\Sym^p_\CC \cV^*_n(\KK) &=& \bigoplus_{r+s=p} \Sym^{r,s}
\end{eqnarray*}
where the elements of $\Lam^{r,s}$ (resp. $\Sym^{r,s}$)
are alternating (resp. symmetric) forms expressed as sums of expressions
linear in $r$  and antilinear in $s$ arguments.
We shall make use of the natural isomorphisms
\begin{eqnarray}
\Lam^{r,s} &\simeq& \Lam^{r,0} \otimes \Lam^{0,s} \nonumber \\ \label{sym-split}
\Sym^{r,s} &\simeq& \Sym^{r,0} \otimes \Sym^{0,s}
\end{eqnarray}
defined as orthogonal projections in $\otimes^{r+s} [\CC\otimes\cV^*_n(\KK)].$

The duals to $\V^*$ and $\bar \V^*$ are the genuinely complex spaces $\V$ and $\bar \V,$ and there is a complex-linear isomorphism
$ \cV_n(\KK) \simeq \V$ and an anti-isomorphism  $\bar\cdot : \V\to\bar \V,$ i.e. complex conjugation. One can view $\V$ (resp. $\bar\V$) as the space of linear (resp. antilinear)
maps from $\CC$ to $\cV_n(\KK)$.

The hermitian inner product on $\cV_n(\KK)$ is represented by a tensor
$ h \in \bar \V^* \otimes \V^*,$ inverse to $h^{-1} \in \V \otimes \bar \V:$
$$ h(Z,Z') = \bar z z' \cdot \langle X,X' \rangle\quad\textrm{for}\ Z=z\otimes X,\ Z'=z'\otimes X' $$
and defines isomorphisms $\V\simeq\bar \V^*$ and $\V^*\simeq\bar \V.$
We now introduce the abstract index notation with lower latin indices $a,b,\dots$ indexing copies of $\CC\otimes \cV_n^*(\KK)$ and unbarred and barred greek indices, resp. $\alpha,\beta,\dots$ and $\bar\alpha,\bar\beta,\dots$, indexing copies of resp. $\V^*$ and $\bar \V^*.$ Upper indices of the same kind naturally index dual spaces. Greek indices corresponding to latin ones denote subspaces.
In this manner, the real scalar product $g$ defined previously, is
$$ g_{ab}\in\Sym^{1,1}_{ab},\quad g_{ab} = h_{\bar\alpha\beta} + h_{\bar\beta\alpha}. $$
Via the scalar product, we have
$ \fso(\cV_n(\KK)) \simeq \Lam^2 \cV^*_n(\KK), $
so that
$$ \fso_\CC(\cV_n(\KK)) \simeq \Lam^{2,0} \oplus \Lam^{0,2} \oplus \Lam^{1,1}. $$
In particular, $\Re\Lam^{1,1} \simeq \fu(\cV_n(\KK)).$ The hermitian product $h_{\bar\alpha\beta}$ and
the inverse
$h^{\alpha\bar\beta}$ are used to lower and raise indices. The generator of $\fu(1)$ corresponding
to the complex structure is 
$$ \theta_{ab} = ih_{\bar\alpha\beta} - ih_{\bar\beta\alpha} \in \fu(1)\subset \Re\Lam^{1,1}.$$

On $\cV_3(\KK)$ there is moreover a distinguished two-form, denoted by slight abuse of notation by
$$  \omega_{\alpha\beta} + \bar\omega_{\bar\alpha\bar\beta} \in \Re (\Lam^{2,0} \oplus \Lam^{0,2}), $$
where $ \omega_{\alpha\beta} \in \Lam^{2,0}$ is a linear extension 
of the symplectic form $\omega$ on $\fxk:$
$$ \omega(Z,Z') = zz' \omega(X,X') $$
for $Z=z\otimes X$, $Z'=z'\otimes X'$, $z,z'\in\CC,$ and $X,X'\in\fxk.$ The subgroup of $\sU(\cV_3(\KK))$
preserving $\omega$ is $\sSp(\cV_3(\KK),\omega),$ with the Lie algebra $\fsp(\cV_3(\KK),\omega) \subset \Re\Lam^{1,1}$ consisting of maps $E_{ab} = b_{\bar\alpha\beta} + \bar b_{\alpha\bar\beta}$
such that $ b^{\mu}{}_{[\alpha} \omega_{\beta]\mu} = 0$
and $b_{\bar\alpha\beta} = - \bar b_{\beta\bar\alpha}.$

In general, $\omega$ defines an isomorphism
$$ \Lam^{1,1} \to \V^*\otimes \V^* = \Sym^{2,0} \oplus \Lam^{0,2} $$
\begin{equation}\label{omid}
\Lam^{1,1}_{ab} \ni 
b_{\bar\alpha\beta} - b_{\bar\beta\alpha} \mapsto 
b^\mu{}_\gamma \omega_{\delta\mu} 
\in \V^*_\gamma \otimes \V^*_\delta, \end{equation}
such that $$\CC\otimes\fsp(\cV_3(\KK),\omega) \simeq \Sym^{2,0}.$$

The complex structures of Proposition \ref{pro-isoreps} are $I =\theta \in \fu(1)$ and $J,K \in
\Re(\Lam^{2,0}\oplus\Lam^{0,2}):$
$$
I_{ab} = \theta_{ab},\quad
J_{ab} = \omega_{\alpha\beta} + \bar\omega_{\bar\alpha\bar\beta},\quad
K_{ab} = -i\omega_{\alpha\beta} + i\bar\omega_{\bar\alpha\bar\beta}, $$
and $\omega$ itself satisfies $\omega_{\alpha\mu} \bar\omega^\mu{}_{\bar\beta} = - h_{\bar\beta\alpha}.$ These three generate \emph{the} $\fsp(1).$
\end{note}

We can now start proving the Proposition.
\begin{proof}[Proof of points 1 \& 2 of Proposition \ref{pro-invs}] $ $
\begin{enumerate}
\item
In the first family, the group $G(\KK,\CC)$ is simply $\Aut\hxk$ acting in the natural way on $\cV_1(\KK) = \sxk.$ Now, the automorphisms of $\hxk = \RR \oplus \sxk$ automatically preserve the trace, the latter decomposition and the scalar product on both $\hxk$ and $\sxk.$ They moreover act faithfully
on the latter space. It thus follows that they
are the isotropy group of the cubic form  $X \mapsto \langle X, X\circ X\rangle$ on $\hxk,$ or
-- equivalently -- of the cubic form $X \mapsto \langle X, X\times X\rangle$ on $\sxk.$ But the latter
is, up to the constant $c_1\neq 0,$ the tensor $\Upsilon.$ 
\item
In the second family, we consider a rescaled linear extension of the determinant,
$$ \Lambda_{\alpha\beta\gamma}\in\Sym^{3,0}_{abc},\quad \Lambda(Z,Z,Z) = c_2 z^3 \det X $$
and its complex conjugate, the antilinear cubic
$$ \bar\Lambda_{\bar\alpha\bar\beta\bar\gamma}\in\Sym^{0,3}_{abc},\quad \bar\Lambda(Z,Z,Z) = c_2 \bar z^3 \det X $$
for $Z=z\otimes X \in \cV_2(\KK) = \CC\otimes\hxk.$ Our tensor $\Xi$ is then
$$ \Xi = \Lam \cdot \bar\Lam \in \Re\Sym^{3,3}, $$
projecting under the isomorphism (\ref{sym-split}) onto a multiple of
$$ \Lam_{\alpha\beta\gamma} \bar\Lam_{\bar\delta\bar\epsilon\bar\phi} \in \Sym^{3,0}_{abc} \otimes
\Sym^{0,3}_{def} $$
by a combinatorial factor.
Let now $E\in\fso(\cV_2(\KK))$ be given by 
$$ E = A + \bar A + B,\quad A\in\Lam^{2,0},\ B\in\Re\Lam^{1,1}. $$
Then 
\begin{eqnarray*}(A+\bar A)(\Xi) &\in& \Sym^{4,2}\oplus\Sym^{2,4}, \\
B(\Xi) &\in& \Sym^{3,3} \end{eqnarray*}
and as such must both vanish independently if $E$ is to preserve $\Xi.$

The projection of $(A+\bar A)(\Xi)$ onto $\Sym^{4,0}_{abcd} \otimes \Sym^{0,2}_{ef}$ is
proportional to
$$ \Lambda_{(\alpha\beta\gamma} A^{\bar\mu}{}_{\delta)} \bar\Lambda_{\bar\mu\bar\epsilon\bar\phi}, $$ vanishing iff $A=0,$ since $$\Lambda(X,\cdot,\cdot) = 0 \iff \LF_X = 0 \iff X=0.$$
The projection of $B(\Xi)$ onto $\Sym^{3,0} \otimes \Sym^{0,3}$
is proportional to $$B(\Lambda) \otimes \bar\Lambda + \Lambda \otimes \overline{B(\Lambda)}, $$
vanishing iff $B(\Lambda) = \lambda\ \Lambda$ with $\lambda \in i\RR.$ Thus, if $E$ preserves $\Xi,$
then
$$B^0_{ab} = B_{ab} - i\lambda (h_{\beta\bar\alpha} - h_{\alpha\bar\beta}) = B_{ab} - \lambda\theta_{ab} $$
must preserve $\Lam.$ As the latter is the linear extension of the determinant, it follows that
$ B^0 \in \CC\otimes\str_0(\hxk),$ and finally $$B = B^0 + \lambda\theta \in \cg_2(\KK) \oplus \fu(1).$$
Thus $E$ preserves $\Xi$ iff $ A=0$ and $B\in\fg(\KK,\HH),$ which proves point 2 of the Proposition.
\end{enumerate}\end{proof}

Proving the last point is considerably more involved.
We consider a rescaled linear extension of the quartic on $\fxk,$
$$ q_{\alpha\beta\gamma\delta} \in \Sym^{4,0}_{abcd},\quad q(Z,Z,Z,Z) = c_3 z^4 \cQ(X,X,X,X) $$
and its complex conjugate, the antilinear quartic
$$ \bar q_{\bar\alpha\bar\beta\bar\gamma\bar\delta}\in\Sym^{0,4}_{abcd},\quad \bar q (Z,Z,Z,Z) = c_3 \bar z^4 \cQ(X,X,X,X) $$
for $Z=z\otimes X \in \cV_3(\KK) = \CC\otimes\hxk.$ 

As the tensor $\mho$ has been defined in a rather obscure form, we need some more convenient formula.
Note first, that, being a symmetric rank eight tensor, invariant with respect to $I,$
it must be in $\Sym^{4,4}$ (invariance follows from Lemma \ref{lem-tl}). We now have the following
lemma, to be proved in the next section:
\begin{lem} \label{lem44}
The projection of $\mho\in\Sym^{4,4}$ onto $\Sym^{4,0}\otimes \Sym^{0,4}$
is 
$$ \mho|_{\Sym^{4,0}\otimes\Sym^{0,4}} = \frac{256}{70}\ (1\otimes J^*) P_{44} (q\otimes q), $$
where $P_{44} : \Sym^{4,0} \otimes \Sym^{4,0} \to \Sym^{4,0} \otimes \Sym^{4,0}$ is the projection
$$
(P_{44})^{\alpha\beta\gamma\delta\ \epsilon\phi\kappa\lambda}_{\mu\nu\rho\sigma\ \xi\eta\zeta\tau}
=
\delta^{^{(\alpha}}_{[\mu} \delta^{_{(\epsilon}}_{\xi]}
\delta^{^\beta}_{[\nu} \delta^{_\phi}_{\eta]}
\delta^{^\gamma}_{[\rho} \delta^{_\kappa}_{\zeta]}
\delta^{^{\delta)}}_{[\sigma} \delta^{_{\lambda)}}_{\tau]}
$$
on the subspace corresponding to the Young diagram
with two rows of four boxes each, and $J^*$ denotes the map taking 
$t_{\alpha\beta\gamma\delta} \in \Sym^{4,0}_{abcd}$
to $
\bar\omega^{\mu}{}_{\bar\epsilon}
\bar\omega^{\nu}{}_{\bar\phi}
\bar\omega^{\rho}{}_{\bar\kappa}
\bar\omega^{\sigma}{}_{\bar\lambda} t_{\mu\nu\rho\sigma} \in \Sym^{0,4}_{efkl}.$
\end{lem}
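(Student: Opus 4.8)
The plan is to verify the stated identity by pairing both sides against decomposable tensors $U^{\otimes 4}\otimes\bar W^{\otimes 4}$ with $U\in\V$ and $\bar W\in\bar\V$ chosen \emph{independently}. Such tensors span $\Sym^{4,0}\otimes\Sym^{0,4}$, and since pairing the symmetric form $\mho\in\Sym^{4,4}$ against $U^{\otimes4}\otimes\bar W^{\otimes4}$ only detects its $\Sym^{4,0}\otimes\Sym^{0,4}$-component, it suffices to prove
\[
\mho(U^{\otimes4},\bar W^{\otimes4})\;=\;\frac{256}{70}\,\big[P_{44}(q\otimes q)\big]\big(U^{\otimes4};(\bar\omega^\sharp\bar W)^{\otimes4}\big),
\]
where $\bar\omega^\sharp:\bar\V\to\V$ denotes the index raising of $\bar\omega$, so that $(J^*q)(\bar W^{\otimes4})=q((\bar\omega^\sharp\bar W)^{\otimes4})$ rewrites the right-hand side of the Lemma in this form.

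First I would unwind $\cQ_L$. By Lemma \ref{lem-tl} applied to $t=\cQ$ one has $\tilde\cQ=\tfrac{1}{2c_3}(q+\bar q)$ and $\langle\cQ_L(F^{\otimes4}),w^{\otimes4}\rangle=\tilde\cQ((L_wF)^{\otimes4})$. Using the identification $\HH\simeq\CC^2$, $w\mapsto(w_+,w_-)$, furnished by $\psi$ in the proof of Proposition \ref{pro-isoreps}, the quaternionic action splits holomorphically: for $U\in\V$ one finds $L_wU=w_+U+w_-\,\omega^\sharp U$ with $\omega^\sharp U\in\bar\V$, and for $\bar W\in\bar\V$ one finds $L_w\bar W=\bar w_+\bar W+\bar w_-\,\bar\omega^\sharp\bar W$ with $\bar\omega^\sharp\bar W\in\V$ (here $\omega^\sharp:\V\to\bar\V$ is the raising of $\omega$). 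Since $q$ reads off only the $\V$-part of its arguments and $\bar q$ only the $\bar\V$-part, feeding the mixed inputs into $\tilde\cQ$ shows that $\cQ_L(U^{\otimes k}\bar W^{\otimes 4-k})$ is supported, inside $\Sym^4\HH$, on just two monomials in $(w_+,\bar w_+,w_-,\bar w_-)$, with coefficients built from $q$, $\bar q$ and the raisings $\omega^\sharp,\bar\omega^\sharp$.

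Next I would assemble the norm. Polarising $\mho(Z^{\otimes8})=c_3^2\langle\cQ_L(Z^{\otimes4}),\cQ_L(Z^{\otimes4})\rangle$ and inserting four copies of $U$ and four of $\bar W$ gives
\[
\mho(U^{\otimes4},\bar W^{\otimes4})=\frac{c_3^2}{\binom{8}{4}}\sum_{k=0}^{4}\binom{4}{k}^{2}\big\langle \cQ_L(U^{\otimes k}\bar W^{\otimes 4-k}),\,\cQ_L(U^{\otimes 4-k}\bar W^{\otimes k})\big\rangle,
\]
which already accounts for the denominator $\binom{8}{4}=70$. The inner products are then computed from the $\CC$-bilinear extension of the metric on $\Sym^4\HH$: in the basis $1,i,j,k$ the vectors $v_\pm$ dual to $w_\pm$ are null (e.g.\ $v_+=1+\sqrt{-1}\,i$) and pair only with their conjugates, $\langle v_\pm,\bar v_\pm\rangle=2$ and all other pairings vanishing. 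This selects, in each summand, the pairing of each monomial with its metric partner (the monomial obtained by exchanging barred and unbarred labels), collapsing the sum to an expression quadratic in $q$ with four $\omega$/$\bar\omega$ raisings inserted.

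The last and hardest step is to recognise this expression as $P_{44}(q\otimes q)$ and to pin the constant. Using the defining relation $\omega_{\alpha\mu}\bar\omega^\mu{}_{\bar\beta}=-h_{\bar\beta\alpha}$ to cancel the paired raisings, the quadratic-in-$q$ contraction through the \emph{antisymmetric} form $\omega$ is precisely what produces the column antisymmetrisations $\delta_{[\mu}\cdots\delta_{\xi]}$ appearing in the projector $P_{44}$, while the weights $\binom{4}{k}^{2}$ are exactly those of the two-row Young symmetriser on $\Sym^{4,0}\otimes\Sym^{4,0}$, so that only the $[4,4]$-component of $q\otimes q$ survives and the lower pieces $[8-m,m]$, $m<4$, drop out. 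I expect this combinatorial identification of the weighted self-contraction with $P_{44}$ — together with the constant $256=2^4\cdot2^4$ arising from the two surviving monomial pairings (each contributing a factor $2^4$) weighed against the $\tfrac1{2c_3}$ coefficients — to be the main obstacle, and I would fix the overall scalar by a single normalisation check, evaluating both sides on one convenient highest-weight vector where the $\omega$-raisings and $P_{44}$ act transparently.
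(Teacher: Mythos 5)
Your reduction framework is sound as far as it goes: pairing against decomposables $U^{\otimes 4}\otimes \bar W^{\otimes 4}$ does isolate the $\Sym^{4,0}\otimes\Sym^{0,4}$ component, the identity $\tilde\cQ=\tfrac{1}{2c_3}(q+\bar q)$ is correct, and your polarisation formula with weights $\binom{4}{k}^{2}\big/\binom{8}{4}$ is exactly right and does account for the $70$. But the proof stops precisely where the lemma's content begins: you never show that the resulting weighted sum of contractions equals $\tfrac{256}{70}(1\otimes J^*)P_{44}(q\otimes q)$ — you say yourself that you ``expect this combinatorial identification \dots to be the main obstacle.'' Worse, the heuristic you offer in its place is incorrect: the expansion of $P_{44}$ into elementary contractions of $q\otimes q$ carries the \emph{signed} weights $(-1)^{j}\binom{4}{j}$ (the $1,-4,6,-4,1$ pattern produced by the four column antisymmetrisations, one per index pair), not the positive weights $\binom{4}{k}^{2}$; your $k$ (how many $U$'s feed each $\cQ_L$ factor) and the projector's $j$ (how many indices are exchanged between the two copies of $q$) index two \emph{different} expansions, and translating one into the other is itself the whole computation. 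The proposed normalisation shortcut is also circular: evaluating both sides on one convenient vector can fix a scalar only after you know the left-hand side is \emph{proportional} to $P_{44}(q\otimes q)$, and nothing in your argument establishes that. Indeed $q\otimes q$ has components of all three symmetry types $[8]$, $[6,2]$, $[4,4]$ inside $\Sym^{2}(\Sym^{4}\V^*)$, and your contraction is built with $\omega$, hence is not $\sGL(\V)$-equivariant, so there is no Schur-type argument excluding the other two pieces for free.

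For comparison, the paper closes exactly this gap by a factorisation trick that your organisation forgoes. It first proves a one-form lemma: for a single real linear form $F$, the sum $\sum_A F(L_A X)F(L_A Y)$ over the quaternion units $L_A=1,I,J,K$ is computed explicitly as a two-index expression consisting of a $\delta\delta$ term and an $\omega\bar\omega$ term. Writing $\|\cQ_L(Z^{\otimes 4})\|^{2}$ as a fourfold sum over units and performing the sum over each of the four unit indices \emph{independently} then expresses $\mho$ as $q\otimes\bar q$ contracted with a \emph{product of four} such two-index brackets. Expanding that product is a plain binomial expansion, so the alternating $(-1)^{j}\binom{4}{j}$ structure of $P_{44}$ — and the constant $\tfrac{256}{70}=\tfrac{16\cdot 16}{70}$ — emerge automatically once $\bar q=J^{*}q$ and $\bar\omega\,\omega=-\delta$ are applied. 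If you wish to keep your decomposable-vector organisation, you must actually compute each inner product $\langle \cQ_L(U^{\otimes k}\bar W^{\otimes 4-k}),\,\cQ_L(U^{\otimes 4-k}\bar W^{\otimes k})\rangle$ in terms of $q$, $\bar q$, $\omega$, and then resum over $k$ and match the result against the expanded projector; that is feasible, but it is precisely the work that is missing from your proposal.
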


From now on, we define 
$$\kappa = \dim\KK \quad\textrm{and}\quad 
 c_3 = 24 \sqrt\frac{1}{\kappa+3}  $$
to claim the following:
\begin{lem} \label{lemq}$ $ \begin{enumerate}
\item Introducing $\chi = \frac{\kappa+2}{\sqrt{\kappa+3}}$ and $N = \dim \V = 6\kappa+8,$
the tensor $q$ satisfies the following identities:
\begin{eqnarray*}
q_{\alpha\beta\mu\nu} \bar\omega^{\mu\nu} &=& 0 \\
q_{\mu\nu\rho\sigma} \bar\omega^{\mu\alpha}\bar\omega^{\nu\beta}\bar\omega^{\rho\gamma}\bar\omega^{\sigma\delta} &=& \bar q^{\alpha\beta\gamma\delta} \\
q_{\alpha\mu\nu\rho} \bar q^{\beta\mu\nu\rho} &=& \frac{N+1}{2}\ \delta_\alpha^\beta \\
q_{\alpha\beta\mu\nu} \bar q^{\gamma\delta\mu\nu} &=& 
\frac{1}{2} [\delta_\alpha^\gamma \delta_\beta^\delta + \delta_\alpha^\delta \delta_\beta^\gamma]+ \chi\ q_{\alpha\beta\mu\nu} \bar\omega^{\mu\gamma}\bar\omega^{\nu\delta}.
\end{eqnarray*}
\item
Let us introduce a map
$$ \cD_q : \Sym^{2,0} \to \Sym^{2,0} $$
$$ \Sym^{2,0}_{ab} \ni b_{\alpha\beta} \mapsto \cD_q(b)_{\alpha\beta} = b_{\mu\nu} \bar q^{\mu\nu\rho\sigma} \omega_{\rho\alpha} \omega_{\sigma\beta} \in \Sym^{2,0}_{ab}. $$
Then, with respect to the decomposition
$$ \Sym^{2,0} \simeq \CC\otimes\fsp(\cV_3(\KK),\omega) = \CC\otimes[\cg_3(\KK) \oplus \cg_3(\KK)^\bot], $$
the map $\cD_q$ is given by:
\begin{eqnarray*}
\cD_q|_{\cg_3(\KK)} &=& -\sqrt{\kappa+3} \\
\cD_q|_{\cg_3(\KK)^\bot} &=& \sqrt\frac{1}{\kappa+3}. 
\end{eqnarray*}
It also satisfies:
$$\cD_q^2 = 1 - \chi\cD_q. $$
\end{enumerate}\end{lem}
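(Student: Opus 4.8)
The plan is to treat the two parts in turn, using the $\cg_3(\KK)$-invariance of $q$, $\omega$ and $h$ to reduce every contraction to a statement about scalars, and to reserve genuine computation in $\fxk$ for fixing the numerical constants --- which is exactly what the normalisation $c_3=24/\sqrt{\kappa+3}$ is chosen to make clean. The first identity of part 1 is immediate from symmetry: $q\in\Sym^{4,0}$ is totally symmetric, while $\bar\omega^{\mu\nu}$, obtained by raising both indices of the $(0,2)$-form $\bar\omega$ with the hermitian form $h$, is antisymmetric, so $q_{\alpha\beta\mu\nu}\bar\omega^{\mu\nu}=0$. The second identity is a reality statement, asserting that raising all four indices of $q$ with $\bar\omega$ returns the conjugate $\bar q$. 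I would prove it from the definition of $q$ as the linear extension of the \emph{real} quartic $\cQ$ together with the relation $\omega_{\alpha\mu}\bar\omega^\mu{}_{\bar\beta}=-h_{\bar\beta\alpha}$ of Remark \ref{not-cpx}: contracting with $\bar\omega$ converts each holomorphic slot of $q$ into the matching antiholomorphic one, and reality of $\cQ$ identifies the four-fold conversion with $\bar q$.

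For the norm identities three and four I would argue first by representation theory and only then fix constants. Since the complexification of $\cg_3(\KK)$ lies in $\der\fxk$ (Lemma \ref{lem-derfts}) it preserves $\cQ$, hence $q$, and it also preserves $\omega$ and $h$; thus every contraction of $q$ with $\bar q$ is $\cg_3(\KK)$-equivariant. The expression $q_{\alpha\mu\nu\rho}\bar q^{\beta\mu\nu\rho}$ is then an equivariant endomorphism of the irreducible module $\V$ (irreducibility by Lemma \ref{lem-cpx} applied to $\cA=\der\fxk$), so by Schur's lemma it is a multiple of $\delta_\alpha^\beta$; the factor is read off from the full trace $q_{\mu\nu\rho\sigma}\bar q^{\mu\nu\rho\sigma}$, a single computation in $\fxk$ which simultaneously fixes $c_3$. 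Likewise $q_{\alpha\beta\mu\nu}\bar q^{\gamma\delta\mu\nu}$ is an equivariant endomorphism of $\Sym^{2,0}\simeq\CC\otimes\fsp(\cV_3(\KK),\omega)$; the only two equivariant tensors with the correct symmetry are the symmetrised identity and the $\omega$-contraction $q_{\alpha\beta\mu\nu}\bar\omega^{\mu\gamma}\bar\omega^{\nu\delta}$, which explains the two-term right-hand side, and the coefficients $\tfrac12$ and $\chi$ are pinned down by tracing a pair of indices and invoking identity three.

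Part 2 then follows formally from identity four. The map $\cD_q$ is assembled from the $\cg_3(\KK)$-invariant tensors $q,\bar q,\omega$, so it commutes with the $\cg_3(\KK)$-action, preserves the decomposition $\Sym^{2,0}\simeq\CC\otimes[\cg_3(\KK)\oplus\cg_3(\KK)^\bot]$, and acts as a scalar on each (irreducible) summand by Schur's lemma. Substituting the definition of $\cD_q$ into $\cD_q^2$ and reducing the resulting $\bar q\,\bar q$-contraction by the conjugate of identity four (again using $\omega_{\alpha\mu}\bar\omega^\mu{}_{\bar\beta}=-h_{\bar\beta\alpha}$) yields the minimal-polynomial relation $\cD_q^2=1-\chi\cD_q$, whose roots are $-\sqrt{\kappa+3}$ and $1/\sqrt{\kappa+3}$ (sum $-\chi$, product $-1$). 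To match each summand with a root I would use that the elements of $\cg_3(\KK)$ are derivations annihilating $q$, so that the corresponding endomorphism $E$ (via (\ref{omid})) satisfies $E^\mu{}_{(\alpha}q_{\beta\gamma\delta)\mu}=0$; feeding this into the definition of $\cD_q$ and simplifying with identity two forces the eigenvalue $-\sqrt{\kappa+3}$ on $\cg_3(\KK)$, and a single evaluation on $\cg_3(\KK)^\bot$ (equivalently, a trace comparison) then gives $1/\sqrt{\kappa+3}$ on the complement.

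The step I expect to be the main obstacle is the explicit computation in $\fxk$ underlying identity four: separating the two invariant tensor structures in $q\bar q$ and producing exactly the coefficient $\chi$ requires combining the special form of $\cQ$ (through $X^\sharp$, $\det$ and the relation $X^{\sharp\sharp}=(\det X)X$) with the precise value of $c_3$. The eigenvalue assignment in part 2 is the only other place demanding input beyond formal manipulation.
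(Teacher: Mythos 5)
Your handling of the first two identities coincides with the paper's (symmetry of $q$ against antisymmetry of $\bar\omega$, and reality of $\cQ$ together with its invariance under $J_0$), and your derivation of the minimal polynomial of $\cD_q$ from the fourth identity is exactly the paper's computation. The fatal gap is the fourth identity itself. Write the ansatz
$q_{\alpha\beta\mu\nu}\bar q^{\gamma\delta\mu\nu}=\frac{A}{2}\bigl[\delta_\alpha^\gamma\delta_\beta^\delta+\delta_\alpha^\delta\delta_\beta^\gamma\bigr]+B\,q_{\alpha\beta\mu\nu}\bar\omega^{\mu\gamma}\bar\omega^{\nu\delta}$.
Contracting $\gamma$ with $\alpha$ annihilates the $B$-structure, since $q_{\alpha\beta\mu\nu}\bar\omega^{\mu\alpha}=0$ by the first identity, and yields only $A=1$; contracting instead with $\omega_{\gamma\delta}$ gives $0=0$. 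So no trace can see $B$: your claim that both coefficients are ``pinned down by tracing a pair of indices and invoking identity three'' fails precisely for $\chi$, and $\chi$ is the entire content of the lemma, the eigenvalues of part 2 being nothing but the roots of $\lambda^2+\chi\lambda-1=0$. In the paper, $\chi$ comes out of the explicit quadratic identity (\ref{idtt}) for $\cQ$, proved by the long componentwise computation in $\fxk$ resting on Lemma \ref{lem-id2} (hence on $X^{\sharp\sharp}=(\det X)X$), and then complexified in Corollary \ref{corq}. You name these ingredients in your closing paragraph, but naming them is not performing the computation: as written, your proposal contains no proof of the fourth identity. (Note also that the paper gets your third identity for free by contracting (\ref{qq}), so your separate Schur-plus-full-trace argument for it introduces yet another uncarried $\fxk$ computation while saving nothing.)

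Part 2 has a second gap: the assignment of $-\sqrt{\kappa+3}$ to $\cg_3(\KK)$. The inputs you invoke --- the annihilation property $E(q)=0$ for $E\in\cg_3(\KK)$ together with identity two --- are invariant under real rescaling $q\mapsto cq$, whereas $\cD_q$ scales by $c$; no argument from these inputs alone can produce a nonzero numerical eigenvalue. Concretely, putting $b_{\mu\nu}=E^\kappa{}_\mu\omega_{\nu\kappa}$ and eliminating $E^\kappa{}_\mu\bar q^{\mu\nu\rho\sigma}$ inside $\cD_q(b)$ via the annihilation identity returns exactly the term you started from, the remaining two terms dying against $\omega_{\nu\kappa}\bar q^{\kappa\nu\rho\sigma}=0$: the manipulation is a tautology. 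What actually settles the assignment in the paper is a dimension count: $\tr\cD_q=0$ (a consequence of (\ref{qw})) forces $\lambda^+\dim\fsp^++\lambda^-\dim\fsp^-=0$ with $\dim\fsp^++\dim\fsp^-=\frac{N(N+1)}{2}$, and only one of the two assignments is compatible with $\dim\cg_3(\KK)$ (for $\KK=\RR$: $21$ against $84$); equivariance of $\cD_q$ and irreducibility of $\CC\otimes\cg_3(\KK)$ guarantee that $\cg_3(\KK)$ sits inside a single eigenspace, so the count closes the argument. Replacing your assignment mechanism by this dimension count would repair part 2, but the missing computation behind the coefficient $\chi$ in identity four remains, and without it the lemma is not proved.
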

This one is also proved in the next section.
We are at last ready to check the stabilizer of $\mho.$ We shall deal with it in two long lemmas,
splitting $\fso(\cV_3(\KK))$ into $\fu(\V)$ and its complement.

Note that the term $P_{44}(q\otimes q)$ in our expression for the $\Sym^{4,0}\otimes\Sym^{0,4}$
component of $\mho$ is itself an element of $\Sym^{4,0}\otimes\Sym^{4,0},$ i.e. it is completely linear.
It is then convenient to consider only the $\End \V$ component of elements of $\fu(\cV_3(\KK)),$
so that a map $F^a{}_b = f^\alpha{}_\beta + \bar f^{\bar\alpha}{}_{\bar\beta} \in \fu(\cV_3(\KK))^a{}_b$
is represented by $f^\alpha{}_\beta \in \fu(\V)^\alpha{}_\beta\subset(\End \V)^\alpha{}_\beta.$
(the isomorphism $\cV_3(\KK)\simeq \V$ is implicit).
The algebra $\fu(\V)$ decomposes into the symplectic subalgebra and its complement, such that
$$\fsp_\CC(\V,\omega) \oplus \fsp^\bot_\CC(\V,\omega) \simeq \Sym^{2,0} \oplus \Lam^{2,0}, $$
where the isomorphism is given by $\omega_{\alpha\beta}$ as in (\ref{omid}). In the following we will omit indicating the complexifications of algebras explicitly 
-- it is clear that the final stabilizer is an intersection of a complexified one with the real algebra $\fso(\cV_3(\KK)).$

\begin{lem} The stabilizer algebra of $\mho$ in $\fsu(\V)$ is $\cg_3(\KK).$ \end{lem}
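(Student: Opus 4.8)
The plan is to exploit that $\mho$ already lives in $\Sym^{4,4}$: it is invariant under the complex structure $I$ (this follows from the $\HH$-equivariance of $\cQ_L$, Lemma \ref{lem-tl}), and by the isomorphism (\ref{sym-split}) it is therefore faithfully represented by its $\Sym^{4,0}\otimes\Sym^{0,4}$-component
$$ M := \frac{256}{70}\,(1\otimes J^*)\,P_{44}(q\otimes q) $$
computed in Lemma \ref{lem44}. Since every $f\in\fsu(\V)\subset\Re\Lam^{1,1}$ preserves the bidegree, the map $f$ stabilises $\mho$ if and only if $f\cdot M=0$, and this equivalence reduces the problem to a purely holomorphic tensor computation. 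One inclusion is then immediate: $\cg_3(\KK)=[\der\fxk]_\fu$ preserves $\cQ$ (hence $q$), the symplectic form $\omega$ (hence $\bar\omega$ and the map $J^*$) and the Hermitian product $h$, and it commutes with the quaternionic action $L_q$; consequently it preserves $\cQ_L$ and its norm, i.e.\ $\mho$. Thus $\cg_3(\KK)$ sits inside the stabiliser and it remains to prove the reverse.

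For the reverse inclusion I would take $f\in\fsu(\V)$ with $f\cdot M=0$ and, after complexifying, decompose it through $\omega$ as in (\ref{omid}) into $f=s+a$ with $s\in\Sym^{2,0}\simeq\fsp(\V,\omega)$ and $a\in\Lam^{2,0}\simeq\fsp(\V,\omega)^\bot$; using identity~2 of Lemma \ref{lemq}, namely $J^*q=\bar q$, one may rewrite $M$ as a multiple of the $[4,4]$-Young projection of $q\otimes\bar q$, which is convenient for contractions. The aim is to deduce first $a=0$ and then $s\in\cg_3(\KK)$. The generator $s$ preserves $\omega$ and hence commutes with $J^*$, so its contribution to $f\cdot M$ stays within the $\fsp$-invariant subspace, whereas $a$ breaks $\omega$ and produces a transverse contribution. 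I expect to isolate the latter by contracting $f\cdot M=0$ against $\bar q$ and $\bar\omega$ and invoking $q_{\alpha\beta\mu\nu}\bar\omega^{\mu\nu}=0$ (identity~1), which annihilates every term coming from the $\omega$-preserving $s$ while leaving a nondegenerate pairing on the $a$-contribution, forcing $a=0$.

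With $a=0$ one has $f=s\in\fsp(\V,\omega)$; since $s$ intertwines its actions on $\Sym^{4,0}$ and on $\Sym^{0,4}$ through the injective map $1\otimes J^*$, the equation $s\cdot M=0$ reduces to $P_{44}\big((s\cdot q)\odot q\big)=0$ in $\Sym^{4,0}\otimes\Sym^{4,0}$, where $\odot$ is the symmetric product. Contracting this against $\bar q$ on one of the two $q$-blocks and collapsing the result with the normalisation $q_{\alpha\mu\nu\rho}\bar q^{\beta\mu\nu\rho}=\tfrac{N+1}{2}\delta_\alpha^\beta$ (identity~3) and the quadratic relation $q_{\alpha\beta\mu\nu}\bar q^{\gamma\delta\mu\nu}=\tfrac12(\delta_\alpha^\gamma\delta_\beta^\delta+\delta_\alpha^\delta\delta_\beta^\gamma)+\chi\,q_{\alpha\beta\mu\nu}\bar\omega^{\mu\gamma}\bar\omega^{\nu\delta}$ (identity~4) turns the condition into a linear relation between $s$ and $\cD_q(s)$. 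By part~2 of Lemma \ref{lemq} the operator $\cD_q$ acts as $-\sqrt{\kappa+3}$ on $\cg_3(\KK)$ and as $1/\sqrt{\kappa+3}$ on $\cg_3(\KK)^\bot$; the relation is satisfied only on the first eigenspace, so the $\cg_3(\KK)^\bot$-part of $s$ vanishes. Equivalently $s\cdot q=0$, whence $s\in\der\fxk$ and, being antihermitian, $s\in\cg_3(\KK)$, giving the claimed equality of stabilisers.

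The main obstacle is the honest tensor bookkeeping concealed in the two contraction steps: one must carry the action of $f=s+a$ on $M$ through the Young projector $P_{44}$ and cleanly separate the symmetric ($s$) from the antisymmetric ($a$) contributions, a task complicated precisely because $J^*$ and $P_{44}$ fail to commute with the $\omega$-breaking part $a$. Verifying that the pairing surviving on the $a$-contribution is nondegenerate, and that identities~3 and~4 collapse the $s$-equation exactly to the $\cD_q$-eigenvalue condition, is what makes this the more laborious of the two lemmas; it is also where the specific normalisation $c_3=24\sqrt{1/(\kappa+3)}$ is engineered to make the constants line up.
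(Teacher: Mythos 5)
Your skeleton is essentially the paper's: reduce to the $\Sym^{4,0}\otimes\Sym^{0,4}$ component $M$ from Lemma \ref{lem44}, split $f\in\fsu(\V)$ according to its commutation with $J$ into a symplectic part $s$ and a part $a\in\fsp^\bot(\V,\omega)$ (which, after stripping $1\otimes J^*$, enters the holomorphic equation as $(s+a)\otimes 1+1\otimes(s-a)$), contract with copies of $\bar q$, and finish with the $\cD_q$-eigenvalue data of Lemma \ref{lemq}; your easy inclusion and your outline of the $s$-equation agree with the paper. The genuine gap is the mechanism you propose for the first step, forcing $a=0$. You assert that contracting against $\bar q$ and $\bar\omega$ and invoking $q_{\alpha\beta\mu\nu}\bar\omega^{\mu\nu}=0$ annihilates every term coming from $s$. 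But that identity (which indeed also holds for $s\cdot q$, since $s$ preserves $\omega$) only kills a $\bar\omega$ whose \emph{both} indices land on one and the same factor $q$. The Young projector $P_{44}$ shuffles indices between the two four-index blocks, so the expansion of the $s$-contribution $P_{44}\bigl((s\cdot q)\otimes q+q\otimes(s\cdot q)\bigr)$ contains cross terms in which a given pair of slots carries one index from each factor; a $\bar\omega$ contracted there produces expressions of the shape $(s\cdot q)_{\dots\mu}\,\bar\omega^{\mu\nu}q_{\nu\dots}$, which do \emph{not} vanish --- contractions of exactly this shape are what generate the operator $\cD_q$ and the constant $\chi$ in the paper's computation. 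So no simple $\bar\omega$-contraction isolates the $a$-part, and your step 1 fails as stated.

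The separation can be repaired in two ways. Either observe that $P_{44}(q\otimes q)$ is symmetric under exchange of the two blocks (visible term by term in its expansion), that $s\otimes 1+1\otimes s$ commutes with this exchange while $a\otimes 1-1\otimes a$ anticommutes with it, so the two contributions have opposite parity and must vanish separately; or do what the paper does and make no a priori separation at all: contract the full equation with $\bar q^{\xi\beta\gamma\delta}\bar q^{\mu\nu\rho\sigma}$, reduce it --- using not only your identities 3 and 4 but also cubic contractions such as $q_{\alpha\beta\mu\nu}\bar q^{\mu\nu\rho\sigma}q_{\rho\sigma\gamma\delta}$, which your sketch omits --- to a linear combination of $F$, $\cD_q(F)$, $\sigma(F)=J^{-1}FJ$ and trace terms, and only at the end invoke the orthogonality of the invariant subspaces $\cg_3(\KK)^\bot\cap\fsp(\V,\omega)$ and $\fsp^\bot(\V,\omega)$ to split the resulting single equation into the separate vanishing of both components of $f$ outside $\cg_3(\KK)$. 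In either organization the actual content of the lemma is the verification that the two resulting coefficient functions are nonzero for $\kappa=1,2,4,8$; your proposal defers this entirely, so even with the separation fixed it remains a plan rather than a proof.
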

\begin{proof}
Let $E = A + B + C$ with $A\in\cg_3(\KK),\ B\in\cg_3^\bot(\KK)\subset\fsp(\V)$ and $C\in\fsp^\bot(\V)\subset\fsu(\V).$
Clearly, $[A,J] = [B,J] = 0$ and $CJ + JC = 0.$ Now, according to the previous lemma, we have
$$ E(\mho) = 0 \iff E( (1\otimes J^*) P_{44} (q\otimes q) ) = 0. $$
Using the properties of $A,B$ and $C,$ we have
$$ 
(1\otimes J^*) \circ [(A+B+C)\otimes 1 + 1 \otimes (A+B-C)]  \circ P_{44} (q\otimes q) = 0, $$
which under the action of $1\otimes J^{-1}$ yields
$$ [(A+B+C)\otimes 1 + 1 \otimes (A+B-C)] P_{44} (q\otimes q) = 0. $$
However terms involving $A$ shall obviously vanish, it is instructive to keep them for the sake of verification.
We shall contract the latter expression with two copies of $\bar q:$
$$ \left\lbrace[(A+B+C)\otimes 1 + 1 \otimes (A+B-C)] 
P_{44} (q\otimes q) \right\rbrace_{\alpha\beta\gamma\delta\mu\nu\rho\sigma} \bar q^{\xi\beta\gamma\delta} \bar q^{\mu\nu\rho\sigma} = 0. $$

In order to simplify bookkeeping, we introduce the following maps:
$$ \Phi,\Psi : \fsu(\V) \to \fsu(\V) $$
\begin{eqnarray*}
\Phi(F)^\xi_\alpha &=& \left\lbrace(F \otimes 1) 
P_{44} (q\otimes q) \right\rbrace_{\alpha\beta\gamma\delta\mu\nu\rho\sigma} \bar q^{\xi\beta\gamma\delta} \bar q^{\mu\nu\rho\sigma} \\
\Psi(F)^\xi_\alpha &=& \left\lbrace(1 \otimes F) 
P_{44} (q\otimes q) \right\rbrace_{\alpha\beta\gamma\delta\mu\nu\rho\sigma} 
\bar q^{\xi\beta\gamma\delta} \bar q^{\mu\nu\rho\sigma}
,
\end{eqnarray*}
so that our condition becomes $\Phi(A+B+C) + \Psi(A+B-C) = 0.$

Expanding the projection $P_{44},$ and utilising the symmetry of $q\otimes q,$ we have (for some irrelevant combinatorial constant $c'\neq 0$):
\begin{eqnarray*}
c'\ \Phi(F)^\xi_\alpha
&=& 2\  
q_{\eta(\alpha\beta\gamma} F^\eta{}_{\delta)} q_{\mu\nu\rho\sigma} 
\bar q^{\xi\beta\gamma\delta} \bar q^{\mu\nu\rho\sigma} 
\\
&-& 8 \left[
\frac{3}{4} q_{\eta(\mu(\alpha\beta} F^\eta{}_{\gamma} q_{\delta)\nu\rho\sigma)} +
\frac{1}{4} q_{(\mu(\alpha\beta\gamma} F^\eta{}_{\delta)} q_{\nu\rho\sigma)\eta}  
\right]
\bar q^{\xi\beta\gamma\delta} \bar q^{\mu\nu\rho\sigma} 
\\
&+& 6\ 
q_{\eta(\mu\nu(\alpha} F^\eta{}_{\beta} q_{\gamma\delta)\rho\sigma)}
\bar q^{\xi\beta\gamma\delta} \bar q^{\mu\nu\rho\sigma}
\end{eqnarray*}
and, similarly,
\begin{eqnarray*}
c'\ \Psi(F)^\xi_\alpha
&=& 2\ 
q_{\eta(\mu\nu\rho} F^\eta{}_{\sigma)} q_{\alpha\beta\gamma\delta} 
\bar q^{\xi\beta\gamma\delta} \bar q^{\mu\nu\rho\sigma} 
\\
&-& 8 \left[
\frac{3}{4} q_{\eta(\alpha(\mu\nu} F^\eta{}_{\rho} q_{\sigma)\beta\gamma\delta)} +
\frac{1}{4} q_{(\alpha(\mu\nu\rho} F^\eta{}_{\sigma)} q_{\beta\gamma\delta)\eta}  
\right]
\bar q^{\xi\beta\gamma\delta} \bar q^{\mu\nu\rho\sigma} 
\\
&+& 6\ 
q_{\eta(\alpha\beta(\mu} F^\eta{}_{\nu} q_{\rho\sigma)\gamma\delta)}
\bar q^{\xi\beta\gamma\delta} \bar q^{\mu\nu\rho\sigma}
\end{eqnarray*}
We will show that both $\Phi(F)$ and $\Psi(F)$ are linear combinations of $F$ and $\cD_q(F),$ where
the domain of $\cD_q$ is trivially extended onto entire $\fsu(\V)$ (so that $\fsp^\bot(\V)$ is its kernel). To this end we first get rid of the symmetrizers:
\begin{eqnarray*}
c'\ \Phi(F)^\xi_\alpha
&=& 2 \left[
\frac{3}{4} q_{\eta\alpha\beta\gamma} F^\eta{}_{\delta} q_{\mu\nu\rho\sigma} +
\frac{1}{4} q_{\eta\delta\beta\gamma} F^\eta{}_{\alpha} q_{\mu\nu\rho\sigma} 
\right]
\bar q^{\xi\beta\gamma\delta} \bar q^{\mu\nu\rho\sigma} 
\\
&-& 8 \left[
\frac{3}{4} \left\lbrace
\frac{1}{2} q_{\eta\mu\alpha\beta} F^\eta{}_{\gamma} q_{\delta\nu\rho\sigma} +
\frac{1}{4} q_{\eta\mu\gamma\beta} F^\eta{}_{\alpha} q_{\delta\nu\rho\sigma} + 
\frac{1}{4} q_{\eta\mu\delta\beta} F^\eta{}_{\gamma} q_{\alpha\nu\rho\sigma} 
\right\rbrace\right. \\ & & \quad \left. +
\frac{1}{4} \left\lbrace 
\frac{3}{4} q_{\mu\alpha\beta\gamma} F^\eta{}_{\delta} q_{\nu\rho\sigma\eta} + 
\frac{1}{4} q_{\mu\delta\beta\gamma} F^\eta{}_{\alpha} q_{\nu\rho\sigma\eta}  
\right\rbrace
\right]
\bar q^{\xi\beta\gamma\delta} \bar q^{\mu\nu\rho\sigma} 
\\
&+& 6\left[
\frac{1}{4} q_{\eta\mu\nu\alpha} F^\eta{}_{\beta} q_{\gamma\delta\rho\sigma} +
\frac{1}{4} q_{\eta\mu\nu\beta} F^\eta{}_{\alpha} q_{\gamma\delta\rho\sigma} 
\right. \\ & & \quad \left.
+ \frac{1}{2} q_{\eta\mu\nu\gamma} F^\eta{}_{\beta} q_{\alpha\delta\rho\sigma} 
\right]
\bar q^{\xi\beta\gamma\delta} \bar q^{\mu\nu\rho\sigma}, \end{eqnarray*}
and then perform contractions:
\begin{eqnarray*}
c'\ \Phi(F)^\xi_\alpha &=&
\frac{N(N+1)}{2} \left[3\varphi(F)^\xi_\alpha + \frac{N+1}{2}F^\xi{}_\alpha\right]
\\ &-& 3 \frac{N+1}{2}\varphi(F)^\xi_\alpha - \frac{3}{2} \left(\frac{N+1}{2}\right)^2 F^\xi{}_\alpha
- \frac{3}{2} \frac{N+1}{2}\varphi(F)^\xi_\alpha 
\\ & & \quad 
- \frac{3}{2}\frac{N+1}{2}\varphi(F)^\xi_\alpha
- \frac{1}{2} \left(\frac{N+1}{2}\right)^2 F^\xi{}_\alpha
\\ &+& \frac{3}{2}[(1+\chi^2)\varphi(F)^\xi_\alpha  + \chi \cD_q(F)^\xi_\alpha]
+ \frac{3}{2}(1+\chi^2)\frac{N+1}{2}F^\xi_\alpha \\
& & \quad + 3 \varphi(F)^\xi_\alpha + 3 \chi  
\left[ \left(-\frac{1}{2} + \chi^2 \right) \cD_q(F)^\xi_\alpha + \frac{\chi}{2}(F^\xi{}_\alpha + F^\mu{}_\mu \delta^\xi_\alpha) \right]
,
\end{eqnarray*}
where in the last lines we used the following identities:
$$
q_{\alpha\beta\mu\nu} \bar q^{\mu\nu\rho\sigma} q_{\rho\sigma\gamma\delta} =
(1+\chi^2) q_{\alpha\beta\gamma\delta} 
+ \frac{\chi}{2} ( \omega_{\alpha\gamma}\omega_{\beta\delta} + \omega_{\alpha\delta}\omega_{\beta\gamma} )
$$
$$ q_{\alpha\mu\kappa\lambda} \bar\omega^{\kappa\rho} \bar\omega^{\lambda\sigma} q_{\rho\sigma\beta\nu} \bar q^{\gamma\delta\mu\nu} = 
\left(-\frac{1}{2}+\chi^2\right) q_{\alpha\beta\kappa\lambda}\bar\omega^{\kappa\gamma}\bar\omega^{\lambda\delta}
+\frac{\chi}{2} ( \delta^\gamma_\alpha \delta^\delta_\beta +\delta^\delta_\alpha \delta^\gamma_\beta )
$$
and we have introduced
$$ \varphi(F)^\xi_\alpha = F^\mu{}_\nu q_{\alpha\mu\rho\sigma} \bar q^{\xi\nu\rho\sigma}
= \frac{1}{2} F^\xi{}_\alpha + \frac{1}{2} F^\mu{}_\mu \delta^\xi_\alpha 
+ \chi q_{\alpha\mu\rho\sigma} \bar\omega^{\rho\xi} \bar\omega^{\sigma\nu} F^\mu{}_\nu
, $$
so that $ \varphi(F) = \frac{1}{2} F + \chi \cD_q(F), $ since we assume $F$ to be traceless (note
also that the last term is indeed zero for $F\in\fsp^\bot(\V)$).
We perform the same operations for $\Psi:$ expand symmetrizers,
\begin{eqnarray*}
c'\ \Psi(F)^\xi_\alpha
&=& 2\ 
q_{\eta\mu\nu\rho} F^\eta{}_{\sigma} q_{\alpha\beta\gamma\delta} 
\bar q^{\xi\beta\gamma\delta} \bar q^{\mu\nu\rho\sigma} 
\\
&-& 8 \left[
\frac{3}{4}\left\lbrace
\frac{1}{4} q_{\eta\alpha\mu\nu} F^\eta{}_{\rho} q_{\sigma\beta\gamma\delta} +
\frac{3}{4} q_{\eta\delta\mu\nu} F^\eta{}_{\rho} q_{\sigma\beta\gamma\alpha}
\right\rbrace
\right. \\ & & + \quad\left.
\frac{1}{4} \left\lbrace
\frac{1}{4} q_{\alpha\mu\nu\rho} F^\eta{}_{\sigma} q_{\beta\gamma\delta\eta} +  
\frac{3}{4} q_{\delta\mu\nu\rho} F^\eta{}_{\sigma} q_{\beta\gamma\alpha\eta}  
\right\rbrace
\right]
\bar q^{\xi\beta\gamma\delta} \bar q^{\mu\nu\rho\sigma} 
\\
&+& 6 \left[ 
\frac{1}{2} q_{\eta\alpha\beta\mu} F^\eta{}_{\nu} q_{\rho\sigma\gamma\delta} +
\frac{1}{2} q_{\eta\delta\beta\mu} F^\eta{}_{\nu} q_{\rho\sigma\gamma\alpha}
\right]
\bar q^{\xi\beta\gamma\delta} \bar q^{\mu\nu\rho\sigma},
\end{eqnarray*}
and contract $q:$
\begin{eqnarray*}
c'\ \Psi(F)^\xi_\alpha &=& 2 \left(\frac{N+1}{2}\right)^2 F^\mu{}_\mu \delta^\xi_\alpha \\
&-& \frac{3}{2} \frac{N+1}{2}\varphi(F)^\xi_\alpha - \frac{9}{2} \varphi^2(F)^\xi_\alpha \\
& & \quad - \frac{1}{2} \left(\frac{N+1}{2}\right)^2 F^\xi_\alpha - \frac{3}{2} \frac{N+1}{2} \varphi(F)^\xi_\alpha \\
&+& 3 [(1+\chi^2) \varphi(F)^\xi_\alpha - \frac{\chi}{2} \cD_q(F)^\xi_\alpha ] \\
& & \quad
+ 3 \left[ \frac{1}{2} \frac{N+1}{2} F^\xi_\alpha + \frac{1}{2} \varphi(F)^\xi_\alpha \right]
\\ & & \quad + 3 \chi \left[ \left(\frac{1}{2}-\chi^2\right) \cD_q(F)^\xi_\alpha 
- \frac{\chi}{2}( \sigma(F)^\xi_\alpha - F^\mu{}_\mu \delta^\xi_\alpha)
\right],
\end{eqnarray*}
where in the last line we used one more identity:
$$ 
q_{\gamma\alpha\kappa\lambda} \bar\omega^{\kappa\mu}\bar\omega^{\lambda\nu} q_{\mu\eta\beta\delta}
\bar q^{\beta\delta\xi\gamma} 
=
\left( \frac{1}{2} - \chi^2 \right) 
q_{\eta\alpha\kappa\lambda} \bar\omega^{\kappa\xi}\bar\omega^{\lambda\nu}
+\frac{\chi}{2} [ \delta^\nu_\eta \delta^\xi_\alpha
- \bar\omega^{\xi\nu}\omega_{\eta\alpha} 
],
$$
and we have introduced a map $$ \sigma(F) = J^{-1} F J, $$
so that $\sigma|_{\fsp(\V)} = \id$ and $\sigma|_{\fsp^\bot(\V)} = -\id.$ The square of $\varphi,$
appearing in the second line, reads
$$ \varphi^2(F) = \left(\frac{1}{4} + \chi^2\frac{1+\sigma}{2}\right) F + \chi (1 - \chi^2) \cD_q(F). $$

We thus have:
\begin{eqnarray*}
c'\ \Phi(F) &=& (1-N+N^2+3\chi^2)\left\lbrace
\frac{N+4}{4} F + \frac{3\chi}{2} \cD_q(F) \right\rbrace \\
c'\ \Psi(F) &=& \frac{8-2N-N^2-3\chi^2(\sigma+2)}{8} F + \frac{3\chi}{2}(3\chi^2-N-1)  \cD_q(F)
\end{eqnarray*}
Using $\cD_q(A) = \lambda_+,$ $\cD_q(B)=\lambda_-,$ $\cD_q(C)=0$ and $\sigma(A)=A,$
 $\sigma(B)=B,$ $\sigma(C)=-C,$ and substituting $N,\lambda_\pm,\chi$ in
 $$ \Phi(A+B+C) + \Psi(A+B-C) = 0, $$
 we obtain $$ f_1(\kappa) B + f_2(\kappa) C = 0, $$ where $f_{1,2}(\kappa)$ are nonzero for
 $\kappa = 1,2,4,8$ (note that terms involving $A$ vanish as expected). Thus $$E(\mho)=0 \implies E \in \cg_3(\KK). $$
 The converse is obviously true form the definition of $\mho.$

\end{proof}

\begin{lem} The subspace of $\fu^\bot(\cV_3(\KK))\subset\fso(\cV_3(\KK))$ stabilizing $\mho$ is spanned by $J$ and $K.$
\end{lem}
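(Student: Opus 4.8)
The plan is to parametrise a general element of $\fu^\bot(\cV_3(\KK))$ as $E = A + \bar A$ with $A\in\Lam^{2,0}$, and to note first that $A$, viewed through the metric as an endomorphism, maps $\V$ into $\bar\V$; hence on a form it raises the holomorphic degree by one and lowers the antiholomorphic degree by one. Since $\mho\in\Sym^{4,4}$, the two summands of $E(\mho) = A(\mho) + \bar A(\mho)$ lie in $\Sym^{5,3}$ and $\Sym^{3,5}$ respectively, and being of distinct bidegree (and complex conjugate) they must vanish independently. Thus $E$ stabilises $\mho$ if and only if $A(\mho) = 0$, and it suffices to determine for which $A\in\Lam^{2,0}$ this holds.

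Next I would split $A = \lambda\omega + A_0$ into its $\omega$-trace and $\omega$-primitive parts (contracting with $\bar\omega$). The form $J = \omega + \bar\omega$ lies in the $\fsp(1)$ factor of $\fg(\KK,\OO)$ (Proposition \ref{pro-isoreps}), and $\mho$ is $\fsp(1)$-invariant by construction: indeed $\cQ_L$ intertwines the left action $L_q$ with $R_{\bar q}$ (Lemma \ref{lem-tl}), and the norm on $\Sym^4\HH$ is invariant under unit right multiplications, so $\mho(L_q Z) = \mho(Z)$ for $|q| = 1$. Consequently $J(\mho) = 0$, and by the bidegree argument $\omega(\mho) = 0$; the same holds for $K$ (the case $A = -i\omega$). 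It therefore remains only to prove the converse, that the map $A\mapsto A(\mho)$ is injective on the primitive part, i.e. $A_0(\mho) = 0 \Rightarrow A_0 = 0$.

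For this I would imitate the computation of the preceding lemma. Using Lemma \ref{lem44}, write $\mho$ as a multiple of $(1\otimes J^*)P_{44}(q\otimes q)$; since $J^*$ is assembled from four copies of $\bar\omega^{\mu}{}_{\bar\epsilon}$, letting $A$ act on the antiholomorphic factor and absorbing one such $\bar\omega$ converts the off-diagonal $A$ into a \emph{holomorphic} endomorphism $\hat A^\alpha{}_\beta = \bar\omega^{\alpha\mu}A_{\mu\beta}$ of $\V^*$ acting on the second factor of $P_{44}(q\otimes q)$. Contracting the resulting tensor in $\Sym^{5,3}$ with two copies of $\bar q$ (exactly as the maps $\Phi,\Psi$ were built) yields a $\cg_3(\KK)$-equivariant operator $\Theta$ on $\Lam^{2,0}$, which by the identities of Lemma \ref{lemq} — in particular $q_{\alpha\mu\nu\rho}\bar q^{\beta\mu\nu\rho} = \tfrac{N+1}{2}\delta_\alpha^\beta$, the $q\bar q$ trace relation, and $\cD_q^2 = 1 - \chi\cD_q$ — reduces, by equivariance, to a short combination $g_1(\kappa)\,A + g_2(\kappa)\,\langle A,\bar\omega\rangle\,\omega$. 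One then checks $g_1(\kappa)\ne 0$ for $\kappa = 1,2,4,8$, so that $A(\mho) = 0$ forces $A_0 = 0$, i.e. $A\in\CC\omega$. Combined with the previous paragraph, the stabilizer of $\mho$ in $\fu^\bot(\cV_3(\KK))$ is precisely the real span $\Span\{J,K\}$.

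The principal obstacle is this last explicit computation: setting up the contraction that cleanly isolates $A$ from the $P_{44}$ symmetriser while correctly tracking the four $\bar\omega$ factors of $J^*$ that mix holomorphic and antiholomorphic indices, and then pushing the reduction through the identities of Lemma \ref{lemq} to pin down $g_1(\kappa)$. This is the same species of lengthy index manipulation as in the $\fsu(\V)$ case, made heavier by the $\omega$-contractions; the one simplification is that $\cg_3(\KK)$-equivariance forces the answer into the two-term shape above, so that only the scalar $g_1(\kappa)$ need actually be evaluated.
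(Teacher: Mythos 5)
Your proposal is correct and follows essentially the same route as the paper's proof: split $E(\mho)$ by bidegree into $\Sym^{5,3}\oplus\Sym^{3,5}$, turn the off-diagonal map into a holomorphic endomorphism by absorbing one $\omega$, contract with two copies of $\bar q$, and reduce via the identities of Lemma \ref{lemq}, finally checking nonvanishing of the resulting coefficient(s) for $\kappa=1,2,4,8$. Your two refinements --- using the $\fsp(1)$-invariance of $\mho$ (via Lemma \ref{lem-tl}) to dispose of the $\CC\omega$ part a priori, and invoking Schur's lemma so that only the single scalar $g_1(\kappa)$ needs evaluating (legitimate, since the $\omega$-primitive part of $\Lam^{2,0}$ is indeed $\cg_3(\KK)$-irreducible, as the decompositions in Section \ref{sec-dec} confirm) --- merely streamline the bookkeeping, whereas the paper expands the full $P_{44}$ symmetrizer for an arbitrary $F$ and then analyzes the resulting master equation on the joint eigenspaces of $\cD_q$ and $\sigma$.
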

\begin{proof}
We shall repeat the procedure applied in the previous proof.
Let $$E^\alpha{}_{\bar\beta}+\bar E^{\bar\alpha}{}_\beta \in\fu^\bot(\cV_3(\KK))^a{}_b.$$ We have $E(\mho) \in \Sym^{5,3} \oplus \Sym^{3,5}.$
Let us demand vanishing of the part in $\Sym^{3,5}:$
$$ P_{44}(q\otimes q)_{\xi\alpha\beta\gamma\epsilon\phi\kappa\lambda}
E^\xi{}_{(\bar\eta} 
\bar\omega^\epsilon{}_{\bar\mu}
\bar\omega^\phi{}_{\bar\nu}
\bar\omega^\kappa{}_{\bar\rho}
\bar\omega^\lambda{}_{\bar\sigma)}
= 0.$$
Acting with $J^{-1}$ on five barred indices we obtain:
$$ P_{44}(q\otimes q)_{\xi\alpha\beta\gamma(\epsilon\phi\kappa\lambda}
F^\xi{}_{\zeta)} = 0,$$
where $F^\xi{}_\zeta = - E^\xi{}_{\bar\eta}\omega^{\bar\eta}{}_\zeta.$
Contracting the latter with $ 
\bar q^{\alpha\beta\gamma\delta} \bar q^{\epsilon\phi\kappa\lambda} 
$ yields:
\begin{eqnarray*}
0 &=& 2\ q_{\xi\alpha\beta\gamma} q_{(\epsilon\phi\kappa\lambda} F^\xi{}_{\zeta)}
\bar q^{\alpha\beta\gamma\delta} \bar q^{\epsilon\phi\kappa\lambda} 
\\
&-& 8
\left[
\frac{3}{4} 
q_{\gamma(\phi\kappa\lambda} 
F^\xi{}_{\zeta}   
q_{\epsilon)\xi\alpha\beta} 
+
\frac{1}{4} 
q_{\xi(\phi\kappa\lambda} 
F^\xi{}_{\zeta}
q_{\epsilon)\alpha\beta\gamma} 
\right]
\bar q^{\alpha\beta\gamma\delta} \bar q^{\epsilon\phi\kappa\lambda} 
\\
&+& 6\ 
q_{\gamma\alpha(\kappa\lambda} F^\xi{}_{\zeta}
q_{\epsilon\phi)\beta\xi} 
\bar q^{\alpha\beta\gamma\delta} \bar q^{\epsilon\phi\kappa\lambda}. 
\end{eqnarray*}
Expanding symmetrizers:
\begin{eqnarray*}
0 &=& 2
\left[
\frac{1}{5} q_{\xi\alpha\beta\gamma} q_{\epsilon\phi\kappa\lambda} F^\xi{}_{\zeta} +
\frac{4}{5} q_{\xi\alpha\beta\gamma} q_{\zeta\phi\kappa\lambda} F^\xi{}_{\epsilon}
\right]
\bar q^{\alpha\beta\gamma\delta} \bar q^{\epsilon\phi\kappa\lambda} 
\\
&-& 8
\left[
\frac{3}{4} \left\lbrace
\frac{1}{5} q_{\gamma\phi\kappa\lambda} F^\xi{}_{\zeta} q_{\epsilon\xi\alpha\beta} +
\frac{1}{5} q_{\gamma\phi\kappa\lambda} F^\xi{}_{\epsilon} q_{\zeta\xi\alpha\beta} +
\frac{3}{5} q_{\gamma\zeta\kappa\lambda} F^\xi{}_{\phi} q_{\epsilon\xi\alpha\beta} 
\right\rbrace
\right. \\ & & \ \left. +
\frac{1}{4} \left\lbrace
\frac{1}{5} q_{\xi\phi\kappa\lambda} F^\xi{}_{\zeta} q_{\epsilon\alpha\beta\gamma} +
\frac{1}{5} q_{\xi\phi\kappa\lambda} F^\xi{}_{\epsilon} q_{\zeta\alpha\beta\gamma} +
\frac{3}{5} q_{\xi\zeta\kappa\lambda} F^\xi{}_{\phi} q_{\epsilon\alpha\beta\gamma}
\right\rbrace\right]
\bar q^{\alpha\beta\gamma\delta} \bar q^{\epsilon\phi\kappa\lambda} 
\\
&+& 6 \left[
\frac{1}{5} q_{\gamma\alpha\kappa\lambda} F^\xi{}_{\zeta} q_{\epsilon\phi\beta\xi} +
\frac{2}{5} q_{\gamma\alpha\zeta\lambda} F^\xi{}_{\kappa} q_{\epsilon\phi\beta\xi} +
\frac{2}{5} q_{\gamma\alpha\kappa\lambda} F^\xi{}_{\phi} q_{\epsilon\zeta\beta\xi} 
\right]
\bar q^{\alpha\beta\gamma\delta} \bar q^{\epsilon\phi\kappa\lambda},
\end{eqnarray*}
and performing contractions gives:
\begin{eqnarray*}
0&=& \frac{2}{5} \frac{N(N+1)}{2} F^\delta{}_\zeta + \frac{8}{5} \left(\frac{N+1}{2}\right)^2 F^\delta{}_\zeta \\
&-& \frac{6}{5} \left(\frac{N+1}{2}\right)^2 F^\delta{}_\zeta - \frac{6}{5} \frac{N+1}{2}\varphi(F)^\delta_\zeta 
\\ & & \quad 
- \frac{18}{5} \left[\frac{1}{2}\frac{N+1}{2} F^\delta{}_\zeta + \frac{1}{2}\varphi(F)^\delta_\zeta  \right] \\ & & \quad\quad
- \frac{18\chi}{5} \left[\left(\frac{1}{2}-\chi^2\right) \cD_q(F)^\delta_\zeta - \frac{\chi}{2}(
\sigma(F)^\delta_\zeta - F^\mu{}_\mu \delta^\delta_\zeta
)\right]
\\ & & \quad 
-\frac{2}{5} \left(\frac{N+1}{2}\right)^2 F^\delta{}_\zeta 
-\frac{2}{5} \frac{N+1}{2} F^\mu{}_\mu \delta^\delta_\zeta- \frac{6}{5} \frac{N+1}{2} \varphi(F)^\delta_\zeta \\
&+& \frac{6}{5} (1+\chi^2)\frac{N+1}{2} F^\delta{}_\zeta + \frac{12}{5} \varphi^2(F)^\delta_\zeta \\
& & \quad + \frac{12}{5} \varphi(F)^\delta_\zeta + \frac{12\chi}{5}\left[\left(-\frac{1}{2}+\chi^2\right)\cD_q(F)^\delta_\zeta + \frac{\chi}{2}(F^\delta{}_\zeta + F^\mu{}_\mu \delta^\delta_\zeta )\right],
\end{eqnarray*}
where we used the identities and maps introduced in the previous proof, the latter understood as mapping
$\End(\V)$ to itself, namely:
\begin{eqnarray*}
\cD_q(F)^\mu_\nu &=& F^\xi{}_\eta q_{\xi\nu\kappa\lambda} \bar\omega^{\kappa\eta} \bar\omega^{\lambda\nu} \\
\varphi(F)^\mu_\nu &=& \frac{1}{2}F^\mu{}_\nu + \frac{1}{2} F^\alpha{}_\alpha \delta^\mu_nu
+ \chi \cD_q(F)^\mu_\nu \\
\sigma(F)^\mu_\nu &=& F^\xi{}_\eta \bar\omega^{\mu\eta}\omega_{\nu\xi}.
\end{eqnarray*}
Collecting all terms gives:
\begin{eqnarray*}
0 &=& [2N^2+(6\chi^2-7)N+6\chi(3\chi+1)(1+\sigma)]F \\
&+& 12\chi(N+1-3\chi^2) \cD_q(F) \\
&-& (8N+5+6\chi^2) (\tr F) \id. \end{eqnarray*}
Recalling the eigenvalues of the (commuting) operators $\cD_q$ and $\sigma,$ one finds that
the latter equation is satisfied only for
$$ F^\mu{}_\nu = \lambda \delta^\mu_\nu,\quad \lambda\in\CC. $$
Hence $E$ is a linear combination of $J$ and $K.$

\end{proof}

At last, we arrive at the final step:
\begin{proof}[Proof of point 3 of Proposition \ref{pro-invs}]
Let us
consider $E \in \fso(\cV_3(\KK)).$ Then $E=A+B$ with $A\in\fu(\cV_3(\KK))$ and $B\in\fu^\bot(\cV_3(\KK)),$ and
the components map $\mho$ into independent subspaces:
$$ A(\mho) \in \Sym^{4,4} \quad\textrm{and}\quad B(\mho) \in \Sym^{5,3}\oplus\Sym^{3,5}. $$
Thus $E(\mho) = 0 \iff A(\mho) = 0\ \&\ B(\mho) = 0.$ Let now $A=A_0 + a I$ with $A_0 \in \fsu(\cV_3(\KK))$
and $a\in\RR.$ Clearly, $I(\mho)=0,$ so that $A(\mho)=0 \iff A_0(\mho)=0.$ We can now apply
the lemmas to $A_0$ and $B$ to find that
$$ A_0 \in \cg_3(\KK) \quad \textrm{and}\quad B \in \fsp(1) \cap \fu^\bot(\cV_3(\KK)), $$
and finally $E \in \fg(\KK,\OO)$ as claimed.
\end{proof}

\section{Important identities and proofs of Lemmas 18 \& 19}

The invariants of $\cG_n(\KK)$ are essentially derived from certain product maps: 
the traceless product $\times,$ Freudenthal product $\bullet$
and $\tau, $ the triple product on a FTS. As usually, finite dimension of the spaces the
maps operate on leads to some characteristic equations; these in turn are translated to certain identities the tensors $\Upsilon,$ $\Lambda$ and $q$ satisfy under contraction (the constants $c_1,\ c_2$ and
$c_3$ of
Proposition \ref{pro-invs} have been introduced in order to simplify these identities).

\subsection{First family}

\begin{lem}
Let $\LC_X : Y \mapsto X\times Y$ denote the (left) multiplication map for $X,Y\in\sxk.$ Then:
\begin{eqnarray}
\tr\ \LC_X &=& 0 \label{idlc} \\
\tr\ \LC_X \LC_X &=& \frac{3\dim\KK+4}{12} \langle X,X \rangle \label{idlc2} \\
 \tr\ \LC_X \LC_X \LC_X &=& -\frac{\dim\KK}{8} \langle X, X\times X \rangle \label{idlc3} \\
\langle X\times X, X\times X \rangle &=& \frac{1}{6} \langle X,X \rangle^2 \label{idlc4} 
\end{eqnarray}
for $X \in \sxk.$
\end{lem}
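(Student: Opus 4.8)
The plan is to reduce all four identities to two inputs: the characteristic equation of Lemma~\ref{lem-tsn} specialized to the traceless case, and the power-traces of the \emph{full} Jordan multiplication $L_X : Y \mapsto X\circ Y$ on $\hxk$, the latter being pinned down by $\Aut(\hxk)$-invariance. I begin by specializing the characteristic equation to $X\in\sxk$. Since $\tr X = 0$ one has $S(X,X) = -\tfrac12\langle X,X\rangle$ and $N(X,X,X) = \det X = \tfrac13\tr X^3$, so that $X^3 = \tfrac12\langle X,X\rangle X + (\det X)\,1$. Two consequences will be recorded: first, $\langle X, X\times X\rangle = \langle X, X\circ X\rangle = \tr X^3$ (the $\tfrac13\langle X,X\rangle\,1$ term in $X\times X$ drops out against the traceless $X$); and second, applying $L_X$ to the characteristic equation and taking $\tr$ gives $\tr X^4 = \tfrac12\langle X,X\rangle\tr X^2 = \tfrac12\langle X,X\rangle^2$. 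Identity (\ref{idlc4}) then follows at once: expanding $X\times X = X\circ X - \tfrac13\langle X,X\rangle\,1$ and using $\langle X\circ X, X\circ X\rangle = \tr X^4$, $\langle X\circ X, 1\rangle = \langle X,X\rangle$ and $\langle 1,1\rangle = 3$, the three terms combine to $\tfrac16\langle X,X\rangle^2$.

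For the trace identities I would pass from $\LC_X$ on $\sxk$ to $L_X$ on $\hxk = \RR1\oplus\sxk$. Writing $L_X$ in block form with respect to this splitting, its $\RR1$-block vanishes, its off-diagonal blocks are $1\mapsto X$ and $Y\mapsto\tfrac13\langle X,Y\rangle$, and its $\sxk$-block is exactly $\LC_X$. Squaring and cubing this $2\times 2$ block operator produces, besides $\LC_X^{\,k}$ in the $\sxk$-block, only explicit rank-one corrections whose traces are computed from $\langle X,X\rangle$ and $\langle X,\LC_X X\rangle = \langle X, X\times X\rangle$. Carrying out this bookkeeping I expect to obtain
\begin{equation*}
\tr_\hxk L_X = \tr_\sxk\LC_X, \qquad
\tr_\hxk L_X^2 = \tr_\sxk\LC_X^2 + \tfrac23\langle X,X\rangle, \qquad
\tr_\hxk L_X^3 = \tr_\sxk\LC_X^3 + \langle X, X\times X\rangle.
\end{equation*}

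It remains to compute the full power-traces. The functions $X\mapsto\tr_\hxk L_X^{\,k}$ are $\Aut(\hxk)$-invariant, because $L_{gX} = g\,L_X\,g^{-1}$; hence they lie in the ring of invariant polynomials on $\sxk$, which is generated by the nonconstant characteristic coefficients, i.e.\ by $\langle X,X\rangle$ and $\det X$. Therefore $\tr_\hxk L_X = 0$ (no linear invariant), $\tr_\hxk L_X^2 = c'\langle X,X\rangle$ and $\tr_\hxk L_X^3 = c''\langle X, X\times X\rangle$ for universal constants. To fix $c'$ and $c''$ I would evaluate on a single diagonal traceless $X = \mathrm{diag}(\lambda_1,\lambda_2,\lambda_3)$, for which $L_X$ is explicitly diagonalizable: eigenvalue $\lambda_p$ on each diagonal basis matrix (three simple eigenvalues) and eigenvalue $\tfrac12(\lambda_p+\lambda_q)$ on the off-diagonal hermitian basis vectors (each with multiplicity $\dim\KK$). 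Using $\sum_p\lambda_p = 0$ one gets $\sum_{p<q}(\lambda_p+\lambda_q)^2 = \tr X^2$ and $\sum_{p<q}(\lambda_p+\lambda_q)^3 = -\tr X^3$, whence $\tr_\hxk L_X^2 = \tfrac{\dim\KK+4}{4}\langle X,X\rangle$ and $\tr_\hxk L_X^3 = \tfrac{8-\dim\KK}{8}\langle X, X\times X\rangle$. Subtracting the corrections from the block computation then yields exactly (\ref{idlc}), (\ref{idlc2}) and (\ref{idlc3}).

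The routine part is the eigenvalue count and the arithmetic; the two points needing genuine care are the rank-one bookkeeping of the previous paragraph (getting the constants $\tfrac23$ and $1$ right, and checking no further correction terms arise), and the input that the invariant ring of $\sxk$ is generated in degrees two and three --- equivalently, that every element of $\sxk$ is $\Aut(\hxk)$-conjugate to a diagonal matrix, which is what lets me fix the universal constants from a single diagonal evaluation. For $\KK=\RR,\CC,\HH$ this is the spectral theorem for the relevant conjugation action; for $\KK=\OO$ it is the classical diagonalization theorem for the Albert algebra under $\sF_{4(-52)}$, which I would cite rather than reprove. This diagonalizability is the one external fact the argument leans on, and I expect it to be the main conceptual (as opposed to computational) obstacle.
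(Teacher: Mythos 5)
Your proposal is correct --- I checked the block bookkeeping (the correction terms $\tfrac{2}{3}\langle X,X\rangle$ and $\langle X,X\times X\rangle$ in passing from $\tr_{\hxk}L_X^k$ to $\tr\,\LC_X^k$ are right, as is the eigenvalue count $\lambda_p$ and $\tfrac12(\lambda_p+\lambda_q)$ with multiplicity $\dim\KK$) and the final arithmetic reproduces all four identities --- but it takes a genuinely different route from the paper for the trace identities. The paper works directly on $\sxk$: identities (\ref{idlc}) and (\ref{idlc2}) follow from irreducibility of $\sxk$ under $\Aut(\hxk)$ and Schur's lemma, with the constant fixed by one evaluation; identity (\ref{idlc4}) comes from the characteristic equation of Lemma \ref{lem-tsn} exactly as in your argument; and --- this is the real divergence --- identity (\ref{idlc3}) is then obtained purely algebraically, by writing (\ref{idlc4}) in polarized index form $f_{m(ij}f_{kl)m}=\frac{1}{6}g_{(ij}g_{kl)}$ and contracting it with the structure constants $f_{kln}$, using (\ref{idlc2}); no further invariant-theoretic input is needed. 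You instead lift everything to $\hxk$, where $L_X$ is explicitly diagonalizable for diagonal $X$, and deduce (\ref{idlc3}) from the statement that every $\Aut(\hxk)$-invariant cubic on $\sxk$ is a multiple of $\tr X^3$; this is exactly where your argument leans on the diagonalization theorem under $\Aut(\hxk)$ --- elementary for $\KK=\RR,\CC,\HH$ but a genuine classical theorem for the Albert algebra $\hx\OO$, which you rightly propose to cite. What your route buys: one uniform eigenvalue computation delivers all three trace identities at once with transparent constants, and as a by-product you obtain the full power-traces of $L_X$ on $\hxk$. What the paper's route buys: it is self-contained, since the cubic trace identity is squeezed out of the quadratic and quartic ones by contraction, so the only representation theory invoked is Schur's lemma and the Freudenthal--Albert diagonalization theorem never enters. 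A minor refinement to your write-up: fixing the constants "from a single diagonal evaluation" presupposes the invariant-ring generation claim, whereas your actual computation with general $(\lambda_1,\lambda_2,\lambda_3)$ only needs diagonalization itself (both sides of each identity are invariant and agree on all diagonal elements), so you can state the weaker dependency.
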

\begin{proof}
Identities (\ref{idlc}) and (\ref{idlc2}) follow from irreducibility of $\sxk$ as a representation
of $\Aut(\sxk),$ where  in the second case we employ Schur's lemma and check the formula for 
some simple $X$ to find the proportionality constant.

To prove (\ref{idlc4}) we recall Lemma \ref{lem-tsn}, substituting $T(X),S(X,X)$ and $N(X,X,X):$ 
$$ X^3 - \frac{1}{2} (\tr X^2) X - \frac{1}{3} \tr X^3 = 0, $$
where we used $\tr X=0.$
Taking the scalar product of this expression with $X$ and using symmetry of $\LC_X$ yields
$$ \langle X^2,X^2 \rangle = \frac{1}{2} \langle X,X \rangle^2. $$
Now,
\begin{eqnarray*}
\langle X\times X,X\times X\rangle &=& \langle X^2 - \frac{1}{3} \tr X^2,
X^2 - \frac{1}{3}\tr X^2 \rangle \\ &=& \langle X^2,X^2 \rangle - \frac{1}{3} \langle X,X \rangle^2
\\
&=& \frac{1}{6} \langle X,X\rangle^2.
\end{eqnarray*}

Using $i,j,k,\dots$ to index $\cV_1(\KK)\simeq\cV_1^*(\KK),$ with the identification given by $\langle\cdot,\cdot\rangle,$ we introduce the structure constants of $\times:$
$$ (X\times Y)_i = f_{ijk} X^j Y^k. $$
With $ g_{ij} X^i Y^j = \langle X,Y \rangle, $ identities  (\ref{idlc}, \ref{idlc2}, \ref{idlc4}) read:
$$ f_{imm} = 0,\quad f_{imn} f_{jmn} = \frac{3\dim\KK+4}{12}g_{ij},\quad f_{m(ij} f_{kl)m} = \frac{1}{6} g_{(ij}g_{kl)}.$$
Contraction of the latter one with $c_{kln}$ yields:
$$ \frac{3\dim\KK+4}{24} f_{ijn} +  f_{ipq} f_{jqr} f_{nrp} = 
\frac{1}{6}  f_{ijn} 
$$
$$ f_{ipq} f_{jqr} f_{nrp} = - \frac{\dim\KK}{8} f_{ijn}, $$
proving (\ref{idlc3}).
\end{proof}

\begin{cor} \label{cor-idy}
Let us set $c_1 = \sqrt\frac{12}{3\kappa+4},$ where $\kappa = \det\KK.$
Using $i,j,k,\dots$ to index $\cV_1(\KK) \simeq \cV_1^*(\KK),$ with the identification
given by $\langle\cdot,\cdot\rangle,$ we have:
\begin{eqnarray*}
\Upsilon_i{}^{mm} &=& 0 \\
\Upsilon_i{}^{mn} \Upsilon_j{}^{mn} &=& g_{ij} \\
\Upsilon_{i}{}^{rp} \Upsilon_j{}^{pq} \Upsilon_{k}{}^{qr} &=& \frac{-3\kappa}{6\kappa+8} \Upsilon_{ijk} \\
\Upsilon_{(ij}{}^m \Upsilon_{kl)}{}^m &=& \frac{10}{3\kappa+4} g_{(ij} g_{kl)},
\end{eqnarray*}
where $g(X,Y) = \langle X,Y \rangle. $
\end{cor}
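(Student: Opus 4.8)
The plan is to push everything back onto the structure constants of the traceless product $\times$ on $\sxk$, where all the analytic content has already been recorded in the preceding Lemma (identities (\ref{idlc})--(\ref{idlc4})). Writing $(X\times Y)_i = f_{ijk}X^jY^k$, commutativity of $\times$ gives $f_{ijk}=f_{ikj}$, while the adjoint-symmetry $\langle X\times Y,Z\rangle=\langle Y,X\times Z\rangle$ (inherited from $\langle X\circ Y,Z\rangle=\langle Y,X\circ Z\rangle$) gives $f_{ijk}=f_{kji}$; hence $f$ is totally symmetric. By point 1 of Proposition \ref{pro-invs}, $\Upsilon(X,X,X)=c_1\langle X\times X,X\rangle=c_1 f_{ijk}X^iX^jX^k$, so that $\Upsilon_{ijk}=c_1 f_{ijk}$. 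Thus each identity in the Corollary is just a power of $c_1$ times a contraction of $f$'s.

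Next I would introduce the symmetric, traceless matrices $(M_i)_{jk}:=f_{ijk}$, so that $\LC_X=X^iM_i$, and read off that the four displayed contractions are $\Upsilon_i{}^{mm}=c_1\tr M_i$, $\;\Upsilon_i{}^{mn}\Upsilon_j{}^{mn}=c_1^2\tr(M_iM_j)$, $\;\Upsilon_i{}^{rp}\Upsilon_j{}^{pq}\Upsilon_k{}^{qr}=c_1^3\tr(M_iM_jM_k)$, and $\Upsilon_{(ij}{}^m\Upsilon_{kl)}{}^m=c_1^2 f_{m(ij}f_{kl)m}$. Translating (\ref{idlc})--(\ref{idlc4}) into structure constants (as the Lemma's own proof already does) gives $f_{imm}=0$, $\;f_{imn}f_{jmn}=\tfrac{3\kappa+4}{12}g_{ij}$, $\;f_{ipq}f_{jqr}f_{nrp}=-\tfrac{\kappa}{8}f_{ijn}$, and $f_{m(ij}f_{kl)m}=\tfrac16 g_{(ij}g_{kl)}$. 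The only point requiring care here is that total symmetry of $f$ makes every cyclic triple contraction equal to the single trace $\tr(M_iM_jM_k)=f_{ipq}f_{jqr}f_{krp}$, so the particular index placement in $\Upsilon_i{}^{rp}\Upsilon_j{}^{pq}\Upsilon_k{}^{qr}$ introduces no reordering issue.

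Finally I would substitute $\Upsilon=c_1 f$ together with the value $c_1^2=\tfrac{12}{3\kappa+4}$, which is fixed precisely so that the second identity collapses to $g_{ij}$. The first two identities are then immediate. The third gives $c_1^3(-\tfrac{\kappa}{8})f_{ijk}=c_1^2(-\tfrac{\kappa}{8})\Upsilon_{ijk}=-\tfrac{12}{3\kappa+4}\tfrac{\kappa}{8}\Upsilon_{ijk}=-\tfrac{3\kappa}{6\kappa+8}\Upsilon_{ijk}$, exactly as stated. By the same mechanism the fourth reads $c_1^2\cdot\tfrac16\,g_{(ij}g_{kl)}=\tfrac{2}{3\kappa+4}\,g_{(ij}g_{kl)}$.

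The genuine work all lives in the preceding Lemma (finite dimensionality via the cubic characteristic equation of Lemma \ref{lem-tsn}, Schur's lemma for the quadratic trace, and the contraction trick producing the cubic relation); once those are granted, the Corollary is pure bookkeeping with the single normalization constant. The one substantive check is the coefficient of the last identity: the computation above yields $\tfrac{2}{3\kappa+4}$, and a direct evaluation on $X=\mathrm{diag}(1,1,-2)\in\sxk$ for $\KK=\RR$ (where $\langle X,X\rangle=6$ and $\langle X\times X,X\times X\rangle=6$) gives $c_1^2\langle X\times X,X\times X\rangle=\tfrac{12}{7}\cdot 6=\tfrac{2}{7}\langle X,X\rangle^2$, confirming $\tfrac{2}{3\kappa+4}$ rather than the displayed $\tfrac{10}{3\kappa+4}$.
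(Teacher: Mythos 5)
Your argument is correct, and it is essentially the paper's own (implicit) derivation: the Corollary carries no separate proof in the text precisely because it amounts to the identification $\Upsilon_{ijk}=c_1 f_{ijk}$ followed by rescaling the structure-constant identities recorded at the end of the proof of the preceding Lemma, which is exactly what you do. Your remark that the cyclic triple contraction needs no reordering care because $f$ is totally symmetric is the one genuinely subtle point in the first three identities, and you handle it correctly.

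Your closing observation is also correct, and it exposes an error in the Corollary as printed rather than a gap in your proof: the coefficient of the last identity must be $\frac{2}{3\kappa+4}$, not $\frac{10}{3\kappa+4}$. Beyond your evaluation at $X=\mathrm{diag}(1,1,-2)\in\sx\RR$, the value is forced by a convention-independent consistency check using only the first two identities: writing $\Upsilon_{(ij}{}^m\Upsilon_{kl)}{}^m=\lambda\,g_{(ij}g_{kl)}$ and contracting with $g^{ij}g^{kl}$, the left side expands into three distinct terms of which tracelessness kills one, while $\Upsilon_i{}^{mn}\Upsilon_j{}^{mn}=g_{ij}$ makes each of the other two equal to $N=3\kappa+2=\dim\cV_1(\KK)$, giving $\frac{2N}{3}$; the right side gives $\frac{\lambda}{3}N(N+2)$, whence $\lambda=\frac{2}{N+2}=\frac{2}{3\kappa+4}$, in agreement with $c_1^2/6$. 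Since the ratio of the quartic coefficient to the quadratic one is invariant under rescaling $\Upsilon\mapsto s\Upsilon$, this is also consistent with Nurowski's normalization (\ref{eqn-nur}), where the quartic identity has coefficient $1$ and the quadratic contraction then equals $\frac{N+2}{2}g_{ij}$. The value $\frac{10}{3\kappa+4}$ is repeated in the algebraic summary of the geometric chapter, but it enters no later computation -- Proposition \ref{pro-tor1} uses only the quadratic identity and the operator $\cD_\Upsilon$ of Lemma \ref{lemdy} -- and the statement that $\Upsilon$ agrees with Nurowski's tensor up to rescaling remains true (indeed, the corrected coefficient is what makes that rescaling consistent).
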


Notably, the last identity is, up to a proportionality constant, the one used by Nurowski in \cite{bobienski-2005}
to define his cubic invariant (without referring to all the Jordan machinery).

\begin{lem} \label{lemdy} Let us introduce a map
$$ \cD_\Upsilon : \Lambda^2 \cV_1(\KK) \to \Lambda^2 \cV_1(\KK) $$
$$ (\Lambda^2 \cV_1(\KK))_{ij} \ni E_{ij} \mapsto E_{pq} \Upsilon_{iqm}\Upsilon_{jpm} \in (\Lambda^2 \cV_1(\KK))_{ij}. $$
Then, with respect to the decomposition
$$ \Lambda^2 \cV_1(\KK) \simeq \fso(\cV_1(\KK)) = \cg_1(\KK) \oplus \cg_1(\KK)^\bot, $$
the map $\cD_\Upsilon$ is given by:
\begin{eqnarray*}
\cD_\Upsilon|_{\cg_1(\KK)} &=&-\frac{1}{2} \\
\cD_\Upsilon|_{\cg_1(\KK)^\bot} &=& \frac{3}{3\kappa+4} .
\end{eqnarray*}
\end{lem}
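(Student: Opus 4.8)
The plan is to exploit that $\cD_\Upsilon$ is built entirely from the $\cg_1(\KK)$-invariant tensor $\Upsilon$, hence is $\cg_1(\KK)$-equivariant, and that it is manifestly self-adjoint for the natural inner product on $\Lambda^2\cV_1(\KK)$ (swapping the two arguments of the pairing only relabels dummy indices). Consequently $\cD_\Upsilon$ is diagonalizable and respects the $\cg_1(\KK)$-module splitting $\fso(\cV_1(\KK))=\cg_1(\KK)\oplus\cg_1(\KK)^\bot$. On the adjoint summand, which is irreducible, Schur's lemma forces a single scalar; on $\cg_1(\KK)^\bot$ reality together with self-adjointness forces a single real scalar as well, even when the complement is reducible (for $\KK=\CC$ it is $\mathbf{10}\oplus\overline{\mathbf{10}}$, interchanged by complex conjugation). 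Thus everything reduces to pinning down two numbers.

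First I would compute the eigenvalue on $\cg_1(\KK)$. For $D\in\der\hxk$ the infinitesimal invariance of $\Upsilon$ reads $D_{ia}\Upsilon_{ajk}+D_{ja}\Upsilon_{iak}+D_{ka}\Upsilon_{ija}=0$. Specialising $(i,j,k)\mapsto(p,i,m)$ rewrites the inner contraction appearing in $\cD_\Upsilon(D)_{ij}=D_{pq}\Upsilon_{iqm}\Upsilon_{jpm}$ as $D_{pq}\Upsilon_{iqm}=-D_{iq}\Upsilon_{pqm}-D_{mq}\Upsilon_{piq}$. The first resulting term collapses, via the normalization identity $\Upsilon_i{}^{mn}\Upsilon_j{}^{mn}=g_{ij}$ of Corollary \ref{cor-idy}, to $-D_{ij}$; after a relabelling of dummy indices the second term is seen to reproduce $\cD_\Upsilon(D)_{ij}$ itself. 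Hence $\cD_\Upsilon(D)=-D-\cD_\Upsilon(D)$, giving $\cD_\Upsilon|_{\cg_1(\KK)}=-\tfrac12$.

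For the complementary eigenvalue I would use the trace. From tracelessness $\Upsilon_i{}^{mm}=0$ and the normalization identity one gets, on $\Lambda^2\cV_1(\KK)$,
\[
\tr\cD_\Upsilon=\tfrac12\sum_{a,b,m}\Upsilon_{abm}\Upsilon_{abm}=\tfrac12\dim\cV_1(\KK),
\]
the cross term $\sum_m\bigl(\sum_a\Upsilon_{aam}\bigr)^2$ vanishing by tracelessness. Subtracting the adjoint contribution $-\tfrac12\dim\der\hxk$ and dividing by $\dim\cg_1(\KK)^\bot=\binom{\dim\cV_1(\KK)}{2}-\dim\der\hxk$ fixes the remaining scalar; inserting the four values $(\dim\cV_1(\KK),\dim\der\hxk)=(5,3),(8,8),(14,21),(26,52)$ for $\KK=\RR,\CC,\HH,\OO$ one checks it equals $\tfrac{4}{3\kappa+4}$ uniformly. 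Equivalently, and in direct parallel with the relation $\cD_q^2=1-\chi\cD_q$ of Lemma \ref{lemq}, one may contract the quartic $\Upsilon$-identity to produce a quadratic relation $\cD_\Upsilon^2=a\,\cD_\Upsilon+b$ whose two roots are $-\tfrac12$ and the sought value.

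The main obstacle is precisely this second eigenvalue: the trace route requires the dimension bookkeeping above, while the quadratic route requires reducing a fourfold product of $\Upsilon$'s by means of the quartic identity, which is the only genuinely laborious step. I note that both routes return $\tfrac{4}{3\kappa+4}$ rather than the $\tfrac{3}{3\kappa+4}$ printed in the statement, so before finalising I would recheck the constant on $\cg_1(\KK)^\bot$.
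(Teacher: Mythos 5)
Your proof is correct, and the discrepancy you flag at the end is real: the value on $\cg_1(\KK)^\bot$ is indeed $\frac{4}{3\kappa+4}$, and the $\frac{3}{3\kappa+4}$ printed in the statement is an error in the paper, not in your computation. In fact the paper's own proof concludes $\cD_\Upsilon|_{\cg_1(\KK)^\bot}=\frac{c_1^2}{3}$, which with the normalization $c_1^2=\frac{12}{3\kappa+4}$ of Corollary \ref{cor-idy} equals $\frac{4}{3\kappa+4}$; moreover this is the only value compatible with the trace identity that both you and the paper use (for $\KK=\RR$ the printed pair of eigenvalues would give $\tr\cD_\Upsilon=-\frac12\cdot 3+\frac37\cdot 7=\frac32$, whereas $\tr\cD_\Upsilon=\frac12\dim\cV_1(\RR)=\frac52$). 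The typo also propagates downstream: the summary of the geometric chapter and the proof of Proposition \ref{pro-tor1} take $\lambda_1=\frac{3}{3\kappa+4}$, so the factor $\frac{3\kappa+4}{3\kappa+10}$ in the intrinsic-torsion formula there should read $\frac{3\kappa+4}{3\kappa+12}$.

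Your route is genuinely different from the paper's: the two arguments run in opposite order. The paper first pins down the complement eigenvalue structurally, writing $\Upsilon_{ijk}=c_1 f_{ijk}$ with $f$ the structure constants of $\times$, so that $\cD_\Upsilon(X\wedge Y)=-c_1^2[\LC_X,\LC_Y]$, and then invoking the Jordan-algebra relation $[\LC_X,\LC_Y]=[L_X,L_Y]-\frac13\,X\wedge Y$; since the linearization of $X\wedge Y\mapsto\cD_{X,Y}=[L_X,L_Y]$ is self-adjoint with image $\der\hxk=\cg_1(\KK)$ (Lemma \ref{lem-dmapj}), it annihilates $\cg_1(\KK)^\bot$, exhibiting $\cD_\Upsilon$ as (a map into $\cg_1(\KK)$) plus $\frac{c_1^2}{3}\,\id$. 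This yields scalarity on the complement for free, with no Schur-type argument; the trace identity is then used to solve for the eigenvalue on $\cg_1(\KK)$. You instead get $-\frac12$ on $\cg_1(\KK)$ directly from infinitesimal invariance together with $\Upsilon_i{}^{mn}\Upsilon_j{}^{mn}=g_{ij}$ — a purely tensorial argument that never touches the Jordan product — and then extract the complement value from the trace; the price is that you must separately argue that $\cD_\Upsilon$ is scalar on $\cg_1(\KK)^\bot$. Your argument for that is sound, with one small correction of wording: for $\KK=\CC$ the complement is in fact irreducible as a \emph{real} representation (it is of complex type, its complexification being $\mathbf{10}\oplus\overline{\mathbf{10}}$), and your observation that reality forces the two complex scalars to be conjugate while self-adjointness forces them to be real is exactly the right way to conclude they coincide.
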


\begin{proof} 
Let us first consider the familiar map into derivations (Lemma \ref{lem-dmapj}):
\begin{equation}
\label{eq-foo}
\Lambda^2 \cV_1(\KK) \ni X \wedge Y \mapsto \cD_{X,Y} = [L_X,L_Y] \in \cg_1(\KK) \subset
\Lambda^2 \cV_1(\KK). \end{equation}
We have $$ (X^p Y^q - Y^p X^q) \Upsilon_{iqm} \Upsilon_{jpm} = -c_1^2 [\LC_X,\LC_Y]_{ij} $$
and
$$ [\LC_X,\LC_Y] = [L_X,L_Y] - \frac{1}{3} X \wedge Y. $$
Since the map (\ref{eq-foo}) is symmetric and its image is $\cg_1(\KK)=\der\sxk,$ it follows
that it vanishes on $\cg_1(\KK)^\bot.$ It then follows from equivariance, that its action on $\cg_1(\KK)$ is a multiple of identity. We thus have:
$$ \cD_\Upsilon|_{\cg_1(\KK)^\bot} = \frac{c_1^2}{3},\quad \cD_\Upsilon|_{\cg_1(\KK)}
= \alpha $$
for some constant $\alpha.$ Then, demanding 
$$\tr\cD_\Upsilon = 
\Upsilon_i{}^{qm} \Upsilon^p{}_{jm}
\ \delta^{[i}_p \delta^{j]}_q = \frac{N}{2} $$
and substituting the dimensions of $\der\sxk,$ one finds $\alpha = -c_1^2 (\frac{3\kappa+4}{24}),$
and the lemma follows.
\end{proof} 

\subsection{Second family}
It is convenient to consider the following general result, which shall prove useful in both second and third family. The idea is to use some abstract \emph{real} spaces $W,\tilde W\simeq\hxk$ instead of $\V,\bar\V$ (recall Remark \ref{not-cpx}):

\begin{lem}\label{lem-id2}
Let us introduce two spaces $W,\tilde W \simeq \hxk$ and maps
\begin{eqnarray*}
\lF_X &:& W \to \tilde W\quad \textrm{for}\ X \in W\\
\tlF_{\tilde X} &:& \tilde W \to W\quad \textrm{for}\ \tilde X \in \tilde W
\end{eqnarray*}
such that $\lF_X$ and $\tlF_{\tilde X}$ become
respectively $\LF_X$ and $\LF_{\tilde X}$ under the isomorphisms $W,\tilde W\simeq\hxk.$ Taking the traces over $W\oplus\tilde W,$ we have:
\begin{eqnarray}
\tr\ \lF_Y &=& 0 \label{idlf1} \\
\tr\ \tlF_{\tilde X} \lF_Y &=& \frac{\dim\KK+2}{4} \langle \tilde X, Y \rangle \label{idlf2} \\
\label{idlf4}
\tr\ \lF_Y \tlF_{\tilde Z} \lF_T &=& 0 \label{idlf3} \\
\tr\ \tlF_{\tilde X} \lF_Y \tlF_{\tilde Z} \lF_T &=& \frac{\dim\KK+2}{32}[
\langle\tilde X, Y\rangle\langle\tilde Z,T\rangle + \langle\tilde X,T \rangle\langle\tilde Z,Y\rangle] \\ &- & \frac{\dim\KK}{8} \langle \lF_Y T, \tlF_{\tilde X}\tilde Z\rangle  \nonumber
\end{eqnarray}
for $Y,T\in W$ and $\tilde X,\tilde Z \in \tilde W.$
\end{lem}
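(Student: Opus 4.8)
The plan is to reduce every trace over $W\oplus\tilde W$ to a trace over $\hxk$, and then to exploit the block structure of $\LF_X$ with respect to the decomposition $\hxk=\RR1\oplus\sxk$ recorded in the proof of Proposition \ref{pro-isoreps}. Since $\lF_X$ maps $W$ to $\tilde W$ and $\tlF_{\tilde X}$ maps $\tilde W$ to $W$, any composition with an \emph{odd} number of factors is off-diagonal in $W\oplus\tilde W$ and hence traceless; this disposes of (\ref{idlf1}) and (\ref{idlf3}) immediately. For an even number of factors the composite is block-diagonal, its $W$-block equals the corresponding product of Freudenthal multiplications $\LF$ on $\hxk$, and the trace over $W\oplus\tilde W$ equals the ordinary trace of that product over $\hxk$. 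Thus (\ref{idlf2}) amounts to computing $\tr\,\LF_{\tilde X}\LF_Y$ and (\ref{idlf4}) to computing $\tr\,\LF_{\tilde X}\LF_Y\LF_{\tilde Z}\LF_T$ over $\hxk$.

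Writing $X=X_01+X_1$ with $X_0\in\RR$, $X_1\in\sxk$, the formula $X\bullet Y=\left(X_0Y_0-\tfrac16\langle X_1,Y_1\rangle,\ X_1\times Y_1-\tfrac12 X_0Y_1-\tfrac12 Y_0X_1\right)$ exhibits $\LF_X$ as a $2\times2$ block operator whose $\sxk$-diagonal block is $\LC_{X_1}-\tfrac12 X_0\,\id$, together with rank-one and scalar corrections coupling $\RR1$ and $\sxk$. For (\ref{idlf2}) I would multiply the two block operators, take the trace block-by-block (off-diagonal blocks never contribute), and substitute the first-family identities $\tr\,\LC_{X_1}=0$ and $\tr\,\LC_{\tilde X_1}\LC_{Y_1}=\tfrac{3\kappa+4}{12}\langle\tilde X_1,Y_1\rangle$ coming from (\ref{idlc}) and (\ref{idlc2}), using $\dim\sxk=3\kappa+2$. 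Collecting the scalar and the $\langle\tilde X_1,Y_1\rangle$ contributions and recombining via $\langle\tilde X,Y\rangle=3\tilde X_0Y_0+\langle\tilde X_1,Y_1\rangle$ yields exactly $\tfrac{\kappa+2}{4}\langle\tilde X,Y\rangle$; in particular the spurious $\tr\tilde X\,\tr Y$ term cancels.

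The quartic identity (\ref{idlf4}) is the genuine obstacle. The same block expansion of $\tr\,\LF_{\tilde X}\LF_Y\LF_{\tilde Z}\LF_T$ produces three kinds of terms: scalar and rank-one pieces handled by (\ref{idlc})--(\ref{idlc3}); the pairing $\langle\tilde X_1\times\tilde Z_1,\,Y_1\times T_1\rangle$; and the pure trace $\tr\,\LC_{\tilde X_1}\LC_{Y_1}\LC_{\tilde Z_1}\LC_{T_1}$ over $\sxk$. The last one is not reducible to the previous scalar invariants by Schur's lemma alone; to control it I would invoke the Jordan identity in the form $L_XL_{X^2}=L_{X^2}L_X$ on $\hxk$ (equivalently the fundamental adjoint identity $X^{\sharp\sharp}=(\det X)X$ derived from Lemma \ref{lem-tsn}) and polarise it into a cubic operator relation among the $\LC$'s, expressing $\tr\,\LC\LC\LC\LC$ through $\langle\cdot\times\cdot,\cdot\times\cdot\rangle$ and products of two scalar pairings. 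Feeding this back, the genuinely irreducible quartic invariant assembles precisely into $-\tfrac{\kappa}{8}\langle Y\bullet T,\tilde X\bullet\tilde Z\rangle=-\tfrac{\kappa}{8}\langle\LF_YT,\LF_{\tilde X}\tilde Z\rangle$, while the remaining terms combine, after using $\langle X\times X,X\times X\rangle=\tfrac16\langle X,X\rangle^2$ from (\ref{idlc4}) in polarised form, into the symmetric scalar combination with coefficient $\tfrac{\kappa+2}{32}$.

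Throughout, the cleanest way to pin the numerical coefficients is to note that both sides of each identity are $\Aut(\hxk)$-invariant polynomials of the stated multidegree, so they lie in the small space of invariant forms spanned by $\langle\cdot,\cdot\rangle$-pairings, the $\tr$-projections onto $\RR1$, and the single irreducible quartic $\langle\cdot\bullet\cdot,\cdot\bullet\cdot\rangle$; the coefficients are then fixed by evaluating on a few convenient arguments, namely $\tilde X=Y=1$, purely traceless arguments (where the first-family identities apply verbatim), and a rank-one idempotent. I expect the bookkeeping in the quartic case, tracking the many rank-one cross terms between $\RR1$ and $\sxk$, to be the only laborious part; the structural input is supplied entirely by the characteristic equation of Lemma \ref{lem-tsn} together with the first-family trace identities, so no new representation-theoretic facts are required.
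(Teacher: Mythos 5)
You are correct, and for the two identities that require real work your route is genuinely different from the paper's. The disposal of (\ref{idlf1}) and (\ref{idlf3}) by the odd/even block-parity argument is exactly the paper's. For (\ref{idlf2}), however, the paper does not compute at all: it complexifies $W,\tilde W$ to $\V,\bar \V$, notes that $(\tilde X,Y)\mapsto\tr\,\tlF_{\tilde X}\lF_Y$ is then an invariant sesquilinear form on $\cV_2(\KK)$, which is irreducible under $\cG_2(\KK)$, and quotes Schur's lemma, fixing the constant on one simple pair of arguments. Your direct block computation on $\hxk=\RR 1\oplus\sxk$ is more elementary (it needs no irreducibility input) and does land on the right constant: the $\tilde X_0Y_0$ coefficient comes out as $1+\frac{3\kappa+2}{4}=3\cdot\frac{\kappa+2}{4}$ and the $\langle\tilde X_1,Y_1\rangle$ coefficient as $\frac{1}{12}+\frac{1}{12}+\frac{3\kappa+4}{12}=\frac{\kappa+2}{4}$, so the total is indeed $\frac{\kappa+2}{4}\langle\tilde X,Y\rangle$. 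For (\ref{idlf4}) both proofs hinge on the same structural fact, the adjoint identity $X^{\sharp\sharp}=(\det X)X$ (equivalently the Jordan identity), but the paper deploys it without ever splitting $\hxk$: it polarizes that identity as a tensor equation in the norm tensors $N,\tilde N$, contracts once more with $\tilde N$, and simplifies using the already-proved (\ref{idlf2}) --- a few displayed lines in all. Your plan instead splits into $\RR 1\oplus\sxk$, which multiplies the bookkeeping (all the unit/traceless cross terms) and additionally requires a quartic trace identity for $\tr\,\LC_A\LC_B\LC_C\LC_D$ on $\sxk$, to be extracted from the polarized Jordan identity $X\times((X\times X)\times Y)-(X\times X)\times(X\times Y)+\frac{1}{3}\langle X\times X,Y\rangle X-\frac{1}{3}\langle X,Y\rangle X\times X=0$; this is feasible (contracting with the structure constants of $\times$ reduces the symmetrized quartic trace to $\langle\cdot\times\cdot,\cdot\times\cdot\rangle$-pairings and metric terms, and since the trace in (\ref{idlf4}) is symmetric in $\{\tilde X,\tilde Z\}$ and $\{Y,T\}$ separately, the symmetrized version suffices), so your argument closes --- but it reconstructs inside $\sxk$, in first-family language, what the paper obtains in one stroke at the level of $\hxk$. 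One caveat: your fallback for pinning coefficients, namely that every invariant of the relevant multidegree is a combination of metric pairings, trace projections and $\langle\cdot\bullet\cdot,\cdot\bullet\cdot\rangle$, is itself a representation-theoretic multiplicity claim; if you lean on it you forfeit the stated advantage of avoiding representation theory, so the expansion-plus-Jordan-identity route is the one that is genuinely self-contained.
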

\begin{proof}
Identities (\ref{idlf1}) and (\ref{idlf3}) are obvious, as we trace maps which swap the (sub)spaces $W$ and
$\tilde W.$ Formula (\ref{idlf2}) follows, up to the constant, from Schur's lemma:
indeed, let us complexify $W,\tilde W$ and identify them complex-linearily with $\V,\bar \V$ respectively. Then (\ref{idlf2}) defines a sesquilinear form on $\cV_2(\KK)$ -- as the latter is an irreducible
representation of $\cG_2(\KK)$, the form must be a multiple of the hermitian inner product. 
The factor can be found by checking for some simple $\tilde X$ and $Y.$

To prove (\ref{idlf4}), we introduce some extra notation: we
shall use $i,j,\dots$ to index $W$ and $\tilde i,\tilde j,\dots$ to index $\tilde W,$ and the same for their duals.
We introduce a tensor
$ \gamma(\tilde X,Y) = \langle\tilde X,Y\rangle,$
so that $\gamma_{\tilde i j} \in \tilde W^*_i\otimes W^*_j,$ with
its inverse satisfying
$$ \gamma_{\tilde i j} \gamma^{j\tilde i} = \dim\hxk = 3\dim\KK + 3. $$
The determinant defines tensors $N_{ijk}$ and $\tilde N_{\tilde i\tilde j\tilde k}$ such that
$$ 
\det X = N_{ijk} X^i X^j X^k,\quad
\det \tilde X = \tilde N_{\tilde i\tilde j \tilde k} \tilde X^{\tilde i} \tilde X^{\tilde j} \tilde X^{
\tilde k}. $$
We will use $\gamma$ and its inverse to lower and raise indices, so that the maps $\lF$ and $\tlF$
are simply given by
$$
(\lF_X Y)^{\tilde i} = 3 N^{\tilde i}{}_{jk} X^j Y^k,\quad
(\tlF_{\tilde X} \tilde Y)^{i} = 3 \tilde N^{i}{}_{\tilde j\tilde k}
\tilde X^{\tilde j} \tilde Y^{\tilde k}.
$$
Equation (\ref{idlf2}) is expressed as: 
$$ 9 \tilde N^m{}_{\tilde i\tilde n} N^{\tilde n}{}_{jm} = \frac{\dim\KK+2}{4} \gamma_{\tilde i j}. $$

We now recall that $X^{\sharp\sharp} = (\det X)X$ for $X \in \hxk.$ Utilising the isomorphisms
$W,\tilde W \simeq \hxk,$ we can write this identity as follows:
\begin{eqnarray*}
9 [
N_{imp} N_{knq} \tilde N_{\tilde j}{}^{pq} +
N_{mkp} N_{inq} \tilde N_{\tilde j}{}^{pq} \\ \quad +
N_{ikp} N_{mnq} \tilde N_{\tilde j}{}^{pq} 
] &=& \frac{1}{4} [
N_{imk} \gamma_{\tilde jn} +
N_{ink} \gamma_{\tilde jm} \\ & & \quad +
N_{imn} \gamma_{\tilde jk} +
N_{nmk} \gamma_{\tilde ji}
].
\end{eqnarray*}
Contraction with $\tilde N_{\tilde l}{}^{mn}$ yields:
\begin{eqnarray*} 18\ 
N_{ip}{}^{\tilde m} \tilde N_{\tilde j\tilde q}{}^p N_{kn}{}^{\tilde q} \tilde N_{\tilde l\tilde m}{}^n +
\frac{\dim\KK+2}{4} N_{ikp} \tilde N_{\tilde j\tilde l}{}^p
&=&
\frac{1}{2} N_{ikm} \tilde N_{\tilde j\tilde l}{}^m \\ &+& \frac{1}{36} \frac{\dim\KK+2}{4} 
[ \gamma_{\tilde li}\gamma_{\tilde jk} + \gamma_{\tilde ji} \gamma_{\tilde lk} ],
\end{eqnarray*}
where we used (\ref{idlf2}). Thus
$$ 
81 N_{ip}{}^{\tilde m} \tilde N_{\tilde j\tilde q}{}^p N_{kn}{}^{\tilde q} \tilde N_{\tilde l\tilde m}{}^n = - \frac{\dim\KK}{8} 9 N_{ikp} \tilde N_{\tilde jl}{}^p + \frac{\dim\KK}{32}[ \gamma_{\tilde li}\gamma_{\tilde jk} + \gamma_{\tilde ji} \gamma_{\tilde lk} ], $$
which is equivalent to (\ref{idlf4}).
\end{proof}

For $\CC\otimes W = \V$ and $\CC\otimes \tilde W = \bar \V,$ where $\V,\bar \V$ are the duals of the (complex) spaces
of linear and antilinear complex forms on $\cV_2(\KK),$ as in (\ref{vbarv}), we readily have
the following
\begin{cor}\label{cor-id2}
Let us set $c_2 = \sqrt\frac{4}{\kappa+2},$ where $\kappa = \det\KK.$ Recalling the conventions
introduced in Remark \ref{not-cpx}, we have:
\begin{eqnarray*} \
\Lambda_{\alpha\mu\nu} \bar\Lambda_{\bar\beta}{}^{\mu\nu} &=& h_{\bar\beta\alpha} \\ 
\Lambda_{\alpha\mu\nu} \bar\Lambda_{\bar\beta}{}^{\nu\rho} 
\Lambda_{\gamma\rho\sigma} \bar\Lambda_{\bar\delta}{}^{\sigma\mu} &=&
\frac{1}{2\kappa+4} (h_{\bar\beta\alpha}h_{\bar\delta\gamma}+h_{\bar\delta\alpha} h_{\bar\beta\gamma}) \\
&-& \frac{\kappa}{2\kappa+4}\Lambda_{\alpha\gamma\mu} \bar\Lambda_{\bar\beta\bar\delta}{}^\mu.
\end{eqnarray*}
\end{cor}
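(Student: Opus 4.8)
The plan is to obtain the Corollary as a direct transcription of Lemma \ref{lem-id2} into the complex abstract-index formalism of Remark \ref{not-cpx}. That lemma already did all the real work, computing the relevant trace identities on the two abstract copies $W,\tilde W\simeq\hxk$; here I would simply complexify, setting $\CC\otimes W=\V$ and $\CC\otimes\tilde W=\bar\V$, and read off the corresponding contractions of $\Lambda$ and $\bar\Lambda$.

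First I would fix the dictionary. Under complexification the real pairing $\gamma(\tilde X,Y)=\langle\tilde X,Y\rangle$ between $\tilde W$ and $W$ becomes the hermitian tensor $h_{\bar\beta\alpha}\in\bar\V^*\otimes\V^*$, and its inverse becomes $h^{\alpha\bar\beta}$; this is exactly the tensor Remark \ref{not-cpx} uses to raise and lower a barred index against an unbarred one. The Freudenthal maps translate through the relation established in the proof of point 2 of Proposition \ref{pro-invs}: since $\Lambda$ is the complex-linear extension of $c_2$ times the cubic norm $N$, and $\langle X\bullet Y,Z\rangle=3N(X,Y,Z)$, fixing one slot of $\Lambda$ reproduces $\tfrac{c_2}{3}\LF_X$. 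Consequently the complex-linear extension of $\lF_Y$ (a map $W\to\tilde W$, hence $\V\to\bar\V$) is, up to the scalar $\tfrac{3}{c_2}$ and a lowering of the output index by $h$, the tensor $\Lambda_{\alpha\mu\nu}$ with one slot contracted, while $\tlF_{\tilde X}$ corresponds in the same way to $\bar\Lambda$. Finally, a trace over $W$ (equivalently over $\V$, complexification preserving traces) becomes a contraction of one unbarred index against one barred index through $h$.

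With this dictionary in place, the first identity of the Corollary is the complexification of the quadratic trace identity (\ref{idlf2}): the composition $\tlF_{\tilde X}\lF_Y$, traced over $W$, turns into $\Lambda_{\alpha\mu\nu}\bar\Lambda_{\bar\beta}{}^{\mu\nu}$, and its right-hand side $\tfrac{\dim\KK+2}{4}\langle\tilde X,Y\rangle$ turns into a multiple of $h_{\bar\beta\alpha}$. The value of $c_2$ recorded in the statement is precisely the one making this multiple equal to one, i.e.\ giving $\Lambda_{\alpha\mu\nu}\bar\Lambda_{\bar\beta}{}^{\mu\nu}=h_{\bar\beta\alpha}$; tracking the powers of $c_2$ together with the factors of $3$ coming from $\lF=3N$ pins the normalization down. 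The second identity is obtained in the same way from the quartic trace identity of Lemma \ref{lem-id2}: the four-fold composition $\tlF_{\tilde X}\lF_Y\tlF_{\tilde Z}\lF_T$ traced over $W$ becomes the cyclic contraction $\Lambda_{\alpha\mu\nu}\bar\Lambda_{\bar\beta}{}^{\nu\rho}\Lambda_{\gamma\rho\sigma}\bar\Lambda_{\bar\delta}{}^{\sigma\mu}$; the two scalar-product terms on its right become the symmetric combination $h_{\bar\beta\alpha}h_{\bar\delta\gamma}+h_{\bar\delta\alpha}h_{\bar\beta\gamma}$, and the remaining term $-\tfrac{\dim\KK}{8}\langle\lF_Y T,\tlF_{\tilde X}\tilde Z\rangle$ becomes the single contraction $\Lambda_{\alpha\gamma\mu}\bar\Lambda_{\bar\beta\bar\delta}{}^{\mu}$, again after inserting the appropriate powers of $c_2$.

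The main obstacle I anticipate is bookkeeping rather than conceptual. One must keep the barred/unbarred pattern of indices consistent throughout, so that each application of $h$ or $h^{-1}$ lands in the correct space, and one must correctly route the four free indices $\alpha,\bar\beta,\gamma,\bar\delta$ onto the four factors of the cyclic contraction when transcribing the quartic trace. Getting the placement of $\tilde X,\tilde Z$ versus $Y,T$ right, that is, checking that the loop closes in the order dictated by $\tr(\tlF_{\tilde X}\lF_Y\tlF_{\tilde Z}\lF_T)$, and collecting all the $c_2$ and factor-of-three normalizations, is where care is needed; once that is done both displayed identities fall out of Lemma \ref{lem-id2}.
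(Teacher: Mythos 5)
Your proposal is correct and is essentially the paper's own argument: the paper derives Corollary \ref{cor-id2} precisely by setting $\CC\otimes W=\V$ and $\CC\otimes\tilde W=\bar\V$ and reading off the trace identities (\ref{idlf2}) and (\ref{idlf4}) of Lemma \ref{lem-id2} in the abstract-index conventions of Remark \ref{not-cpx}, with $\gamma$ becoming $h$ and the $\lF,\tlF$ contractions becoming $\Lambda,\bar\Lambda$ contractions up to the factors of $3$ and $c_2$ you identify. The bookkeeping you flag (barred versus unbarred indices, routing of the four free indices in the quartic trace, and the normalization constants) is indeed all that remains, exactly as in the paper.
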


We shall further make use of the following:
\begin{lem} \label{lemdl}
Let us introduce a map
$$ \cD_\Lambda : \Lam^{1,1} \to \Lam^{1,1} $$
$$ \Lam^{1,1}_{ab} \ni b_{\bar\alpha\beta} - b_{\bar\beta\alpha} \mapsto 
b_{\bar\mu\nu}\Lambda^{\bar\mu\bar\rho}{}_{\beta}\bar\Lambda^{\nu}{}_{\bar\rho\bar\alpha} -
b_{\bar\mu\nu}\Lambda^{\bar\mu\bar\rho}{}_{\alpha}\bar\Lambda^{\nu}{}_{\bar\rho\bar\beta}
\in \Lam^{1,1}_{ab}. $$
Then, with respect to the decomposition
$$ \Re\Lam^{1,1} \simeq \fu(\cV_2(\KK)) = \theta\RR \oplus \cg_2(\KK)\oplus\cg_2(\KK)^\bot, $$
where $\cg_2(\KK)^\bot$ is the orthogonal complement of $\cg_2(\KK)$ in $\fsu(\cV_2(\KK)),$ the map
$\cD_\Lambda$ is given by:
\begin{eqnarray*}
\cD_\Lambda|_{\theta\CC} &=& 1 \\
\cD_\Lambda|_{\cg_2(\KK)} &=& -\frac{1}{2} \\
\cD_\Lambda|_{\cg_2(\KK)^\bot} &=& \frac{1}{\kappa+2}.
\end{eqnarray*}
It also satisfies:
$$ {\cD_\Lambda}|_{\fsu(\cV_2(\KK))}^2 = \frac{1}{2\kappa+4}[1 - \kappa\cD_\Lambda|_{\fsu(\cV_2(\KK))}].$$ 
\end{lem}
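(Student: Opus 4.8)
The plan is to study the map $\cD_\Lambda$ on $\Re\Lam^{1,1} \simeq \fu(\cV_2(\KK))$ by first establishing that it is $\cg_2(\KK)$-equivariant and self-adjoint, so that it acts as a scalar on each irreducible summand of the decomposition $\fu(\cV_2(\KK)) = \theta\RR \oplus \cg_2(\KK) \oplus \cg_2(\KK)^\bot$. Equivariance is immediate because $\cD_\Lambda$ is built entirely by contracting with the invariant tensor $\Lambda$ (and its conjugate $\bar\Lambda$), and self-adjointness with respect to the natural scalar product on $\Lam^{1,1}$ follows from the symmetry properties of the contraction pattern. Each of the three summands $\theta\RR$, $\cg_2(\KK)$, $\cg_2(\KK)^\bot$ is an irreducible $\cg_2(\KK)$-module (the first being trivial), so Schur's lemma guarantees $\cD_\Lambda$ acts as a single scalar on each.

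\medskip

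\noindent\textbf{Computing the eigenvalues.} To pin down the three scalars I would proceed in two stages. First, I would exhibit the image of the natural map $b_{\bar\alpha\beta}-b_{\bar\beta\alpha}\mapsto$ (antisymmetrized $\Lambda\bar\Lambda$ contraction) in analogy with Lemma \ref{lemdy}: the composite $X\wedge Y \mapsto \cD_{X,Y}$ from Lemma \ref{lem-dmapj} relates $\cD_\Lambda$ on $\cg_2(\KK)=\der\hxk\oplus iL_{\sxk}$ to the multiplication structure, and I would read off that the piece coming from genuine derivations versus the $iL_{\sxk}$ part. The cleanest route, however, is to use the first identity of Corollary \ref{cor-id2}, namely $\Lambda_{\alpha\mu\nu}\bar\Lambda_{\bar\beta}{}^{\mu\nu}=h_{\bar\beta\alpha}$, together with the second (quartic) identity there. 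Applying $\cD_\Lambda$ twice and substituting the quartic identity yields a quadratic relation $\cD_\Lambda^2 = \frac{1}{2\kappa+4}[1-\kappa\,\cD_\Lambda]$ on $\fsu(\cV_2(\KK))$ directly; the roots of $x^2+\frac{\kappa}{2\kappa+4}x-\frac{1}{2\kappa+4}=0$ are exactly $-\frac12$ and $\frac{1}{\kappa+2}$, matching the claimed eigenvalues on $\cg_2(\KK)$ and $\cg_2(\KK)^\bot$. The value $\cD_\Lambda|_{\theta\CC}=1$ on the central $\fu(1)$ generator I would compute separately by direct substitution, using $\theta_{ab}=ih_{\bar\alpha\beta}-ih_{\bar\beta\alpha}$ and the first identity of Corollary \ref{cor-id2}.

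\medskip

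\noindent\textbf{Identifying which eigenvalue goes where.} Having the two roots, I must assign $-\frac12$ to $\cg_2(\KK)$ and $\frac{1}{\kappa+2}$ to $\cg_2(\KK)^\bot$ rather than the reverse. For this I would invoke the trace: computing $\tr\cD_\Lambda$ over $\fsu(\cV_2(\KK))$ by contracting $\Lambda^{\bar\mu\bar\rho}{}_\beta\bar\Lambda^\nu{}_{\bar\rho\bar\alpha}$ against the identity (exactly as in the trace argument of Lemma \ref{lemdy}), then matching against $-\frac12\dim\cg_2(\KK)+\frac{1}{\kappa+2}\dim\cg_2(\KK)^\bot$, fixes the assignment once the dimensions of $\cg_2(\KK)$ (the magic-square groups in the second column) and its complement in $\fsu(\cV_2(\KK))$ are substituted. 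Alternatively, I can test the eigenvalue on $\cg_2(\KK)=\der\hxk$ directly against a single derivation, since derivations annihilate $\det$ hence interact with $\Lambda$ in a controlled way, giving $-\frac12$ there unambiguously.

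\medskip

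\noindent\textbf{Main obstacle.} The genuine difficulty is the quartic identity of Corollary \ref{cor-id2}, which underlies the quadratic relation for $\cD_\Lambda$; that identity is exactly the content of Lemma \ref{lem-id2}, whose proof hinges on the non-obvious relation $X^{\sharp\sharp}=(\det X)X$ for the Freudenthal product. All the Schur-lemma and trace bookkeeping is routine once that quartic contraction identity is in hand, so I expect the real work to be organizing the index contractions cleanly and keeping track of the combinatorial factors when converting between the abstract $W,\tilde W$ picture of Lemma \ref{lem-id2} and the complexified $\V,\bar\V$ indices of Remark \ref{not-cpx}.
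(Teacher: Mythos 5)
Your proposal is correct and follows essentially the same route as the paper's proof: both obtain the quadratic relation $\cD_\Lambda^2 = \frac{1}{2\kappa+4}[1-\kappa\,\cD_\Lambda]$ on $\fsu(\cV_2(\KK))$ from the quartic identity of Corollary \ref{cor-id2}, verify $\cD_\Lambda(\theta)=\theta$ by direct substitution, and fix the assignment of the two roots $-\frac{1}{2}$ and $\frac{1}{\kappa+2}$ via the trace $\tr\cD_\Lambda = \Lambda_{\mu\nu\rho}\bar\Lambda^{\mu\nu\rho}$ combined with dimension counting. The only cosmetic difference is that you invoke equivariance and Schur's lemma for the eigenspace splitting, where the paper uses hermiticity of $\cD_\Lambda$ together with the quadratic relation.
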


\begin{proof}
It is easy to check that $\cD_\Lambda(\theta) = \theta.$ Now,
let $$B_{ab}=b_{\bar\alpha\beta}-b_{\bar\beta\alpha} \in \fsu(\cV_2(\KK))_{ab}.$$ Then
$ \cD_\Lambda(B)_{ab} = 
b_{\bar\mu\nu}\Lambda^{\bar\mu\bar\rho}{}_{\beta}\bar\Lambda^{\nu}{}_{\bar\rho\bar\alpha} -
b_{\bar\mu\nu}\Lambda^{\bar\mu\bar\rho}{}_{\alpha}\bar\Lambda^{\nu}{}_{\bar\rho\bar\beta}$
(still in $\fsu(\cV_2(\KK))_{ab},$ due to hermiticity of $\cD_\Lambda$) and
\begin{eqnarray*} \cD_\Lambda^2(B)_{ab} &=& 
b_{\bar\mu\nu}\Lambda^{\bar\mu\bar\rho}{}_{\lambda}\bar\Lambda^{\nu}{}_{\bar\rho\bar\kappa} 
\Lambda^{\bar\lambda\bar\sigma}{}_{\beta}\bar\Lambda^{\kappa}{}_{\bar\sigma\bar\alpha} 
\\ & & \quad -\ 
b_{\bar\mu\nu}\Lambda^{\bar\mu\bar\rho}{}_{\lambda}\bar\Lambda^{\nu}{}_{\bar\rho\bar\kappa} 
\Lambda^{\bar\lambda\bar\sigma}{}_{\alpha}\bar\Lambda^{\kappa}{}_{\bar\sigma\bar\beta} \\
&=&
\frac{1}{2\kappa+4}[ b_{\bar\alpha\beta} + b^\mu{}_\mu h_{\bar\alpha\beta} - \kappa
b_{\bar\mu\nu} \Lambda^{\bar\mu\bar\rho}{}_{\beta}\bar\Lambda^{\nu}{}_{\bar\rho\bar\alpha}]
\\ & & \quad -\ 
\frac{1}{2\kappa+4}[ b_{\bar\beta\alpha} + b^\mu{}_\mu h_{\bar\beta\alpha} - \kappa
b_{\bar\mu\nu} \Lambda^{\bar\mu\bar\rho}{}_{\alpha}\bar\Lambda^{\nu}{}_{\bar\rho\bar\beta}],
\end{eqnarray*}
where we used Corollary \ref{cor-id2} and $b^\mu{}_\mu = 0.$ Thus, when restricted to
$\fsu(\cV_2(\KK)),$
$$ \cD_\Lambda^2 = \frac{1}{2\kappa+4}[1 - \kappa\cD_\Lambda].$$ 
Then, since $\cD_\Lambda$ is hermitian, it splits the space $\fsu(\cV_2(\KK))$ orthogonally into
$\fsu^+(\cV_2(\KK))\oplus\fsu^-(\cV_2(\KK))$ with
$$ \cD_{\Lambda}|_{\fsu^\pm(\cV_2(\KK))} = \lambda^\pm \id,\quad 
\lambda^\pm = \frac{-\kappa\pm (\kappa+4)}{4\kappa+8}.$$
It follows that
$$ \lambda^+ \dim \fsu^+(\cV_2(\KK)) + \lambda^- \dim\fsu^-(\cV_2(\KK)) = \tr \cD_\Lambda = \Lambda_{\mu\nu\rho}
\bar\Lambda^{\mu\nu\rho} = 3\kappa+3. $$
Using $\dim\fsu(\cV_2(\KK)) = N^2-1$ we can compute the dimensions of $\fsu^\pm(\cV_2(\KK))$ and identify
$\fsu^-(\cV_2(\KK)) $ as $ \cg_2(\KK).$
\end{proof}

\subsection{Third family}
\begin{lem}
Let $\LT_{X,Y} : Z \mapsto \tau(X,Y,Z)$ be the left multiplication map for $X,Y,Z\in\fxk.$
Then:
\begin{eqnarray} \label{idt}
\tr\ \LT_{X,Y} &=& 0 \\ \label{idtt}
\tr\ \LT_{X,Y} \LT_{Z,T} &=& \frac{\dim\KK+3}{2^7 3^2} [\omega(X,Z)\omega(Y,T)+\omega(X,T)\omega(Y,Z)]
\\ &+& \frac{\dim\KK+2}{24} \cQ(X,Y,Z,T). \nonumber
\end{eqnarray}
Moreover, let $J_0 : \fxk\to\fxk$ be the map given by $\langle J_0 X,Y\rangle = \omega(X,Y). $
Then \begin{eqnarray} \label{idqj} \cQ(J_0 X,J_0 X,J_0 X,J_0 X) = \cQ(X,X,X,X). \end{eqnarray}
\end{lem}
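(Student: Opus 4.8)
The plan is to dispatch the three identities separately, since the first and third are short while (\ref{idtt}) carries all the computational weight.

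\textbf{Identity (\ref{idt}).} The key observation is that $\LT_{X,Y}$ lies in the symplectic algebra $\fsp(\fxk,\omega)$. Indeed, for all $W,Z\in\fxk$, using the definition $\omega(W,\tau(X,Y,Z))=\cQ(W,X,Y,Z)$, the total symmetry of $\cQ$ and the antisymmetry of $\omega$,
$$ \omega(\LT_{X,Y}W,Z)+\omega(W,\LT_{X,Y}Z)=-\cQ(Z,X,Y,W)+\cQ(W,X,Y,Z)=0. $$
Thus, writing $A=\LT_{X,Y}$ and $S_{ab}=\omega_{ac}A^c{}_b$, the relation $\omega(AW,Z)=\omega(AZ,W)$ says $S$ is symmetric, so $\tr A=A^b{}_b=\omega^{ba}S_{ab}=0$ because $\omega^{ba}$ is antisymmetric while $S_{ab}$ is symmetric. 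This is (\ref{idt}).

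\textbf{Identity (\ref{idtt}).} First I would rewrite the trace symplectically: choosing a basis $e_a$ with $\omega_{ab}=\omega(e_a,e_b)$ and inverse $\omega^{ab}$, the defining relation $\omega(e_a,\LT_{X,Y}e_b)=\cQ(e_a,e_b,X,Y)=:\cQ_{abXY}$ gives $(\LT_{X,Y})^c{}_b=\omega^{ca}\cQ_{abXY}$, hence
$$ \tr\,\LT_{X,Y}\LT_{Z,T}=\omega^{ca}\omega^{bd}\,\cQ_{abXY}\,\cQ_{cdZT}. $$
This expression is manifestly symmetric in $X\leftrightarrow Y$, in $Z\leftrightarrow T$, and under $(X,Y)\leftrightarrow(Z,T)$, and it is $\der\fxk$-invariant (since $\tau$, hence $\LT$, is equivariant and the trace is invariant). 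The space of invariant tensors of this symmetry type is two-dimensional, spanned by $\omega(X,Z)\omega(Y,T)+\omega(X,T)\omega(Y,Z)$ and $\cQ(X,Y,Z,T)$ (the defining invariants of $\fxk$ being exactly $\omega$ and $\cQ$); alternatively this decomposition simply drops out of the explicit reduction below. Either way,
$$ \tr\,\LT_{X,Y}\LT_{Z,T}=\alpha\,[\omega(X,Z)\omega(Y,T)+\omega(X,T)\omega(Y,Z)]+\beta\,\cQ(X,Y,Z,T) $$
for scalars $\alpha,\beta$ depending only on $\KK$.

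It remains to pin down the two constants. Setting $X=Y=Z=T=F$ annihilates the symplectic term (as $\omega(F,F)=0$) and leaves $\tr\,\LT_{F,F}^2=\beta\,\cQ(F,F,F,F)$, so evaluating both sides on one generic $F$ with $\cQ(F,F,F,F)\neq0$ fixes $\beta$. For $\alpha$ I would contract the displayed expression with $\omega^{XZ}\omega^{YT}$: the $\cQ$-term vanishes (symmetric $\cQ$ against antisymmetric $\omega^{-1}$), isolating $\alpha$ in terms of a fully $\omega$-contracted pair of $\cQ$'s. In both evaluations one expands $\cQ$ over $\fxk=(\RR\oplus\hxk)\otimes\RR^2$ and reduces the contractions using $X^{\sharp\sharp}=(\det X)X$ and the Jordan-algebra trace identities of Lemma \ref{lem-id2}; the dependence on $\dim\KK$ enters precisely through the latter (e.g. $\tr\,\tlF_{\tilde X}\lF_Y=\tfrac{\dim\KK+2}{4}\langle\tilde X,Y\rangle$), producing the coefficients $\tfrac{\dim\KK+3}{2^7 3^2}$ and $\tfrac{\dim\KK+2}{24}$. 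This index bookkeeping — tracking the many contractions and combinatorial factors without error — is the main obstacle.

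\textbf{Identity (\ref{idqj}).} Comparing the formulas for $\langle\cdot,\cdot\rangle$ and $\omega$ on $\fxk=(\RR\oplus\hxk)\otimes\RR^2$ shows that $J_0$ acts as the identity on the $(\RR\oplus\hxk)$-factor and as $\left(\begin{smallmatrix}0&-1\\1&0\end{smallmatrix}\right)$ on the $\RR^2$-factor; hence, in the parametrization (\ref{genf}), $J_0$ sends $(x,X,\tilde x,\tilde X)$ to $(-\tilde x,-\tilde X,x,X)$. Substituting into the defining formula for $\cQ$ and using that $\sharp$ is even, $(-\tilde X)^\sharp=\tilde X^\sharp$, while $\det$ is odd, $\det(-\tilde X)=-\det\tilde X$, every term is restored to its original value, yielding $\cQ(J_0X,J_0X,J_0X,J_0X)=\cQ(X,X,X,X)$.
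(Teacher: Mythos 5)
Your treatments of (\ref{idt}) and (\ref{idqj}) are correct and complete. The observation that $\LT_{X,Y}\in\fsp(\fxk,\omega)$ — so that $(W,Z)\mapsto\omega(W,\LT_{X,Y}Z)$ is symmetric and the trace is a symmetric-against-antisymmetric contraction — is precisely what lies behind the paper's one-line remark for (\ref{idt}); and your identification of $J_0$ as $(x,X,\tilde x,\tilde X)\mapsto(-\tilde x,-\tilde X,x,X)$, combined with the evenness of $\sharp$ and the oddness of $\det$, is the direct check the paper alludes to for (\ref{idqj}).

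For (\ref{idtt}), however, what you have written is a plan rather than a proof, and the gap sits exactly where the lemma lives. The identity is a quantitative statement: its entire content is the pair of constants $\frac{\kappa+3}{2^7 3^2}$ and $\frac{\kappa+2}{24}$, where $\kappa=\dim\KK$. Your strategy (a priori invariant form with two unknown scalars, then two normalizing evaluations: all arguments equal to a generic $F$, and a full contraction with $\omega^{XZ}\omega^{YT}$) is genuinely more economical than the paper's route, which computes every component of $\cQ^2_{IJKL}=\cQ_{IJPQ}\cQ_{KLRS}\omega^{PR}\omega^{QS}$ over the decomposition $\fxk=(\RR\oplus\hxk)\otimes\RR^2$. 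But you never carry out either of your two evaluations. Each one still requires expanding $\cQ$ into its components and reducing the contractions via $X^{\sharp\sharp}=(\det X)X$ and Lemma \ref{lem-id2} — the very ``index bookkeeping'' you name as the main obstacle and then skip. Asserting that these evaluations ``produce'' $\frac{\kappa+3}{2^7 3^2}$ and $\frac{\kappa+2}{24}$ is circular: those numbers are the thing to be proved, and nothing in your text derives them.

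There is also a secondary defect in the reduction itself. The claim that the space of $\der\fxk$-invariant tensors with your symmetry type is two-dimensional is true, but your parenthetical justification — ``the defining invariants of $\fxk$ being exactly $\omega$ and $\cQ$'' — is not a valid argument: that $\Aut(\fxk)$ is \emph{defined} as the stabilizer of $\omega$ and $\tau$ does not imply that every invariant tensor is expressible through them (that is a first-fundamental-theorem statement requiring proof). What you actually need is that the trivial representation occurs exactly twice in $\Sym^2(\Sym^2\fxk)$, which holds case by case (e.g.\ from $\Sym^2(56)=133\oplus1463$ for $\fe_7$, and its analogues for the other three $\KK$, checkable with the same computer-algebra tools the paper uses elsewhere) — but this must be invoked explicitly. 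Your fallback, that the form ``drops out of the explicit reduction below,'' concedes the point: that explicit reduction is the paper's proof, and it does not appear in your proposal.
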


\begin{proof} 
The equation (\ref{idt}) follows from symmetry of $\cQ$ and antisymmetry of $\omega,$ 
and (\ref{idqj}) is easy to check directly. On the other hand, proving (\ref{idtt}) will
require considerably more effort, and some extra notation.

To use the results of Lemma (\ref{lem-id2}), we introduce spaces
$$ W,\tilde W \simeq \hxk,\ L,\tilde L \simeq \RR $$
such that
$$\fxk = (\RR\oplus \hxk) \otimes \RR^2 = (L \oplus W) \otimes (1,0) \oplus (\tilde L \oplus \tilde W) \otimes (0,1). $$

In the same manner as in the proof of Lemma (\ref{lem-id2}), the scalar product on $\hxk$ is extended to a tensor
$$ \gamma \in \tilde W^* \otimes W^*,\quad \gamma(\tilde X,Y) = \langle\tilde X,Y\rangle, $$
and we introduce for later convenience the normalised forms on the real lines $L,\tilde L,$ denoted
$$\xi\in L^*,\tilde\xi\in\tilde L^*,\quad \xi(x)=x,\ \tilde\xi(\tilde x)=\tilde x. $$
We shall now use $i,j,\dots$ to index $L\oplus W$ and $\tilde i,\tilde j,\dots$ to index $\tilde L\oplus \tilde W,$ and the same for their duals.
Indices are lowered and raised with help of the tensor
$$ \gamma_{\tilde ij} + \tilde\xi_{\tilde i}\xi_{\tilde j} $$
and its inverse, $\gamma^{i\tilde j} + \tilde\xi^i \xi^{\tilde j},$
where 
$$ \gamma_{\tilde i j} \gamma^{j\tilde i} = \dim\hxk = 3\dim\KK + 3,\quad \xi_i\tilde\xi^i = 1 = \tilde\xi_{\tilde j}\xi^{\tilde j}$$
and
$$ \gamma_{\tilde i j} \tilde\xi^j = 0,\quad \gamma_{\tilde i j} \xi^{\tilde i} = 0,
\quad \gamma^{i\tilde j} \xi_i = 0,\quad \gamma^{i\tilde j}\tilde\xi_{\tilde j} = 0. $$

The determinant defines tensors $N_{ijk}$ and $\tilde N_{\tilde i\tilde j\tilde k}$ such that
$$ 
\det X = N_{ijk} X^i X^j X^k,\quad
\det \tilde X = \tilde N_{\tilde i\tilde j \tilde k} \tilde X^{\tilde i} \tilde X^{\tilde j} \tilde X^{
\tilde k} $$
for $X\in W$ and $\tilde X \in \tilde W,$ vanishing on $L$ and $\tilde L,$ so that
$$ N_{ijk} \tilde\xi^k = 0,\quad \tilde N_{\tilde i\tilde j\tilde k} \xi^{\tilde k} = 0. $$

The maps $\lF$ and $\tlF$ 
are given by
$$
(\lF_X Y)^{\tilde i} = 3 N^{\tilde i}{}_{jk} X^j Y^k,\quad
(\tlF_{\tilde X} \tilde Y)^{i} = 3 \tilde N^{i}{}_{\tilde j\tilde k}
\tilde X^{\tilde j} \tilde Y^{\tilde k}
$$ for $X,Y \in W$ and $\tilde X,\tilde Y$ in $\tilde W,$
and identities of Lemma \ref{lem-id2} are expressed as: 
$$ 9 \tilde N^m{}_{\tilde i\tilde n} N^{\tilde n}{}_{jm} = \frac{\dim\KK+2}{4} \gamma_{\tilde i j}, $$
\begin{eqnarray*}
81
\tilde N^p{}_{\tilde i\tilde q} N^{\tilde n}{}_{jp} 
\tilde N^m{}_{\tilde k\tilde n} N^{\tilde q}{}_{lm} 
&=& \frac{\dim\KK+2}{32} [\gamma_{\tilde i j} \gamma_{\tilde k l} + \gamma_{\tilde i l} \gamma_{\tilde k j}]
\\ &-& \frac{9 \dim\KK}{8} 
\tilde N^m{}_{\tilde i\tilde k} N_{mjl}.
\end{eqnarray*}

Using corresponding uppercase letters to index full $\fxk,$ we have
$$ \fxk^I = (L\oplus W)^i \oplus (\tilde L\oplus \tilde W)^{\tilde i},$$
etc., and the scalar product and symplectic form on this space is represented by
$$ k^{PQ} = 
\gamma^{p\tilde q} + \tilde\xi^p\xi^{\tilde q} +
\gamma^{q\tilde p} + \tilde\xi^q\xi^{\tilde p} $$
$$ \omega^{PQ} = 
\gamma^{p\tilde q} + \tilde\xi^p\xi^{\tilde q} -
\gamma^{q\tilde p} - \tilde\xi^q\xi^{\tilde p}. $$
The quartic defining the triple product is $Q_{IJKL},$
and, recalling that $\tau$ is given in terms of
$\cQ_{IJKM} \omega^{ML},$
the expression we wish to compute is
$$ \cQ^2_{IJKL} = \cQ_{IJPQ} \cQ_{KLRS} \omega^{PR} \omega^{QS}. $$
Projecting onto subspaces of $\fxk,$ the components of the \emph{symmetric} tensor $\cQ$ are given by
\begin{eqnarray*}
\cQ_{IJPQ} &=& \cQ_{ijpq} + \cQ_{\tilde i\tilde j\tilde p\tilde q} +
\cQ_{ij\tilde p\tilde q} +\cQ_{\tilde i\tilde j pq} \\ &+&
\cQ_{\tilde i j \tilde p q} + \cQ_{\tilde i j p \tilde q} +
\cQ_{i\tilde j \tilde p q} + \cQ_{i\tilde j p \tilde q}
\end{eqnarray*}
with
\begin{eqnarray*}
\cQ_{ijkl} &=& \frac{1}{4} [\xi_{i} N_{jkl}+ \xi_{j} N_{kli} +\xi_{k} N_{lij} +\xi_{l} N_{ijk} ]
\\
6 \cQ_{ij\tilde k\tilde l} &=& 9 N_{mij} \tilde N^m{}_{\tilde k\tilde l} 
-\frac{1}{4} \xi_i \xi_j \tilde\xi_{\tilde k} \tilde\xi_{\tilde l}
\\
&+&\frac{1}{8} 
[-\gamma_{\tilde k i} \gamma_{\tilde l j} - \gamma_{\tilde l i}\gamma_{\tilde k j} +
\gamma_{\tilde k i} \tilde\xi_{\tilde l} \xi_j +
\gamma_{\tilde l i} \tilde\xi_{\tilde k} \xi_j +
\gamma_{\tilde k j} \tilde\xi_{\tilde l} \xi_i +
\gamma_{\tilde l j} \tilde\xi_{\tilde k} \xi_i
].
\end{eqnarray*}
Recalling that we contract $\cQ_{IJPQ}$ with
\begin{eqnarray*} \cQ_{KLRS} \omega^{PR} \omega^{QS} &=& 
\cQ_{kl}{}^{pq} + \cQ_{\tilde k\tilde l}{}^{\tilde p\tilde q}
+ \cQ_{kl}{}^{\tilde p\tilde q}
+ \cQ_{\tilde k \tilde l}{}^{pq} \\ &-&
\cQ_{\tilde kl}{}^{\tilde pq}
-\cQ_{\tilde kl}{}^{p\tilde q}
-\cQ_{k\tilde l}{}^{\tilde pq}
-\cQ_{k\tilde l}{}^{p\tilde q}, \end{eqnarray*}
we find that the following expressions need to be computed:
\begin{eqnarray*}
24 \cQ_{ijpq} \cQ_{kl}{}^{pq} &=& 
[2\xi_{(i} N_{j)pq} + 2 \xi_{p} N_{qij} ] 
[9 N_{mkl} \tilde N^{mpq} - \frac{1}{4} \xi_k\xi_l\tilde\xi^p\tilde \xi^q
\\ & & \quad
+\frac{1}{8} (-\delta_k^p\delta_l^q-\delta_l^p\delta_k^q
+\delta_k^p \xi_l\tilde\xi^q
+\delta_k^q \xi_l\tilde\xi^p
+\delta_l^p \xi_k\tilde\xi^q
+\delta_l^q \xi_k\tilde\xi^p
)] \\
&=& \left(2\frac{\dim\KK+2}{4} -\frac{1}{2} \right) \xi_{(i}N_{j)kl} + \frac{1}{2} N_{ij(k}\xi_{l)}
\end{eqnarray*}

\begin{eqnarray*}
16 \cQ_{ijpq} \cQ^{pq}{}_{\tilde k\tilde l} &=& 
[2\xi_{(i} N_{j)pq} 
+\xi_{p} N_{qij} 
+\xi_{q} N_{pij} 
]
[
\tilde\xi^p \tilde N^q{}_{\tilde k\tilde l} + 
\tilde\xi^q \tilde N^p{}_{\tilde k\tilde l} + 
2\tilde \xi_{(\tilde k} \tilde N^{pq}{}_{\tilde l)} 
 ] \\
&=& 2 N_{mij} \tilde N^m{}_{\tilde k\tilde l} +
\frac{1}{9}\frac{\dim\KK+2}{4}[\gamma_{\tilde k i} \tilde\xi_{\tilde l} \xi_j +
\gamma_{\tilde l i} \tilde\xi_{\tilde k} \xi_j +
\gamma_{\tilde k j} \tilde\xi_{\tilde l} \xi_i +
\gamma_{\tilde l j} \tilde\xi_{\tilde k} \xi_i]
\end{eqnarray*}

\begin{eqnarray*}
36 \cQ_{ij\tilde p\tilde q} \cQ^{\tilde p\tilde q}{}_{\tilde k\tilde l} &=&
[ 9 N_{mij} \tilde N^m{}_{\tilde p\tilde q} - \frac{1}{4} \gamma_{\tilde p (i} \gamma_{j)\tilde q } 
- \frac{1}{4}\xi_i\xi_j\tilde\xi_{\tilde p}\tilde\xi_{\tilde q} 
+\frac{1}{4}( 
\gamma_{\tilde p (i}\xi_{j)}\tilde \xi_{\tilde q} +
\gamma_{\tilde q (i}\xi_{j)}\tilde \xi_{\tilde p} 
)
] \\ &\times&
[ 9 N_{n}{}^{\tilde p\tilde q} \tilde N^n{}_{\tilde k\tilde l} 
- \frac{1}{4} \delta_{(\tilde k}^{\tilde p} \delta_{\tilde l)}^{\tilde q} 
- \frac{1}{4}\xi^{\tilde p}\xi^{\tilde q}\tilde\xi_{\tilde k}\tilde\xi_{\tilde l} 
+\frac{1}{4}(
\delta_{(\tilde k}^{\tilde p}\tilde \xi_{\tilde l)} \xi^{\tilde q} +
\delta_{(\tilde k}^{\tilde q}\tilde \xi_{\tilde l)} \xi^{\tilde p}
)]\\
&=&
9\left(\frac{\dim\KK+2}{4}-\frac{1}{4}-\frac{1}{4}\right) N^{\tilde m}{}_{ij} \tilde N_{\tilde m\tilde k \tilde l} + \frac{1}{32} (\gamma_{\tilde k i}\gamma_{\tilde l j} + \gamma_{\tilde li}\gamma_{\tilde kj})
\\
&+& \frac{1}{16} \xi_i\xi_j\tilde\xi_{\tilde k}\tilde\xi_{\tilde l}
+ \frac{1}{16} (
\gamma_{\tilde k i} \tilde\xi_{\tilde l} \xi_j +
\gamma_{\tilde l i} \tilde\xi_{\tilde k} \xi_j +
\gamma_{\tilde k j} \tilde\xi_{\tilde l} \xi_i +
\gamma_{\tilde l j} \tilde\xi_{\tilde k} \xi_i )
\end{eqnarray*}

\begin{eqnarray*}
36 \cQ_{ip\tilde j\tilde q} \cQ_k{}^{\tilde q}{}_{\tilde l}{}^p &=&
[ 9 N_{mip} \tilde N^m{}_{\tilde j\tilde q} - \frac{1}{8} (
\gamma_{\tilde j i}\gamma_{\tilde q p}+
\gamma_{\tilde j p}\gamma_{\tilde q i})
-\frac{1}{4} \xi_i\xi_p\tilde\xi_{\tilde j}\tilde\xi_{\tilde q} 
\\ & & \quad + \frac{1}{8}(
\gamma_{\tilde j i} \xi_p\tilde\xi_{\tilde q} +
\gamma_{\tilde j p} \xi_i\tilde\xi_{\tilde q} +
\gamma_{\tilde q i} \xi_p\tilde\xi_{\tilde j} +
\gamma_{\tilde q p} \xi_i\tilde\xi_{\tilde j}) ]
\\ &\times&
[ 9 N_{nk}{}^{\tilde q} \tilde N^{np}{}_{\tilde l} - \frac{1}{8} (
\gamma_{\tilde l k} \gamma^{p \tilde q} +
\delta_k^p \delta_{\tilde l}^{\tilde q}
)
- \frac{1}{4} \xi_k \xi^{\tilde q} \tilde\xi_{\tilde l}\tilde\xi^p
\\ & & \quad + \frac{1}{8}(
\gamma_{\tilde lk} \xi^{\tilde q}\tilde\xi^p +
\gamma^{p\tilde q} \xi_{k}\tilde\xi_{\tilde l} +
\delta_k^p \xi^{\tilde q}\tilde\xi_{\tilde l}+
\delta_{\tilde l}^{\tilde q} \xi_k\tilde\xi^p
)
]
\\ &=&
\frac{\dim\KK+2}{32}[\gamma_{\tilde ji}\gamma_{\tilde lk}+\gamma_{\tilde kj}\gamma_{\tilde li}]
- \frac{9\dim\KK}{8} \tilde N^m{}_{\tilde j\tilde l} N_{mik} \\
&-& \frac{2}{8} \left(\frac{\dim\KK+2}{4} \gamma_{\tilde lk}\gamma_{\tilde ji}
+ 9 N_{mik} \tilde N^m{}_{\tilde j\tilde l}\right) \\
&+& \frac{1}{64}[(3\dim\KK+3+2)\gamma_{\tilde ji}\gamma_{\tilde lk} + \gamma_{\tilde jk}\gamma_{\tilde li}] + \frac{1}{16} \xi_i\xi_k\tilde\xi_{\tilde j}\tilde\xi_{\tilde q} \\
&+& \frac{1}{64} [ 
2\gamma_{\tilde lk}  \xi_i\tilde\xi_{\tilde j}+ 
2\gamma_{\tilde ji} \xi_k \tilde\xi_{\tilde l} +
\gamma_{\tilde jk} \xi_i\tilde\xi_{\tilde l} + 
\gamma_{\tilde li} \xi_k\tilde\xi_{\tilde j} 
] \\
&+& \frac{1}{64}[
\gamma_{\tilde ji} \gamma_{\tilde lk} + 
(3\dim\KK+3) \xi_i\xi_k\tilde\xi_{\tilde j}\tilde\xi_{\tilde l} ].
\end{eqnarray*}
Using these we evaluate the components of $\cQ^2_{IJKL}:$
\begin{eqnarray*} \cQ^2_{ijkl} &=& \cQ_{ijpq} \cQ_{kl}{}^{pq} + \cQ_{ij\tilde p\tilde q} \cQ_{kl}{}^{\tilde p\tilde q} 
\\ &=&
\frac{\dim\KK+2}{48} [ \xi_{(i} N_{j)kl} + N_{ij(k} \xi_{l)} ]
\\ &=& \frac{\dim\KK+2}{24} \cQ_{ijkl}.
\end{eqnarray*}
\begin{eqnarray*} \cQ^2_{ij\tilde k\tilde l} &=& \cQ_{ijpq} \cQ_{\tilde k\tilde l}{}^{pq} + \cQ_{ij\tilde p\tilde q} \cQ_{\tilde k\tilde l}{}^{\tilde p \tilde q} \\
&=& \frac{\dim\KK+2}{16} N_{mij} \tilde N^m{}_{\tilde k\tilde l} + 
\frac{\dim\KK+3}{3^2 2^6} [  
\gamma_{\tilde ki}\xi_j\tilde\xi_{\tilde l} +
\gamma_{\tilde kj}\xi_i\tilde\xi_{\tilde l} +
\gamma_{\tilde li}\xi_j\tilde\xi_{\tilde k} +
\gamma_{\tilde lj}\xi_i\tilde\xi_{\tilde k} ] \\
&+& 
\frac{1}{3^2 2^7} (\gamma_{\tilde ki}\gamma_{\tilde lj} + \gamma_{\tilde li}\gamma_{\tilde kj})  +
\frac{1}{3^2 2^6} \xi_i\xi_j\tilde\xi_{\tilde k}\tilde\xi_{\tilde l} \\
&=& \frac{\dim\KK+2}{24} \cQ_{ij\tilde k\tilde l} 
+ \frac{\dim\KK+3}{3^2 2^7} [
2 \xi_i\xi_j\tilde\xi_{\tilde k}\tilde\xi_{\tilde l} + \gamma_{\tilde ki}\gamma_{\tilde lj}
+ \gamma_{\tilde li} \gamma_{\tilde kj} \\ & & \quad +
\gamma_{\tilde ki}\xi_j\tilde\xi_{\tilde l} +
\gamma_{\tilde kj}\xi_i\tilde\xi_{\tilde l} +
\gamma_{\tilde li}\xi_j\tilde\xi_{\tilde k} +
\gamma_{\tilde lj}\xi_i\tilde\xi_{\tilde k} ] 
\end{eqnarray*}
\begin{eqnarray*} \cQ^2_{i\tilde j k\tilde l} &=& - 2 \cQ_{i\tilde j\tilde pq} \cQ_{k\tilde l}{}^{\tilde p q} \\
&=& \frac{\dim\KK+2}{16} N_{mik} \tilde N^m{}_{\tilde j\tilde l} + \dots \\
&=& \frac{\dim\KK+2}{24} \cQ_{i\tilde j k\tilde l} 
+ \frac{\dim\KK+3}{3^2 2^7} [
2 \xi_i\xi_j\tilde\xi_{\tilde k}\tilde\xi_{\tilde l} + \gamma_{\tilde ki}\gamma_{\tilde lj}
+ \gamma_{\tilde li} \gamma_{\tilde kj} \\ & & \quad +
\gamma_{\tilde ki}\xi_j\tilde\xi_{\tilde l} +
\gamma_{\tilde kj}\xi_i\tilde\xi_{\tilde l} +
\gamma_{\tilde li}\xi_j\tilde\xi_{\tilde k} +
\gamma_{\tilde lj}\xi_i\tilde\xi_{\tilde k} ].
\end{eqnarray*}
Finally, it follows that
$$ \cQ^2_{IJKL} = \frac{\dim\KK+2}{24} \cQ_{IJKL} + \frac{\dim\KK+3}{3^2 2^7} (
\omega_{IJ}\omega_{KL} + \omega_{IL}\omega_{KJ}
),$$
which is equivalent to (\ref{idtt}).
\end{proof}

Passing to the complexification $\cV_3(\KK) = \CC\otimes\fxk,$ we readily have the following
\begin{cor}\label{corq}
Let us set $c_3=24\sqrt\frac{1}{\kappa+3},$ where $\kappa=\dim\KK.$ Recalling the conventions introduced
in Remark \ref{not-cpx}, we have
\begin{eqnarray}
\label{qw}
q_{\alpha\beta\mu\nu} \bar\omega^{\mu\nu} &=& 0 \\ 
\label{qq}
q_{\alpha\beta\mu\nu} \bar q^{\gamma\delta\mu\nu} &=& 
\frac{1}{2} [\delta_\alpha^\gamma \delta_\beta^\delta + \delta_\alpha^\delta \delta_\beta^\gamma]+ \chi\ q_{\alpha\beta\mu\nu} \bar\omega^{\mu\gamma}\bar\omega^{\nu\delta}.
\end{eqnarray}
Moreover, $J^* q = \bar q,$ i.e. $$
q_{\mu\nu\rho\sigma} \bar\omega^{\mu\alpha}\bar\omega^{\nu\beta}\bar\omega^{\rho\gamma}\bar\omega^{\sigma\delta} = \bar q^{\alpha\beta\gamma\delta}. $$
\end{cor}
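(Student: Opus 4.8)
The plan is to read the three assertions of the Corollary as the images, under complexification, of the three real identities (\ref{idt}), (\ref{idtt}) and (\ref{idqj}) just established on $\fxk$. The underlying dictionary is that the holomorphic part $\V$ of the complexification of $\cV_3(\KK)$ is canonically $\CC\otimes\fxk$, so that $q$ and $\omega_{\alpha\beta}$ are nothing but the complex-bilinear extensions of $c_3\cQ$ and of the symplectic form, while $\bar q$ and $\bar\omega$ are their conjugates. The constant $c_3 = 24\sqrt{1/(\kappa+3)}$ is chosen precisely so that $c_3\frac{\kappa+2}{24}=\chi$ and $c_3^2\frac{\kappa+3}{3^2 2^7}=\frac12$, which is exactly what will produce the coefficients $\chi$ and $\frac12$ on the right of (\ref{qq}).

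First I would fix the index-raising conventions of Remark \ref{not-cpx}. The only nontrivial input is the relation $\omega_{\alpha\mu}\bar\omega^\mu{}_{\bar\beta}=-h_{\bar\beta\alpha}$; contracting it with $h^{\gamma\bar\beta}$ yields $\omega_{\alpha\mu}\bar\omega^{\mu\gamma}=-\delta_\alpha^\gamma$, i.e. the hermitian-raised conjugate form $\bar\omega^{\mu\gamma}$ is, up to sign, the complex-bilinear symplectic inverse on $\V$. Thus raising an index by $\omega^{-1}$ in a real FTS identity is implemented, after complexification, by contraction with $\bar\omega$. With this in hand, (\ref{qw}) is immediate: $q_{\alpha\beta\mu\nu}\bar\omega^{\mu\nu}$ pairs the totally symmetric $q$ against the antisymmetric $\bar\omega^{\mu\nu}$ and so vanishes, which is just the complexification of $\tr\LT_{X,Y}=0$, i.e. (\ref{idt}). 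Likewise $J^*q=\bar q$ is the complexification of (\ref{idqj}): since the map $J_0$ of the preceding Lemma obeys $\langle J_0X,Y\rangle=\omega(X,Y)$, under complexification $J_0$ becomes exactly the rotation by $\bar\omega$ that defines $J^*$ in Lemma \ref{lem44}, and the invariance $\cQ(J_0X,J_0X,J_0X,J_0X)=\cQ(X,X,X,X)$ says precisely that $q_{\mu\nu\rho\sigma}\bar\omega^{\mu\alpha}\bar\omega^{\nu\beta}\bar\omega^{\rho\gamma}\bar\omega^{\sigma\delta}=\bar q^{\alpha\beta\gamma\delta}$.

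For (\ref{qq}) I would complexify (\ref{idtt}). Writing the trace as $\tr\LT_{X,Y}\LT_{Z,T}=\cQ_{XYPQ}\cQ_{ZTRS}\omega^{PR}\omega^{QS}$ (up to sign) and extending complex-bilinearly, (\ref{idtt}) becomes a purely holomorphic relation $q_{\alpha\beta PQ}q_{\kappa\lambda RS}\omega^{PR}\omega^{QS}=\chi\,q_{\alpha\beta\kappa\lambda}+\frac12(\omega_{\alpha\beta}\omega_{\kappa\lambda}+\omega_{\alpha\lambda}\omega_{\kappa\beta})$ on $\V$, the two constants arising from the choice of $c_3$ above. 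I would then invoke the already-proved $J^*q=\bar q$ to rewrite $\bar q$, which on one hand turns the contraction $q_{\alpha\beta\mu\nu}q_{\kappa\lambda\rho\sigma}\bar\omega^{\rho\mu}\bar\omega^{\sigma\nu}$ into the holomorphic left-hand side and on the other raises the two free indices $\kappa,\lambda$ with $\bar\omega$. The first right-hand term becomes $\chi\,q_{\alpha\beta\mu\nu}\bar\omega^{\mu\gamma}\bar\omega^{\nu\delta}$, the desired last term of (\ref{qq}), and collapsing the $\omega\omega$ cross-terms by means of $\omega_{\alpha\mu}\bar\omega^{\mu\gamma}=-\delta_\alpha^\gamma$ (discarding the piece annihilated by (\ref{qw})) should leave exactly $\frac12[\delta_\alpha^\gamma\delta_\beta^\delta+\delta_\alpha^\delta\delta_\beta^\gamma]$.

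The step I expect to be the main obstacle is this last reorganization of the $\omega\omega$ cross-terms: tracking the signs and the reality (barred versus unbarred) conventions in the simultaneous use of the symplectic inverse $\bar\omega$ and the hermitian form $h$, so that the apparently surviving antisymmetric combination $\omega_{\alpha\beta}\bar\omega^{\gamma\delta}$ in fact cancels and the symmetric Kronecker structure forced by the symmetry of $q_{\alpha\beta\mu\nu}\bar q^{\gamma\delta\mu\nu}$ in $\alpha,\beta$ (and in $\gamma,\delta$) is recovered. Confirming that the normalizations built into $c_3$, $\chi=\frac{\kappa+2}{\sqrt{\kappa+3}}$ and the factor $\frac12$ are mutually consistent is the quantitative heart of the argument; everything else is a direct transcription of the real identities.
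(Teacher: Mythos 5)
Your strategy is the same as the paper's: Corollary \ref{corq} is obtained there simply by passing to the complexification $\cV_3(\KK)=\CC\otimes\fxk$ of the three real identities (\ref{idt}), (\ref{idtt}) and (\ref{idqj}), which is exactly your plan. Your verification of the normalization ($c_3\tfrac{\kappa+2}{24}=\chi$ and $c_3^2\tfrac{\kappa+3}{3^2 2^7}=\tfrac12$), your one-line argument for (\ref{qw}) (totally symmetric $q$ contracted with antisymmetric $\bar\omega^{\mu\nu}$), and your derivation of $J^*q=\bar q$ from the polarized (\ref{idqj}) (where the ambiguity $J|_{\fxk}=\pm J_0$ is harmless, as it enters to the fourth power) are all correct.

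The gap is in your transcription of (\ref{idtt}) into holomorphic indices, and it is exactly what manufactures the ``main obstacle'' you could not resolve. You write
$$ q_{\alpha\beta PQ}\,q_{\kappa\lambda RS}\,\omega^{PR}\omega^{QS}=\chi\,q_{\alpha\beta\kappa\lambda}+\tfrac12\bigl(\omega_{\alpha\beta}\omega_{\kappa\lambda}+\omega_{\alpha\lambda}\omega_{\kappa\beta}\bigr), $$
but this cannot be right: the left-hand side is symmetric in $(\alpha,\beta)$ because $q$ is totally symmetric, while $\omega_{\alpha\beta}\omega_{\kappa\lambda}$ is antisymmetric in $(\alpha,\beta)$. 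In (\ref{idtt}) the symplectic forms pair the arguments \emph{across} the two multiplication operators: reading $\tr\,\LT_{X,Y}\LT_{Z,T}$ with $(X,Y,Z,T)\mapsto(\alpha,\beta,\kappa,\lambda)$, the term $\omega(X,Z)\omega(Y,T)+\omega(X,T)\omega(Y,Z)$ is $\omega_{\alpha\kappa}\omega_{\beta\lambda}+\omega_{\alpha\lambda}\omega_{\beta\kappa}$. With your pattern, contracting with $\bar\omega^{\kappa\gamma}\bar\omega^{\lambda\delta}$ yields $\tfrac12\omega_{\alpha\beta}\bar\omega^{\gamma\delta}-\tfrac12\delta_\alpha^\delta\delta_\beta^\gamma$, and no amount of sign- or reality-convention bookkeeping will turn this into the right-hand side of (\ref{qq}): the spurious antisymmetric term does not cancel, it simply should never appear. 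With the correct pattern nothing needs to cancel: using $\omega_{\alpha\mu}\bar\omega^{\mu\gamma}=-\delta_\alpha^\gamma$ twice in each term, the minus signs cancel pairwise and
$$ \tfrac12\bigl(\omega_{\alpha\kappa}\omega_{\beta\lambda}+\omega_{\alpha\lambda}\omega_{\beta\kappa}\bigr)\bar\omega^{\kappa\gamma}\bar\omega^{\lambda\delta}=\tfrac12\bigl[\delta_\alpha^\gamma\delta_\beta^\delta+\delta_\alpha^\delta\delta_\beta^\gamma\bigr], $$
while $\chi\,q_{\alpha\beta\kappa\lambda}\bar\omega^{\kappa\gamma}\bar\omega^{\lambda\delta}$ is precisely the last term of (\ref{qq}); the sign ambiguity between $\bar\omega$ and the complexified symplectic inverse is immaterial because it always enters in pairs. (A caution: the final display of the paper's proof of the preceding Lemma exhibits the same mis-paired combination $\omega_{IJ}\omega_{KL}+\omega_{IL}\omega_{KJ}$ --- a typo there, since both the statement (\ref{idtt}) and the component computations immediately above that display use the cross pairing $\omega_{IK}\omega_{JL}+\omega_{IL}\omega_{JK}$; this may be where your pattern came from.) Once this one formula is corrected, your argument closes and coincides with the paper's.
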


These lead us directly to the missing proof:
\begin{proof}[Proof of Lemma \ref{lemq}] $ $
\begin{enumerate}
\item The only formula absent in Corollary \ref{corq} is $ q_{\alpha\mu\nu\rho} \bar q^{\beta\mu\nu\rho}
= \frac{N+1}{2}\delta^\beta_\alpha$ with $N=6\kappa+8$ being the complex dimension of $\cV_3(\KK).$ But
this follows by contraction from (\ref{qq}).
\item 
Let $b \in \Sym^{2,0}.$
Then $ \cD_q(b)_{\alpha\beta} = b_{\mu\nu} \bar q^{\mu\nu\rho\sigma} \omega_{\rho\alpha} \omega_{\sigma\beta} $
and
\begin{eqnarray*}
\cD_q^2(b)^\alpha{}_\beta
&= &
b^\mu{}_\nu q_{\phi\mu\rho\sigma} \bar\omega^{\rho\nu} \bar\omega^{\sigma\epsilon} q_{\beta\epsilon\xi\eta} \bar\omega^{\xi\phi} \bar\omega^{\eta\alpha} 
\\ &=&
- b^\mu{}_\nu q_{\mu\rho\sigma\phi}  \bar q^{\sigma\phi\eta\alpha}  \omega_{\eta\beta} \bar\omega^{\rho\nu}
\\ &=&
b^\alpha{}_\beta - \chi\  b^\mu{}_\nu 
q_{\mu\rho\beta\phi}  
\bar\omega^{\phi\alpha}
\bar\omega^{\rho\nu},
\end{eqnarray*}
so that $\cD_q^2 = 1 - \chi\cD_q.$
Then, since $\cD_q$ is hermitian, it
splits the space $\fsp(\cV_3(\KK),\omega)$ orthogonally into $\fsp^+(\cV_3(\KK),\omega) \oplus \fsp^-(\cV_3(\KK),\omega)$
with
$$ \cD_q|_{\fsp^\pm(\cV_3(\KK),\omega)} = \lambda^\pm\ \id,\quad \lambda^\pm = -\frac{\chi \pm \sqrt{\chi^2+4}}{2}. $$

Now, using (\ref{qw}), one easily checks that $\tr\cD_q = 0.$ It thus follows that
$$ \lambda^+\dim\fsp^+(\cV_3(\KK),\omega) +\lambda^- \dim\fsp^-(\cV_3(\KK),\omega)  = 0. $$
Using $ \dim\fsp(\cV_3(\KK),\omega) = \frac{N(N+1)}{2} $
we can compute the dimensions of $\fsp^\pm(\cV_3(\KK),\omega)$ and identify
$\fsp^+(\cV_3(\KK),\omega)$ as  $ \cg_3(\KK).$ 
Finally, it turns out that one can further simplify $\lambda^\pm$ and $\chi,$ so that
$$ \lambda_+ = -\sqrt{\kappa+3},\quad \lambda_- = -\frac{1}{\lambda_+}. $$
\end{enumerate}
\end{proof}

\subsection{Explicit formula for $\mho$}
We now wish to prove Lemma \ref{lem44}, which gave us an explicit expression for the projection
of $\mho$ onto $\Sym^{4,0}\otimes\Sym^{0,4}.$ We first state a simpler result, which
essentially expands the
result of applying the construction of Lemma \ref{lem-tl} to a one-form:
\begin{lem}
Let $F:\cV_3(\KK)\to\RR$ be a real linear map $F_a = f_\alpha + \bar f_{\bar\alpha}.$
Let $\Phi:\Sym^2\cV_3(\KK)\to\RR$ be the quadratic map given by
$$ \Phi(X,X) = 
F(X)^2 
+ F(I(X))^2 
+ F(J(X))^2 
+ F(K(X))^2. $$
Then $\Phi\in\Sym^{1,1}$ and 
$$ \frac{1}{2} \Phi = 
f\otimes\bar f + J^* \bar f \otimes J^* f
+ \bar f\otimes f + J^* f \otimes J^* \bar f
. $$ 
\end{lem}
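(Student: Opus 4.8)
The plan is to unwind the definitions of the four complex structures acting on $\cV_3(\KK)=\CC\otimes\fxk$ and express each squared term $F(L(X))^2$ directly in the $(1,1)$-bigraded notation of Remark \ref{not-cpx}, then sum. First I would recall that $F_a=f_\alpha+\bar f_{\bar\alpha}$ with $f\in\V^*$ and $\bar f\in\bar\V^*$, and that the maps $I,J,K$ act as stated in point 3 of Proposition \ref{pro-isoreps}: $I=\theta$ multiplies by $\sqrt{-1}$ in the $\CC$-factor, while $J$ and $K$ lie in $\Re(\Lam^{2,0}\oplus\Lam^{0,2})$ with $J_{ab}=\omega_{\alpha\beta}+\bar\omega_{\bar\alpha\bar\beta}$ and $K_{ab}=-i\omega_{\alpha\beta}+i\bar\omega_{\bar\alpha\bar\beta}$. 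The key observation is that pre-composing $F$ with a complex structure just transforms its holomorphic and antiholomorphic parts: $F\circ I$ has parts $if$ and $-i\bar f$, whereas $F\circ J$ converts a linear part into an antilinear one via $\omega$, producing the combinations $J^*f$ and $J^*\bar f$ (and similarly, up to the $\pm i$ factors, for $K$).

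The main computation is then bookkeeping. Writing $F(X)=f(X)+\bar f(X)$, I would square to get $F(X)^2=f\otimes f+2\,f\otimes\bar f+\bar f\otimes\bar f$, where the pure $f\otimes f\in\Sym^{2,0}$ and $\bar f\otimes\bar f\in\Sym^{0,2}$ terms are the ones that must cancel in the sum, leaving only the mixed $\Sym^{1,1}$ contributions. For $F(I(X))^2$ the linear part picks up $i$ and the antilinear part $-i$, so the pure terms acquire a factor $i^2=-1$ (for $f\otimes f$) and $(-i)^2=-1$ (for $\bar f\otimes\bar f$), while the cross term $f\otimes\bar f$ acquires $i\cdot(-i)=1$; thus $F(X)^2+F(I(X))^2$ kills the $\Sym^{2,0}$ and $\Sym^{0,2}$ pieces and doubles the mixed piece to $4\,f\otimes\bar f$. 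An analogous pairing of $F(J(X))^2$ with $F(K(X))^2$, using that $J$ and $K$ differ by the same $\pm i$ pattern, cancels the corresponding pure pieces among those two terms and leaves $4\,J^*\bar f\otimes J^*f$ after symmetrisation. Collecting everything and dividing by two yields exactly
$$ \tfrac12\Phi=f\otimes\bar f+J^*\bar f\otimes J^*f+\bar f\otimes f+J^*f\otimes J^*\bar f, $$
as claimed, and the membership $\Phi\in\Sym^{1,1}$ is automatic since every surviving term pairs one linear with one antilinear factor.

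The only point requiring care is the precise form of $J$ on one-forms: since $J_{ab}=\omega_{\alpha\beta}+\bar\omega_{\bar\alpha\bar\beta}$ sends the linear part of $F$ to an antilinear covector and vice versa, I must verify that $(F\circ J)$ decomposes as $J^*\bar f$ (linear) plus $J^*f$ (antilinear), with $J^*$ the index-lowering-by-$\bar\omega$ map introduced in Lemma \ref{lem44}; the defining relation $\omega_{\alpha\mu}\bar\omega^\mu{}_{\bar\beta}=-h_{\bar\beta\alpha}$ guarantees $J^2=\mathrm{id}$ on the relevant space, so the labelling is consistent. This is the one step where sign conventions between $J$ and $K$ could easily slip, and it is the natural place to concentrate the verification; the rest is the straightforward algebra of combining four squares.
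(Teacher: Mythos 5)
Your proposal is correct and takes essentially the same route as the paper: the paper's proof simply writes out the four squares $F(X)^2$, $F(I(X))^2$, $F(J(X))^2$, $F(K(X))^2$ in the bigraded decomposition and lets the pure $\Sym^{2,0}$ and $\Sym^{0,2}$ pieces cancel pairwise — between the identity and $I$ terms, and between the $J$ and $K$ terms — leaving exactly the mixed $\Sym^{1,1}$ contributions you collect. (One incidental slip in your side remark: since $J$ is a complex structure, the relation $\omega_{\alpha\mu}\bar\omega^\mu{}_{\bar\beta}=-h_{\bar\beta\alpha}$ gives $J^2=-\id$, not $+\id$, but this plays no role in your actual computation.)
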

\begin{proof} Using the expressions for $I,J,K,$ we have
\begin{eqnarray*}
\Phi
&=& f\otimes f + \bar f \otimes \bar f + f \otimes \bar f + \bar f \otimes f \\
&-& f\otimes f - \bar f \otimes \bar f + f \otimes \bar f + \bar f \otimes f \\
&+& J^* f\otimes J^* f + J^* f \otimes \bar J^* f + J^* f \otimes J^* \bar f + J^* \bar f \otimes J^* f \\
&-& J^* f\otimes J^* f - J^* f \otimes \bar J^* f + J^* f \otimes J^* \bar f + J^* \bar f \otimes J^* f,
\end{eqnarray*} reducing to the former expression. \end{proof}

With indices present, the formula reads 
$$  \Phi_{ab} = 2 f_\mu \bar f_{\bar\nu}
[ \delta^\mu_\alpha \delta^{\bar\nu}_{\bar\epsilon} 
- \omega^{\bar\nu}{}_\alpha \bar\omega^\mu{}_{\bar\epsilon} 
+ \delta^{\bar\nu}_{\bar\alpha} \delta^\mu_\epsilon 
- \bar\omega^\mu{}_{\bar\alpha} \omega^{\bar\nu}{}_\epsilon ]. $$
It is now easy to extend this result to multilinear maps. This leads us to the following
\begin{proof}[Proof of Lemma \ref{lem44}]
Let us introduce on $\cV_3(\KK)$ a representation of an orthonormal basis in $\HH:$ 
$L_1 = 1, L_2 = I, L_3 = J, L_4 = K.$ Recalling
the definition $\mho(Z,\dots,Z) = c_3^2\| \cQ_L(Z,Z,Z,Z) \|^2,$ we expand it as
$$ \mho(Z) = c_3^2 \sum_{ABCD} \tilde\cQ(L_A Z,L_B Z, L_C Z,L_D Z)^2, $$
with $A,B,C,D$ ranging from 1 to 4, where $\tilde\cQ$ is defined as in Lemma \ref{lem-tl}:
$$ \tilde\cQ(Z,Z,Z,Z) = |z|^4 \cQ(X,X,X,X) $$
for $Z = z\otimes X \in \CC\otimes\fxk.$

Perfofming the sums over $A,B,C,D$ separately, we can apply the former lemma to obtain
\begin{eqnarray*}
\mho_{abcdefkl} 
= & \frac{1}{70} q_{\mu\nu\rho\sigma} \bar q_{\bar\xi\bar\eta\bar\zeta\bar\tau} \lbrace &
[2 \delta^\mu_\alpha \delta^{\bar\xi}_{\bar\epsilon} + 2\omega^{\bar\xi}{}_\alpha \bar\omega^\mu{}_{\bar\epsilon}] 
[2 \delta^\nu_\beta \delta^{\bar\eta}_{\bar\phi} + 2\omega^{\bar\eta}{}_\beta \bar\omega^\nu{}_{\bar\phi}] \\
& &
[ 2\delta^\rho_\gamma \delta^{\bar\zeta}_{\bar\kappa} + 2\omega^{\bar\zeta}{}_\gamma \bar\omega^\rho{}_{\bar\kappa}] 
[ 2\delta^\sigma_\delta \delta^{\bar\tau}_{\bar\lambda} + 2\omega^{\bar\tau}{}_\delta \bar\omega^\sigma{}_{\bar\lambda}] 
\quad+ sym. \rbrace
\end{eqnarray*}
where symmetrization in $a\dots l$ is indicated, producing ${8\choose 4}=70$
terms on the right hand side. Projecting onto $\Sym^{4,0}\otimes
\Sym^{0,4}$ we obtain
\begin{eqnarray*}
\frac{70}{16} \mho_{\alpha\beta\gamma\delta\bar\epsilon\bar\phi\bar\kappa\bar\lambda} 
&=& q_{\alpha\beta\gamma\delta} \bar q_{\bar\epsilon\bar\phi\bar\kappa\bar\lambda} \\
&+& 4\ q_{\mu(\alpha\beta\gamma} \omega^{\bar\xi}{}_{\delta)} \bar\omega^\mu{}_{(\bar\epsilon}\bar q_{\bar\phi\bar\kappa\bar\lambda)\bar\xi}  \\
&+& 6\ q_{\mu\nu(\alpha\beta} 
\omega^{\bar\xi}{}_{\gamma} \omega^{\bar\eta}{}_{\delta)} 
\bar\omega^\mu{}_{(\bar\epsilon} \bar\omega^\nu{}_{\bar\phi}
\bar q_{\bar\kappa\bar\lambda)\bar\xi\bar\eta}\\
&+& 4\ q_{\mu\nu\rho(\alpha} 
\omega^{\bar\xi}{}_{\beta} \omega^{\bar\eta}{}_{\gamma} \omega^{\bar\zeta}{}_{\delta)} 
\bar\omega^\mu{}_{(\bar\epsilon} \bar\omega^\nu{}_{\bar\phi} \bar\omega^\rho{}_{\bar\kappa}
\bar q_{\bar\lambda)\bar\xi\bar\eta\bar\zeta}\\
&+&  q_{\mu\nu\rho\sigma} 
\omega^{\bar\xi}{}_{(\alpha} \omega^{\bar\eta}{}_{\beta} \omega^{\bar\zeta}{}_{\gamma} \omega^{\bar\tau}{}_{\delta)} 
\bar\omega^\mu{}_{(\bar\epsilon} \bar\omega^\nu{}_{\bar\phi} \bar\omega^\rho{}_{\bar\kappa} \bar\omega^\sigma{}_{\bar\lambda)}
\bar q_{\bar\xi\bar\eta\bar\zeta\bar\tau}.
\end{eqnarray*}
Now, using $\bar q = J^* q $ and $\bar\omega^\alpha{}_{\bar\mu}\omega^{\bar\mu}{}_\beta 
= -\delta^\alpha_\beta,$ we find that
\begin{eqnarray*} 
\frac{70}{16}
\omega^{\bar\epsilon}{}_\mu
\omega^{\bar\phi}{}_\nu
\omega^{\bar\kappa}{}_\rho
\omega^{\bar\lambda}{}_\sigma
\mho_{\alpha\beta\gamma\delta\bar\epsilon\bar\phi\bar\kappa\bar\lambda} 
&\propto& q_{\alpha\beta\gamma\delta} q_{\mu\nu\rho\sigma} \\
&-& 4\ q_{(\mu(\alpha\beta\gamma} q_{\delta)\nu\rho\sigma)} \\
&+& 6\ q_{(\mu\nu(\alpha\beta} q_{\gamma\delta)\rho\sigma)} \\
&-& 4\ q_{(\mu\nu\rho(\alpha} q_{\beta\gamma\delta)\sigma)} \\
&+& q_{\mu\nu\rho\sigma} q_{\alpha\beta\gamma\delta}, 
\end{eqnarray*}
where $(\alpha\beta\gamma\delta)$ and $(\mu\nu\rho\sigma)$ are symmetrized
separately.
It is now easy to see that the expression on the right hand side is $16 P_{44} (q\otimes q).$
Inverting the omegas on the left, we obtain the lemma.\end{proof}

\chapter{Geometric part}

\section{Summary of the algebraic results}
We wish to keep this part of our work possibly independent of the Jordan-algebra-related theory developed in the previous chapter. We will now recall the results we need, in a form which claims the existence of certain tensors subject to a number of identities.

Still, the symbols $\KK$ and $\KK'$ denote, respectively, one of $\RR,\CC,\HH,\OO$ and one of $\CC,\HH,\OO.$ Pairs $(\KK,\KK')$ enumerate compact Riemannian symmetric spaces collected in the following table:
\begin{center}
\begin{tabular}{cc|ccc}
 & $\KK'$ & $\CC$ & $\HH$ & $\OO$ \\
 $\KK$ & & & & \\ \hline
 $\RR$ & &
 $\frac{\sSU(3)}{\sSO(3)}$ &
 $\frac{\sSp(3)}{\sU(3)}$ &
 $\frac{\sF_4}{\sSp(3)\sSp(1)}$ \\
 $\CC$ & &
 $\frac{\sSU(3)\times\sSU(3)}{\sSU(3)}$ &
 $\frac{\sSU(6)}{S(\sU(3)\times\sU(3))}$ &
 $\frac{\sE_6}{\sSU(6)\sSp(1)}$ \\
 $\HH$ & &
 $\frac{\sSU(6)}{\sSp(3)}$ &
 $\frac{\sSO(12)}{\sU(6)}$ &
 $\frac{\sE_7}{\sSO(12)\sSp(1)}$ \\
 $\OO$ & &
 $\frac{\sE_6}{\sF_{4}}$ &
 $\frac{\sE_7}{\sE_6\times\sU(1)}$ &
 $\frac{\sE_8}{\sE_7\sSp(1)}$
\end{tabular}
\end{center}
Let $M_S(\KK,\KK')$ denote the corresponding symmetric space and $G(\KK,\KK')$ its underlying
isotropy group, with a Lie algebra $\fg(\KK,\KK').$ The isotropy representation is denoted $V(\KK,\KK').$

Abstract index notation, including the conventions introduced in Remark \ref{not-cpx}, is assumed.
Moreover, for given $\KK$ and $\KK',$ the following symbols are defined:
$$ \kappa = \dim\KK,\quad 
N = \left\lbrace\begin{matrix} 
\dim M_S(\KK,\CC) &=& 3\kappa+2 & \quad \textrm{for}\ \KK'=\CC \\
\frac{1}{2}\dim M_S(\KK,\HH) &=& 3\kappa+3 & \quad \textrm{for}\ \KK'=\HH \\
\frac{1}{2}\dim M_S(\KK,\OO) &=& 6\kappa+8 & \quad \textrm{for}\ \KK'=\OO
\end{matrix}\right. $$
and $ \chi = \frac{\kappa+2}{\sqrt{\kappa+3}} $ for $\KK'=\OO.$

The following statements are reformulations or simple corollaries of: Corollary \ref{cor-reps}, Proposition \ref{pro-invs},
Lemmas \ref{lem44}, \ref{lemq}, \ref{lemdy}, \ref{lemdl}, and Corollaries \ref{cor-idy}, 
\ref{cor-id2}.

\subsection{First family}
Set $\KK'=\CC$ and choose $\KK.$  Let $\cV_1(\KK)$ be a real vector space of dimension $N$
equipped with a positive definite scalar product $g$ identifying $\cV_1(\KK)\simeq\cV_1(\KK)^*.$

There exists a tensor $\Upsilon \in \Sym^3 \cV_1(\KK)$ reducing the group $\sGL(N)$ to
$G(\KK,\CC)$ acting on $\cV_1(\KK)$ in the isotropy representation of $M_S(\KK,\CC),$
with $g$ as the preserved scalar product.

This tensor moreover satisfies the following identities:
\begin{eqnarray*}
\Upsilon_i{}^{mm} &=& 0 \\
\Upsilon_i{}^{mn} \Upsilon_j{}^{mn} &=& g_{ij} \\
\Upsilon_{i}{}^{rp} \Upsilon_j{}^{pq} \Upsilon_{k}{}^{qr} &=& \frac{-3\kappa}{6\kappa+8} \Upsilon_{ijk} \\
\Upsilon_{(ij}{}^m \Upsilon_{kl)}{}^m &=& \frac{10}{3\kappa+4} g_{(ij} g_{kl)}.
\end{eqnarray*}
The latter shows that, up to rescaling, $\Upsilon$ is the same tensor as the one
used by Nurowski in \cite{nurowski-2006}.

An operator $$\cD_\Upsilon : \Lam^2 \cV_1(\KK) \to \Lam^2 \cV_1(\KK)$$
$$ \cD_\Upsilon(E)_{ij} = E_{pq} \Upsilon_{iqm} \Upsilon_{jpm} $$ acts as
$$ \cD_\Upsilon = - \frac{1}{2} \pr_\fg + \frac{3}{3\kappa+4} \pr_\bot $$
where $\pr_\fg$ and $\pr_\bot$ are projections corresponding to the orthogonal decomposition
$$ \Lam^2 \cV_1(\KK) = \fg(\KK,\CC) \oplus \bot, $$
with $$\fg(\KK,\CC) \subset \fso(\cV_1(\KK),g)\simeq\Lam^2 \cV_1(\KK)$$ being the isotropy
algebra of $\Upsilon.$

\subsection{Second family}
Set $\KK'=\HH$ and choose $\KK.$ Let $\cV_2(\KK)$ be a complex vector space of (complex) dimension
$N$ equipped with a hermitian inner product $h,$ giving rise to a positive definite real scalar product $g$
on (the realification of) $\cV_2(\KK).$

Let $\Sym^{p,q}$ and $\Lam^{p,q}$ denote the spaces
of $p$-linear $q$-antilinear, respectively symmetric and antisymmetric, complex-valued
forms on $\cV_2(\KK).$

There exists a real tensor $\Xi \in \Re\Sym^{3,3}$ reducing the group $\sO(\cV_2(\KK),g)$ to a subgroup
whose connected component is $G(\KK,\HH),$ acting on $\cV_2(\KK)$ in the isotropy representation of $M_S(\KK,\HH),$ with $g$ as the preserved scalar product.

Recall that $h$ gives rise to an identification
$$ \fu(\cV_2(\KK),h) \simeq \Re\Lam^{1,1} = \theta\RR \oplus \fsu(\cV_2(\KK),h), $$
where $\theta_{ab} = ih_{\bar\alpha\beta} - ih_{\bar\beta\alpha}.$ In particular,
the orthogonal isotropy algebra of
$\Xi$ is a subalgebra in $\fu(\cV_2(\KK),h).$

There moreover exists a tensor $\Lambda \in \Sym^{3,0}$ such that $ \Xi = \Lambda \cdot \bar\Lambda,$
satisfying the following identities:
\begin{eqnarray*} \
\Lambda_{\alpha\mu\nu} \bar\Lambda_{\bar\beta}{}^{\mu\nu} &=& h_{\bar\beta\alpha} \\ 
\Lambda_{\alpha\mu\nu} \bar\Lambda_{\bar\beta}{}^{\nu\rho} 
\Lambda_{\gamma\rho\sigma} \bar\Lambda_{\bar\delta}{}^{\sigma\mu} &=&
\frac{1}{2\kappa+4} (h_{\bar\beta\alpha}h_{\bar\delta\gamma}+h_{\bar\delta\alpha} h_{\bar\beta\gamma}) \\
&-& \frac{\kappa}{2\kappa+4}\Lambda_{\alpha\gamma\mu} \bar\Lambda_{\bar\beta\bar\delta}{}^\mu.
\end{eqnarray*}
The algebra $\fg(\KK,\HH)$ is a direct sum 
$$ \fg(\KK,\HH) = \cg_2(\KK) \oplus \fu(1) $$
of the orthogonal stabilizer algebra
of $\Lambda$ and a $\fu(1)$ spanned by multiplication by $i.$ The latter is the centre of $\fg(\KK,\HH).$

There is a $G(\KK,\HH)$-invariant operator
$$ \cD_\Lambda : \Lam^{1,1} \to \Lam^{1,1} $$
$$ \cD_\Lambda(B)_{ab} = 
b_{\bar\mu\nu} \Lambda^{\bar\mu\bar\rho}{}_\beta \bar\Lambda^\nu{}_{\bar\rho\bar\alpha} 
- b_{\bar\mu\nu} \Lambda^{\bar\mu\bar\rho}{}_\alpha \bar\Lambda^\nu{}_{\bar\rho\bar\beta} 
$$
for $B_{ab} = b_{\bar\alpha\beta} - b_{\bar\beta\alpha},$ acting as
$$ \cD_\Lambda = \pr_0 - \frac{1}{2} \pr_\cg + \frac{1}{\kappa+2} \pr_\bot, $$
where $\pr_0,$ $\pr_\cg$ and $\pr_\bot$ are projections corresponding to the orthogonal decomposition
into $G(\KK,\HH)$-invariant spaces:
$$ \Lam^{1,1} = \CC\otimes\fu(1)\ \oplus\ \CC\otimes\cg_2(\KK)\ \oplus\ \bot. $$

\subsection{Third family}
Set $\KK'=\OO$ and choose $\KK.$ Let $\cV_3(\KK)$ be a real vector space of dimension $2N$ equipped
with a positive definite scalar product $g$ and a quaternion-hermitian structure, i.e. three hermitian
structures $I,J,K$ subject to
$$ I^2 = J^2 = K^2 = IJK = -\id. $$

There exists a tensor $\mho \in \Sym^8 \cV_3(\KK)$ reducing the group $\sO(\cV_3(\KK),g)$ to
a subgroup whose connected component is $G(\KK,\OO),$ acting on $\cV_3(\KK)$ in the isotropy representation
of $M_S(\KK,\OO),$ with $g$ as the preserved scalar product.

Consider now $\cV_3(\KK)$ as a complex vector space of dimension $N,$ with one of the three hermitian
structures, for concreteness $I,$ as \emph{the} complex structure. 
Let $\Sym^{p,q}$ and $\Lam^{p,q}$ denote the spaces
of $p$-linear $q$-antilinear, respectively symmetric and antisymmetric, complex-valued
forms. By polarisation, the scalar product $g$ gives rise to a hermitian inner product $h\in\Sym^{1,1}.$
Moreover, by the identification 
$$\fso(\cV_3(\KK,g)) \simeq \Lam^2 \cV_3(\KK,g) = \Re (\Lam^{2,0} \oplus \Lam^{1,1} \oplus \Lam^{0,2}),$$
one has a symplectic form $\omega_{\alpha\beta} \in \Lam^{2,0}_{ab}$ such that
$$J \in \Re(\Lam^{2,0} \oplus \Lam^{0,2}),\quad J^a{}_b = \omega^{\bar\alpha}{}_\beta 
+ \bar\omega^\alpha{}_{\bar\beta}. $$

In this context, $\mho\in\Re\Sym^{4,4}.$ There moreover exists a tensor $q\in\Sym^{4,0}$ such that the
projection of $\mho\in\Sym^{4,4}$ onto $\Sym^{4,0} \otimes \Sym^{0,4}$ is
$$
(\mho|_{\Sym^{4,0}\otimes\Sym^{0,4}})_{\alpha\beta\gamma\delta\bar\epsilon\bar\phi\bar\kappa\bar\lambda}
=  \frac{256}{70}\ 
\delta^{[\mu}_{(\alpha}
\bar\omega^{\xi]}{}_{(\bar\epsilon}
\delta^{[\nu}_\beta
\bar\omega^{\eta]}{}_{\bar\phi}
\delta^{[\rho}_\gamma
\bar\omega^{\zeta]}{}_{\bar\kappa}
\delta^{[\sigma}_{\delta)}
\bar\omega^{\tau]}{}_{\bar\lambda)}
\ q_{\mu\nu\rho\sigma} q_{\xi\eta\zeta\tau},$$
where barred and unbarred indices are symmetrized separately.
Following identities are satisfied:
\begin{eqnarray}
q_{\alpha\beta\mu\nu} \bar\omega^{\mu\nu} &=& 0 \\ 
q_{\alpha\mu\nu\rho} \bar q^{\beta\mu\nu\rho} &=& \frac{N+1}{2} \delta^\alpha_\beta \\ 
q_{\alpha\beta\mu\nu} \bar q^{\gamma\delta\mu\nu} &=& 
\frac{1}{2} [\delta_\alpha^\gamma \delta_\beta^\delta + \delta_\alpha^\delta \delta_\beta^\gamma]+ \chi\ q_{\alpha\beta\mu\nu} \bar\omega^{\mu\gamma}\bar\omega^{\nu\delta} \\
q_{\mu\nu\rho\sigma} \bar\omega^{\mu\alpha}\bar\omega^{\nu\beta}\bar\omega^{\rho\gamma}\bar\omega^{\sigma\delta} &=& \bar q^{\alpha\beta\gamma\delta}. 
\end{eqnarray}

Let $\sSp(\cV_3(\KK),\omega)$ denote the symplectic group preserving the quaternionic structure,
and $\sSp(1)$ \emph{the} group generated by $I,J,K.$
Let $\fsp(\cV_3(\KK),\omega)$ and $\fsp(1)$ be the corresponding  Lie algebras. 
From the complex viewpoint, with $I$ the distinguished
complex structure, one has
$$ \fsp(\cV_3(\KK),\omega) \subset \fu(\cV_3(\KK),h) \simeq \Re\Lam^{1,1}. $$
Moreover, $\omega$ gives rise to an identification
$$ \Lam^{1,1} \supset \CC\otimes\fsp(\cV_3(\KK),\omega) \simeq \Sym^{2,0} $$
via 
$$ \Lam^{1,1}_{ab} \ni b_{\bar\alpha\beta} - b_{\bar\beta\alpha} \mapsto b^\mu{}_\alpha\omega_{\beta\mu}
\in \Lam^{1,0}_a\otimes \Lam^{1,0}_b. $$
The orthogonal isotropy algebra of $\mho$ can be decomposed as $$\fg(\KK,\OO) = \cg_3(\KK) \oplus 
\fsp(1)$$ where $\cg_3(\KK)$ is the orthogonal stabilizer of $q.$

An operator
$$ \cD_q : \Sym^{2,0} \to \Sym^{2,0} $$
$$ \cD_q(b)_{\alpha\beta} = b_{\mu\nu} \bar q^{\mu\nu\rho\sigma} \omega_{\rho\alpha} \omega_{\sigma\beta}
$$
acts as
$$ \cD_q = -\sqrt{\kappa+3}\ \pr_\fg + \sqrt{\frac{1}{\kappa+3}}\ \pr_\bot, $$
where $\pr_\fg$ and $\pr_\bot$ are projections corresponding to the decomposition
$$ \Sym^{2,0} \simeq \CC \otimes \cg_3(\KK) \oplus \bot. $$

\section{$G-$structures and intrinsic torsion}

$G$-structures provide a covariant description of additional structure introduced on a manifold
(be it Riemannian, complex, CR etc.).
While they are defined as reductions of the frame bundle, or simply principal bundles equipped with
a soldering form, related objects are pulled back to the mainfold and its tangent bundle by means
of local sections, i.e. adapted frames.

One can also introduce somewhat weaker structure directly on the tangent bundle, specifying
endomorphisms of the latter corresponding to the Lie algebra $\fg$ of the original structure group $G.$ Under certain technical assumption, the
latter can be then reconstructed up to connected components, so that in general 
one ends up, locally, with several associated $G-$structures (although these may fail to be globally defined on a manifold which is not simply connected).

Finally, we shall discuss structures defined in terms of invariant tensors. However,
the groups $G(\KK,\KK')$ we are interested in are in general defined only as identity components of the isotropy groups. Selecting a $G(\KK,\KK')-$structure requires thus an extra choice of a frame in a single point of each connected component of the manifold, and may fail due to global problems even if the tensor is globally defined.

\subsection{$G-$structures and the intrinsic torsion} \label{ss-gstr}
Recall, that a $G-$structure $Q$ on an $m-$dimensional Riemannian
manifold $(M,g),$ for $G$ being a subgroup
of $\sO(m),$ is a reduction $$ i: Q \hookrightarrow P$$ of the orthonormal frame bundle $\pi : P \to M$
to a subbundle $Q$
with structure group $G,$ acting on $Q$ by restriction of the action of $\sO(m)$ on $P.$ Its
local sections will be called \emph{adapted frames}.

A connection on the bundle $P$ 
is said to be compatible with  $Q$ iff 
the associated horizontal distribution ${\rm Hor} \subset TP$ is
tangent to $Q:$ $${\rm Hor}_{q} \subset T_q Q\quad \textrm{for each}\ q\in Q.$$
In terms of a connection form $\omega \in \Omega^1(P)\otimes\fso(m),$ the compatibility
condition reads simply\footnote{
Indeed, consider a vector $X\in T_q Q\subset T_q P.$ Then $X - \widehat{\omega(X)}_q$
is horizontal, where $\hat A \in \mathcal{X}(P)$ denotes the vertical vector field
associated to $A\in\fso(m)$ by the structure group action. Demanding ${\rm Hor}_q
\subset T_q Q$ implies $\widehat{\omega(X)}_q \in T_q Q,$ so that $\omega(X)\in\fg.$
}
$$ i^* \omega \in \Omega^1(Q) \otimes \fg, $$
where $\fg\subset\fso(m)$ is the Lie algebra of $G.$
If it is satisfied, $i^*\omega$ becomes
a connection on $Q$ (since $G$ acts on $Q$ by restriction of its action on $P$).

\begin{defn} A $G-$structure is said to be \emph{integrable} iff it admits a torsion-free compatible
connection.
\end{defn}

Following Agricola \cite{agricola-2006}, we consider the Levi-Civita connection $ \omega^{LC} 
\in \Omega_1(P)\otimes\fso(m)$ and decompose its restriction to $Q$ orthogonally:
$$\fso(m) = \fg \oplus \ft,\quad [\fg,\ft]\subset\ft $$
\begin{equation} i^*\omega^{LC} = \omega^\fg - \alpha^\ft \label{omega-dec}\end{equation}
so that $\omega^\fg \in \Omega^1(Q)\otimes\fg$ and $\alpha^\ft \in \Omega^1(Q)\otimes\ft.$
\begin{lem}[cf. \cite{agricola-2006}] $ $
\begin{enumerate}
\item $\omega^\fg$ is a connection on $Q.$
\item $\alpha^\ft$ is a horizontal one-form of type $\Ad$ on $Q.$
\end{enumerate}
\end{lem}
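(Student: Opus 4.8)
The plan is to verify directly the two defining axioms of a principal connection for $\omega^\fg$ and the two defining conditions (horizontality and $\Ad$-equivariance) for $\alpha^\ft$, obtaining all of them by projecting the corresponding properties of $i^*\omega^{LC}$ onto the two summands of $\fso(m)=\fg\oplus\ft$. The starting point is that $i:Q\hookrightarrow P$ is $G$-equivariant, since $G$ acts on $Q$ by restriction of the $\sO(m)$-action on $P$. Hence $i$ intertwines fundamental vector fields and right translations: for $A\in\fg\subset\fso(m)$ the fundamental field $\hat A$ on $Q$ satisfies $i_*\hat A=\hat A|_{i(Q)}$, and $i\circ R_g=R_g\circ i$ for $g\in G$. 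Pulling back the two axioms of the Levi-Civita connection form therefore gives, for $A\in\fg$ and $g\in G$,
\begin{equation*}
(i^*\omega^{LC})(\hat A)=A,\qquad R_g^*(i^*\omega^{LC})=\Ad_{g^{-1}}(i^*\omega^{LC}).
\end{equation*}

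The key algebraic observation I would isolate next is that $\Ad(G)$ preserves the orthogonal splitting $\fso(m)=\fg\oplus\ft$. Invariance of $\fg$ is automatic, as it is the Lie algebra of $G$; invariance of $\ft$ follows because the splitting is orthogonal with respect to the $\Ad(\sO(m))$-invariant inner product on $\fso(m)$ (for instance $\langle A,B\rangle=-\tr(AB)$), so $\Ad(G)$, preserving both this product and $\fg$, must preserve $\fg^\bot=\ft$; the hypothesis $[\fg,\ft]\subset\ft$ is precisely the infinitesimal shadow of this statement. Consequently the orthogonal projections $\pr_\fg$ and $\pr_\ft$ commute with $\Ad_g$ for every $g\in G$.

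I would then apply $\pr_\fg$ and $\pr_\ft$ to the two displayed identities. Evaluating on $\hat A$ and using $A\in\fg$ yields $\omega^\fg(\hat A)=A$ and $\alpha^\ft(\hat A)=0$: the first is the reproducing axiom for $\omega^\fg$, the second says $\alpha^\ft$ annihilates vertical vectors, i.e.\ is horizontal. Projecting the equivariance identity and commuting the projections past $\Ad_{g^{-1}}$ (using the previous paragraph) separates it into $R_g^*\omega^\fg=\Ad_{g^{-1}}\omega^\fg$ and $R_g^*\alpha^\ft=\Ad_{g^{-1}}\alpha^\ft$. The first completes the proof that $\omega^\fg$ is a connection on $Q$ (part 1); the second, together with horizontality, is exactly the assertion that $\alpha^\ft$ is a horizontal one-form of type $\Ad$ (part 2). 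Smoothness of both components is immediate, since $\pr_\fg$ and $\pr_\ft$ are fixed linear maps applied to a smooth form.

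I expect the only delicate point to be the $\Ad(G)$-invariance of $\ft$ when $G$ is disconnected: the stated hypothesis $[\fg,\ft]\subset\ft$ gives this invariance only infinitesimally (hence only for the identity component), and one genuinely needs the orthogonality of the decomposition with respect to an $\Ad(\sO(m))$-invariant form to upgrade it to invariance under all of $\Ad(G)$. Once that is in place, everything else is a mechanical consequence of projecting the two Levi-Civita axioms onto $\fg$ and $\ft$.
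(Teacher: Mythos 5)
Your proof is correct and complete. The paper itself does not prove this lemma --- it is stated with a citation to Agricola's lecture notes --- and your argument is exactly the standard one that fills that gap: pull back the two connection axioms of $\omega^{LC}$ along the $G$-equivariant inclusion $i:Q\hookrightarrow P$, check that $\Ad(G)$ preserves both summands of $\fso(m)=\fg\oplus\ft$, and split each axiom by the projections $\pr_\fg$, $\pr_\ft$. Your closing observation is also a genuine refinement of the paper's stated hypotheses: the condition $[\fg,\ft]\subset\ft$ appearing in the text only yields $\Ad$-invariance of $\ft$ under the identity component of $G$, so for possibly disconnected $G\subset\sO(m)$ (which the paper allows at this point, and which matters later since the full isotropy groups of the invariant tensors are disconnected) one really does need, as you say, that $\ft=\fg^\bot$ with respect to an $\Ad(\sO(m))$-invariant inner product.
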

The usual name for the $\alpha^\ft$ is the \emph{intrinsic torsion}
of $Q$ (in fact, this notion can be defined also for a general $G-$structure, not necessarily
Riemannian, although it becomes less explicit \cite{joyce-x}).

The form $\omega^\fg$ can be extended by covariance to a connection on entire $P,$ with
values in the orthogonal orbit of $\fg:$  $$ \tilde\omega^\fg \in \Omega_1(P) \otimes \Ad_{\sO(m)} \fg.$$
Let $\tilde D^\fg$ be the associated covariant derivative:
$$ \tilde D^\fg : \Omega^p_{hor}(P)\otimes H  \to \Omega^{p+1}_{hor}(P)\otimes H $$
$$ \tilde D^\fg \phi = d\phi + \tilde\omega^\fg \wedge \phi $$
for any $\sO(m)$-module $H.$ 
Recall that a point $p$ in the frame bundle $P$ is an orthogonal isomorphism $p : \RR^m \to T_{\pi(p)}M,$ where $\RR^m$ is equipped with some fixed positive definite scalar product.
Introducing the soldering form 
$$\theta \in \Omega^1(P)\otimes\RR^m,\quad \theta_p = p^{-1} \circ T_p\pi\quad\textrm{for each}\ p\in P, $$
we have the torsion of $\omega^\fg:$
$$ \Theta^\fg = \tilde D^\fg \theta = d\theta + \tilde\omega^\fg\wedge\theta
\in \Omega^2(P)\otimes\RR^m. $$
Recalling that the Levi-Civita connection is torsion-free and using (\ref{omega-dec}), one finds
\begin{eqnarray*}
i^* \Theta^\fg &=& i^* d\theta + \omega^\fg\wedge i^*\theta \\ &=& i^* (d\theta + \omega^{LC} \wedge\theta
) + \alpha^\ft \wedge i^*\theta \\ &=& \alpha^\ft\wedge i^*\theta.
\end{eqnarray*}
It then follows that $$\Theta^\fg(X,Y) = \alpha^\ft(X) (\theta(Y)) - \alpha^\ft(Y) (\theta(X)) $$ 
for $X,Y\in T_q Q.$ Conversely, it turns out that $\Theta^\fg$ determines the
horizontal form $\alpha^\ft$
(it needn't be true in the non-Riemannian case, cf. \cite{agricola-2006}):
\begin{eqnarray*} 2 \langle\ \alpha^\ft(X)(\theta(Y)),\ \theta(Z)\ \rangle 
&=& \langle\ \Theta^\fg(X,Y),\ \theta (Z)\ \rangle \\
&-& \langle\ \Theta^\fg(Y,Z),\ \theta (X)\ \rangle \\
&+& \langle\ \Theta^\fg(Z,X),\ \theta (Y)\ \rangle \end{eqnarray*}
for $X,Y,Z\in T_q Q,$ where $\langle \cdot,\cdot\rangle$ denotes the scalar product
on $\RR^m.$ 

Note that a $Q-$compatible connection is necessarily metric. Thus a torsion-free $Q-$compatible
connection, provided it exists, necessarily coincides with the Levi-Civita one.
This leads us to the following obvious
\begin{pro}[cf. \cite{agricola-2006,joyce-x}] Let $Q$ be a $G-$structure on an $m-$dimensional Riemannian manifold $(M,g),$ 
with $G\subset\sO(m)$ The following are equivalent:
\begin{enumerate}
\item{$Q$ is integrable.}
\item{The intrinsic torsion of $Q$ is trivial.}
\item{The Levi-Civita connection on $M$ is compatible with $Q.$}
\end{enumerate}
Each of these implies reduction of the Riemannian holonomy group ${\rm Hol}(g)$ to
a subgroup of $G.$
\end{pro}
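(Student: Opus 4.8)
The plan is to deduce all three equivalences directly from the orthogonal splitting $i^*\omega^{LC} = \omega^\fg - \alpha^\ft$ established above, supplemented only by the uniqueness clause of the fundamental theorem of Riemannian geometry and, for the last assertion, the standard reduction theorem for holonomy. First I would prove $(2)\Leftrightarrow(3)$, which is the most immediate. By definition the Levi-Civita connection is compatible with $Q$ exactly when $i^*\omega^{LC}$ is $\fg$-valued, i.e. $i^*\omega^{LC}\in\Omega^1(Q)\otimes\fg$. Since $\fso(m)=\fg\oplus\ft$ is a direct sum, the decomposition $i^*\omega^{LC}=\omega^\fg-\alpha^\ft$ with $\omega^\fg\in\Omega^1(Q)\otimes\fg$ and $\alpha^\ft\in\Omega^1(Q)\otimes\ft$ shows that the $\ft$-part of $i^*\omega^{LC}$ is precisely $-\alpha^\ft$; hence $i^*\omega^{LC}$ is $\fg$-valued if and only if the intrinsic torsion $\alpha^\ft$ vanishes, giving $(3)\Leftrightarrow(2)$.

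Next I would treat $(1)\Leftrightarrow(3)$. The implication $(3)\Rightarrow(1)$ is immediate, since a compatible $\omega^{LC}$ is itself a torsion-free compatible connection, so $Q$ is integrable. For the converse I would use the fact, already noted before the statement, that every $Q$-compatible connection is $\fso(m)$-valued, hence metric; a torsion-free compatible connection is therefore a metric torsion-free connection and, by uniqueness of the Levi-Civita connection, must coincide with $\omega^{LC}$. Thus integrability forces $\omega^{LC}$ to be compatible with $Q$, establishing $(1)\Rightarrow(3)$ and closing the cycle of equivalences.

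Finally, for the holonomy reduction I would argue from $(3)$: when $\omega^{LC}$ is compatible with $Q$ its horizontal distribution is tangent to $Q$, so parallel transport along any loop preserves the subbundle $Q\hookrightarrow P$. Fixing a base frame $p\in Q_x$ and expressing the holonomy transformations based at $x$ in this frame, preservation of $Q$ confines them to $G$, so ${\rm Hol}(g)$ reduces to a subgroup of $G$; this is the reduction theorem for holonomy (equivalently Ambrose--Singer applied to a reducible connection). The argument is in essence bookkeeping on material already assembled, so I do not expect a genuine obstacle; the only substantive inputs are the uniqueness clause invoked in $(1)\Rightarrow(3)$ and the holonomy reduction theorem. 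The one delicate point to state carefully is the last step --- pinning down the base frame so that the holonomy lands in $G$ itself rather than merely a conjugate of it.
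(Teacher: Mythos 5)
Your proof is correct and follows exactly the route the paper takes: the paper states this proposition as ``obvious'' immediately after observing that a $Q$-compatible connection is necessarily metric (so a torsion-free compatible connection must coincide with the Levi-Civita one), and your argument simply fleshes out that remark together with the splitting $i^*\omega^{LC}=\omega^\fg-\alpha^\ft$ and the standard holonomy reduction theorem. Your care about basing the holonomy at a frame lying in $Q$ is the right way to handle the only subtle point.
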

Particular examples one should have in mind are the almost-hermitian and almost-quaternion-hermitian
structures, with structure groups $\sU(\frac{m}{2})$ and $\sSp(\frac{m}{4})\sSp(1)$ respectively
(their integrable cases being featured in the celebrated theorem of Berger). The geometries modelled
on the second and third family of symmetric spaces described so far fall into these categories.

\subsection{Tangent bundle view}

Choosing a local section $e:M\supset U \to Q,$ i.e. an adapted frame, one can pull $\omega^\fg$ and
$\alpha^\ft$ back to $M,$ so that $\omega^\fg$ gives rise to a connection
in the tangent bundle, and $\alpha^\ft$ becomes a well-defined tensorial one-form.

It is however instructive to consider the structure which appears
on the tangent bundle independently of any section. In what follows, we use $g$ to identify
$TM\simeq T^*M.$
Observe first that, having
fixed a point $x\in M,$ the 
image of $\fg\subset\Lam^2 \RR^m$ under (the extension of) $ q : \RR^m \to T_x M,$ where
$q\in Q$ and $\pi(q) = x,$ does not depend on the choice of $q$ from
the fibre of $Q$ over $x.$ Indeed, for every two frames $q, q' \in \pi|_Q^{-1}(x)$
there exists $g\in G$ such that $q' = q \circ g$; on the other hand, $\Ad_g(\fg) = \fg,$
so that $q(\fg) = q'(\fg).$ We shall denote this subspace as $\fg_M(x) \subset \Lam^2 T_x M.$

It thus follows, that a $G-$structure equips $M$ with an orthogonal splitting
\begin{equation} \Lam^2 TM = \fg_M \oplus \ft_M \label{tsplit} \end{equation}
into subbundles
such that at each point $x\in M$ the subspace 
$$\fg_M(x) \subset \Lam^2 T_x M \simeq \fso(T_x M, g_x)$$
is a Lie subalgebra isomorphic to $\fg.$

A natural question is to what extent can one reconstruct the original $G-$structure from
the data given by (\ref{tsplit}). In what follows we shall assume that:
\begin{enumerate}
\item $G$ is connected
\item $\ft$ contains no
one-dimensional $G-$invariant subspace
\end{enumerate}
(these are true for $G(\KK,\KK'),$ as one can check in Section \ref{sec-dec}.
Point 2 is needed in particular for the following result to hold:
\begin{lem}\label{lem-gm}
The set of all orthonormal frames mapping $\fg$ to $\fg_M$ forms a principal
bundle with a structure group whose identity component is $G.$
\end{lem}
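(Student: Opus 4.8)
The plan is to identify the set in question — call it $\tilde Q$ — as a canonical enlargement of the given $G$-structure $Q$ whose structure group is the normaliser of $\fg$ in $\sO(m),$ and then to show, using the two standing assumptions, that the identity component of this normaliser is exactly $G.$

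First I would pin down the structure group. By construction a point of $\tilde Q$ over $x\in M$ is an orthonormal frame $p:\RR^m\to T_xM$ whose natural extension to $\Lam^2$ carries $\fg\subset\Lam^2\RR^m$ onto $\fg_M(x);$ since the adapted frames already satisfy this, $Q\subset\tilde Q.$ Given two frames $p,p'$ of $\tilde Q$ over the same point, orthonormality gives $p'=p\circ g$ for a unique $g\in\sO(m),$ and the identity $p(\fg)=\fg_M(x)=p'(\fg)=p(g(\fg))$ forces $g(\fg)=\fg,$ i.e. $\Ad_g\fg=\fg.$ Thus the frames of $\tilde Q$ over $x$ form a single orbit of the normaliser $N:=N_{\sO(m)}(\fg),$ which acts on $\tilde Q$ on the right. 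To see that $\tilde Q$ is a smooth principal $N$-bundle I would note that $\tilde Q=Q\cdot N$: any $p\in\tilde Q$ over $x$ equals $q\circ g$ with $q\in Q$ over $x$ and $g\in N$ by the previous argument. The assignment $[q,g]\mapsto q\cdot g$ then gives a bijection of the associated bundle $Q\times_G N$ onto $\tilde Q$ (injectivity uses freeness of the $\sO(m)$-action and the fact that fibres of $Q$ are $G$-orbits), realising $\tilde Q$ as a principal $N$-subbundle of $P$ by the standard structure-group-extension construction.

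The crux is to compute the identity component $N^0.$ Its Lie algebra is the normaliser $\mathfrak{n}=\{A\in\fso(m):[A,\fg]\subset\fg\}.$ Clearly $\fg\subset\mathfrak{n},$ so using the orthogonal reductive splitting $\fso(m)=\fg\oplus\ft$ I would reduce to showing $\mathfrak{n}\cap\ft=0.$ For $A\in\mathfrak{n}\cap\ft$ one has $[A,\fg]\subset\fg$ because $A\in\mathfrak{n},$ while $[A,\fg]\subset\ft$ because $A\in\ft$ and $[\fg,\ft]\subset\ft.$ Hence $[A,\fg]\subset\fg\cap\ft=0,$ so $A$ commutes with all of $\fg$ and $\RR A$ is a trivial one-dimensional $\fg$-submodule of $\ft.$ As $G$ is connected this is a one-dimensional $G$-invariant subspace of $\ft,$ which assumption 2 forbids unless $A=0.$ Therefore $\mathfrak{n}=\fg.$ Since $G$ is connected and contained in $N$ it lies in $N^0,$ and two connected subgroups of $\sO(m)$ sharing the Lie algebra $\fg$ must coincide; thus $N^0=G.$

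I expect the genuine content to be concentrated in the third paragraph: the reductivity $[\fg,\ft]\subset\ft$ together with the absence of an invariant line in $\ft$ is precisely what rules out any infinitesimal enlargement of $\fg$ inside $\fso(m),$ and it is here that both standing assumptions (connectedness of $G$ and assumption 2) are genuinely used. The bundle-theoretic steps of the second paragraph, by contrast, are routine once the structure group has been identified as $N.$
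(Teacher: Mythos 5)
Your proposal is correct and follows essentially the same route as the paper's proof: identify the structure group as the normaliser $\tilde G=\{a\in\sO(m)\ |\ \Ad_a(\fg)=\fg\}$, compute its Lie algebra $\{A\in\fso(m)\ |\ [A,\fg]\subset\fg\}$, and use the splitting $\fso(m)=\fg\oplus\ft$ with $[\fg,\ft]\subset\ft$ together with assumption 2 (no $G$-invariant line in $\ft$) and connectedness of $G$ to conclude that this algebra equals $\fg$, hence the identity component is $G$. Your second paragraph (the $Q\times_G \tilde G$ associated-bundle argument) and the explicit final step identifying $N^0$ with $G$ merely flesh out details the paper leaves implicit.
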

\begin{proof}
Consider an orthonormal frame $e_x : \RR^m \to T_x M$ at a fixed point $x\in M,$
such that $e_x(\fg) = \fg_M(x).$ Every other frame at $x$ is of the form $e_x' = e_x \circ a$
for some $a\in\sO(m),$ so that $e_x'$ maps $\fg$ to $\fg_M(x)$ iff $\Ad_a(\fg) = \fg.$ It
thus follows that all such frames at $x$ form a fibre of a principal bundle with
structure group
$$ \tilde G = \{ a \in \sO(m)\ |\ \Ad_a(\fg) = \fg \}. $$
The corresponding Lie algebra is
$$ \tilde\fg = \{ A \in \fso(m)\ |\ [A,\fg]\subset\fg \}.$$
Now, every $A\in\fso(m)$ is of the form $A_1 + A_2$ with $A_1\in\fg$ and $A_2\in\ft,$
so that $[A,\fg] = [A_1,\fg] + [A_2,\fg],$ where automatically $[A_1,\fg]\subset \fg$
and $[A_2,\fg] \subset \ft,$ since $[\fg,\ft]\subset\ft.$
Thus $[A,\fg]\subset\fg$ iff $[A_2,\fg] = 0.$ Then the assumption we have made on $\ft$
implies that every such $A_2$ is zero, so that finally $\tilde\fg=\fg,$ and $\tilde G$
has $G$ as its component of identity.
\end{proof}
One moreover obtains  a characterization of compatible connections:
\begin{lem}
A metric connection $\nabla$ in the tangent bundle, lifted to a connection on the frame bundle,
is compatible with the $G-$structure iff it preserves the splitting (\ref{tsplit}):
$$ f \in \Omega^0(M,\fg_M) \implies \nabla f \in \Omega^1(M,\fg_M). $$
\end{lem}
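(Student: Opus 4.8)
The plan is to pass to a local adapted frame and reduce the whole statement to a pointwise fact about the Lie algebra $\fso(m) = \fg \oplus \ft$. First I would fix a local section $e : U \to Q$ and use it to trivialise $\Lam^2 TM \simeq \fso(TM)$ by $\fso(m)$-valued functions; because $\Lam^2 TM \simeq \fso(TM)$ carries the \emph{adjoint} representation of $\sO(m)$, the induced covariant derivative reads $\nabla f = df + [\Gamma, f]$, where $\Gamma = e^*\omega \in \Omega^1(U) \otimes \fso(m)$ is the local connection form of $\nabla$ (metricity of $\nabla$ is exactly what makes $\Gamma$ take values in $\fso(m)$, so that the splitting $\fso(m) = \fg \oplus \ft$ applies). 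The crucial observation is that, since $e$ maps $\fg \subset \Lam^2 \RR^m$ onto $\fg_M$, a section of $\fg_M$ is precisely a $\fg$-valued function in this trivialisation. Decomposing $\Gamma = \Gamma_\fg + \Gamma_\ft$ according to $\fso(m) = \fg \oplus \ft$ and using that $\fg$ is a subalgebra while $[\fg,\ft] \subset \ft$, I would compute, for $f \in \fg$, that $\nabla f = \underbrace{df + [\Gamma_\fg, f]}_{\in\,\fg} + \underbrace{[\Gamma_\ft, f]}_{\in\,\ft}$.

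With this the equivalence becomes transparent. Compatibility of $\nabla$ with $Q$ is, by the definition recalled in Subsection~\ref{ss-gstr}, the condition $\Gamma_\ft = 0$, and this clearly forces $\nabla f \in \fg_M$ for every $f \in \fg_M$. Conversely, preservation of the splitting forces the $\ft$-component $[\Gamma_\ft(X), f]$ to vanish for all $X \in TU$ and all $f \in \fg$, i.e. each $\Gamma_\ft(X)$ must lie in the centraliser of $\fg$ inside $\ft$. The only substantive step is then to show this centraliser is trivial: if $A \in \ft$ satisfies $[\fg, A] = 0$, then, $G$ being connected, $\RR A$ is a $G$-invariant subspace of $\ft$ (one has $\Ad_{\exp\xi} A = e^{\ad_\xi} A = A$), of dimension one unless $A = 0$; the standing assumption that $\ft$ contains no one-dimensional $G$-invariant subspace yields $A = 0$, hence $\Gamma_\ft = 0$ and $\nabla$ is compatible.

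Finally I would remark that the argument does not depend on the chosen adapted frame: under a gauge change $e \mapsto e\cdot g$ with $g : U \to G$ one has $\Gamma \mapsto \Ad_{g^{-1}}\Gamma + g^{-1}dg$, where $g^{-1}dg$ is $\fg$-valued and $\Ad_{g^{-1}}$ preserves both $\fg$ and $\ft$, so that $\Gamma_\ft \mapsto \Ad_{g^{-1}}\Gamma_\ft$ and its vanishing is well defined. The main (indeed the only) obstacle is the triviality of the centraliser of $\fg$ in $\ft$; this is exactly what assumption~(2) on $\ft$ is designed to secure, and it is the hypothesis without which the two conditions would genuinely differ, since a nonzero centralising direction in $\ft$ would be preserved by $\nabla$ while still violating compatibility.
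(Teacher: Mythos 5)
Your proof is correct and follows essentially the same route as the paper's: work in a local adapted frame, identify compatibility with the vanishing of the $\ft$-component of the local connection form, and reduce preservation of the splitting $\Lam^2 TM = \fg_M \oplus \ft_M$ to the triviality of the centraliser of $\fg$ in $\ft$, which follows from connectedness of $G$ together with the assumption that $\ft$ has no one-dimensional $G$-invariant subspace. The only cosmetic difference is that the paper delegates this last algebraic step to the argument already given in its Lemma~\ref{lem-gm}, whereas you spell it out explicitly (and add a gauge-invariance remark); the substance is identical.
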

\begin{proof}
Choose (locally) an adapted frame, i.e. a (local) section $e : M \to Q.$ A compatibility
condition for a connection form $\omega$ on the frame bundle $P$ is $ i^* \omega \in \Omega^1(Q)\otimes \fg.$ Consider now the corresponding connection $\nabla^\omega$ in the frame bundle,
whose local connection form in the adapted frame $e$ is
$$ \Gamma^\omega\in\Omega^1(M,\Lam^2 TM),
\quad \Gamma^\omega(X) = e_x(e^*\omega(X))\ \textrm{for}\ X\in T_xM. $$
Then the compatibility condition is equivalent to $\Gamma^\omega \in \Omega^1(M,\fg_M).$

On the other hand, $\nabla^\omega$ preserves $\fg_M$ iff for each $X\in T_x M$
$$ [ \Gamma^\omega(X), \fg_M(x) ] \subset \fg_M(x). $$
From the assumption on $\ft,$ via an argument given in the proof of Lemma \ref{lem-gm},
it follows that $\nabla^\omega$ preserves $\fg_M$ iff $\Gamma^\omega\in\Omega^1(M,\fg_M).$
Hence the lemma.
\end{proof}
\begin{cor}
The $G-$structure is integrable iff 
the Levi-Civita derivative of each section of $\fg_M$
is $\fg_M-$valued.
\end{cor}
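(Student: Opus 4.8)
The plan is simply to chain together the two results immediately preceding this Corollary. Recall that the Proposition on integrability asserts the equivalence of integrability of $Q$ with compatibility of the Levi-Civita connection with $Q$, while the last Lemma characterizes metric compatible connections as precisely those preserving the splitting (\ref{tsplit}). Thus nothing new need be constructed: the statement is obtained by specializing the characterization Lemma to the Levi-Civita connection and feeding the result into the Proposition.

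First I would observe that $\nabla^{LC}$ is a metric connection on $(M,g)$ and hence, after lifting to the orthonormal frame bundle $P$, is a legitimate instance of the connection $\nabla$ appearing in the hypotheses of the characterization Lemma. In particular the standing assumptions that $G$ is connected and that $\ft$ contains no one-dimensional $G$-invariant subspace — used in the proof of that Lemma via the argument of Lemma \ref{lem-gm} — hold for $G = G(\KK,\KK')$, as verified in Section \ref{sec-dec}.

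Specializing the Lemma to $\nabla = \nabla^{LC}$ then yields that $\nabla^{LC}$ is compatible with the $G$-structure if and only if
$$ f \in \Omega^0(M,\fg_M) \implies \nabla^{LC} f \in \Omega^1(M,\fg_M), $$
that is, if and only if the Levi-Civita derivative of every section of $\fg_M$ takes values in $\fg_M$. Combining this equivalence with the Proposition produces the chain: $Q$ is integrable iff $\nabla^{LC}$ is compatible with $Q$ iff the Levi-Civita derivative of each section of $\fg_M$ is $\fg_M$-valued, which is exactly the claim.

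The only point requiring a moment's attention — and the closest thing to an obstacle, which is essentially nil here — is to confirm that $\nabla^{LC}$ genuinely satisfies the metricity hypothesis of the Lemma; this is immediate, since the Levi-Civita connection is by definition metric, and it also automatically lifts to a connection on $P$. No further computation is needed, so the Corollary is a direct consequence of the preceding two statements.
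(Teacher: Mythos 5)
Your proof is correct and is precisely the argument the paper intends: the Corollary follows by specializing the compatibility-characterization Lemma to $\nabla^{LC}$ and chaining it with the Proposition stating that integrability is equivalent to Levi-Civita compatibility. The paper itself leaves this Corollary without proof for exactly this reason, and your check of the standing hypotheses (metricity of $\nabla^{LC}$, connectedness of $G$, no one-dimensional invariant subspace in $\ft$) is the right amount of diligence.
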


Recall now, that a difference of two connections in the tangent bundle is a well-defined
tensor\footnote{
By $A(X)(Y)$ we mean the element of $\End TM,$ obtained by evaluation of $A$ on $X$, acting on $Y$.
}:
$$ \nabla_X Y - \nabla'_X Y = A(X) (Y),\quad A \in \Omega^1(M,\End\ TM), $$
where in particular $A\in\Omega^1(M,\Lam^2 M)$ for metric connections.
We have the following
\begin{lem}
There exists a unique metric connection $\nabla^\fg$ preserving $\fg_M$ such that
the difference tensor of $\nabla^\fg$ and the Levi-Civita connection $\nabla^{LC}$
is $\ft_M-$valued, i.e.  $$
\exists A^\ft\in\Omega^1(M,\ft_M) 
\ \forall X,Y\in\mathcal{X}(M)\ :
\ \nabla_X^\fg Y - \nabla^{LC}_X Y = A^\ft(X)(Y). $$
\end{lem}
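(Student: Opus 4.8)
The plan is to parametrize all metric connections by their difference tensor from $\nabla^{LC}$ and then isolate the one meeting the two requirements. Recall that any metric connection $\nabla$ differs from $\nabla^{LC}$ by a unique tensor $A\in\Omega^1(M,\Lam^2 TM)$ via $\nabla_X Y = \nabla^{LC}_X Y + A(X)(Y)$; so the assertion amounts to producing a \emph{unique} such $A$ that lies in $\Omega^1(M,\ft_M)$ and for which $\nabla$ preserves the subbundle $\fg_M$ of the splitting (\ref{tsplit}).

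For existence, I would take $A^\ft$ to be the intrinsic torsion. Concretely, from the orthogonal decomposition (\ref{omega-dec}), $i^*\omega^{LC} = \omega^\fg - \alpha^\ft$, the one-form $\alpha^\ft$ is horizontal of type $\Ad$ with values in $\ft$, so conjugating it back by an adapted frame yields a tensor $A^\ft\in\Omega^1(M,\ft_M)$ that is independent of the chosen frame. Setting $\nabla^\fg := \nabla^{LC} + A^\ft$, its local connection form in any adapted frame equals $\omega^\fg$, which takes values in $\fg$; by the characterization of compatible connections established above (a connection preserves $\fg_M$ iff its local form is $\fg_M$-valued) this $\nabla^\fg$ preserves $\fg_M$. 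By construction its difference tensor from $\nabla^{LC}$ is the $\ft_M$-valued $A^\ft$, so $\nabla^\fg$ meets both requirements.

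For uniqueness, suppose $\nabla$ and $\nabla'$ both preserve $\fg_M$ and have $\ft_M$-valued difference tensors $A,A'$ from $\nabla^{LC}$. Their difference $B := A - A'\in\Omega^1(M,\ft_M)$ is a tensor, and for any section $f$ of $\fg_M$ the induced derivatives on $\Lam^2 TM$ satisfy $\nabla_X f - \nabla'_X f = [B(X),f]$, the bracket being the commutator in $\fso(T_xM)\simeq\Lam^2 T_x M$. Since both $\nabla_X f$ and $\nabla'_X f$ lie in $\fg_M$, we get $[B(X),f]\in\fg_M$. On the other hand $B(X)\in\ft_M$ while $f\in\fg_M$, so $[\fg,\ft]\subset\ft$ forces $[B(X),f]\in\ft_M$; as $\fg_M\cap\ft_M = 0$, this gives $[B(X),f]=0$ for every section $f$ of $\fg_M$ and every $X$. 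Hence $B(X)$ centralizes $\fg_M(x)\simeq\fg$ while lying in $\ft_M(x)\simeq\ft$.

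The crux, and the step I expect to be the main obstacle, is to conclude from this that $B(X)=0$. This is exactly where assumption 2 on $\ft$ (no one-dimensional $G$-invariant subspace) enters: since $G$ is connected, an element of $\ft$ annihilated by the adjoint action of all of $\fg$ spans a trivial $G$-submodule of $\ft$, hence a one-dimensional invariant subspace, which by hypothesis must vanish — precisely the argument used in the proof of Lemma \ref{lem-gm}. Therefore $B\equiv 0$, so $\nabla = \nabla'$, which yields uniqueness and completes the proof.
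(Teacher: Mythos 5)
Your proof is correct and takes essentially the same route as the paper: existence comes from the orthogonal splitting of the Levi-Civita connection form into its $\fg_M$- and $\ft_M$-valued parts (the paper does this directly with local connection forms on $M$, while you pull back the bundle-level decomposition (\ref{omega-dec}) — the same content), and uniqueness rests in both cases on the assumption that $\ft$ contains no one-dimensional $G$-invariant subspace. The only difference is presentational: the paper leaves uniqueness implicit in the uniqueness of the pointwise splitting $\Lam^2 TM = \fg_M \oplus \ft_M$ combined with the preceding compatibility lemma, whereas you unfold exactly that mechanism into an explicit centralizer argument on the difference tensor $B$, correctly identifying that an element of $\ft$ commuting with all of $\fg$ must vanish.
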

\begin{proof} Choose (locally) an adapted frame $e.$ Then the local connection
form of $\nabla^{LC}$ is uniquely decomposed with respect to
$$ \Lam^2 TM = \fg_M \oplus \ft_M $$
$$ \Gamma^{LC} = \Gamma^\fg - A^\ft, $$
where $\Gamma^\fg\in\Omega^1(M,\fg_M)$ and $A^\ft\in\Omega^1(M,\ft_M).$
One easily checks that $\Gamma^\fg$ transforms as a connection form corresponding
to some connection $\nabla^\fg$ in the tangent bundle, while $A^\ft$ is a well defined
tensor. It further follows that, irrespectively of the frame,
$$ \nabla^\fg_X Y - \Gamma^{LC}_X Y = A^\ft(X)(Y). $$
\end{proof}
It is not difficult to check, that $A^\ft\in \Omega^1(M,\ft_M)$ coincides
with a pull-back by an arbitrary section $e:M\to Q$
of the intrinsic torsion $\alpha^\ft$ introduced in the previous subsection:
$$ A^\ft(X) = e_x (e^* \alpha^\ft(X))\quad\textrm{for}\ X\in T_xM $$
As the reader may expect, $A^\ft$ is equivalent to the torsion $T^\fg$
of $\nabla^\fg:$
$$ T^\fg(X,Y) = A^\ft(X)(Y) - A^\ft(Y)(X), $$
$$ 2g(A^\ft(X)(Y),Z) = g(T^\fg(X,Y),Z) - g(T^\fg(Y,Z),X) + g(T^\fg(Z,X),Y). $$

\subsection{Invariant tensors} \label{ss-its}
Finally, we consider the following structure: let $\tilde G \subset \sO(m)$ be the orthogonal
stabilizer group of a tensor $\Y\in\otimes^p\RR^m,$ with $G$ its identity component and $\fg$ the Lie algebra. Then a $\tilde G-$structure on an $m-$dimensional manifold $(M,g)$ can be defined
by a tensor $$ \Y_M \in \otimes^p TM $$ such that at each point $x\in M$ there
exists an orthonormal frame $e_x : \RR^m \to T_x M$ such that $e_x(\Y) = \Y_M(x).$ Clearly, the set
of all such frames at $x$ forms a fibre of a principal bundle with structure group $\tilde G,$
a reduction $\tilde i : \tilde Q \to P$ of the orthonormal frame bundle.

In a typical situation, we will however be interested in having a $G-$structure $Q$, where $G$
is the identity component of possibly not connected $\tilde G.$
A reduction of $\tilde Q$ to $Q$ is always possible locally (i.e. on $\pi^{-1}(U)$
for $U$ a neighbourhood of a point on $M$), by choosing a single point $q\in\tilde Q,$
declaring it to be a member of $Q$ and identifying the latter by continuity. It may however
fail to yield a $G-$structure over entire $M$, even if $\Y_M,$ and thus $\tilde Q,$ are globally
defined. Such a situation is illustrated by the following:

\begin{eg} \label{eg-noG} A simple
flat geometry related locally to the symmetric space $\sSp(3) / \sU(3).$
Consider first the (real) space 
$M_0=\CC^6$ whose tangent spaces carry a natural complex structure and
a fixed complex-linear identification with $\cV_2(\RR)\simeq\CC^6,$ so that they become equipped
with a metric $g_{M_0}$ and a parallel tensor $\Xi_{M_0}$ being a real section
of $\Sym^{3,3} TM_0.$ The latter defines a trivial $\tilde G-$structure on $M_0,$ and
restricting to holomorphic frames gives a reduction to a trivial $G-$structure, $G=\sU(3)$.

Introduce now the natural complex coordinates $z^1,\dots,z^6 : \CC^6 \to \CC,$ such that $g_{M_0} 
= \sum_{i=1}^6 dz^i d\bar z^i,$ and consider a (real) manifold $M$ obtained by identifying
points subject to the equivalence relation
$$ (z^1,z^2,\dots,z^6) \sim (\bar z^1 + 2\pi, \bar z^2, \dots, \bar z^6). $$
Considering the projection $p:M_0 \to M$ one finds that $M$ inherits a metric and orientation
(complex conjugation on a space of even complex dimension has determinant one). The tensor
$\Xi_{M_0}$ is also uniquely pushed forward to $\Xi_M$ on $M,$ since $\bar\Xi = \Xi.$

We thus have a $\tilde G-$ structure on $M$, where $\tilde G$ is the full orthogonal 
isotropy group of $\Xi,$ with
$\sU(3)$ as its identity component. However, as we have just noted, $\tilde G$ also
contains an antiunitary component, in particular the complex conjugation map, $ C
: \CC^6 \to \CC^6.$
A local $\sU(3)-$structure can be defined by picking a frame at a single point, say
the one obtained by projecting the natural holomorphic frame at the origin of $M_0=\CC^6$ to
$$ e_{p(0)} : \CC^6 \to T_{p(0)} M. $$
However, parallel transporting $e_{p(0)}$ along a closed loop, given by the projection of
a curve joining $(0,\dots,0)$ with $(2\pi,\dots,0)$ in $M_0,$ one ends with $ e_{p(0)} \circ
C,$ which clearly belongs to a different connected component of the fibre than the original frame.
One thus fails to define a global $\sU(3)$ structure (which is already implied by the fact
that the complex structure of $M_0$ does not descend to $M$). Note that this problem is not resolved by fixing an orientation.
\end{eg}

Nevertheless, ignoring global questions and considering \emph{any}
of the local $G-$structures defined by
$\Y_M,$ we can recover most of the local information about the former from geometric data given
by the latter (i.e. $\Y_M$ and its Levi-Civita derivatives).
Note first, that in an adapted frame $e$ we have $e_x^{-1} \Y_M(x) = \Y,$ a constant $\otimes^p \RR^m-$valued function, so that 
$$ \nabla_X \Y_M = \Gamma(X)(\Y_M) $$
for a connection $\nabla$ with local connection form $\Gamma\in\Omega^1(M,\Lam^2 TM).$ This leads to the following
\begin{lem}
A connection $\nabla$ in the tangent bundle is compatible with a local $G-$structure defined
by $\Y_M$ iff $ \nabla\Y_M = 0.$
\end{lem}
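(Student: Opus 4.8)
The statement to prove is the final Lemma: a connection $\nabla$ in the tangent bundle is compatible with a local $G$-structure defined by $\Y_M$ if and only if $\nabla\Y_M = 0$. The plan is to work in a local adapted frame $e : M \supset U \to Q$, where by definition $e_x^{-1}\Y_M(x) = \Y$ is the fixed constant tensor on $\RR^m$. I would begin by recalling the formula just established above the statement, namely that for a connection $\nabla$ with local connection form $\Gamma \in \Omega^1(M,\Lam^2 TM)$ one has
$$ \nabla_X \Y_M = \Gamma(X)(\Y_M), $$
where $\Gamma(X)$ acts on the tensor $\Y_M$ via the Lie algebra action on $\otimes^p TM$ (as fixed in the Conventions section). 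This reduces the entire lemma to an algebraic question about when $\Gamma(X)$ annihilates $\Y_M$.

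**The two implications.** For the forward direction, suppose $\nabla$ is compatible with the $G$-structure $Q$. Then in an adapted frame its connection form takes values in $\fg_M$, i.e. $\Gamma(X) \in \fg_M(x)$ for each $X \in T_x M$; pulling back to $\RR^m$ via $e_x$, the form $e^*\omega(X)$ lies in $\fg \subset \fso(m)$, the Lie algebra of $G$. Since $G = \tilde G^\circ$ is the (identity component of the) stabilizer of $\Y$, its Lie algebra $\fg$ annihilates $\Y$: that is, $A(\Y) = 0$ for all $A \in \fg$. Transporting this back, $\Gamma(X)(\Y_M) = 0$, whence $\nabla\Y_M = 0$ by the boxed formula. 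For the converse, suppose $\nabla\Y_M = 0$. Then $\Gamma(X)(\Y_M(x)) = 0$ for all $X$, which in the adapted frame says $e^*\omega(X)(\Y) = 0$, i.e. $e^*\omega(X)$ annihilates $\Y$. The key point here is that the Lie algebra of the \emph{full} stabilizer $\tilde G$ coincides with the Lie algebra $\fg$ of its identity component $G$ (identity components share a Lie algebra), and this Lie algebra is precisely $\{A \in \fso(m) : A(\Y) = 0\}$. Hence $e^*\omega(X) \in \fg$, so $i^*\omega \in \Omega^1(Q)\otimes\fg$, which is the compatibility condition from Section \ref{ss-gstr}.

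**The main subtlety.** The one place requiring care is that compatibility with the $G$-structure is a condition about the \emph{Lie algebra} $\fg$, whereas $\tilde G$ may be disconnected (as Example \ref{eg-noG} vividly illustrates). I would emphasize that the passage from $A(\Y) = 0$ to $A \in \fg$ uses only the infinitesimal stabilizer, so that the possible disconnectedness of $\tilde G$ is irrelevant to this local statement: differentiating the stabilizer condition $g(\Y) = \Y$ at the identity yields exactly $A(\Y) = 0$ for $A$ in the Lie algebra, and conversely integrating such $A$ stays within the identity component $G$. Thus the equivalence $\nabla\Y_M = 0 \iff e^*\omega(X) \in \fg$ is an identity of linear conditions, independent of global topology. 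This is the expected obstacle only in the sense that one must be explicit that ``compatible with the $G$-structure'' is being read at the level of connection forms (Lie-algebra-valued), which is how it was defined, rather than demanding membership in the disconnected group $\tilde G$; once that is said, the proof is immediate from the displayed formula and the two directions above.
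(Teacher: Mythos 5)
Your proof is correct and follows essentially the same route as the paper's: work in an adapted frame, use $\nabla_X\Y_M = \Gamma(X)(\Y_M)$, and observe that $\Gamma(X)(\Y_M)=0$ holds precisely when $\Gamma(X)\in\fg_M$, which is the compatibility condition. The only difference is that you spell out what the paper leaves implicit — that the annihilator of $\Y$ in $\fso(m)$ is exactly $\fg$ (the common Lie algebra of $\tilde G$ and its identity component $G$), so the possible disconnectedness of $\tilde G$ is irrelevant — which is a worthwhile clarification but not a different argument.
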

\begin{proof}
Choose an adapted frame $e.$ Then for $X\in T_x M$ 
$$ \nabla_X \Y_M = \Gamma(X)(\Y_M) = 0 \iff \Gamma(X) \in \fg_M.$$
The latter is the compatibility condition for $\nabla.$
\end{proof}

\begin{cor} A local $G-$structure defined by $\Y_M$ is integrable iff $\nabla^{LC}\Y_M = 0.$
\end{cor}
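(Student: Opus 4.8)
The plan is to reduce the statement to the Lemma immediately preceding it together with the uniqueness of the Levi-Civita connection. Recall that the local $G$-structure defined by $\Y_M$ is, by definition, integrable precisely when it admits a torsion-free compatible connection. The preceding Lemma tells us that a tangent-bundle connection $\nabla$ is compatible with this $G$-structure if and only if $\nabla\Y_M = 0$. So the first step I would take is simply to rephrase integrability as the existence of a torsion-free $\nabla$ with $\nabla\Y_M = 0$, passing freely between the frame-bundle and tangent-bundle pictures (the local connection form of a compatible connection takes values in $\fg_M\subset\Lam^2 TM\simeq\fso(T_xM,g_x)$, so any compatible connection is in particular metric).

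The key step is then the following observation: a torsion-free compatible connection is a metric, torsion-free connection, and the \emph{only} such connection on $(M,g)$ is $\nabla^{LC}$. Hence a torsion-free compatible connection, should it exist, must coincide with $\nabla^{LC}$ itself. Therefore the local $G$-structure is integrable if and only if $\nabla^{LC}$ is compatible with it. Applying the preceding Lemma to the particular connection $\nabla=\nabla^{LC}$ gives that $\nabla^{LC}$ is compatible if and only if $\nabla^{LC}\Y_M = 0$, which is exactly the asserted equivalence.

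I do not expect any genuine obstacle here: no computation is required, and the argument is precisely the Riemannian specialization of the general equivalence already recorded earlier (integrability $\Leftrightarrow$ triviality of intrinsic torsion $\Leftrightarrow$ compatibility of $\nabla^{LC}$). The only point deserving a word of care is bookkeeping: one must make sure that the frame-bundle notion of integrability from the \textbf{Definition} matches the tangent-bundle formulation used in the Lemma, and that the qualifier \emph{local} causes no difficulty. The latter is harmless, since uniqueness of the Levi-Civita connection holds on any open set and the whole argument is local (indeed pointwise in its essential content), so the reasoning applies verbatim to the local $G$-structure obtained by fixing a frame in one point and extending by continuity.
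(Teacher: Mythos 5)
Your proposal is correct and is essentially the paper's own (implicit) argument: the paper leaves this corollary without proof precisely because it follows by applying the preceding Lemma to $\nabla = \nabla^{LC}$, combined with the observation already recorded in the earlier Proposition of Subsection \ref{ss-gstr} that a compatible connection is necessarily metric, so a torsion-free compatible connection must coincide with the Levi-Civita connection, i.e.\ integrability is equivalent to compatibility of $\nabla^{LC}$. Your re-derivation of that last equivalence is exactly the reasoning the paper relies on, so there is nothing to add.
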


While $\Y_M$ being parallel implies vanishing of the intrinsic torsion, it turns out
that entire $A^\ft$ can be reconstructed from $\nabla^{LC}\Y_M.$ Since $\ft$ is the complement
of the isotropy algebra of $\Y,$ we have the following obvious
\begin{lem}
The kernel of
$$ \ft \ni E \mapsto E(\Y) \in \otimes^p \RR^m $$
is trivial.
\end{lem}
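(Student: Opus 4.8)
The plan is to identify the isotropy algebra $\fg$ with the full kernel of the map $A \mapsto A(\Y)$ on $\fso(m)$, and then invoke the fact that $\ft$ is a complement of $\fg$. Since $\fg$ is defined as the Lie algebra of $\tilde G$, the orthogonal stabilizer of $\Y$, the first step is to show
$$ \fg = \{ A \in \fso(m)\ |\ A(\Y) = 0 \}. $$
One inclusion is the differentiated stabilizer condition: for $A\in\fg$ the curve $\exp(tA)$ lies in $\tilde G$, so $\exp(tA)(\Y) = \Y$ for all $t$, and differentiating at $t=0$ yields $A(\Y)=0$ (recall that the Lie-algebra action $A(\cdot)$ introduced in the Conventions is precisely this differential of the group action on $\otimes^p\RR^m$).

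For the converse I would use that the representation of $\sO(m)$ on $\otimes^p\RR^m$ sends $\exp(tA)$ to the exponential of the induced endomorphism $A(\cdot)$ of $\otimes^p\RR^m$. Hence, writing powers of the linear operator $A(\cdot)$, one has $\exp(tA)(\Y) = \sum_{k\geq 0} \frac{t^k}{k!}\, A^k(\Y)$, where $A^k(\Y)$ denotes $k$-fold application. If $A(\Y)=0$ then $A^k(\Y) = A^{k-1}(A(\Y)) = 0$ for every $k\geq 1$, so the series collapses to $\exp(tA)(\Y)=\Y$; thus $\exp(tA)\in\tilde G$ for all $t$, and $A\in\fg$. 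This establishes that $\fg$ is exactly the kernel of $A\mapsto A(\Y)$ on all of $\fso(m)$.

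With this identification the lemma is immediate. Suppose $E\in\ft$ satisfies $E(\Y)=0$. Then $E$ lies in the kernel of $A\mapsto A(\Y)$, so by the above $E\in\fg$. But $\ft$ is by definition the orthogonal complement of $\fg$ inside $\fso(m)$, whence $\ft\cap\fg = \{0\}$ and therefore $E=0$. Thus the restriction of $E\mapsto E(\Y)$ to $\ft$ has trivial kernel, as claimed.

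The only genuine (and very modest) point requiring care is the converse inclusion $A(\Y)=0 \implies A\in\fg$, i.e. passing from the infinitesimal condition back to the group-level stabilizer condition; everything else is routine linear algebra built on the orthogonal splitting $\fso(m)=\fg\oplus\ft$. No obstacle of substance arises, which is why the statement is labelled obvious in the text.
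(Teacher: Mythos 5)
Your proof is correct and follows exactly the reasoning the paper leaves implicit: the paper states this lemma as ``obvious'' precisely because the full kernel of $A\mapsto A(\Y)$ on $\fso(m)$ is the isotropy algebra $\fg$ and $\ft$ is its orthogonal complement. Your write-up merely fills in the standard differentiation/exponentiation argument identifying $\fg$ with that kernel, which is the intended justification.
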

\begin{cor}\label{cor-fi}
There exists a $G$-equivariant
map $\varphi : \otimes^p\RR^m \to \ft$ such that, independently of the choice of an adapted frame $e,$
$$ A^\ft(X) = -(e_x \circ \varphi \circ e_x^{-1}) \nabla^{LC}_X \Y_M\quad\textrm{for each}\ X\in T_x M. $$
\end{cor}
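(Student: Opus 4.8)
The plan is to construct $\varphi$ as a $G$-equivariant left inverse of the injection supplied by the preceding lemma, and then to read off the required formula by decomposing the Levi-Civita connection form in an adapted frame.

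First I would set $\iota:\ft\to\otimes^p\RR^m$, $\iota(E)=E(\Y)$; this is exactly the map whose kernel the preceding lemma asserts to be trivial. It is moreover $G$-equivariant, since for $g\in G$ one has $\iota(\Ad_g E)=(\Ad_g E)(\Y)=g(E(g^{-1}(\Y)))=g(E(\Y))=g(\iota(E))$, using $G$-invariance of $\Y$. Equipping $\ft$ with the restriction of the ($\sO(m)$-invariant) scalar product on $\fso(m)\simeq\Lam^2\RR^m$ and $\otimes^p\RR^m$ with its natural scalar product, I would form the adjoint $\iota^*$; injectivity makes $\iota^*\iota:\ft\to\ft$ positive-definite, hence invertible, and I set
$$ \varphi=(\iota^*\iota)^{-1}\iota^*:\otimes^p\RR^m\to\ft. $$
All three factors are assembled from $\sO(m)$-, hence $G$-invariant data, so $\varphi$ is $G$-equivariant; and $\varphi\circ\iota=\id_\ft$, so $\varphi$ is a left inverse of $\iota$ (concretely, the orthogonal projection onto $\iota(\ft)$ followed by $\iota^{-1}$).

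Next I would establish the formula in a fixed adapted frame $e$. Since $e_x^{-1}\Y_M(x)=\Y$ is constant, one has $\nabla^{LC}_X\Y_M=\Gamma^{LC}(X)(\Y_M)$, where $\Gamma^{LC}\in\Omega^1(M,\Lam^2 TM)$ is the Levi-Civita connection form. Decomposing $\Gamma^{LC}(X)=\Gamma^\fg(X)-A^\ft(X)$ as in the previous lemma, and observing that $\Gamma^\fg(X)\in\fg_M$ annihilates $\Y_M$ (because $\fg$ is the isotropy algebra of $\Y$), I obtain
$$ \nabla^{LC}_X\Y_M=-A^\ft(X)(\Y_M). $$
Writing $E_0:=e_x^{-1}\circ A^\ft(X)\circ e_x\in\ft$ and using that the Lie-algebra action on tensors commutes with the frame identification, this reads $e_x^{-1}(\nabla^{LC}_X\Y_M)=-E_0(\Y)=-\iota(E_0)$. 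Applying $\varphi$ and invoking $\varphi\circ\iota=\id_\ft$ gives $E_0=-\varphi(e_x^{-1}\nabla^{LC}_X\Y_M)$, and transporting back by $e_x$ yields precisely $A^\ft(X)=-(e_x\circ\varphi\circ e_x^{-1})\nabla^{LC}_X\Y_M$.

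Finally, frame-independence is immediate from the equivariance of $\varphi$: any two adapted frames at $x$ differ by the structure group, $e_x'=e_x\circ g$ with $g\in G$, and then, using $\varphi\circ g=\Ad_g\circ\varphi$ together with the fact that $e_x$ acts on $\ft$ by conjugation, a short computation gives $e_x'\circ\varphi\circ(e_x')^{-1}=e_x\circ\varphi\circ e_x^{-1}$. The existence and equivariance of $\varphi$ are thus formal consequences of injectivity and of the invariance of the scalar products; I expect the only thing requiring genuine care to be the bookkeeping among the several identifications involved (the Lie-algebra action on tensors, conjugation of endomorphisms by the frame, and the sign conventions of the previous subsection), rather than any substantive obstacle.
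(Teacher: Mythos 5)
Your proposal is correct and follows essentially the same route as the paper: there too, $\varphi$ is taken to be a left inverse of the injection $\ft \ni E \mapsto E(\Y) \in \otimes^p\RR^m$ supplied by the preceding lemma, and the formula is verified by conjugating with an adapted frame exactly as you do. If anything you are more careful on two points the paper glosses over: you build an explicitly $G$-equivariant left inverse, namely $(\iota^*\iota)^{-1}\iota^*$ (an arbitrary left inverse need not be equivariant, and equivariance is what makes the formula frame-independent), and your sign bookkeeping $\nabla^{LC}_X\Y_M = -A^\ft(X)(\Y_M)$, coming from the decomposition $\Gamma^{LC}=\Gamma^\fg-A^\ft$, correctly reproduces the minus sign in the statement, whereas the paper's displayed computation silently drops it.
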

\begin{proof}
Set $\varphi$ to be the left inverse of $E\mapsto E(\Y),$ which exists due to the Lemma.
Then, in an adapted frame $e,$ we have
\begin{eqnarray*}
( e_x \circ \varphi \circ e_x^{-1} ) \nabla^{LC}_X \Y_M &=& 
(e_x \circ \varphi) [(e_x^{-1} \circ A^\ft(X) \circ e_x) (\Y)] \\ &=&
e_x ( e_x^{-1} \circ A^\ft(X) \circ e_x ) \\ &=& A^\ft(X).
\end{eqnarray*}
\end{proof}

\begin{note}\label{rem-it}
We can derive a more direct result for a symmetric tensor $$\Y\in\Sym^p(\RR^m),\quad p\ge 3,$$
assuming $\RR^m$ to be $G$-irreducible.  
Since $\Y$ is $G-$invariant, irreducibility implies
$$\Y_{i m_2 \dots m_p} \Y_{j m_2 \dots m_p} = \lambda_0\ g_{ij}$$
for some $\lambda_0\in\RR,$
where $g_{ij} X^i Y^j =\langle X,Y\rangle.$ 
Similarily, the map
$$ \cD_\Y : \Lam^2 \RR^m \to \Lam^2 \RR^m $$
$$ \cD_\Y(E)_{ij} = E_{kl} \Y_{kjm_3\dots m_p} \Y_{lim_3\dots m_p} $$
restricted to $\ft$ is given by 
$$ \cD_\Y|_\ft = \lambda_1 \pr_{\ft_1} + \dots + \lambda_r \pr_{\ft_r} $$
for some $\lambda_1,\dots,\lambda_r \in \RR,$ 
where $\pr_{\ft_1},\dots,\pr_{\ft_r}$ are projections
onto $G-$irreducible subspaces of $\ft:$
$$\ft = \ft_1 \oplus \dots \oplus \ft_r. $$
We finally introduce the map
$$ \Y^\ft : \ft \to \ft $$
$$ \Y^\ft(E)_{ij} = E(\Y)_{im_2\dots m_p} \Y_{jm_2\dots m_p} $$
and, performing the contractions, easily find that $\Y^\ft = \lambda_0\ \id + (p-1) \cD_\Y.$

If we thus calculate $\lambda_0,\lambda_1,\dots,\lambda_r$ and check that $\lambda_0 + (p-1) \lambda_k
\neq 0$ for $k=1,\dots,r,$ we can express the intrinsic torsion as in Corollary \ref{cor-fi}
with 
\begin{equation}
\varphi(t)_{ij} = \sum_{k=1}^r \frac{1}{\lambda_0 + (p-1)\lambda_k}\ (\pr_{\ft_k})^{kl}_{ij}
\ t_{km_2\dots m_p} \Y_{lm_2\dots m_p} \label{eqfi}\end{equation}
for $t\in\Sym^p\RR^m.$
As one can find in Section \ref{sec-dec},
the space $\ft$ in case of $G(\KK,\KK')-$structures decomposes into at most two subspaces. Thus the projections
$\pr_{\ft_k}$ with $k=1,2$ can be expressed
as combinations of $\cD_\Y$ and the
identity map (provided $\lambda_1\neq\lambda_2$). This will provide us with explicit expressions for $A^\ft$ in terms
of $\Y_M$ and $\nabla^{LC}\Y_M.$

\end{note}

\section{$\fg(\KK,\KK')$-geometries and their torsion}
\label{sec-kkgeo}

We will now finally define the geometries we are to investigate. Our choice is
to consider Riemannian manifolds equipped with the sole invariant tensor, disregarding global existence
of a $G(\KK,\KK')$-structure and its local choice. When referring to a local $G(\KK,\KK')$-structure
associated with the $\fg(\KK,\KK')$-geometry, we mean any of the possible ones (i.e. any connected
component of the full bundle of frames defined on some region of the manifold and preserving the special form of the tensor).\footnote{
In particular, the tensor gives global $G(\KK,\KK')$-structures on a simply connected manifold. This
can be seen noting, that every loop lifted to the \emph{full} bundle of adapted frames (i.e. the one whose structure
group is the full isotropy group of the tensor) must necessarily start and end in the same connected
component of a fibre.
}

Recall the conventions summarized at the beginning of this chapter. In particular, $\KK$
is any of $\RR,\CC,\HH,\OO,$ and $\KK'$ is set to $\CC,\HH,\OO$ in, respectively, first, second
and third family. We have defined $\kappa = \dim\KK.$ To avoid index clutter, we
abandon the convention of writing $\Y_M$ in favour of $\underline\Y,$ provided $M$
is clear from context.

\subsection{First family}
The following definition essentially
coincides with that of the geometries studied by Nurowski \cite{nurowski-2006}:
\begin{defn} A $\fg(\KK,\CC)$-geometry is a $(3\kappa+2)-$dimensional Riemannian manifold $M$ equipped with a tensor
$$ \UpsilonM \in \Omega^0(M, \Sym^3 TM ) $$
such that at each $x\in M$ there exists an orthonormal 
frame $e_x : \cV_1(\KK) \to T_x M$ such that $
\UpsilonM = e_x(\Upsilon).$
\end{defn}

Let thus $M$ be a $\fg(\KK,\CC)$-geometry. Since in the first family the groups $G(\KK,\CC)$
are the precise isotropy groups of $\Upsilon,$ it follows that the global principal bundle
of all frames mapping $\Upsilon$ to $\UpsilonM$ has $G(\KK,\CC)$ as its structure group:
\begin{pro}
A $\fg(\KK,\CC)$-geometry possesses a natural global $G(\KK,\CC)$-structure.
\end{pro}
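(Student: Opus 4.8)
The plan is to apply the general invariant-tensor construction of Subsection~\ref{ss-its} directly to the pair $(\Upsilon, \UpsilonM)$ and then read off the structure group. By the definition of a $\fg(\KK,\CC)$-geometry, at every $x\in M$ there is an orthonormal frame $e_x:\cV_1(\KK)\to T_xM$ with $e_x(\Upsilon)=\UpsilonM(x)$; so I would set
$$ \tilde Q = \{\, e_x\in P \ \mid\ e_x(\Upsilon)=\UpsilonM(\pi(e_x)) \,\}, $$
where $P$ is the orthonormal frame bundle of $(M,g)$. The discussion of Subsection~\ref{ss-its}, applied with $\Y=\Upsilon$ and $\Y_M=\UpsilonM$, shows that $\tilde Q$ is a globally defined reduction $\tilde i:\tilde Q\hookrightarrow P$ whose structure group $\tilde G$ is the orthogonal isotropy group of $\Upsilon$: indeed, fixing one $e_x\in\tilde Q$, every orthonormal frame at $x$ is $e_x\circ a$ with $a\in\sO(\cV_1(\KK))$, and $(e_x\circ a)(\Upsilon)=e_x(\Upsilon)$ is equivalent, after applying $e_x^{-1}$, to $a(\Upsilon)=\Upsilon$. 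Hence each fibre is a single $\tilde G$-torsor.

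The key step is then to identify $\tilde G$. Here the first family differs essentially from the other two: by point~1 of Proposition~\ref{pro-invs}, the group $G(\KK,\CC)$ is the \emph{entire} isotropy group of $\Upsilon$, not merely a connected component as in points~2 and~3 for $\KK'=\HH,\OO$. (One may also recall that the fourth identity of Corollary~\ref{cor-idy} recovers $g$ from $\Upsilon$, so the $\sGL$-stabilizer of $\Upsilon$ is automatically orthogonal and equals $G(\KK,\CC)=\cG_1(\KK)$.) Since $\cG_1(\KK)$ is one of the connected groups $\sSO(3),\sSU(3),\sSp(1),\sF_4$, the group $\tilde G=G(\KK,\CC)$ is connected. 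Therefore $\tilde Q$ is already a principal $G(\KK,\CC)$-bundle, and the general reduction $\tilde Q\hookrightarrow P$ \emph{is} the sought $G(\KK,\CC)$-structure, with no further choice required.

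The only point that needs genuine care—and the main thing distinguishing this statement from the cautious phrasing used for families two and three—is the absence of a monodromy obstruction of the type exhibited in Example~\ref{eg-noG}. There the full orthogonal isotropy group of $\Xi$ was strictly larger than its identity component, so selecting a component amounted to a pointwise choice that could fail to be consistent under parallel transport around a loop. Here, because $\tilde G=G(\KK,\CC)$ is already connected, the subset $\tilde Q\subset P$ is intrinsically determined by the globally defined tensor $\UpsilonM$, with no component to choose; smoothness of $\tilde Q$ as an embedded subbundle is then the standard fact that the zero set of the smooth equivariant map $P|_U\ni p\mapsto p^{-1}(\UpsilonM)-\Upsilon\in\Sym^3\cV_1(\KK)$ is a local reduction trivialised as $U\times G(\KK,\CC)$. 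Assembling these local trivialisations yields the natural global $G(\KK,\CC)$-structure, completing the proof.
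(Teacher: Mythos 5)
Your proposal is correct and takes essentially the same route as the paper: the paper's own (one-line) proof is precisely the observation that, by point 1 of Proposition \ref{pro-invs}, $G(\KK,\CC)$ is the \emph{full} isotropy group of $\Upsilon$ rather than merely a connected component (unlike the second and third families), so the globally defined bundle of all orthonormal frames mapping $\Upsilon$ to $\UpsilonM$ already has structure group $G(\KK,\CC)$ and no component choice or monodromy issue can arise. One minor caution: your parenthetical remark that the fourth identity of Corollary \ref{cor-idy} ``recovers $g$ from $\Upsilon$'' is circular as stated, since that contraction itself presupposes $g$ to raise the index; but nothing in your argument depends on this aside.
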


In what follows, we shall use $i,j,\dots$ to index $TM \simeq T^*M,$ when referring to tensors and 
tensor-valued forms,
so that we have e.g.
\begin{eqnarray*}
{\UpsilonM}_{ijk} &\in& \Omega^0(M,(S^3 TM)_{ijk}) \\
(A^\ft)_{ij} &\in& \Omega^1(M,(\Lam^2 TM)_{ij}) \\
(\nabla^{LC}\Upsilon_M)_{ijk} &\in& \Omega^1(M,(\Sym^3 TM)_{ijk}).
\end{eqnarray*}

\begin{pro}\label{pro-tor1} The intrinsic torsion of the $G(\KK,\CC)$-structure associated with a $\fg(\KK,\CC)$-geometry $(M,\UpsilonM)$ is given by
$$ (A^\ft)_{ij} = \frac{3\kappa+4}{3\kappa+10} 
(\nabla^{LC} \UpsilonM)_{imn}
\UpsilonM_{jmn}.
$$
\end{pro}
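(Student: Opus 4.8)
The plan is to apply the general machinery of Remark \ref{rem-it} directly to the symmetric cubic tensor $\Upsilon \in \Sym^3 \cV_1(\KK)$, which is $G(\KK,\CC)$-invariant and lives in a $G(\KK,\CC)$-irreducible space $\cV_1(\KK)$. We have $p = 3$, and the complement $\ft = \cg_1(\KK)^\bot$ in this first family is a \emph{single} $G$-irreducible subspace (so $r = 1$ in the notation of Remark \ref{rem-it}). This is the crucial simplification: because $\ft$ is irreducible, the formula \eqref{eqfi} collapses to a single term with a single eigenvalue, and the projection $\pr_{\ft_1}$ is just the identity on $\ft$. So the only inputs needed are the two constants $\lambda_0$ (from $\Upsilon_i{}^{mn}\Upsilon_j{}^{mn} = \lambda_0 g_{ij}$) and $\lambda_1$ (the eigenvalue of $\cD_\Upsilon$ on $\ft = \cg_1(\KK)^\bot$).

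First I would read off $\lambda_0$ from the second identity in the summary of the first family: $\Upsilon_i{}^{mn}\Upsilon_j{}^{mn} = g_{ij}$, hence $\lambda_0 = 1$. Next I would read off $\lambda_1$ from Lemma \ref{lemdy} (equivalently the summary), where $\cD_\Upsilon$ is computed to act on $\cg_1(\KK)^\bot$ as multiplication by $\tfrac{3}{3\kappa+4}$; thus $\lambda_1 = \tfrac{3}{3\kappa+4}$. One must check the definition of $\cD_\Upsilon$ in Remark \ref{rem-it}, namely $\cD_\Y(E)_{ij} = E_{kl}\,\Y_{kjm}\Y_{lim}$ (the $p=3$ case has a single contracted index $m$), matches the $\cD_\Upsilon$ of Lemma \ref{lemdy}, which it does up to the labelling of indices. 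Then Remark \ref{rem-it} gives the coefficient
\[
\frac{1}{\lambda_0 + (p-1)\lambda_1}
= \frac{1}{1 + 2\cdot\frac{3}{3\kappa+4}}
= \frac{3\kappa+4}{3\kappa+10},
\]
and the map $\varphi$ of \eqref{eqfi} becomes $\varphi(t)_{ij} = \tfrac{3\kappa+4}{3\kappa+10}\, t_{imn}\Upsilon_{jmn}$ (since $\pr_{\ft_1}$ is the identity). Substituting into Corollary \ref{cor-fi} and translating to the manifold via the adapted frame yields exactly
\[
(A^\ft)_{ij} = \frac{3\kappa+4}{3\kappa+10}\,(\nabla^{LC}\UpsilonM)_{imn}\,\UpsilonM_{jmn}.
\]

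The only genuine verification — and the main thing to get right rather than the main obstacle — is checking that $\ft = \cg_1(\KK)^\bot$ is indeed $G(\KK,\CC)$-irreducible, so that the single-eigenvalue reduction is legitimate and $\lambda_0 + 2\lambda_1 \neq 0$. Irreducibility is asserted in Section \ref{sec-dec} (referenced in the summary and in the hypotheses preceding Lemma \ref{lem-gm}), which I would invoke; one then confirms $\tfrac{3\kappa+4}{3\kappa+10}\neq 0$ for $\kappa = 1,2,4,8$, so the map $\Y^\ft$ is invertible and $\varphi$ is well-defined. Everything else is the bookkeeping already packaged in Remark \ref{rem-it} and Corollary \ref{cor-fi}; no new computation beyond reading off the two constants is required.
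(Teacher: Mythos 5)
Your proposal is correct and follows essentially the same route as the paper's own proof: invoking Remark \ref{rem-it} with $p=3$, reading $\lambda_0=1$ and $\lambda_1=\frac{3}{3\kappa+4}$ from the invariant identities and Lemma \ref{lemdy}, using irreducibility of $\ft=\fg(\KK,\CC)^\bot$ (Section \ref{sec-dec}) so that the projection is the identity, and concluding via Corollary \ref{cor-fi}. Your explicit evaluation of the coefficient $\frac{1}{\lambda_0+2\lambda_1}=\frac{3\kappa+4}{3\kappa+10}$ and the nonvanishing check are exactly the bookkeeping the paper leaves implicit.
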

\begin{proof} The proof is by application of the procedure
described in Remark \ref{rem-it}.
The decompositions of $\Lam^2 \cV_1(\KK)$ given in Section \ref{sec-dec}
show that $\ft = \fg(\KK,\CC)^\bot$ is always irreducible; one has
$$ \cD_\Upsilon|_\ft = \frac{3}{3\kappa+4}, $$
according to the summary presented in the first section of this chapter.
One thus uses the formula (\ref{eqfi}) with $\lambda_0 = 1,$ $\lambda_1 = \frac{3}{3\kappa+4}$
and $p=3,$
while
the projection is simply identity. Having the map $\varphi : \Sym^3\cV_1(\KK) \to \ft$ expressed
using $\Upsilon,$ the Proposition follows from  Corollary \ref{cor-fi}.
\end{proof}

\subsection{Second family}
\begin{defn} A $\fg(\KK,\HH)$-geometry is a $(6\kappa+6)-$dimensional Riemannian manifold $M$ equipped with a tensor
$$ \XiM \in \Omega^0(M, \Sym^6 TM ) $$
such that at each $x\in M$ there exists an orthonormal frame $e_x : \cV_2(\KK) \to T_x M$ 
such that $\XiM = e_x(\Xi),$ where $\cV_2(\KK)$ is considered as a real vector space.
\end{defn}

Let thus $M$ be a  $\fg(\KK,\HH)$-geometry. As it has been demonstrated in Example \ref{eg-noG},
there may be in general no globally defined $G(\KK,\HH)$-structure associated to $\XiM.$ However,
one has a global split
$$ \Lam^2 TM = \fg_M \oplus \ft_M $$
with $\fg_M(x) \simeq \fg(\KK,\HH)$ as a Lie subalgebra of $\Lam^2 T_x M \simeq \fso(\cV_2(\KK)).$
Recall that the algebra $\fg(\KK,\HH)$ has a one-dimensional centre spanned by the hermitian complex
structure of $\cV_2(\KK).$ The same is true of each $\fg_M(x),$ so that defining a bundle
$$ \fu(1)_M \subset \fg_M,\quad \fu(1)_M(x) = \mathcal{Z}(\fg_M(x)), $$
with $\mathcal{Z}$ defining the centre,
we have the following
\begin{pro} \label{pro-ah2}
A $\fg(\KK,\HH)$-geometry $(M,\XiM)$ is naturally equipped with a one-dimensional subbundle $
\fu(1)_M \subset \Lam^2 TM$
spanned locally by an almost hermitian structure.
\end{pro}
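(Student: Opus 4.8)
The plan is to show that the construction $x \mapsto \mathcal{Z}(\fg_M(x))$ produces a genuine rank-one subbundle of $\Lam^2 TM$, and that each fibre is spanned by an almost hermitian structure. First I would recall from the preceding discussion that a $\fg(\KK,\HH)$-geometry, even without a global $G(\KK,\HH)$-structure, carries a global orthogonal splitting $\Lam^2 TM = \fg_M \oplus \ft_M$ with each $\fg_M(x)$ isomorphic as a Lie algebra to $\fg(\KK,\HH)$. This global splitting is well-defined precisely because $\fg_M(x)$ is defined point-wise as the image of $\fg(\KK,\HH)$ under any adapted frame, and the summary of the second family guarantees that $\fg(\KK,\HH) = \cg_2(\KK) \oplus \fu(1)$ where the $\fu(1)$ is the \emph{centre}, spanned by the hermitian complex structure $I$ (i.e. multiplication by $\sqrt{-1}$). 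The centre is intrinsic to the abstract Lie algebra, so it is frame-independent.

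The key step is that taking the centre commutes with the adjoint action of the structure group, hence is well-defined on the associated bundle. Concretely, for any two adapted frames $e_x, e'_x$ over $x\in M$ related by $e'_x = e_x\circ a$ with $a$ in the orthogonal isotropy group of $\Xi$, the induced map on $\Lam^2 T_x M$ sends $\fg_M(x)$ to itself (as already established) and, being a Lie algebra automorphism of $\fg_M(x)$, preserves its centre $\mathcal{Z}(\fg_M(x))$. Therefore $\fu(1)_M(x) := \mathcal{Z}(\fg_M(x))$ is a well-defined one-dimensional subspace of $\Lam^2 T_x M$ independent of all choices, varying smoothly because $\fg_M$ is a smooth subbundle and the centre of a semisimple-plus-centre Lie algebra of fixed isomorphism type depends smoothly (indeed algebraically) on the bracket. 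This yields the rank-one subbundle $\fu(1)_M \subset \fg_M \subset \Lam^2 TM$.

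Finally I would identify the local generator as an almost hermitian structure. Locally choosing a (continuity-defined) $G(\KK,\HH)$-structure with adapted frame $e_x : \cV_2(\KK)\to T_x M$, the generator of $\fu(1)_M(x)$ is $e_x(I)$ where $I$ is the distinguished complex structure on $\cV_2(\KK)$. Since $e_x$ is orthogonal and $I$ is an orthogonal complex structure on $\cV_2(\KK)$ with $I^2 = -\id$, its image $J_M(x) := e_x \circ I \circ e_x^{-1} \in \End T_x M$ satisfies $J_M(x)^2 = -\id$ and is $g$-orthogonal, i.e. an almost hermitian structure; as an element of $\Lam^2 T_x M$ via the metric it spans $\fu(1)_M(x)$. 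Normalizing, the subbundle is locally spanned by such a $J_M$. The only subtlety to address is the distinction between the line bundle $\fu(1)_M$ and a \emph{choice} of almost hermitian structure spanning it: globally one has only the line (a choice of $J_M$ versus $-J_M$ requires orienting the fibre, which may be obstructed, exactly as in Example \ref{eg-noG}), so the statement is correctly phrased as "spanned locally by an almost hermitian structure." This is the main point requiring care, but it is conceptual rather than computational: the global object is the line bundle $\fu(1)_M$, and the almost hermitian structure exists only up to sign and only locally.
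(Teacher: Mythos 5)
Your proposal is correct and follows essentially the same route as the paper: the paper likewise defines $\fu(1)_M(x) = \mathcal{Z}(\fg_M(x))$ as the centre of the fibre of the global splitting $\Lam^2 TM = \fg_M \oplus \ft_M$, invokes the decomposition $\fg(\KK,\HH) = \cg_2(\KK)\oplus\fu(1)$ with the $\fu(1)$ spanned by the hermitian complex structure, and notes that each fibre contains exactly two normalized complex structures with only a local (possibly globally obstructed) choice between them. Your added details on frame-independence via Lie algebra automorphisms and on smoothness of the centre are exactly the points the paper leaves implicit.
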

Each fibre $\fu(1)_M(x)$ contains exactly two complex structures on $T_x M$ (singled
out by normalization), and there is a priori no way to distinguish between these. A
particular choice of one complex structure at a single point of $M$
can be extended by continuity to a neighbourhood, 
but may fail to yield a globally defined almost hermitian structure on $M$ (cf. Example \ref{eg-noG}).


Before we proceed, an algebraic consideration is necessary:
\begin{lem} \label{lem-dxi} Reintroducing $N = 3\kappa+3,$ we have: \begin{enumerate}
\item
$ \Xi_{adefkl} \Xi^{bdefkl} = \frac{N}{40} \delta^a_b. $
\item
Let us introduce a map
$$ \cD_\Xi : \Lam^2 \cV_2(\KK) \to \Lam^2 \cV_2(\KK) $$
$$ \cD_\Xi(E)_{ab} = E_{cd} \Xi_a{}^{defkl} \Xi^c{}_{befkl}. $$
Then, with respect to the decomposition
$$ \Lam^2 \cV_2(\KK) = \fu(1) \oplus \cg_2(\KK) \oplus (\ft\cap\Lam^{1,1}) \oplus \Re(\Lam^{2,0}\oplus\Lam^{0,2}) $$
the map $ \cD_\Xi$ is given by
\begin{eqnarray*}
200\ \cD_\Xi|_{\fu(1)} &=& {2N+3} \\
200\ \cD_\Xi|_{\cg_2(\KK)} &=& -{N} \\
200\ \cD_\Xi|_{\ft\cap\Lam^{1,1}} &=& \frac{2N}{\kappa+2} \\
200\ \cD_\Xi|_{\Re(\Lam^{2,0}\oplus\Lam^{0,2})} &=& 3. 
\end{eqnarray*}
\end{enumerate}
\end{lem}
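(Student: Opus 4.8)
The plan is to exploit the factorization $\Xi = \Lambda\cdot\bar\Lambda$ together with the established decomposition of $\Lam^2\cV_2(\KK)$ and the quadratic identities for $\Lambda$ from Corollary \ref{cor-id2}. The key technical device is to pass to the complex viewpoint of Remark \ref{not-cpx}, where $\Xi$ lives in $\Re\Sym^{3,3}$ and projects, under the isomorphism (\ref{sym-split}), onto a multiple of $\Lambda_{\alpha\beta\gamma}\bar\Lambda_{\bar\delta\bar\epsilon\bar\phi}$. Both statements are then contractions of four copies of $\Lambda,\bar\Lambda$ (for the norm) or a map built from such contractions (for $\cD_\Xi$), so everything reduces to the single fundamental identity
$$ \Lambda_{\alpha\mu\nu}\bar\Lambda_{\bar\beta}{}^{\nu\rho}\Lambda_{\gamma\rho\sigma}\bar\Lambda_{\bar\delta}{}^{\sigma\mu} = \frac{1}{2\kappa+4}(h_{\bar\beta\alpha}h_{\bar\delta\gamma}+h_{\bar\delta\alpha}h_{\bar\beta\gamma}) - \frac{\kappa}{2\kappa+4}\Lambda_{\alpha\gamma\mu}\bar\Lambda_{\bar\beta\bar\delta}{}^\mu $$
and the simpler $\Lambda_{\alpha\mu\nu}\bar\Lambda_{\bar\beta}{}^{\mu\nu} = h_{\bar\beta\alpha}$.

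For point 1, I would expand $\Xi_{adefkl}\Xi^{bdefkl}$ by writing each $\Xi$ index-set in terms of the $(3,0)\otimes(0,3)$ split, so that the full contraction becomes a combinatorial sum of products $\Lambda\bar\Lambda\Lambda\bar\Lambda$ with various index pairings; the diagonal pairings give $h$-factors via the first identity, the off-diagonal ones are handled by the second identity, and the $\Lambda_{\alpha\gamma\mu}\bar\Lambda_{\bar\beta\bar\delta}{}^\mu$ terms must cancel or recombine since the left side is manifestly a multiple of $\delta^a_b$. The combinatorial bookkeeping of the $\binom{6}{3}=20$ ways to distribute holomorphic versus antiholomorphic indices is where most of the labor lies; the factor $\frac{N}{40}$ with $N=3\kappa+3$ should emerge after collecting terms, and one can sanity-check the constant by taking the trace ($a=b$) against the known $\dim\cV_2(\KK)=2N$.

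For point 2, the strategy is to recognize that $\cD_\Xi$ is, after the same expansion, expressible as a polynomial in the simpler operator $\cD_\Lambda$ of Lemma \ref{lemdl} (extended to $\Lam^2$ so that it annihilates $\Re(\Lam^{2,0}\oplus\Lam^{0,2})$ and acts as stated on $\Re\Lam^{1,1}$). Since $\cD_\Lambda$ has known eigenvalues $1,-\tfrac12,\tfrac1{\kappa+2}$ on $\fu(1),\cg_2(\KK),\cg_2(\KK)^\bot$ respectively and satisfies the quadratic relation ${\cD_\Lambda}|_{\fsu}^2=\frac{1}{2\kappa+4}[1-\kappa\cD_\Lambda]$, I expect $\cD_\Xi$ to be a fixed combination $a\,\id + b\,\cD_\Lambda + c\,\cD_\Lambda^2$ on the $\fu(\cV_2(\KK))$-part, plus a separately computed scalar on $\Re(\Lam^{2,0}\oplus\Lam^{0,2})$ (where the mixed holomorphic structure forces a purely $h$-type contraction giving the isolated value $\tfrac{3}{200}$). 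Evaluating this combination on each eigenspace then yields the four listed eigenvalues; the normalization $200$ and the numerators $2N+3,-N,\tfrac{2N}{\kappa+2},3$ are fixed once the overall combinatorial prefactor from the $\Sym^{3,3}$ expansion is pinned down.

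The main obstacle will be the bookkeeping in the $\Sym^{3,3}\to\Sym^{3,0}\otimes\Sym^{0,3}$ expansion and keeping track of the combinatorial weights attached to each index pairing, together with correctly reducing the $\Lam^{2,0}\oplus\Lam^{0,2}$ block, on which the holomorphic/antiholomorphic mismatch means only a subset of contractions survive. I would organize the computation by first fixing the projection constant relating $\Xi$ to $\Lambda\bar\Lambda$, then treating the $\fu(\cV_2(\KK))$ block via $\cD_\Lambda$-algebra, and finally handling $\Re(\Lam^{2,0}\oplus\Lam^{0,2})$ as a short direct contraction, cross-checking the total trace of $\cD_\Xi$ against the sum of eigenvalues times the respective dimensions.
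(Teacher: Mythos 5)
Your proposal follows essentially the same route as the paper's proof: expand $\Xi = \Lambda\cdot\bar\Lambda$ with the $\binom{6}{3}=20$ symmetrization factor, sort the contractions by holomorphic type, compute the $\Re(\Lam^{2,0}\oplus\Lam^{0,2})$ block directly (yielding $\tfrac{3}{200}$), and reduce the $\Re\Lam^{1,1}$ block to the operator $\cD_\Lambda$ of Lemma \ref{lemdl} with its known eigenvalues. Carrying it out you will find it is even simpler than you anticipate: in point 1 the cross-pairings vanish purely by type, so only the quadratic identity $\Lambda_{\alpha\mu\nu}\bar\Lambda_{\bar\beta}{}^{\mu\nu}=h_{\bar\beta\alpha}$ is needed (the quartic identity enters only through Lemma \ref{lemdl}), and the paper's answer $\cD_\Xi|_{\Re\Lam^{1,1}} = \tfrac{N}{100}\,\cD_\Lambda + \tfrac{3}{200}\,\pr_0$ is exactly your ansatz, since $\pr_0$ is a quadratic polynomial in $\cD_\Lambda$ on that block.
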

The proof is given in Section \ref{sec-xprf}.
We are now ready to express the intrinsic torsion in terms of $\XiM$ and its derivative. In the
following we use $a,b,\dots$ to index $TM\simeq T^*M:$
\begin{pro}\label{pro-tor2}
The intrinsic torsion of a local $G(\KK,\HH)$-structure associated with a $\fg(\KK,\HH)$-geometry
$(M,\XiM)$ is given by
\begin{eqnarray*}
(A^\ft)_{ab} &=& \frac{40}{3\kappa^2+15\kappa+12} \left[ \frac{\kappa^2+6\kappa+6}{\kappa+2}\ \delta_a^c \delta_b^d 
- \frac{200}{3}\ \XiM_a{}^{defkl} \XiM_b{}^{cefkl} \right] \\
&\times& (\nabla^{LC} \XiM)_{cmnrst} \XiM_d{}^{mnrst}.
\end{eqnarray*}
\end{pro}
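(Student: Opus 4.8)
The plan is to apply the general machinery of Remark \ref{rem-it} and Corollary \ref{cor-fi} to the symmetric tensor $\Y=\Xi\in\Sym^6\cV_2(\KK)$, now with $p=6$. The strategy of that remark produces the intrinsic torsion from $\nabla^{LC}\XiM$ once we invert the map $\Y^\ft=\lambda_0\,\id+(p-1)\cD_\Y$ on $\ft$. So first I would identify the relevant eigenvalues. From part 1 of Lemma \ref{lem-dxi} we get $\lambda_0=\tfrac{N}{40}$ (this is the constant in $\Xi_{am_2\dots m_6}\Xi_{bm_2\dots m_6}=\lambda_0 g_{ab}$). From part 2 we read off the eigenvalues of $\cD_\Xi$ on the \emph{two} irreducible pieces of $\ft$, namely $\ft\cap\Lam^{1,1}$ and $\Re(\Lam^{2,0}\oplus\Lam^{0,2})$, since $\ft=\fg(\KK,\HH)^\bot$ excludes the $\fu(1)$ and $\cg_2(\KK)$ summands. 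Thus $\lambda_1=\cD_\Xi|_{\ft\cap\Lam^{1,1}}=\tfrac{2N}{200(\kappa+2)}=\tfrac{N}{100(\kappa+2)}$ and $\lambda_2=\cD_\Xi|_{\Re(\Lam^{2,0}\oplus\Lam^{0,2})}=\tfrac{3}{200}$.

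Next I would form, per formula (\ref{eqfi}), the scalars $\lambda_0+(p-1)\lambda_k=\lambda_0+5\lambda_k$ for $k=1,2$, and \emph{check they are nonzero} for $\kappa=1,2,4,8$ so that $\varphi$ exists; a quick substitution of $N=3\kappa+3$ confirms $\lambda_0+5\lambda_1=\tfrac{3\kappa+3}{40}+\tfrac{3\kappa+3}{20(\kappa+2)}$ and $\lambda_0+5\lambda_2=\tfrac{3\kappa+3}{40}+\tfrac{3}{40}$ are both strictly positive. The crucial simplification, which makes the final formula clean, is that $\ft$ splits into exactly \emph{two} pieces, so the two projections $\pr_{\ft_1},\pr_{\ft_2}$ can be written as affine combinations of $\id$ and $\cD_\Xi$ restricted to $\ft$. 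Concretely, since $\cD_\Xi|_\ft$ has the two distinct eigenvalues $\lambda_1,\lambda_2$, one has
$$
\pr_{\ft_1}=\frac{\cD_\Xi-\lambda_2}{\lambda_1-\lambda_2},\qquad
\pr_{\ft_2}=\frac{\cD_\Xi-\lambda_1}{\lambda_2-\lambda_1},
$$
as operators on $\ft$. Substituting these into (\ref{eqfi}) with $p=6$ collapses the sum $\sum_k\frac{1}{\lambda_0+5\lambda_k}\pr_{\ft_k}$ into a single expression of the form $c_1\,\id+c_2\,\cD_\Xi$ on $\ft$, with $c_1,c_2$ rational functions of $\kappa$ determined by $\lambda_0,\lambda_1,\lambda_2$.

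Finally I would translate this operator identity into the stated tensorial formula. Applying $\varphi$ to $\nabla^{LC}_X\XiM$ and invoking Corollary \ref{cor-fi} gives $A^\ft(X)=-(e_x\circ\varphi\circ e_x^{-1})\nabla^{LC}_X\XiM$, and writing out $\cD_\Xi(E)_{ab}=E_{cd}\Xi_a{}^{defkl}\Xi^c{}_{befkl}$ together with the contraction $(\nabla^{LC}\XiM)_{cmnrst}\XiM_d{}^{mnrst}$ reproduces the two terms in the claimed expression, with $\delta_a^c\delta_b^d$ carrying the $\id$ part and $\XiM_a{}^{defkl}\XiM_b{}^{cefkl}$ the $\cD_\Xi$ part. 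The remaining task is purely arithmetic: verify that $c_1$ and $c_2$, once the common denominator $3\kappa^2+15\kappa+12$ is cleared, give precisely the coefficients $\frac{40}{3\kappa^2+15\kappa+12}\cdot\frac{\kappa^2+6\kappa+6}{\kappa+2}$ and $\frac{40}{3\kappa^2+15\kappa+12}\cdot\frac{200}{3}$ displayed in the Proposition. \textbf{The main obstacle} is this bookkeeping: one must be careful that the combinatorial factor $(p-1)=5$, the value $\lambda_0=N/40$, and the two eigenvalues enter with the right normalizations, and that the index contraction in $\cD_\Xi$ matches the specific placement $\Xi_a{}^{defkl}\Xi_b{}^{cefkl}$ appearing in the statement (a transposition in the $\Lam^2$ indices could otherwise flip a sign). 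Since everything reduces to inverting a diagonalizable operator with two known eigenvalues on a two-piece space, there is no conceptual difficulty beyond this careful matching of constants.
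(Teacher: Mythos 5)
Your proposal is correct and follows essentially the same route as the paper's own proof: it invokes the procedure of Remark \ref{rem-it} and Corollary \ref{cor-fi} with $p=6$, takes $\lambda_0=\tfrac{N}{40}$, $\lambda_1=\tfrac{N}{100(\kappa+2)}$, $\lambda_2=\tfrac{3}{200}$ from Lemma \ref{lem-dxi}, and writes the two projections onto the irreducible pieces of $\ft$ as affine combinations of $\id$ and $\cD_\Xi$ (your expressions $\pr_{\ft_k}=\tfrac{\cD_\Xi-\lambda_{k'}}{\lambda_k-\lambda_{k'}}$ are exactly the paper's formulas (\ref{eq-prau}) in closed form). The only content the paper makes explicit that you leave as "arithmetic" is the evaluation of those coefficients, which indeed collapses to the stated constants since $3\kappa^2+15\kappa+12=3(\kappa+1)(\kappa+4)$.
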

\begin{proof} The proof is by application of the procedure
described in Remark \ref{rem-it}.
The decompositions of $\Lam^2 \cV_2(\KK)$ given in Section \ref{sec-dec}
show that $\ft = \fg(\KK,\CC)^\bot$ decomposes into two irreducible subspaces:
$$ \ft = (\ft\cap\Lam^{1,1}) \oplus \Re(\Lam^{2,0}\oplus\Lam^{0,2}). $$
The map $\cD_\Upsilon|_\ft$ and the contraction $\Xi_{adefkl} \Xi^{bdefkl}$ is
found in Lemma \ref{lem-dxi}, so that one uses the formula
(\ref{eqfi}) with $\lambda_0 = \frac{N}{40},$ 
$\lambda_1 = \frac{1}{100}\frac{N}{\kappa+2},$ $\lambda_2 = \frac{3}{200}$
and $p=6,$
while
the projections are (having checked that $\lambda_1\neq\lambda_2$ for $\kappa=1,2,4,8$)
\begin{eqnarray}
\pr_{\ft\cap\Lam^{1,1}} &=& -\frac{\kappa+2}{\kappa}\ \id + \frac{\kappa+2}{3\kappa}\ 200\ \cD_\Xi 
\nonumber\\
\pr_{\Re(\Lam^{2,0}\oplus\Lam^{0,2})} &=& \frac{2\kappa+2}{\kappa}\ \id -  \frac{\kappa+2}{3\kappa} \ 200\ \cD_\Xi. \label{eq-prau}
\end{eqnarray}
Having the map $\varphi : \Sym^6\cV_2(\KK) \to \ft$ expressed
using $\Xi,$ the Proposition follows from  Corollary \ref{cor-fi}.
\end{proof}

Recall that, by Proposition \ref{pro-ah2}, normalized local sections of $\fu(1)_M$ give
local almost hermitian structures on $M$. These are integrable iff $\fu(1)_M$ is parallel,
i.e. iff
$$ f \in \Omega^0(M, \fu(1)_M) \implies \nabla^{LC} f \in \Omega^1(M,\fu(1)_M). $$
Since $\nabla^\fg$ has this property, it follows that in this case the intrinsic torsion
must satisfy $$[A^\ft(X), \fu(1)_M(x)] \subset \fu(1)_M(x)$$ for each $X\in T_x M.$ Expressed
in an adapted frame $e_x : \cV_2(\KK) \to T_x M,$ the condition reads $$ A^\ft(X) \in e_x(\Re\Lam^{1,1})$$ 
i.e. the antiunitary component, obtained by projecting onto $\Re(\Lam^{2,0}\oplus\Lam^{0,2}),$
must vanish.
Applying the projection (\ref{eq-prau}) to the intrinsic torsion expressed in Proposition \ref{pro-tor2} the following formula is readily proved:
\begin{pro}\label{pro-kaehler}
A $\fg(\KK,\HH)$-geometry $(M,\XiM)$ possesses a natural local K\"ahler structure, given by a unit
local section of $\fu(1)_M,$ iff
$$
\left[
\XiM_a{}^{defkl} \XiM_b{}^{cefkl} -\frac{3}{100}\frac{\kappa+1}{\kappa+2}\delta_a^c \delta_b^d 
\right]
\ (\nabla^{LC} \XiM)_{cmnrst} \XiM_d{}^{mnrst} = 0. $$
\end{pro}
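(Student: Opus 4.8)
The plan is to reduce the existence of a local Kähler structure to the vanishing of a single component of the intrinsic torsion, and then to read that component off the explicit formula of Proposition \ref{pro-tor2}. A normalized local section $J$ of the line bundle $\fu(1)_M$ from Proposition \ref{pro-ah2} is an orthogonal almost complex structure on $(M,g)$; since a metric almost Hermitian structure is a genuine (integrable) Kähler structure exactly when it is Levi-Civita parallel, the first step is to translate the condition $\nabla^{LC}J=0$ into an algebraic condition on $A^\ft$.

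First I would use the difference-tensor relation $\nabla^{LC}_X = \nabla^\fg_X - A^\ft(X)$, in which the $\ft_M$-valued $A^\ft$ acts on the endomorphism $J$ through the bracket in $\Lam^2 TM \simeq \fso(\cV_2(\KK))$. Because $\fu(1)_M$ is the fibrewise centre of $\fg_M$ and $\nabla^\fg$ preserves $\fg_M$ (hence its centre), the unit central section obeys $\nabla^\fg_X J = 0$: indeed $\nabla^\fg_X J$ is again a section of the line bundle $\fu(1)_M$, so $\nabla^\fg_X J = fJ$, and metricity together with $\langle J,J\rangle$ constant forces $f=0$. Thus $\nabla^{LC}_X J = -[A^\ft(X),J]$. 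Now decompose $A^\ft(X)\in\ft = (\ft\cap\Lam^{1,1})\oplus\Re(\Lam^{2,0}\oplus\Lam^{0,2})$: the first summand lies in $\fu(\cV_2(\KK))\simeq\Re\Lam^{1,1}$ and hence commutes with $J=\theta$, while every element of $\Re(\Lam^{2,0}\oplus\Lam^{0,2})$ is complex-antilinear and anticommutes with $J$. Therefore $[A^\ft(X),J]=0$ for all $X$ if and only if the projection of $A^\ft$ onto $\Re(\Lam^{2,0}\oplus\Lam^{0,2})$ vanishes identically.

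It then remains to compute this projection. I would apply the operator (\ref{eq-prau}),
$$\pr_{\Re(\Lam^{2,0}\oplus\Lam^{0,2})} = \frac{2\kappa+2}{\kappa}\ \id - \frac{\kappa+2}{3\kappa}\ 200\ \cD_\Xi,$$
to the expression for $A^\ft$ furnished by Proposition \ref{pro-tor2}. Since $A^\ft$ is already $\ft_M$-valued and $\cD_\Xi$ acts on $\ft$ with the two eigenvalues $\frac{1}{100}\frac{N}{\kappa+2}$ and $\frac{3}{200}$ of Lemma \ref{lem-dxi}, the composition reduces to a finite eigenvalue computation. Writing $W_{cd} = (\nabla^{LC}\XiM)_{cmnrst}\XiM_d{}^{mnrst}$ and using the contraction $\Xi_{adefkl}\Xi^{bdefkl} = \frac{N}{40}\delta^a_b$ together with the two eigenvalues above, the $\id$ and $\cD_\Xi$ contributions collapse into a single tensor of the form $\bigl[\XiM_a{}^{defkl}\XiM_b{}^{cefkl} - \lambda\,\delta_a^c\delta_b^d\bigr]W_{cd}$; matching the coefficients fixes $\lambda = \frac{3}{100}\frac{\kappa+1}{\kappa+2}$ and shows the overall prefactor is nonzero for $\kappa = 1,2,4,8$, so that vanishing of the projection is equivalent to the displayed identity.

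The main obstacle I anticipate is purely the bookkeeping of the last step: correctly composing the two $\cD_\Xi$-type contractions (the one implicit in $A^\ft$ and the one in $\pr_{\Re(\Lam^{2,0}\oplus\Lam^{0,2})}$) and tracking the numerical constants so that the $\delta\delta$ and $\XiM\XiM$ pieces recombine into exactly the stated coefficient, while confirming the global prefactor does not vanish for the four admissible values of $\kappa$. All the conceptual input — the reduction to parallelism of $J$, the commute/anticommute dichotomy of the two summands of $\ft$, and the eigenvalues of $\cD_\Xi$ — is already in place, so once the algebra is organized the identity follows, as the paper indicates, \emph{readily}.
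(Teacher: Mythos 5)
Your proposal is correct and follows essentially the same route as the paper: reduce K\"ahlerness of the unit section of $\fu(1)_M$ to the vanishing of the $\Re(\Lam^{2,0}\oplus\Lam^{0,2})$ component of $A^\ft$ (the paper phrases this as $[A^\ft(X),\fu(1)_M]\subset\fu(1)_M$, which, since the bracket lands in $\ft_M$, is the same as your $[A^\ft(X),J]=0$), then apply the projection (\ref{eq-prau}) to the torsion formula of Proposition \ref{pro-tor2}. Your coefficient matching is also consistent: with $N=3\kappa+3$ the stated constant $\frac{3}{100}\frac{\kappa+1}{\kappa+2}$ is exactly the eigenvalue $\cD_\Xi|_{\ft\cap\Lam^{1,1}}=\frac{N}{100(\kappa+2)}$ from Lemma \ref{lem-dxi}, so the condition is precisely $(\cD_\Xi-\cD_\Xi|_{\ft\cap\Lam^{1,1}}\,\id)$ annihilating the contraction $(\nabla^{LC}\XiM)_{cmnrst}\XiM_d{}^{mnrst}$.
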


\subsection{Third family}
\begin{defn} A $\fg(\KK,\OO)$-geometry is a $(12\kappa+16)-$dimensional Riemannian manifold $M$ equipped with a tensor
$$ \mhoM \in \Omega^0(M, \Sym^8 TM ) $$
such that at each $x\in M$ there exists an orthonormal frame $e_x : \cV_3(\KK) \to T_x M$ 
such that $\mhoM = e_x(\mho).$
\end{defn}
Let thus $M$ be a $\fg(\KK,\OO)$-geometry. There exists a global split
$$ \Lam^2 TM = \fg_M \oplus \ft_M $$
such that $\fg_M(x) \simeq \fg(\KK,\OO)$ at each $x\in M.$ Recall that
$\fg(\KK,\OO)$ decomposes under its adjoint representation into two a direct sum
$$ \fg(\KK,\OO) = \cg_3(\KK) \oplus \fsp(1) $$
with $\fsp(1)$ generated by a quaternionic structure on $\cV_3(\KK).$ The same is true of
each $\fg_M(x)$, so that defining a bundle
$$ \fsp(1)_M \subset \fg_M,\quad [\fg_M,\fsp(1)_M]=\fsp(1)_M,\quad \dim\fsp(1)_M(x) = 3 $$
we have the following
\begin{pro} A $\fg(\KK,\OO)$-geometry $(M,\mhoM)$ is naturally equipped with a three-dimensional subbundle
$\fsp(1)_M \subset \Lam^2 TM$ spanned locally by three almost hermitian structures subject to
the algebra of imaginary quaternions.
\end{pro}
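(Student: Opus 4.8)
The plan is to mimic the proof of Proposition~\ref{pro-ah2}, replacing the rôle played there by the centre of $\fg(\KK,\HH)$ with that of the unique three-dimensional ideal of the now centreless semisimple algebra $\fg(\KK,\OO)$. Recall from Proposition~\ref{pro-isoreps} that, as a Lie algebra, $\fg(\KK,\OO) = \cg_3(\KK) \oplus \fsp(1)$ is a direct sum of two ideals: here $\fsp(1)$ is the three-dimensional simple algebra spanned by the complex structures $I,J,K$ of point~3 of that Proposition, while $\cg_3(\KK)$ is one of $\fsp(3),\fsu(6),\fso(12),\fe_7$, hence simple of dimension at least $21$. Thus $\fg(\KK,\OO)$ is semisimple with exactly two simple ideals, of different dimensions.

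First I would establish the purely algebraic fact that $\fsp(1)$ is \emph{canonically} distinguished inside $\fg(\KK,\OO)$: since every ideal of a semisimple Lie algebra is a sum of simple ideals, the only three-dimensional ideal is $\fsp(1)$, the competitors $\cg_3(\KK)$ and $\cg_3(\KK)\oplus\fsp(1)$ having dimension $\geq 21$. This lets me define, at each $x\in M$, the subspace $\fsp(1)_M(x)\subset\fg_M(x)$ as the unique three-dimensional ideal of $\fg_M(x)$; being determined by the isomorphism type of $\fg_M(x)$ alone, it is independent of any choice of frame. Because $\fsp(1)$ is simple it is perfect, so $\fsp(1)_M(x) = [\fsp(1)_M(x),\fsp(1)_M(x)] \subset [\fg_M(x),\fsp(1)_M(x)] \subset \fsp(1)_M(x)$, which yields the defining relation $[\fg_M,\fsp(1)_M] = \fsp(1)_M$ stated in the Proposition.

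Next I would verify smoothness and exhibit the quaternionic structure simultaneously, by means of local adapted frames. Although there need be no global $G(\KK,\OO)$-structure, local ones always exist (Section~\ref{ss-its}); over a neighbourhood $U$ choose an adapted frame $e:U\to Q$ and set $I_M = e(I),\ J_M = e(J),\ K_M = e(K)$. Since $e_x$ restricts to a Lie algebra isomorphism $\fg(\KK,\OO)\to\fg_M(x)$ carrying the unique three-dimensional ideal $\fsp(1)=\Span\{I,J,K\}$ onto $\fsp(1)_M(x)$, these three sections span $\fsp(1)_M$; and because $e$ is orthogonal they are $g$-orthogonal complex structures satisfying $I_M^2 = J_M^2 = K_M^2 = I_M J_M K_M = -\id$. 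As $\cg_3(\KK)$ commutes with $\fsp(1)$ while the connected $\sSp(1)$ only rotates $(I,J,K)$ among themselves, the span $e(\Span\{I,J,K\})$ is frame-independent, confirming it equals $\fsp(1)_M(x)$. Hence $\fsp(1)_M$ is locally spanned by the smooth triple $I_M,J_M,K_M$, so it is a smooth three-dimensional subbundle generated locally by three almost-hermitian structures obeying the algebra of imaginary quaternions.

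The routine parts are the dimension bookkeeping and the check that $e(I),e(J),e(K)$ are complex structures obeying the quaternion relations, both immediate from Proposition~\ref{pro-isoreps}. The delicate point — the exact analogue of the obstacle met for $\fu(1)_M$ in Proposition~\ref{pro-ah2} — is the \emph{global} behaviour: the bundle $\fsp(1)_M$ is globally defined and canonical, but choosing a quaternionic basis $I_M,J_M,K_M$ of its fibres amounts to a local trivialisation that in general cannot be performed over all of $M$ (cf. Example~\ref{eg-noG}). This is precisely why the Proposition asserts only a \emph{local} frame of almost-hermitian structures.
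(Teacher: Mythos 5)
Your proposal is correct and follows essentially the same route as the paper: the paper likewise defines $\fsp(1)_M$ fibrewise as the three-dimensional subbundle singled out by the adjoint action (the ideal condition $[\fg_M,\fsp(1)_M]=\fsp(1)_M$, $\dim\fsp(1)_M(x)=3$) coming from the invariant splitting $\fg(\KK,\OO)=\cg_3(\KK)\oplus\fsp(1)$ of Proposition \ref{pro-isoreps}, and exhibits $I,J,K$ through adapted frames. Your write-up merely makes explicit what the paper leaves implicit — uniqueness of the three-dimensional ideal by semisimplicity (note in passing that the paper's table listing $\cG_3(\RR)$ as $\sSp(1)$ is a typo for $\sSp(3)$, so your dimension count $\geq 21$ is the right one) and the frame-independence check — which is a faithful elaboration rather than a different argument.
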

Note that the full orthogonal isotropy group of $\mho$ necessarily preserves the structure constants
of $\fsp(1) \subset \fg(\KK,\OO),$ and thus an orientation on this space, so that it acts on it
as $\sSO(3) \simeq \Aut\HH.$ This in turn implies that the images of $I,J,K\in\Lam^2\cV_3(\KK)$ under different adapted frames are related by automorphisms of the unique quaternionic structure. Hence the following\footnote{
There are many equivalent ways of introducing (almost) quaternion-hermitian structures on a manifold -- e.g. by means of a 4-form, or a 2-sphere bundle in $\End TM$. For reference, see \cite{joyce-x, swann-1991, ivanov-2004}.
}
\begin{cor} \label{cor-qh}A $\fg(\KK,\OO)$-geometry $(M,\mhoM)$ is naturally equipped with a unique global almost quaternion-hermitian structure defined by
$$ \underline\Omega\in\Omega^4(M),\quad \underline\Omega(x) = e_x ( I\wedge I + J\wedge J + K\wedge K) $$
in an arbitrary adapted frame $e_x : \cV_3(\KK)\to T_x M.$
\end{cor}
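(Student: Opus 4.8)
The plan is to prove two claims: that $\underline\Omega$ is independent of the adapted frame appearing in its definition, so that it is a genuine global section of $\Omega^4(M)$, and that pointwise it is the fundamental form of an almost quaternion-hermitian structure. All the content lies in the behaviour of $I\wedge I + J\wedge J + K\wedge K$ under a change of adapted frame.

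First I would recall that if $e_x, e_x' : \cV_3(\KK)\to T_x M$ are two orthonormal frames with $\mhoM(x) = e_x(\mho) = e_x'(\mho)$, then $e_x' = e_x\circ a$ for some $a$ in the \emph{full} orthogonal isotropy group $\tilde G$ of $\mho$ (not merely its identity component $G(\KK,\OO)$). By the discussion preceding the Corollary, $\tilde G$ preserves the structure constants and the orientation of $\fsp(1) = \Span\{I,J,K\}\subset\Lam^2\cV_3(\KK)$, and therefore acts on this three-dimensional space as $\sSO(3)\simeq\Aut\HH$. Writing $(L_1,L_2,L_3) = (I,J,K)$ and $a(L_\alpha) = \sum_\beta R_{\beta\alpha}L_\beta$ with $R\in\sSO(3)$ for the adjoint action of $a$ on $\Lam^2\cV_3(\KK)$ restricted to $\fsp(1)$, the key step is the computation
\begin{equation*}
a\Bigl(\sum_\alpha L_\alpha\wedge L_\alpha\Bigr)
= \sum_{\beta,\gamma}\Bigl(\sum_\alpha R_{\beta\alpha}R_{\gamma\alpha}\Bigr)\,L_\beta\wedge L_\gamma
= \sum_{\beta,\gamma}\delta_{\beta\gamma}\,L_\beta\wedge L_\gamma
= \sum_\beta L_\beta\wedge L_\beta,
\end{equation*}
where I use the equivariance of the wedge map $\Lam^2\times\Lam^2\to\Lam^4$ and the orthogonality relation $RR^\top = \id$. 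Hence $e_x'(I\wedge I + J\wedge J + K\wedge K) = e_x\bigl(a(I\wedge I + J\wedge J + K\wedge K)\bigr) = e_x(I\wedge I + J\wedge J + K\wedge K)$, so $\underline\Omega(x)$ is independent of the chosen adapted frame and $\underline\Omega\in\Omega^4(M)$ is globally well-defined; its uniqueness is manifest, being canonically produced from $\mhoM$.

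It then remains to observe that on $\cV_3(\KK)$ the $4$-form $I\wedge I + J\wedge J + K\wedge K$ is exactly the fundamental (Kraines) $4$-form of the standard quaternion-hermitian structure determined by the triple $I,J,K$. As every adapted frame is an orthogonal isomorphism, $\underline\Omega(x)$ is pointwise the fundamental $4$-form of an almost quaternion-hermitian structure on $T_x M$, whose associated rank-three subbundle of $\Lam^2 TM$ is precisely $\fsp(1)_M\subset\fg_M$. For the standard correspondence between an almost quaternion-hermitian structure and its fundamental $4$-form I would cite \cite{swann-1991, joyce-x, ivanov-2004}; this correspondence applies here since $\dim M = 12\kappa+16\ge 28$, comfortably above the exceptional dimension eight.

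The computation is elementary, so I do not expect a genuine obstacle; the one point deserving care — and the conceptual heart of the statement — is that the transition group is $\tilde G$ acting through $\sSO(3)$ on $\fsp(1)$, which is precisely what forces the frame-dependent triple $I,J,K$ to assemble into a frame-independent object. The form $\sum_\alpha L_\alpha\wedge L_\alpha$ works because it is the image under the (commutative) wedge map $\Sym^2(\Lam^2\cV_3(\KK))\to\Lam^4\cV_3(\KK)$ of the essentially unique $\sSO(3)$-invariant quadratic element of $\Sym^2\fsp(1)$, which is the abstract reason a canonical $4$-form exists while no canonical almost complex structure does.
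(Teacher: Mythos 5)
Your proof is correct and follows essentially the same route as the paper: the paper's remark immediately preceding the corollary --- that the full orthogonal isotropy group of $\mho$ preserves the structure constants and orientation of $\fsp(1)\subset\fg(\KK,\OO)$ and hence acts on it through $\sSO(3)\simeq\Aut\HH$ --- is exactly your transition-group observation, and the corollary follows from the $\sSO(3)$-invariance of $I\wedge I+J\wedge J+K\wedge K$. The only difference is that you spell out the orthogonality computation ($RR^\top=\id$) and the identification with the fundamental (Kraines) $4$-form, both of which the paper leaves implicit in its one-line ``Hence the following''.
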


Once again some algebra is needed before we can express the intrinsic torsion:
\begin{lem}\label{lem-dmho}
Reintroducing $N=3\kappa+4,$ we have:\begin{enumerate}
\item $\mho_{am_2\dots m_8} \mho^{bm_2\dots m_8} = \frac{64}{35}\frac{N+1}{2}[25(N-1)+12\chi^2]\ \delta^a_b$
\item Let us introduce a map
$$ \cD_\mho : \Lam^2 \cV_3(\KK) \to \Lam^2 \cV_3(\KK) $$
$$ \cD_\mho(E)_{ab} = E_{cd} \mho_a{}^{dm_3\dots m_8} \mho^c{}_{bm_3\dots m_8}. $$
Then, with respect to the decomposition
$$ \Lam^2 \cV_3(\KK) = \fsp(1) \oplus \cg_3(\KK) \oplus [\ft\cap\fsp(\cV_3(\KK,\omega))] \oplus 
\bot, $$
the map $\cD_\mho$ is given by
\begin{eqnarray*}
\cD_\mho|_{\fsp(1)} &=& 
\frac{32}{245} \left(30N^2 - 9N - 21 - 636\chi^2\right) \\
\cD_\mho|_{\cg_3(\KK)} &=& 
\frac{32}{245} \left(51N^2 + 13N -83 -18\chi^2 - \sqrt{\kappa+3}\right) \\
\cD_\mho|_{\ft\cap\fsp(\cV_3(\KK),\omega)} &=&
\frac{32}{245} \left(51N^2+13N-83-18\chi^2+\frac{1}{\sqrt{\kappa+3}}\right) \\
\cD_\mho|_{\bot} &=&
\frac{32}{245} (15N^2+53N+47-386\chi^2).
\end{eqnarray*}
\end{enumerate}\end{lem}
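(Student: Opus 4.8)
The plan is to mimic the strategy of Lemmas \ref{lemdy} and \ref{lemdl} from the first two families, now powered by the explicit description of $\mho$ furnished by Lemma \ref{lem44} together with the contraction identities for $q$ collected in Lemma \ref{lemq}. Since $\mho$ is $G(\KK,\OO)$-invariant and $\cV_3(\KK)$ is (real-)irreducible under $G(\KK,\OO)$, Schur's lemma immediately makes the sevenfold self-contraction in point (1) a multiple of $\delta^a_b$; likewise $\cD_\mho$ is $G(\KK,\OO)$-equivariant, hence preserves each $G(\KK,\OO)$-invariant summand of $\Lam^2\cV_3(\KK)\simeq\fso(\cV_3(\KK))$, and the computation will show it acts there as a scalar. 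So the entire content is the evaluation of the constant $\lambda_0$ in point (1) and of the four eigenvalues in point (2).

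First I would record the relevant decomposition. Under the distinguished complex structure $I=\theta$ one has $\fso(\cV_3(\KK)) = \Re\Lam^{1,1} \oplus \Re(\Lam^{2,0}\oplus\Lam^{0,2})$ with $\Re\Lam^{1,1} = \fu(1)\oplus\fsu(\cV_3(\KK))$ and $\fsu(\cV_3(\KK)) = \fsp(\cV_3(\KK),\omega)\oplus\fsp^\bot$; the symplectic part splits into $\cg_3(\KK)\oplus[\ft\cap\fsp(\cV_3(\KK),\omega)]$ as the two eigenspaces of $\cD_q$ (Lemma \ref{lemq}). The four summands in the statement are then $\fsp(1) = \RR I\oplus\RR J\oplus\RR K$, which straddles $\fu(1)$ and the antiunitary part but is a single $G(\KK,\OO)$-irreducible (recall $\fg(\KK,\OO)=\cg_3(\KK)\oplus\fsp(1)$ is a direct sum of ideals, so $\cg_3(\KK)$ acts trivially on $\fsp(1)$ and $\fsp(1)$ acts by its adjoint representation), together with $\cg_3(\KK)$, $\ft\cap\fsp(\cV_3(\KK),\omega)$, and the remaining complement $\bot$. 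Because $\mho$ is $I$-invariant, $\cD_\mho$ commutes with $\ad_I$ and therefore respects the coarse splitting into $\Re\Lam^{1,1}$ and $\Re(\Lam^{2,0}\oplus\Lam^{0,2})$; this is exactly what forces $\cD_\mho$ to take the \emph{same} value on $I$ as on $J,K$, consistently with scalarity on $\fsp(1)$.

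The computational heart is to convert both contractions into contractions of $q$ and $\bar q$. Substituting $\mho|_{\Sym^{4,0}\otimes\Sym^{0,4}} = \frac{256}{70}(1\otimes J^*)P_{44}(q\otimes q)$ and expanding $P_{44}$, I would reduce $\cD_\mho(E)$, for $E$ a representative of each summand, to precisely the strings $q\bar q$, $q\bar q q$ and $q\bar\omega\bar\omega q$ already evaluated in the stabilizer proofs, reusing the auxiliary operators $\cD_q$, $\sigma(F)=J^{-1}FJ$ and $\varphi=\tfrac12+\chi\cD_q$, together with the cubic/quartic identities
\[ q_{\alpha\beta\mu\nu}\bar q^{\mu\nu\rho\sigma}q_{\rho\sigma\gamma\delta} = (1+\chi^2)q_{\alpha\beta\gamma\delta} + \tfrac{\chi}{2}(\omega_{\alpha\gamma}\omega_{\beta\delta}+\omega_{\alpha\delta}\omega_{\beta\gamma}) \]
and its $\bar\omega$-contracted companions. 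On $\cg_3(\KK)$ and $\ft\cap\fsp$ one feeds in the known eigenvalues $\cD_q=-\sqrt{\kappa+3}$ and $\cD_q=1/\sqrt{\kappa+3}$; on $\bot$ one uses $\sigma=-1$ together with $\cD_q=0$ on $\fsp^\bot$; and on $\fsp(1)$ one evaluates directly on $I$ (or $J$), where the antiunitary contractions simplify through $\omega_{\alpha\mu}\bar\omega^\mu{}_{\bar\beta}=-h_{\bar\beta\alpha}$. For point (1) the scalar is read off as $\lambda_0 = \frac{1}{2N}\,\mho_{m_1\dots m_8}\mho^{m_1\dots m_8}$, the full trace over $\dim_\RR\cV_3(\KK)=2N$ of a single $q\bar q$ contraction, computed by the same reduction; a final substitution of $N$ and $\chi$ and simplification then yields the stated polynomial expressions.

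I expect the main obstacle to be purely one of bookkeeping and stamina: the rank-eight contractions generate a large number of terms, and correctly tracking the combinatorial weights coming from the $P_{44}$ symmetrisation (the analogue of the $2,\ -8[\tfrac34,\tfrac14],\ 6$ pattern appearing in the stabilizer lemmas) while keeping the barred and unbarred symmetrisations separate is error-prone. A secondary subtlety is the $\fsp(1)$ summand: one must check that the contributions of its $\fu(1)$-part ($I$) and its antiunitary part ($J,K$) collapse to one common eigenvalue, which provides a useful internal consistency check on the whole calculation.
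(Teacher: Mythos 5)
Your proposal matches the paper's proof in both strategy and detail: the paper likewise reduces everything to the unitary part $\Re\Lam^{1,1}$ (each irreducible summand meets it, so the eigenvalue on $\fsp(1)$ is read off from $I\in\fu(1)$ and the one on $\bot$ from $\fsp^\bot_0(\cV_3(\KK),\omega)$), expands $\mho$ through $\frac{256}{70}P_{44}(q\otimes q)$, and grinds the resulting $q\bar q$-strings down to a combination of $\id$, $\pr_0$, $\sigma$ and $\cD_q$ --- using computer algebra for the combinatorial weights --- before substituting the eigenvalues of $\cD_q$ and $\sigma$, with point (1) obtained from the analogous self-contraction. One small caveat: it is the $G(\KK,\OO)$-irreducibility of $\fsp(1)$ (equivariance under the $\sSp(1)$ factor, which mixes $I,J,K$), not the fact that $\cD_\mho$ commutes with $\ad_I$, that forces the common eigenvalue on $I$, $J$ and $K$; since your proposal also invokes that irreducibility, this slip is harmless.
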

The proof can be found in Section \ref{sec-xprf}.
We are now able to read the intrinsic torsion from $\mhoM$ and its derivative:
\begin{pro}\label{pro-tor3}
The intrinsic torsion of a local $G(\KK,\OO)$-structure associated with a $\fg(\KK,\OO)$-geometry
$(M,\mhoM)$ is given by
\begin{eqnarray*}
(A^\ft)_{ab} &=& \gamma^{-1} [ \alpha\ \mhoM_a{}^{dm_3\dots m_8} \mhoM^c{}_{bm_3\dots m_8} + \beta
\ \delta^c_a\delta^d_b
] \\
&\times& (\nabla^{LC} \mhoM)_{cn_2\dots n_8} \mhoM_d{}^{kn_2\dots n_8}
\end{eqnarray*}
where
\begin{eqnarray*}
\alpha &=& \frac{1}{\lambda_0 + 7 \lambda_1} - \frac{1}{\lambda_0 + 7\lambda_2} \\
\beta &=& -\frac{\lambda_2}{\lambda_0 + 7 \lambda_1} + \frac{\lambda_1}{\lambda_0 + 7\lambda_2} \\
\gamma &=& \frac{32}{245}\left(36N^2+40N-130+368\chi^2+\frac{1}{\sqrt{\kappa+3}}\right) \\
\lambda_0 &=& \frac{64}{35}\frac{N+1}{2}[25(N-1)+12\chi^2] \\
\lambda_1 &=& \frac{32}{245} \left(51N^2+13N-83-18\chi^2+\frac{1}{\sqrt{\kappa+3}}\right) \\
\lambda_2 &=& \frac{32}{245} (15N^2+53N+47-386\chi^2).
\end{eqnarray*}
\end{pro}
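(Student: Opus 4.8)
The plan is to apply, essentially verbatim, the general procedure of Remark \ref{rem-it}, exactly as was done for the first two families in Propositions \ref{pro-tor1} and \ref{pro-tor2}. Here the invariant tensor is $\Y=\mho\in\Sym^8\cV_3(\KK)$, so $p=8$ and $p-1=7$; the representation $\cV_3(\KK)$ is $\fg(\KK,\OO)$-irreducible by Corollary \ref{cor-reps}, so the hypotheses of the remark are met. The first task is to identify the complement $\ft=\fg(\KK,\OO)^\bot$ and its splitting into $G(\KK,\OO)$-irreducibles. From the decompositions collected in Section \ref{sec-dec} and exhibited in Lemma \ref{lem-dmho}, one has $\Lam^2\cV_3(\KK)=\fsp(1)\oplus\cg_3(\KK)\oplus[\ft\cap\fsp(\cV_3(\KK),\omega)]\oplus\bot$, and since $\fg(\KK,\OO)=\cg_3(\KK)\oplus\fsp(1)$, the complement is $\ft=[\ft\cap\fsp(\cV_3(\KK),\omega)]\oplus\bot$, a sum of exactly two irreducible pieces $\ft_1,\ft_2$.

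Next I would read off the three constants governing the remark. The scalar $\lambda_0$ with $\mho_{am_2\dots m_8}\mho^{bm_2\dots m_8}=\lambda_0\,\delta^a_b$ is supplied by point 1 of Lemma \ref{lem-dmho}, giving $\lambda_0=\frac{64}{35}\frac{N+1}{2}[25(N-1)+12\chi^2]$; the eigenvalues of $\cD_\mho$ on $\ft_1$ and $\ft_2$ are read from point 2, namely $\lambda_1=\cD_\mho|_{\ft\cap\fsp(\cV_3(\KK),\omega)}$ and $\lambda_2=\cD_\mho|_\bot$, matching the values listed in the statement. One then checks, for the admissible values $\kappa=1,2,4,8$ only, that $\lambda_1\neq\lambda_2$ and $\lambda_0+7\lambda_k\neq0$ for $k=1,2$, so that both the projections and the inverse map $\varphi$ of Remark \ref{rem-it} are well defined.

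Because $\cD_\mho$ acts as the scalar $\lambda_1$ on $\ft_1$ and $\lambda_2$ on $\ft_2$, the two projections can be written on $\ft$ as
\[
\pr_{\ft_1}=\frac{\cD_\mho-\lambda_2\,\id}{\lambda_1-\lambda_2},\qquad
\pr_{\ft_2}=\frac{\lambda_1\,\id-\cD_\mho}{\lambda_1-\lambda_2}.
\]
Substituting these into formula (\ref{eqfi}) with $p-1=7$ collapses the sum $\sum_k\frac{1}{\lambda_0+7\lambda_k}\pr_{\ft_k}$ into $\frac{1}{\lambda_1-\lambda_2}(\alpha\,\cD_\mho+\beta\,\id)$, with $\alpha=\frac{1}{\lambda_0+7\lambda_1}-\frac{1}{\lambda_0+7\lambda_2}$, $\beta=\frac{-\lambda_2}{\lambda_0+7\lambda_1}+\frac{\lambda_1}{\lambda_0+7\lambda_2}$ and $\gamma=\lambda_1-\lambda_2$. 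Feeding the resulting map $\varphi:\Sym^8\cV_3(\KK)\to\ft$, expressed through $\mho$, into Corollary \ref{cor-fi} (which turns $\varphi$ applied to $\nabla^{LC}\mhoM$ into $A^\ft$) yields precisely the stated formula, the contraction $\mhoM_a{}^{dm_3\dots m_8}\mhoM^c{}_{bm_3\dots m_8}$ being the tangent-bundle incarnation of $\cD_\mho$ and $\delta^c_a\delta^d_b$ that of $\id$.

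I expect no genuine obstacle inside the argument for the Proposition itself: given Lemma \ref{lem-dmho} it is pure bookkeeping, identical in shape to the proof of Proposition \ref{pro-tor2}. The real work has been quarantined into Lemma \ref{lem-dmho} (proved in Section \ref{sec-xprf}), where computing the four eigenvalues of $\cD_\mho$ from the quartic identities for $q$ (Lemma \ref{lemq} and Corollary \ref{corq}) is the laborious step. The only points demanding care here are the sign bookkeeping in $\alpha,\beta,\gamma$ (which fixes, in particular, whether one obtains $\gamma=\lambda_1-\lambda_2$ or its negative, the surviving sign being absorbed consistently into $\alpha,\beta$) and the verification that the denominators $\lambda_0+7\lambda_k$ and $\lambda_1-\lambda_2$ are nonzero at $\kappa=1,2,4,8$, which must be done case by case since the expressions are not uniformly nonvanishing in $\kappa$.
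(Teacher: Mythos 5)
Your proposal is correct and follows essentially the same route as the paper's own proof: Remark \ref{rem-it} with $p=8$, the two-piece decomposition $\ft=[\ft\cap\fsp(\cV_3(\KK),\omega)]\oplus[\fsp(\cV_3(\KK),\omega)+\fsp(1)]^\bot$, the constants $\lambda_0,\lambda_1,\lambda_2$ from Lemma \ref{lem-dmho}, the projections written as $\frac{1}{\lambda_1-\lambda_2}(\cD_\mho-\lambda_2\,\id)$ and $\frac{1}{\lambda_2-\lambda_1}(\cD_\mho-\lambda_1\,\id)$, and the conclusion via Corollary \ref{cor-fi}. Your identification $\gamma=\lambda_1-\lambda_2$ is the structurally correct one, and collapsing the sum in (\ref{eqfi}) indeed yields the stated $\alpha$ and $\beta$; note only that $\lambda_1-\lambda_2$ gives $-40N$ rather than the $+40N$ appearing in the Proposition's displayed $\gamma$, an apparent sign typo in the paper rather than a flaw in your argument.
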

Before we give the proof, we must sadly admit that even after $N$, $\chi$ and $\kappa$ have
been substituted by numbers for given $\KK,$ the constants remain unreasonably complicated.
\begin{proof} The proof is by application of the procedure
described in Remark \ref{rem-it}.
The decompositions of $\Lam^2 \cV_3(\KK)$ given in Section \ref{sec-dec}
show that $\ft = \fg(\KK,\OO)^\bot$ decomposes into two irreducible subspaces:
$$ \ft = [\ft\cap\fsp(\cV_3,\omega)] \oplus [\fsp(\cV_3,\omega)+\fsp(1)]^\bot. $$
The map $\cD_\mho|_\ft$ and the contraction $\mho_{am_2\dots m_8} \mho^{bm_2\dotsm_8}$ is
found in Lemma \ref{lem-dmho}, so that one uses the formula
(\ref{eqfi}) with 
\begin{eqnarray*}
\lambda_0 &=& \frac{64}{35}\frac{N+1}{2}[25(N-1)+12\chi^2] \\
\lambda_1 &=& \frac{32}{245} \left(51N^2+13N-83-18\chi^2+\frac{1}{\sqrt{\kappa+3}}\right) \\
\lambda_2 &=& \frac{32}{245} (15N^2+53N+47-386\chi^2).
\end{eqnarray*}
and $p=8,$
while
the projections are (having checked that $\lambda_1\neq\lambda_2$ for $\kappa=1,2,4,8$)
\begin{eqnarray}
\pr_{\ft\cap\fsp(\cV_3(\KK),\omega)} &=& \frac{1}{\lambda_1-\lambda_2} (\cD_\mho - \lambda_2 \id)
\nonumber\\ \label{eq-pras}
\pr_{[\fsp(\cV_3(\KK),\omega)+\fsp(1)]^\bot} &=& \frac{1}{\lambda_2-\lambda_1} (\cD_\mho - \lambda_1 \id).
\end{eqnarray}
Having the map $\varphi : \Sym^8\cV_3(\KK) \to \ft$ expressed
using $\mho,$ the Proposition follows from  Corollary \ref{cor-fi}.
\end{proof}

The condition for integrability of the almost-quaternion-hermitian structure 
described in Corollary \ref{cor-qh} is that the Levi-Civita
parallel transport preserve a bundle of unit 2-spheres in
$\fsp(1)_M$:
$$ \mathcal{S} \subset \fsp(1)_M,\quad \mathcal{S}_x = \{ E \in \fsp(1)_M(x)\ | E^2 = -\id \}, $$
which under this condition becomes the natural twistor bundle of the quaternion-K\"ahler structure
(cf. \cite{joyce-x}).
It is equivalent to
$$ f \in \Omega^0(M, \fsp(1)_M) \implies \nabla^{LC} f \in \Omega^1(M,\fsp(1)_M). $$
Since $\nabla^\fg$ has this property, it follows that in this case the intrinsic torsion
must satisfy $$[A^\ft(X), \fsp(1)_M(x)] \subset \fsp(1)_M(x)$$ for each $X\in T_x M.$ Expressed
in an adapted frame $e_x : \cV_3(\KK) \to T_x M,$ the condition reads $$ A^\ft(X) \in e_x(\fsp(\cV_3(\KK),\omega))$$ 
i.e. the component in $\fsp(\cV_3(\KK),\omega)^\bot$
must vanish.
Applying the projection (\ref{eq-pras}) to the intrinsic torsion expressed in Proposition \ref{pro-tor3} the following formula is readily proved:
\begin{pro}\label{pro-qkaehler}
A $\fg(\KK,\OO)$-geometry $(M,\mhoM)$ possesses a natural quaternion-K\"ahler structure, given by
the 2-sphere bundle 
$$ \mathcal{S} \subset \fsp(1)_M,\quad \mathcal{S}_x = \{ E \in \fsp(1)_M(x)\ | E^2 = -\id \}, $$
iff
$$
[ \mhoM_a{}^{dm_3\dots m_8} \mhoM^c{}_{bm_3\dots m_8} - \lambda_1
\ \delta^c_a\delta^d_b
] (\nabla^{LC} \mhoM)_{cn_2\dots n_8} \mhoM_d{}^{n_2\dots n_8} = 0 $$
where
$$ \lambda_1 = \frac{32}{245} \left(51N^2+13N-83-18\chi^2+\frac{1}{\sqrt{\kappa+3}}\right). $$
\end{pro}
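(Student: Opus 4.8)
The plan is to carry out in detail the one-line reduction indicated just before the statement: to recognise the displayed contraction as a nonzero multiple of the component of the intrinsic torsion $A^\ft$ lying in $[\fsp(\cV_3(\KK),\omega)\oplus\fsp(1)]^\bot$, and to use that this component is precisely what obstructs integrability of the quaternion-hermitian structure of Corollary \ref{cor-qh}.

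First I would invoke the discussion preceding the statement: the quaternion-K\"ahler condition is the parallelity of $\fsp(1)_M$ under $\nabla^{LC}$ (equivalently, of the twistor $2$-sphere bundle $\mathcal{S}$). Since $\nabla^\fg$ already preserves $\fsp(1)_M$ --- it is a $\fg(\KK,\OO)$-connection and $\fsp(1)$ is the unique three-dimensional ideal of $\fg(\KK,\OO)=\cg_3(\KK)\oplus\fsp(1)$, hence a canonical direct summand preserved by the $\Ad$-action --- and $\nabla^{LC}_X=\nabla^\fg_X-A^\ft(X)$ with $A^\ft(X)$ acting by the bracket on $\Lam^2 TM$, parallelity for $\nabla^{LC}$ amounts to $[A^\ft(X),\fsp(1)_M]\subset\fsp(1)_M$. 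Working in an adapted frame, the normaliser of $\fsp(1)$ in $\fso(\cV_3(\KK))$ is its centraliser $\fsp(\cV_3(\KK),\omega)$ enlarged by $\fsp(1)$ itself; since $A^\ft(X)\in\ft$ meets this normaliser exactly in $\ft\cap\fsp(\cV_3(\KK),\omega)$, the bracket condition is equivalent to $\pr_\bot A^\ft=0$, with $\pr_\bot$ the projection onto $[\fsp(\cV_3(\KK),\omega)\oplus\fsp(1)]^\bot$ of (\ref{eq-pras}).

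It then remains to show that the bracketed tensor in the statement, contracted as displayed, is a nonzero multiple of $\pr_\bot A^\ft$. Here I would use compatibility, $\nabla^\fg\mhoM=0$, to write $\nabla^{LC}_X\mhoM=-A^\ft(X)(\mhoM)$; contracting seven of the eight slots with $\mhoM$ and applying Remark \ref{rem-it} with $p=8$ gives
$$ S_{cd}:=(\nabla^{LC}\mhoM)_{cn_2\dots n_8}\mhoM_d{}^{n_2\dots n_8}=-\bigl(\lambda_0\,\id+7\,\cD_\mho\bigr)(A^\ft)_{cd}. $$
Since $\cD_\mho(S)_{ab}=S_{cd}\,\mhoM_a{}^{dm_3\dots m_8}\mhoM^c{}_{bm_3\dots m_8}$, the left-hand side of the proposition is exactly $(\cD_\mho-\lambda_1\id)(S)$. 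By Lemma \ref{lem-dmho}, $\cD_\mho$ restricted to $\ft$ has eigenvalue $\lambda_1$ on $\ft\cap\fsp(\cV_3(\KK),\omega)$ and $\lambda_2$ on the complement $\bot$, so feeding these through the expression for $S$ yields
$$ (\cD_\mho-\lambda_1\id)(S)=-(\lambda_2-\lambda_1)(\lambda_0+7\lambda_2)\,\pr_\bot A^\ft. $$
As $\lambda_1\neq\lambda_2$ and $\lambda_0+7\lambda_2\neq0$ for $\kappa=1,2,4,8$, the displayed equation is equivalent to $\pr_\bot A^\ft=0$, i.e. to the quaternion-K\"ahler condition.

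The main obstacle I anticipate is the normaliser identification in the second paragraph --- the clean statement that a $\ft$-valued endomorphism brackets $\fsp(1)$ into itself precisely when it lies in $\fsp(\cV_3(\KK),\omega)$ --- since one must exclude accidental coincidences; this is where canonicity of the $\fsp(1)$-summand and the eigenvalue separation $\lambda_1\neq\lambda_2$ are essential. The remainder is the index bookkeeping of the last paragraph, which is routine given Proposition \ref{pro-tor3} and Lemma \ref{lem-dmho}.
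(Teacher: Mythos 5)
Your proof is correct and essentially reproduces the paper's argument: the same reduction of the quaternion-K\"ahler condition to $[A^\ft(X),\fsp(1)_M]\subset\fsp(1)_M$, hence to the vanishing of the component of $A^\ft$ orthogonal to $\fsp(\cV_3(\KK),\omega)\oplus\fsp(1)$, followed by the same eigenvalue computation with $\cD_\mho$ (Lemma \ref{lem-dmho}) identifying the displayed contraction with a nonzero multiple of $\pr_\bot A^\ft$. Working from $S=-(\lambda_0\,\id+7\,\cD_\mho)(A^\ft)$ via Remark \ref{rem-it}, rather than inverting through the constants $\alpha,\beta,\gamma$ of Proposition \ref{pro-tor3} and then applying the projection (\ref{eq-pras}), is only a cosmetic reorganization of the paper's computation.
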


\section{$G$-structures with characteristic torsion}

In the previous section we have at last defined the geometries modelled after the Magic Square symmetric spaces, and expressed their intrinsic torsion in terms of geometric data, i.e. the defining tensorial invariant and its Levi-Civita derivative. It is of course natural to investigate at first the integrable case, that is, manifolds with parallel $\UpsilonM,$ $\XiM$ or $\mhoM.$ However, we immediately have
the following
\begin{pro}[Corollary of Berger's theorem]
Let $(M,g,\Y_M)$ be a $\fg(\KK,\KK')$-geometry, whose underlying local $G(\KK,\KK')$-structures
are integrable. Then $(M,g)$ is either a locally symmetric space or a product of Riemannian manifolds
of lower dimension,
with the product metric tensor.
\end{pro}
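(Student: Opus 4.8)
The plan is to recast integrability as a statement about Riemannian holonomy and then invoke the Berger--de~Rham machinery. First I would use the integrability criterion of Section~\ref{ss-its}: integrability of the underlying local $G(\KK,\KK')$-structures is equivalent to $\nabla^{LC}\Y_M=0$, i.e. to the defining tensor being parallel. Since $\Y_M$ is globally defined and parallel, the restricted holonomy group $\mathrm{Hol}^0(g)$ fixes the value of $\Y$ in every tangent space (under any adapted identification $T_xM\simeq V(\KK,\KK')$), hence lies in the identity component of the orthogonal stabilizer of $\Y$, which is precisely $G(\KK,\KK')$. Thus $\mathrm{Hol}^0(g)\subseteq G(\KK,\KK')$.

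Next I would apply the local de~Rham decomposition theorem to $(M,g)$: locally it is isometric to a product of a flat factor and finitely many irreducible factors. If this decomposition is nontrivial, then $(M,g)$ is locally a Riemannian product of manifolds of strictly smaller dimension, and we are in the second alternative of the statement; a purely flat $M$ is in any case locally symmetric. It therefore remains to treat the case in which $(M,g)$ is locally irreducible, so that $\mathrm{Hol}^0(g)$ acts irreducibly on the tangent space, consistently with the irreducibility of $V(\KK,\KK')$ recorded in Corollary~\ref{cor-reps}.

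Assume then that $(M,g)$ is locally irreducible. If it is locally symmetric we are done, so suppose it is not. Passing to the universal cover (which affects neither $\mathrm{Hol}^0$ nor local symmetry), Berger's theorem applies and forces $\mathrm{Hol}^0(g)$ to \emph{equal} one of the groups on Berger's list, acting in its standard irreducible representation on a space of dimension $\dim V(\KK,\KK')$. I would then derive a contradiction by a direct dimension comparison: in each of the twelve dimensions $\dim V(\KK,\KK')\in\{5,8,14,26;\,12,18,30,54;\,28,40,64,112\}$, every admissible Berger group (taking into account the parity of the dimension and divisibility by four) has dimension strictly larger than $\dim G(\KK,\KK')$. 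For instance, in dimension $8$ the smallest candidate is $\sSp(2)$ of dimension $10>8=\dim\sSU(3)$, and in dimension $28$ it is $\sSp(7)$ of dimension $105>24=\dim(\sSp(3)\sSp(1))$. Since $\mathrm{Hol}^0(g)\subseteq G(\KK,\KK')$ would give $\dim\mathrm{Hol}^0(g)\le\dim G(\KK,\KK')<\dim\mathrm{Hol}^0(g)$, this is impossible, so $(M,g)$ must after all be locally symmetric.

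The main, though routine, obstacle is precisely this last comparison: one must verify case by case that $G(\KK,\KK')$ is too small to contain any Berger holonomy group acting irreducibly on $V(\KK,\KK')$. Conceptually the content is that the $G(\KK,\KK')$ are the isotropy groups of irreducible symmetric spaces and, in these representations, do not occur among Berger's non-symmetric holonomies; the dimension count is simply the most economical way to confirm this, requiring no input beyond the irreducibility of $V(\KK,\KK')$ and the identification of $G(\KK,\KK')$ established in the algebraic part.
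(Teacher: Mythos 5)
Your proposal is correct and follows essentially the same route as the paper's own (sketched) proof: integrability forces $\mathrm{Hol}_0(g)\subseteq G(\KK,\KK')$, and a dimension count rules out every irreducible Berger holonomy group, so $(M,g)$ must be reducible or locally symmetric. The only difference is that you make explicit the routine steps the paper leaves implicit (the local de~Rham decomposition, the flat case, and passage to the universal cover before invoking Berger), and your case-by-case dimension comparisons check out.
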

\begin{proof}[Proof (sketch)]
Integrability implies that
the connected holonomy group ${\rm Hol}_0(g),$ considered via some adapted frame as a subgroup 
of $\sGL(\cV_n(\KK)),$ must be contained in $G(\KK,\KK').$ Simple dimension count shows that none
of the irreducible Riemannian holonomy groups listed in Berger's theorem meet this requirement. Thus,
$(M,g)$ cannot be irreducible and not locally symmetric.
\end{proof}

A larger variety of geometries is available once one relaxes the integrability condition to some extent.
While integrability is equivalent to existence of a compatible connection with trivial torsion, we can
consider a milder condition of vanishing all but one of the torsion's irreducible components. The torsion tensor of a connection on a Riemannian manifold $(M,g)$ is a section of the bundle
$\Lam^2 TM \otimes TM,$ which decomposes under the action of the orthogonal group into three (for $\dim M>3$) irreducible components:
$$ \Lam^2 TM \otimes TM \simeq \Lam^3 TM \oplus TM \oplus \mathcal{T}. $$ 

Particularily interesting classes of connections are those with completely skew ($\Lam^3 TM$)
and vectorial ($TM$) torsion, where the associated projections act on $T_{ijk} \in (\Lam^2 TM)_{ij} \otimes (TM)_k$ as:
$$ (\pr_{skew} T)_{ijk} = T_{[ijk]},\quad (\pr_{vec} T)_{ijk} = \frac{1}{\dim M} T_{[i}{}^{mm} g_{j]k}. $$
In what follows, we shall restrict our attention to the skew case (information about the vectorial one can be found e.g. in \cite{agricola-2004}), mostly following the exposition given in \cite{agricola-2006}.
A geometric characterisation of this class is given by the following
\begin{pro}[cf. \cite{agricola-2006}] \label{pro-geo}
A metric connection $\nabla$ in the tangent bundle of a Riemannian manifold $(M,g)$ has completely
skew torsion iff its unparametrised geodesics conincide with those of the Levi-Civita connection
on $M.$
\end{pro}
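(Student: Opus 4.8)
The plan is to work with the difference tensor $A \in \Omega^1(M,\Lam^2 TM)$ defined by $\nabla_X Y - \nabla^{LC}_X Y = A(X)(Y)$, where the fact that $A$ takes values in $\Lam^2 TM \simeq \fso(TM)$ is exactly the metricity of $\nabla$ (as recalled earlier in this section). Writing $a(X,Y,Z) = g(A(X)(Y),Z)$, metricity reads $a(X,Y,Z) = -a(X,Z,Y)$, and since $\nabla^{LC}$ is torsion-free the torsion of $\nabla$ is $T(X,Y) = A(X)(Y) - A(Y)(X)$, so that $t(X,Y,Z) := g(T(X,Y),Z) = a(X,Y,Z) - a(Y,X,Z)$ is automatically skew in its first two arguments. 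I would also record at the outset that along any curve $\gamma$ one has $\nabla_{\dot\gamma}\dot\gamma = \nabla^{LC}_{\dot\gamma}\dot\gamma + A(\dot\gamma)(\dot\gamma)$, so that the geodesic spray of $\nabla$ sees only the diagonal values $A(X)(X)$.

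The core step is the purely algebraic equivalence: $\nabla$ has completely skew torsion if and only if $A(X)(X) = 0$ for all $X$. First I would show that $t$ is totally skew precisely when $a$ is antisymmetric under the transposition of its first and third slots: imposing $t(X,Y,Z) = -t(X,Z,Y)$ and using metricity $a(X,Y,Z) = -a(X,Z,Y)$ collapses the condition to $a(Y,X,Z) = -a(Z,X,Y)$. Since the transpositions $(1\,3)$ and $(2\,3)$ generate $S_3$, antisymmetry of $a$ in the slots $(2\,3)$ (metricity) together with antisymmetry in $(1\,3)$ forces $a$ to transform by the sign character, i.e.\ to be totally antisymmetric; this is in turn equivalent to $a(X,X,Z) = 0$, that is, $A(X)(X) = 0$. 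The converse is the depolarisation of $A(X)(X)=0$, which gives antisymmetry in the first two slots and then, combined with metricity, total antisymmetry again.

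For the implication \emph{completely skew $\Rightarrow$ same unparametrised geodesics}, the equivalence just established gives $A(X)(X) = 0$, whence $\nabla_{\dot\gamma}\dot\gamma = \nabla^{LC}_{\dot\gamma}\dot\gamma$ identically along every curve; thus even the \emph{parametrised} geodesics coincide, and a fortiori so do the unparametrised ones. This direction is essentially immediate once the algebraic lemma is in hand.

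The converse is where I expect the only genuine subtlety. Here I would use that a curve is an unparametrised (pre)geodesic of a connection exactly when its acceleration is proportional to its velocity. Fixing $p\in M$ and $X\in T_pM$, let $\gamma$ be the $\nabla^{LC}$-geodesic with $\dot\gamma(0)=X$, so that $\nabla^{LC}_{\dot\gamma}\dot\gamma=0$; by hypothesis $\gamma$ is also a pregeodesic of $\nabla$, so $\nabla_{\dot\gamma}\dot\gamma = f\,\dot\gamma$ for some function $f$, and evaluating at $t=0$ yields $A(X)(X) = f(0)\,X$, i.e.\ $A(X)(X)\parallel X$. The decisive point, and the place where metricity is indispensable, is then immediate: because $A(X)\in\fso(TM)$ we have $g(A(X)(X),X)=0$, so $A(X)(X)\perp X$; together with $A(X)(X)\parallel X$ this forces $A(X)(X)=0$ for every $X$, and the algebraic equivalence delivers that $T$ is completely skew. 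The hard part is thus the clean extraction of the pointwise relation $A(X)(X)\parallel X$ from the unparametrised-geodesic hypothesis (using that through every point and in every direction there passes a $\nabla^{LC}$-geodesic); the orthogonality observation that finishes the proof is exactly what distinguishes this metric statement from the weaker projective equivalence valid for arbitrary connections.
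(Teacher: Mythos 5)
Your proof is correct and follows essentially the same route as the paper: the paper likewise introduces the difference tensor $A$, proves (as a separate lemma) that for a metric connection the torsion is completely skew iff $A$ is completely skew, and then argues that coincidence of unparametrised geodesics forces $A(X)(X)$ to be both proportional and (by metricity) orthogonal to $X$, hence zero. Your $S_3$ sign-character phrasing of the algebraic step and your slightly more explicit handling of the converse are only presentational differences from the paper's direct computation.
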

Let us first introduce a simple
\begin{lem}\label{lem-skewa}
Let $\nabla$ be a metric connection in the tangent bundle of a Riemannian manifold $(M,g)$
and $A \in \Omega^1(M,\Lam^2 TM)$ its difference tensor with the Levi-Civita connection:
$$ A(X)(Y) = \nabla_X Y - \nabla^{LC}_X Y. $$
Then $\nabla$ has completely skew torsion $T$ iff $A$ is completely skew, i.e.
$$ A(X)(Y) = - A(Y)(X). $$
Moreover, in such case $A = \frac{1}{2} T$ as a section of $\Lam^3 TM.$
\end{lem}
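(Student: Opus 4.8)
The plan is to reduce everything to the single algebraic relation between the torsion $T$ of $\nabla$ and the difference tensor $A$, and then read off both the equivalence and the identity $A=\tfrac12 T$ from index symmetries. First I would compute $T$ directly: since $\nabla_X Y = \nabla^{LC}_X Y + A(X)(Y)$ and the Levi-Civita connection is torsion-free, the bracket and symmetric terms cancel and one obtains
$$ T(X,Y) = \nabla_X Y - \nabla_Y X - [X,Y] = A(X)(Y) - A(Y)(X). $$
I would also record the one symmetry that holds for \emph{every} metric connection: because both $\nabla$ and $\nabla^{LC}$ are metric, $A(X)$ lies in $\Lam^2 TM\simeq\fso(TM)$, so $g(A(X)(Y),Z)=-g(A(X)(Z),Y)$. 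Writing $A_{XYZ}:=g(A(X)(Y),Z)$, this says $A$ is skew in its last two slots, and in the same notation $T_{XYZ}:=g(T(X,Y),Z)=A_{XYZ}-A_{YXZ}$.

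The forward direction is then immediate. If $A$ is completely skew, i.e. $A_{XYZ}=-A_{YXZ}$, then $T_{XYZ}=A_{XYZ}-A_{YXZ}=2A_{XYZ}$, which is visibly totally antisymmetric; hence $\nabla$ has completely skew torsion and $A=\tfrac12 T$ as a $3$-form. For the converse I would derive the standard contorsion formula, valid for \emph{any} metric connection. Writing the three cyclic permutations of $T_{XYZ}=A_{XYZ}-A_{YXZ}$ and using the last-two-slot skewness to replace $A_{YXZ}=-A_{YZX}$, $A_{ZYX}=-A_{ZXY}$, $A_{XZY}=-A_{XYZ}$, one gets the system
$$ A_{XYZ}+A_{YZX}=T_{XYZ},\quad A_{YZX}+A_{ZXY}=T_{YZX},\quad A_{ZXY}+A_{XYZ}=T_{ZXY}. $$
Adding the first and third and subtracting the second yields $2A_{XYZ}=T_{XYZ}+T_{ZXY}-T_{YZX}$, which is exactly the contorsion formula already obtained for the intrinsic torsion earlier in this section.

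Finally I would specialize. Assuming $T$ is completely skew, its even (cyclic) permutations satisfy $T_{ZXY}=T_{YZX}=T_{XYZ}$, so the right-hand side collapses to $T_{XYZ}$, giving $A_{XYZ}=\tfrac12 T_{XYZ}$. Thus $A$ is itself totally antisymmetric, in particular $A(X)(Y)=-A(Y)(X)$, and $A=\tfrac12 T$ as a section of $\Lam^3 TM$, which completes both the equivalence and the last claim. The only care needed is bookkeeping: one must keep the metricity symmetry (skewness in the last two indices) cleanly separated from the skew-torsion hypothesis (which supplies the cyclic symmetry of $T$), since it is precisely their combination that forces total antisymmetry. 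No genuine obstacle arises beyond this, the content being the elementary linear algebra underlying the decomposition $\Lam^2 TM \otimes TM \simeq \Lam^3 TM \oplus TM \oplus \mathcal{T}$.
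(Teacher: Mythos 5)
Your proof is correct, and it takes a mildly different route from the paper's. The paper proves both implications at once through a single identity: starting from $T(X,Y)=A(X)(Y)-A(Y)(X)$ it computes
$g(T(X,Y),Z)+g(T(X,Z),Y)=g(A(Y)(Z),X)+g(A(Z)(Y),X)$,
i.e.\ the symmetric part of $T$ in its last two arguments equals, after relabelling, the symmetric part of $A$ in its first two; since $T$ is automatically skew in its first pair and $A$ in its last pair (metricity), the vanishing of either side is exactly complete antisymmetry of the corresponding tensor, so the equivalence drops out in one stroke. You instead treat the two directions separately: the forward one by direct substitution (giving $T=2A$), and the converse by first deriving the general contorsion formula $2A_{XYZ}=T_{XYZ}+T_{ZXY}-T_{YZX}$, valid for \emph{every} metric connection, and then specializing to a completely skew $T$, whose invariance under cyclic permutations collapses the right-hand side to $T_{XYZ}$. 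What your route buys is precisely that inversion formula, which makes the clause $A=\tfrac12 T$ and the fact that a metric connection is determined by its torsion completely explicit; it is the same formula the paper records separately for the intrinsic torsion in the tangent-bundle subsection, so your argument unifies the lemma with that statement. What the paper's route buys is brevity and symmetry: one identity handles both directions, with no need to solve for $A$ in terms of $T$.
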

\begin{proof}
The torsion of $\nabla$ is
\begin{eqnarray*}
T(X,Y) &=& \nabla_X Y - \nabla_Y X - [X,Y] \\ &=& \nabla^{LC}_X Y - \nabla^{LC}_Y - [X,Y]
+ A(X)(Y) - A(Y)(X) \\ &=& A(X)(Y) - A(Y)(X)
\end{eqnarray*}
and
\begin{eqnarray*}
g(T(X,Y),Z) + g(T(X,Z),Y) &=& g(A(X)(Y),Z) - g(A(Y)(X),Z) \\ &+& g(A(X)(Z),Y) - g(A(Z)(X),Y)
\\ &=& g(A(Y)(Z),X) + g(A(Z)(Y),X).
\end{eqnarray*}
Vanishing of l.h.s. is equivalent to complete antisymmetry of $T,$ while vanishing of r.h.s.
is equivalent to complete antisymmetry of $A.$
\end{proof}
\begin{proof}[Proof of Proposition \ref{pro-geo}]
Let $\gamma : ]-\epsilon,\epsilon[ \to M $ be a curve in $M$ and $X$ a vector field defined on some neighbourhood of $\gamma$ such that $X \circ \gamma$ is tangent to $\gamma.$
Then $\gamma$ is (unparametrised) geodesic of $\nabla$ iff $$(\nabla_X X) \circ \gamma
= (\nabla^{LC}_X X + A(X)(X)) \circ \gamma
= f X \circ \gamma $$
for some function $f.$ The latter is equivalent to $\gamma$ being a Levi-Civita geodesic iff
$A(X)(X)$ is a multiple of $X$. However, since $A(X)$ is skew, $A(X,X)$ must be simply zero. Demanding it for every curve $\gamma$ is equivalent to $A(X)(X) = 0$ for each $X\in TM,$
i.e. $A$ completely skew. This in turn, via the previous Lemma, is equivalent to complete antisymmetry of the torsion of $\nabla.$
\end{proof}

\subsection{$G$-structures with skew torsion}
Let us now consider a general $G$-structure $Q$ on an $m$-dimensional Riemannian manifold $(M,g)$
as described in subsection \ref{ss-gstr}. The basic result is the following:
\begin{pro}[cf. \cite{agricola-2006}] \label{pro-exc}
A $G$-structure $Q$ on $M$ admits a compatible connection with completely skew torsion iff
there exists a function
$H\in\Omega^0(Q)\otimes\Lam^3 \RR^m$ (of the natural type) such that for each $X\in TQ$
$$ \alpha^\ft(X) = \pr_\ft\ H(\theta(X))  $$
where $\alpha^\ft$ is the intrinsic torsion of $Q,$ and $\theta$ the soldering form.
\end{pro}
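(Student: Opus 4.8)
The plan is to reduce the statement to a pointwise algebraic condition on the difference tensor, combining the parametrisation of compatible connections from the tangent-bundle discussion with Lemma \ref{lem-skewa}. First I would recall that any metric connection $\nabla$ in the tangent bundle is determined by its difference tensor $A \in \Omega^1(M,\Lam^2 TM)$ with the Levi-Civita connection, $A(X)(Y) = \nabla_X Y - \nabla^{LC}_X Y$, and that $\nabla$ is compatible with $Q$ precisely when $\pr_\ft A = A^\ft$, the intrinsic torsion, while the $\fg_M$-valued part $\pr_\fg A$ is unconstrained. Indeed, in an adapted frame the Levi-Civita form decomposes as $\Gamma^{LC} = \Gamma^\fg - A^\ft$ with $\Gamma^\fg \in \Omega^1(M,\fg_M)$ and $A^\ft \in \Omega^1(M,\ft_M)$; the connection form of $\nabla$ is then $\Gamma^\nabla = \Gamma^{LC} + A$, and compatibility is the requirement $\Gamma^\nabla \in \Omega^1(M,\fg_M)$, i.e. $A - A^\ft \in \Omega^1(M,\fg_M)$, which (since $A^\ft$ is $\ft_M$-valued) is exactly $\pr_\ft A = A^\ft$.

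Next I would invoke Lemma \ref{lem-skewa}: $\nabla$ has completely skew torsion iff its difference tensor $A$ is completely antisymmetric, i.e. iff $A$ is a section of $\Lam^3 TM \subset \Lam^2 TM \otimes TM$ (with $A = \tfrac12 T$ as a three-form). Putting the two facts together, a compatible connection with completely skew torsion exists if and only if there is a three-form $A \in \Omega^0(M,\Lam^3 TM)$ whose $\ft_M$-projection equals the prescribed intrinsic torsion $A^\ft$. Both implications are then immediate and need no further work: given such a connection, its difference tensor $A$ is the desired three-form; conversely, given a three-form $A$ with $\pr_\ft A = A^\ft$, the connection $\nabla^{LC} + A$ is compatible (correct $\ft_M$-part) and has skew torsion (skew $A$). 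The key point to flag is that skewness imposes no constraint beyond $\pr_\ft A = A^\ft$: the complementary component $\pr_\fg A$ is genuinely free and never obstructs compatibility.

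Finally I would transcribe this back to the frame bundle to match the stated form. A section $A$ of $\Lam^3 TM$ corresponds to an equivariant function $H \in \Omega^0(Q)\otimes\Lam^3 \RR^m$ of the natural type (this is just the tensor–frame dictionary, $A^\ft(X) = e_x(e^*\alpha^\ft(X))$), under which $A^\ft$ corresponds to $\alpha^\ft$ and the contraction-and-projection $X \mapsto \pr_\ft A(X)$ becomes $X \mapsto \pr_\ft H(\theta(X))$, where $\theta$ is the soldering form and $\RR^m \simeq (\RR^m)^*$ identifies $\Lam^3 \RR^m$ contracted with $\theta(X)$ as an element of $\Lam^2\RR^m = \fso(m)$. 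Solvability of $\alpha^\ft(X) = \pr_\ft H(\theta(X))$ for some such $H$ is therefore equivalent to existence of the sought connection, which is the assertion.

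The argument is essentially bookkeeping once the parametrisation of compatible connections and Lemma \ref{lem-skewa} are in hand, so there is no serious obstacle; the only places demanding care are the consistent tracking of conventions (the signs in $\Gamma^{LC} = \Gamma^\fg - A^\ft$, the factor $A = \tfrac12 T$, and the metric identification $TM \simeq T^*M$) and the verification that the free $\fg$-valued part $\pr_\fg H(\theta(\cdot))$ really does leave the compatibility condition untouched.
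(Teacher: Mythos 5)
Your proof is correct, but it takes a recognizably different route: you work downstairs on the tangent bundle where the paper stays on the frame bundle $Q$. Concretely, you characterise compatibility of $\nabla = \nabla^{LC} + A$ by $\pr_\ft A = A^\ft$ (via the adapted-frame decomposition $\Gamma^{LC} = \Gamma^\fg - A^\ft$) and then invoke Lemma \ref{lem-skewa} --- the tangent-bundle skewness criterion, which the paper establishes en route to Proposition \ref{pro-geo} --- so that existence of the desired connection is reduced to the existence of a section $A$ of $\Lam^3 TM$ with $\pr_\ft A = A^\ft$; a final tensor--frame dictionary step converts this into the stated condition on $H$. The paper instead proves the frame-bundle analogue, Lemma \ref{lem-skeww}, decomposes $\omega^{LC}|_Q = \omega^\fg - \alpha^\ft$, and in the forward direction explicitly manufactures the connection $\omega^s = \omega^\fg + \beta$ with $\beta = \pr_\fg H(\theta(\cdot))$, while in the backward direction it reads off $H$ from the skew difference $\omega - \omega^{LC}$. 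The two arguments are structurally parallel --- your remark that the $\fg$-component of $A$ is genuinely free is exactly the paper's $\beta$ --- but yours buys economy: Lemma \ref{lem-skeww} becomes unnecessary, and the content is exposed as pointwise linear algebra (prescribe the $\ft$-part of a skew tensor). The price is the translation step at the end, where the only point demanding care is that both sides of $\alpha^\ft(X) = \pr_\ft H(\theta(X))$ vanish on vertical vectors (since $\alpha^\ft$ is horizontal and $\theta$ kills vertical vectors), so the equation on $TQ$ is indeed equivalent to the tensorial equation on $TM$; the paper's version needs no such dictionary because it builds $\omega^s$ directly out of the objects appearing in the statement.
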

We first give a simple
\begin{lem} \label{lem-skeww}
A connection $\omega$ compatible with a $G$-structure $Q$ has skew torsion iff
$$ [\omega(X)-\omega^{LC}(X)](\theta(Y)) = - [\omega(Y)-\omega^{LC}(Y)](\theta(X)). $$
\end{lem}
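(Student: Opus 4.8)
The plan is to reduce the statement to a direct computation with the soldering form, exploiting torsion-freeness of the Levi-Civita connection exactly as was done when the intrinsic torsion was introduced. Write $B(X) = \omega(X) - \omega^{LC}(X)$, the difference of the two connection forms. Both $\omega$ (being $Q$-compatible, hence metric) and $\omega^{LC}$ are $\fso(m)$-valued, so $B(X) \in \fso(m)$ for every $X$; equivalently $\langle B(X)(u), v\rangle = -\langle B(X)(v), u\rangle$ for all $u,v\in\RR^m$. Moreover $B$ vanishes on vertical vectors, since there both connection forms reproduce the same fundamental vector field, so all identities below depend only on the horizontal, soldering parts. First I would compute the torsion $\Theta = d\theta + \omega\wedge\theta$ of $\omega$: using that the Levi-Civita connection is torsion-free, $d\theta = -\omega^{LC}\wedge\theta$, we get
$$ \Theta(X,Y) = B(X)(\theta(Y)) - B(Y)(\theta(X)), $$
which is manifestly antisymmetric in $X,Y$ and is the frame-bundle analogue of the difference tensor appearing in Lemma \ref{lem-skewa}.

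Next I would test complete skew-symmetry. Set $T(X,Y,Z) = \langle \Theta(X,Y), \theta(Z)\rangle$; this tensor is already antisymmetric in its first two arguments, so it is totally antisymmetric (i.e. a section of $\Lam^3 TM$) precisely when $T(X,Y,Z) + T(X,Z,Y) = 0$ for all $X,Y,Z \in TQ$. Substituting the formula for $\Theta$ gives four terms. The two terms in which $B(X)$ acts cancel by antisymmetry of $B(X)$, namely $\langle B(X)(\theta(Y)), \theta(Z)\rangle + \langle B(X)(\theta(Z)), \theta(Y)\rangle = 0$. Applying antisymmetry of $B(X)$ once more to the remaining two terms to move the operators onto $\theta(X)$, I expect to arrive at
$$ T(X,Y,Z) + T(X,Z,Y) = \langle B(Y)(\theta(Z)) + B(Z)(\theta(Y)), \theta(X)\rangle. $$

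Finally, I would read off both implications from this single identity. Since $\theta$ is surjective onto $\RR^m$ (its horizontal part realizes arbitrary values), the left-hand side vanishes for all $X,Y,Z$ if and only if $B(Y)(\theta(Z)) + B(Z)(\theta(Y)) = 0$ for all $Y,Z$, which is exactly the asserted condition
$$ [\omega(Y)-\omega^{LC}(Y)](\theta(Z)) = -[\omega(Z)-\omega^{LC}(Z)](\theta(Y)). $$
The two directions of the lemma then follow at once. The computation is essentially routine; the only points requiring care are the bookkeeping of the antisymmetrizations and the repeated use of the metric property $B(X)\in\fso(m)$, together with the remark that restricting to horizontal arguments loses no information because $B$ and $\theta$ annihilate vertical vectors. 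This lemma is the frame-bundle counterpart of Lemma \ref{lem-skewa}, and I expect it will feed directly into the proof of Proposition \ref{pro-exc}.
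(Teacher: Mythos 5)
Your proof is correct and takes essentially the same route as the paper's: both set $\alpha=\omega-\omega^{LC}$, use torsion-freeness of the Levi-Civita connection to write $\Theta=\alpha\wedge\theta$, and then show that the symmetrization $\langle\Theta(X,Y),\theta(Z)\rangle+\langle\Theta(X,Z),\theta(Y)\rangle$ collapses, via skewness of $\alpha(X)\in\fso(m)$, to $\langle\alpha(Y)(\theta(Z))+\alpha(Z)(\theta(Y)),\theta(X)\rangle$, whose vanishing for all $X$ is the stated condition. The only cosmetic slip is that moving the operators onto $\theta(X)$ in the last two terms uses skewness of $B(Y)$ and $B(Z)$ rather than of $B(X)$; otherwise your argument matches the paper's, and is slightly more explicit about horizontality of $B$ and surjectivity of $\theta$.
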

\begin{proof}
The torsion of $\omega$ is
\begin{eqnarray*} \Theta &=& d\theta + \omega\wedge\theta 
\\&=& (d\theta + \omega^{LC}\wedge\theta) + (\omega-\omega^{LC})\wedge\theta
\\&=& \alpha\wedge \theta,
\end{eqnarray*}
where we introduced $\alpha=\omega-\omega^{LC},$ so that $\alpha|_Q \in \Omega^1(Q)\otimes\fg$
is horizontal of type $\Ad.$
We now have
\begin{eqnarray*}
\langle \Theta(X,Y),\theta(Z) \rangle + \langle \Theta(X,Z),\theta(Y) \rangle
&=& \alpha(X)(\theta(Y),\theta(Z) - \alpha(Y)(\theta(X),\theta(Z))
\\&+& \alpha(X)(\theta(Z),\theta(Y) - \alpha(Z)(\theta(X),\theta(Y))
\\
&=& \alpha(Y)(\theta(Z),\theta(X)) + \alpha(Z)(\theta(Y),\theta(X)),
\end{eqnarray*}
where vanishing of l.h.s is equivalent to complete antisymmetry of the torsion.
\end{proof}
\begin{proof}[Proof of Proposition \ref{pro-exc}]
Assume that indeed $\alpha^\ft$ satisfies this condition for some $H.$ Then
$$ \omega^{LC}|_Q = \omega^\fg - \alpha^\ft = \omega^\fg|_Q - H(\theta(\cdot)) + \beta $$
where $\beta\in\Omega_{hor}^1(Q)\otimes\fg.$ One easily checks that $ \omega^s = \omega^\fg+\beta $
defines a connection on $Q.$  Now, we have $ \omega^s -\omega^{LC}|_Q = H(\theta(\cdot)), $
and by Lemma \ref{lem-skeww} the torsion of $\omega^s$ is skew.

Conversely, the same lemma implies that the difference of a connection $\omega^s$ with skew torsion and
the Levi-Civita connection is a completely skew tensor, and the function $H$ is simply given by
$$ H_q(\theta(X),\theta(Y),\theta(Z)) = \langle (\omega(X)-\omega^{LC}(X))\theta(Y) ,\theta(Z) \rangle $$for $X,Y,Z\in T_qQ.$ We have
$$ \alpha^\ft(X) = H(\theta(X)) + \omega^g(X) - \omega^s(X), $$
and projecting on $\ft$ proves the Proposition.
\end{proof}

Having stated a condition for existence of a compatible connection with skew torsion, it is natural
to ask how many such connections can be found. A particularily interesting case occurs when the connection is unique. There is a well known, purely algebraic condition:
\begin{pro}[cf. \cite{agricola-2006}]
\label{pro-inter}
A compatible connection with skew torsion on a $G$-structure $Q$ is unique, provided it exists, iff 
\begin{equation} (\RR^m \otimes \fg) \cap \Lam^3 \RR^m = 0. \label{eq-inter} \end{equation}
\end{pro}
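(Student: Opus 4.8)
The plan is to show that the difference of any two compatible connections with completely skew torsion is, at each point, forced to lie in $(\RR^m\otimes\fg)\cap\Lam^3\RR^m$, so that uniqueness becomes exactly the vanishing of this intersection. First I would recall the standard fact that any two connections compatible with $Q$ differ by a horizontal one-form of type $\Ad$ with values in $\fg$ (both restrict to $\fg$-valued forms on $Q$, and the difference of connection forms is horizontal). So let $\omega_1,\omega_2$ be two compatible connections with skew torsion and put $\beta:=\omega_1-\omega_2$, a horizontal $\fg$-valued form on $Q$. Applying Lemma \ref{lem-skeww} to each of $\omega_1,\omega_2$ (with the \emph{same} Levi-Civita reference $\omega^{LC}$) and subtracting the two identities yields the pointwise relation
$$\beta(X)(\theta(Y)) = -\,\beta(Y)(\theta(X)), \qquad X,Y\in T_qQ.$$

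Next I would translate this into linear algebra at a fixed $q\in Q$. Since $\beta$ is horizontal it is encoded by a linear map $b\colon\RR^m\to\fg$, that is $b\in\RR^m\otimes\fg\subset\RR^m\otimes\Lam^2\RR^m$, via $\beta(X)=b(\theta(X))$; equivalently by the trilinear form $B(u,v,w)=\langle b(u)(v),w\rangle$ on $\RR^m$. The membership $b(u)\in\fg\subset\fso(m)$ says $B$ is antisymmetric in its last two slots, while the displayed relation says $B$ is antisymmetric in its first two slots. Since the two adjacent transpositions $(12)$ and $(23)$ generate $S_3$ and each acts here by the sign, $B$ must be \emph{totally} antisymmetric, i.e. $b\in\Lam^3\RR^m$. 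Hence $b\in(\RR^m\otimes\fg)\cap\Lam^3\RR^m$, the intersection being taken inside $\otimes^3\RR^m$ after identifying $\RR^m\simeq(\RR^m)^*$ by the metric.

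This already settles the ($\Leftarrow$) direction: if the intersection is zero then $b=0$ at every point, so $\beta=0$ and the skew-torsion connection is unique. For the converse I would run the argument in reverse. Assume one such connection $\omega$ exists and pick a nonzero $b_0$ in the intersection; the horizontal $\fg$-valued form $\beta_0$ determined by $b_0$ in an adapted frame satisfies $\beta_0(X)(\theta(Y))=-\beta_0(Y)(\theta(X))$ because $b_0\in\Lam^3\RR^m$, and takes values in $\fg$ because $b_0\in\RR^m\otimes\fg$. Thus $\omega+\beta_0$ is again compatible with $Q$ and, by Lemma \ref{lem-skeww}, still has completely skew torsion, yet is distinct from $\omega$; uniqueness therefore fails. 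This gives the equivalence.

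The only genuinely delicate point is the representation-theoretic step that antisymmetry in each of the two adjacent index pairs forces complete antisymmetry; once that is granted, everything else is bookkeeping with the splitting $\fso(m)=\fg\oplus\ft$ and Lemma \ref{lem-skeww}. A minor care-point is the converse construction of a second connection, but since the obstruction is purely pointwise and $\fg$ is fixed, adding the form $\beta_0$ that is constant in an adapted frame suffices.
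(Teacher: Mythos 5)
Your proof is correct and takes essentially the same route as the paper's: both directions rest on Lemma \ref{lem-skeww}, the forward one showing that the horizontal, $\fg$-valued difference of two skew-torsion connections is completely skew and hence lies pointwise in $(\RR^m \otimes \fg) \cap \Lam^3 \RR^m$, and the converse perturbing the given connection by an equivariant ($\Ad$-type) form built from a nonzero element of that intersection. The only cosmetic difference is that you spell out the $S_3$-generation argument for total antisymmetry, which the paper leaves implicit in its appeal to Lemma \ref{lem-skeww}.
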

\begin{proof}
Assume $\omega^s$ and $\varpi^s$ are two such connections. Then, by Lemma \ref{lem-skeww},
the function $C \in \Omega^0(Q)\otimes(\RR^m \otimes \fg) $ defined by
\begin{eqnarray*} C_q(\theta(X),\theta(Y),\theta(Z)) 
&=& \langle(\omega^s(X) - \varpi^s(X))\theta(Y),\theta(Z)\rangle 
\\ &=& 
\langle(\omega^s(X) - \omega^{LC}(X))\theta(Y),\theta(Z)\rangle \\&-& 
\langle(\varpi^s(X) - \omega^{LC}(X))\theta(Y),\theta(Z)\rangle)
\end{eqnarray*}
for $X,Y,Z\in T_qQ$
is completely skew. Thus, if the intersection of $\RR^m\otimes\fg$ and $\Lam^3 \RR^m$ is trivial,
one has $\omega^s = \varpi^s.$

Conversely, let $\omega^s$ be the unique compatible connection with skew torsion and $C\in\Omega^0(Q)\otimes [(\RR^m\otimes\fg) \cap \Lam^3 \RR^m]$ a function of the natural type. Then
$$ \varpi^s(X) = \omega^s(X) - C(\theta(X)) $$
for $X\in TQ$ defines a connection on $Q$ and by Lemma \ref{lem-skeww} its torsion is skew.
Now, if the intersection of $\RR^m\otimes\fg$ and $\Lam^3\RR^m$ was nontrivial,
$\varpi^s(X) \neq \omega^s(X)$ and there would exist different compatible connections with skew torsion.
As it contradicts the uniqueness of $\omega^s,$ the intersection must be trivial.
\end{proof}

If such a connection is indeed unique, it is called \emph{the characteristic connection} of the
$G$-structure, and its torsion tensor -- \emph{the characteristric torsion.} It is not very difficult
to check the intersections (\ref{eq-inter}) for classical irreducible holonomy groups, and the result
is that a skew-torsion connection is unique in all those cases \cite{agricola-2006}. Moreover, there
is a recent important result of Nagy, which solves the problem of computing the l.h.s. of (\ref{eq-inter}) completely:
\begin{pro}[Nagy \cite{nagy-2007}]\label{pro-nagy}
Let $\fg\subset\fso(m)$ be proper and act irreducibly on $\RR^m$. Then the intersection (\ref{eq-inter})
is trivial, unless $\fg$ compact simple and $\RR^m\simeq \fg$ as a $\fg$-module. In the latter case, it is one dimensional and spanned by the structure constants of $\fg$.
\end{pro}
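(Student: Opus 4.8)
The plan is to recast the intersection (\ref{eq-inter}) as a \emph{skew prolongation}. Identifying $\fso(m)\simeq\Lam^2\RR^m$ via the metric, an element of $\RR^m\otimes\fg$ is a linear map $\phi:\RR^m\to\fg$, $v\mapsto\iota_v T$ for a tensor $T\in\RR^m\otimes\Lam^2\RR^m$, and membership in $\Lam^3\RR^m$ is total skewness. Thus the intersection is
$$ \mathcal P(\fg)=\{\,T\in\Lam^3\RR^m\ :\ \iota_v T\in\fg\ \text{for all}\ v\in\RR^m\,\}, $$
the kernel of the $\fg$-equivariant map $\Lam^3\RR^m\to\RR^m\otimes\ft$ obtained by composing $T\mapsto\sum_i e_i\otimes\iota_{e_i}T$ with $\mathrm{id}\otimes\pr_\ft$, where $\ft=\fg^\bot$ in $\fso(m)$ and $\{e_i\}$ is orthonormal. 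Being the kernel of an equivariant map, $\mathcal P(\fg)$ is a $\fg$-submodule of $\Lam^3\RR^m$, so it suffices to understand its isotypic content.

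First I would dispose of the adjoint case as the source of the exceptional answer. If $\fg$ is compact simple and $\RR^m\simeq\fg$ with $\langle\cdot,\cdot\rangle$ a multiple of the Killing form and $\fg\hookrightarrow\fso(\fg)$ via $\ad$, then the Cartan form $\Omega(X,Y,Z)=\langle[X,Y],Z\rangle$ is totally skew and satisfies $\iota_X\Omega=\langle\ad_X\,\cdot,\cdot\rangle$, which is precisely $\ad_X\in\fg$; hence $\Omega\in\mathcal P(\fg)$ is the claimed nonzero element. One-dimensionality then follows once I show that $\mathcal P(\fg)$ consists of $\fg$-\emph{invariant} forms, because the space of $\ad$-invariant $3$-forms on a simple compact Lie algebra is one-dimensional (equivalently $H^3(\fg)\simeq\RR$), spanned by $\Omega$.

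The crux --- and the step I expect to be hardest --- is to show that a nonzero $\mathcal P(\fg)$ forces exactly the adjoint situation, and in particular to prove the invariance just used. My approach is a Casimir eigenvalue comparison. Let $\{A_a\}$ be an orthonormal basis of $\fg$ for the trace form on $\fso(m)$ and let $c_V>0$ be the scalar (by Schur) eigenvalue of the Casimir $-\sum_a A_a^2$ on the irreducible module $\RR^m$. Differentiating the defining condition $\iota_vT\in\fg$ and using that the $\fg$-action on $\Lam^2\RR^m$ is the bracket in $\fso(m)$, I would derive an identity expressing the Casimir of $\fg$ on $T\in\mathcal P(\fg)$ in terms of $c_V$ and of the Casimir eigenvalues on the summands of $\fso(m)=\fg\oplus\ft$; this confines $\mathcal P(\fg)$ to a single Casimir eigenspace and, for compact $\fg$, forces it into the trivial isotypic component (yielding the invariance) while simultaneously equating $c_V$ with the adjoint eigenvalue. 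The remaining work is to show this numerical coincidence is met only by $\RR^m\simeq\fg$ with $\fg$ simple: the genuinely delicate point is to run the argument uniformly over all proper irreducible $\fg\subset\fso(m)$ without invoking their classification, and to rule out both the semisimple-but-not-simple case and the case of nontrivial center (where $\fg\subseteq\fu(\RR^m)$) by separate Schur and weight arguments, isolating these two families and showing the eigenvalue balance cannot hold there.
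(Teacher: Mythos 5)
The paper does not prove this statement at all: it is quoted as an external result of Nagy \cite{nagy-2007}, so your attempt has to stand entirely on its own. What you actually establish is only the easy part of the theorem: the reformulation of the intersection as $\mathcal{P}(\fg)=\{T\in\Lam^3\RR^m : \iota_vT\in\fg\ \forall v\}$, the fact that it is a $\fg$-submodule, the verification that the Cartan form $\Omega(X,Y,Z)=\langle[X,Y],Z\rangle$ lies in $\mathcal{P}(\fg)$ in the adjoint case, and the conditional uniqueness statement via $\dim(\Lam^3\fg^*)^\fg=1$ for compact simple $\fg$. All of this is correct but routine.

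The genuine gap is that the crux --- your ``Casimir eigenvalue comparison'' --- is never carried out, and it is precisely the content of Nagy's theorem. Three separate claims are asserted without proof: (i) an identity confining $\mathcal{P}(\fg)$ to a single Casimir eigenspace; (ii) that this forces every element of $\mathcal{P}(\fg)$ to be $\fg$-invariant; (iii) that the resulting eigenvalue coincidence occurs only for the adjoint representation of a compact simple algebra. None of these is a routine verification. Note that the Casimir of $\fg$ does not act as a scalar on $\Lam^3\RR^m$, and the defining constraint enters through the operator $\Phi_\ft(T)=\sum_i e_i\wedge\pr_\ft(\iota_{e_i}T)$, whose kernel is $\mathcal{P}(\fg)$ (by positivity of $\langle\Phi_\ft T,T\rangle=\sum_i\|\pr_\ft\iota_{e_i}T\|^2$); to get eigenvalue information you must express $\Phi_\ft$ through Casimir-type operators, and your sketch never shows where properness and irreducibility of $\fg$ enter --- they must, since for $\fg=\fso(m)$ the prolongation is all of $\Lam^3\RR^m$ and is certainly not made of invariant forms. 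Once invariance is granted, the rest is indeed standard (an invariant $T$ with $\iota_xT\in\fg$ satisfies the Jacobi identity, making $\RR^m$ a metric Lie algebra; irreducibility and positive definiteness of $g$ force it to be compact simple, and since all derivations are inner, $\fg=\ad(\RR^m)$), but you do not spell this out either. For comparison, the two known proofs --- Nagy's representation-theoretic one and the geometric one of Olmos and Reggiani via Simons-type holonomy systems --- each require substantial machinery exactly at the point you defer, so what you label ``the remaining work'' is not a residual technicality but the theorem itself.
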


Considering the case with nontrivial intersection (i.e. no characteristic connection), we have:
\begin{cor}\label{cor-nagy}
Let $G$ be simple, with $\fg$ proper in $\fso(m)$ and $\RR^m\simeq \fg$ as a $G$-module. Then a $G$-structure on $M$ admits either no compatible connections with skew torsion, or precisely a one-parameter family thereof. In the latter case, the torsion tensors of connections in this family differ by a section of a one-dimensional invariant subbundle $\mathcal{T}_0\subset\Lam^3 TM$. Choosing an adapted frame
$e$, an intertwiner $\psi:\fg\to\RR^m$, and defining $f = e \circ \psi$, we have a fibre of $\mathcal{T}_0$:
$$ \mathcal{T}_0(x) = \Span\{C\},\quad C(X,Y,Z) = \langle e_x^{-1} X, \psi[f_x^{-1}Y,f_x^{-1}Z]\rangle $$
for $X,Y,Z\in T_x M$.
\end{cor}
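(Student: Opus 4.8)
The plan is to feed Proposition \ref{pro-nagy} directly into Proposition \ref{pro-exc} and Proposition \ref{pro-inter}, treating the present Corollary as an application rather than a fresh computation. The hypotheses ($G$ simple, $\fg$ proper in $\fso(m)$, and $\RR^m\simeq\fg$ as a $G$-module) are exactly those that Proposition \ref{pro-nagy} singles out as the exceptional case, so I may assume at once that the intersection $(\RR^m\otimes\fg)\cap\Lam^3\RR^m$ is one-dimensional, spanned by the structure constants of $\fg$. The contrapositive of Proposition \ref{pro-inter} then says that a compatible connection with skew torsion, \emph{if it exists}, is never unique; and Proposition \ref{pro-exc} governs whether it exists at all, via the algebraic condition $\alpha^\ft(X)=\pr_\ft H(\theta(X))$ for some $H$. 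This gives the dichotomy: either no such $H$ exists (no compatible connection with skew torsion) or one does, and in that case the set of all compatible connections with skew torsion is an affine space over the one-dimensional intersection.

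First I would argue the affine structure. Given two compatible connections $\omega^s,\varpi^s$ with skew torsion, Lemma \ref{lem-skeww} shows their difference $C$, viewed as the function $C_q(\theta(X),\theta(Y),\theta(Z))=\langle(\omega^s(X)-\varpi^s(X))\theta(Y),\theta(Z)\rangle$, is both completely skew (being a difference of two skew-torsion differences from Levi-Civita) and $\fg$-valued in its last two slots (both connections being compatible). Hence $C\in\Omega^0(Q)\otimes[(\RR^m\otimes\fg)\cap\Lam^3\RR^m]$, which by Proposition \ref{pro-nagy} is one-dimensional. Conversely, adding any such $C$ to $\omega^s$ yields another compatible connection with skew torsion, again by Lemma \ref{lem-skeww}. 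Thus the family is precisely one-parameter, and the torsion tensors differ by a section of the associated one-dimensional subbundle $\mathcal{T}_0\subset\Lam^3 TM$, whose fibre is the image under the frame of the line spanned by the structure constants.

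It remains to produce the explicit formula for $\mathcal{T}_0(x)$. Since $\RR^m\simeq\fg$ as $G$-modules, fix an intertwiner $\psi:\fg\to\RR^m$, choose an adapted frame $e_x:\RR^m\to T_x M$ (identifying $\RR^m$ with the isotropy representation), and set $f_x=e_x\circ\psi:\fg\to T_x M$. The structure-constants tensor on $\RR^m$ is, up to scale, $(X,Y,Z)\mapsto\langle X,\psi[\psi^{-1}Y,\psi^{-1}Z]\rangle$, where the bracket is that of $\fg$ and $\langle\cdot,\cdot\rangle$ the fixed inner product; its complete skewness is exactly the statement that this element lies in $\Lam^3\RR^m$, and its $\fg$-valuedness in the last two slots is automatic. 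Transporting by $e_x$ gives the claimed
$$ C(X,Y,Z)=\langle e_x^{-1}X,\ \psi[f_x^{-1}Y,f_x^{-1}Z]\rangle $$
for $X,Y,Z\in T_x M$, which is well-defined independently of the choice of $e_x$ within the fibre precisely because the intersection is $G$-invariant and one-dimensional. The main obstacle I anticipate is the bookkeeping in verifying that this expression is genuinely skew in all three arguments (not merely in the last two) and that it is the \emph{same} line as the one Nagy identifies; this reduces to the ad-invariance of the Killing-type form $\langle X,[Y,Z]\rangle$ on a compact simple $\fg$, which is standard, so the work is more a matter of careful identification than of genuine difficulty.
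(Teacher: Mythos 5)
Your proposal is correct and follows essentially the same route as the paper: both arguments reduce the dichotomy to the fact that differences of compatible skew-torsion connections lie in $(\RR^m\otimes\fg)\cap\Lam^3\RR^m$ (the mechanism of Lemma \ref{lem-skeww} / Proposition \ref{pro-inter}), then invoke Proposition \ref{pro-nagy} to identify that intersection with the line of structure constants, and finally transport it to $\Lam^3 T_xM$ via an adapted frame and the intertwiner $\psi$. The paper merely packages the affine structure slightly differently, identifying each connection with its torsion function $H^\omega\in\Omega^0(Q)\otimes\Lam^3\RR^m$, but the content is the same.
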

\begin{proof}
Let $Q$ be the $G$-structure. Every connection $\omega\in\Omega^1(Q)\otimes\fg$ on $Q$ with
skew torsion $\Theta\in\Omega^2(Q)\otimes\RR^m$ is uniquely defined by the latter, since (cf. Lemma \ref{lem-skewa})
$$ [\omega(X) - \omega^{LC}(X) ](\theta(Y),\theta(Z)) = \frac{1}{2}\langle \Theta(X,Y),\theta(Z) \rangle.$$
We can introduce a function $H^\omega\in\Omega^0(Q)\otimes\Lam^3\RR^m$ such that
$$ \langle \Theta(X,Y),\theta(Z)\rangle = H^\omega(q)(\theta(X),\theta(Y),\theta(Z))$$
for each $X,Y,Z\in T_q Q.$

On the other hand, two such connections $\omega,\varpi$ differ by a horizontal one-form of type $\Ad$:
$$ \omega-\varpi \in \Omega^1(Q)\otimes\fg, $$
so that 
$$ H^\omega(q)(\theta(X)) - H^\varpi(q)(\theta(X)) \in \fg $$
for each $X\in T_q Q,$ and thus
$$ H^\omega(q) - H^\varpi(q) \in \Lam^3\RR^m \cap (\RR^m \otimes \fg).$$

Assume now that there \emph{exists} a skew-torsion connection $\omega_0$ on $Q$. It thus follows,
that if we identify each skew-torsion connection $\omega$ on $Q$ with the corresponding function
$H^\omega$, the space of all such connections at a point $q\in Q$ is the affine space
$$ H^{\omega_0}(q) + [\Lam^3\RR^m \cap (\RR^m\otimes \fg)]. $$
We can now apply Proposition \ref{pro-nagy} to find that the intersection is spanned by
the map
$$ c: \Lam^3 \RR^m \to \RR,\quad c(X,Y,Z) = \langle X,\psi[\psi^{-1}Y,\psi^{-1}Z]\rangle,$$
for $X,Y,Z\in\RR^m,$ where $\psi:\RR^m\to\fg$ is an intertwiner. Pulling everything back to $M$ via some
adapted frame, i.e. a (local) section $e: M \to Q$, we arrive at the Corollary.
\end{proof}

We shall refer to such a family as \emph{the} one-parameter family of skew-torsion connections.
Their torsion can be considered to be `characteristic modulo $\mathcal{T}_0$'.

\subsection{Invariant tensors and nearly-integrability}
We shall finally focus on structures defined on a Riemannian manifold $(M,g)$ by a \emph{symmetric} tensor $\YM \in \Omega^0(M,\Sym^p TM),$ as in Subsection \ref{ss-its}. The first important fact is a necessary condition for the existence of a compatible connection with skew torsion, first discussed by Nurowski \cite{nurowski-2006}:
\begin{pro}\label{pro-nearly}
Let a local $G-$structure defined by a symmetric tensor
$\YM$ admit a compatible connection with skew torsion. Then the
symmetrized Levi-Civita derivative of $\YM$ vanishes:
\begin{equation} (\nabla^{LC}_X\YM)(X,\dots,X) = 0 \quad\textrm{for each}\ X\in TM. \label{eq-nearly}\end{equation}
\end{pro}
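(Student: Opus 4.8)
The plan is to work entirely at the level of the tangent bundle, using the characterization of compatible connections with skew torsion already developed. Suppose a local $G$-structure defined by $\YM$ admits such a compatible connection $\nabla$. By the Lemma in Subsection \ref{ss-its}, compatibility means $\nabla\YM = 0$, and by Lemma \ref{lem-skewa} the difference tensor $A(X) = \nabla_X - \nabla^{LC}_X$ is completely skew, i.e. $A(X)(Y) = -A(Y)(X)$ and $A$, viewed as a section of $\otimes^2 T^*M \otimes TM$, lies in $\Lam^3 TM$ (after using $g$ to identify $TM \simeq T^*M$).

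The key computation is to differentiate $\YM$ using the relation $\nabla = \nabla^{LC} + A$. Since $\YM$ is parallel for $\nabla$, we have, for each $X\in TM$,
\begin{equation}
0 = (\nabla_X \YM)(X,\dots,X) = (\nabla^{LC}_X \YM)(X,\dots,X) + (A(X)\cdot \YM)(X,\dots,X),
\end{equation}
where $A(X)$ acts on $\YM$ as a derivation via the $\fso(TM)$-action on the symmetric power (recall the conventions from the introduction, where $A\in\fg$ acts on $\otimes^p$ by the Leibniz rule). The plan is then to show that the second term vanishes identically when all $p$ slots are filled with the same vector $X$. Writing out the derivation action,
\begin{equation}
(A(X)\cdot\YM)(X,\dots,X) = -p\,\YM(A(X)(X),X,\dots,X),
\end{equation}
using symmetry of $\YM$ to collect the $p$ equal contributions. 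The crucial observation is that $A(X)(X) = 0$, precisely because $A(X)$ is a skew-symmetric endomorphism of $T_xM$ (an element of $\fg\subset\fso(T_xM)$), so $g(A(X)(X),X) = 0$ forces $A(X)(X)=0$ as it is evaluated against $X$ in the only non-symmetric slot — more directly, $A(X)(X) = 0$ since $A$ is totally skew in all three arguments. Hence the correction term drops out and we are left with $(\nabla^{LC}_X\YM)(X,\dots,X) = 0$, which is exactly \eqref{eq-nearly}.

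The main subtlety, rather than any real obstacle, is bookkeeping in the derivation action: one must be careful that $A(X)$ acts on $\YM$ through all $p$ tensor slots, and verify that after symmetrizing the arguments every resulting term contains a factor of the form $\YM(\dots, A(X)(X), \dots)$ or vanishes by the skewness $A(X)(X)=0$. Because $A(X)$ annihilates $X$, the simplest route is to note that the diagonal evaluation $(A(X)\cdot\YM)(X,\dots,X)$ only ever produces terms where $A(X)$ has acted on one copy of $X$, each equal to $A(X)(X)=0$. I expect this step to be entirely routine once the skewness of $A$ is invoked; the content of the proposition is really just the elementary fact that a totally skew difference tensor cannot contribute to the fully symmetrized covariant derivative. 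No appeal to the specific structure of $G(\KK,\KK')$ is needed, so the argument applies verbatim to any symmetric defining tensor $\YM$.
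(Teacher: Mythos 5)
Your proof is correct and is essentially the paper's own argument: both exploit $\nabla\YM=0$ together with Lemma \ref{lem-skewa} to express $\nabla^{LC}_X\YM$ as a correction term built from the completely skew difference tensor (equivalently $\tfrac12 T$), which dies under full symmetrization because $A(X)(X)=0$; your diagonal evaluation in $X$ is just the index-free form of the paper's symmetrized-index computation. One small caution: the parenthetical claim that $g(A(X)(X),X)=0$ alone forces $A(X)(X)=0$ is not a valid inference — the correct justification is the one you give immediately after, namely total skewness of $A$ (so $g(A(X)(X),Z)=0$ for \emph{every} $Z$).
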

\begin{proof} Let $\nabla$ be such a connection. Clearly, $\nabla\YM = 0.$
Then, recalling Lemma \ref{lem-skewa},
$$ \nabla^{LC}_X\YM = \nabla_X \YM - \frac{1}{2} T(X)(\YM) 
= -\frac{1}{2} T(X)(\YM) $$  where $T$ is
the torsion of $\nabla,$ a section of $\Lam^3 TM\subset TM\otimes\Lam^2 TM.$ Indexing $TM\simeq T^*M$ with $i,j,k,\dots,$
we have
$$ \nabla^{LC}_{(i}\YM_{j_1\dots j_p)} = \frac{p}{2}\ T_{(i}{}^m{}_{j_1} \YM_{j_2\dots j_p) m} = 0$$
due to the antisymmetry of $T.$
\end{proof}
The next natural step is to ask when is (\ref{eq-nearly}) sufficient. After \cite{nurowski-2006},
we give an algebraic condition on the tensor $\Y,$ mapped to $\YM$ in adapted frames:
\begin{lem}\label{lem-nearly}
Let us introduce a map
$$ \Y' : \RR^m \otimes \Lam^2 \RR^m \to \Sym^{p+1}\RR^m $$
$$ \Y'(A)_{i_0 i_1\dots i_p} = A_{(i_0}{}^m{}_{i_1} \Y_{i_2\dots i_p)m}. $$
Then (\ref{eq-nearly}) implies existence of a compatible connection with skew torsion iff
$$ \ker\Y' = \RR^m \otimes \fg + \Lam^3 \RR^m. $$
\end{lem}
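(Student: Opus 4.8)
The plan is to translate both conditions into pointwise linear algebra on $\RR^m$ via an adapted frame, using the reconstruction of the intrinsic torsion from $\nabla^{LC}\YM$ given in Corollary \ref{cor-fi}. First I would record the two reformulations. Since in an adapted frame $\YM$ is the constant tensor $\Y$, subsection \ref{ss-its} gives $\nabla^{LC}_X\YM = \Gamma^{LC}(X)(\YM)$, and decomposing $\Gamma^{LC} = \Gamma^\fg - A^\ft$ with $\Gamma^\fg(X)\in\fg$ annihilating $\YM$ yields $\nabla^{LC}_X\YM = -A^\ft(X)(\YM)$. Symmetrising over all $p+1$ indices, the fully symmetric part $\mathrm{Sym}(\nabla^{LC}\YM)$ is a nonzero multiple of $\Y'(A^\ft)$, where $A^\ft$ is viewed through the frame as an element of $\RR^m\otimes\ft$. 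Hence the nearly-integrability condition (\ref{eq-nearly}) holds iff $\Y'(A^\ft)=0$, i.e. iff $A^\ft\in\ker\Y'\cap(\RR^m\otimes\ft)$. On the other side, Proposition \ref{pro-exc} says a compatible connection with skew torsion exists iff $A^\ft=\pr_\ft(H)$ for some $H\in\Lam^3\RR^m$, i.e. iff $A^\ft\in\pr_\ft(\Lam^3\RR^m)$, where $\Lam^3\RR^m\subset\RR^m\otimes\Lam^2\RR^m$ and $\pr_\ft$ projects the $\Lam^2$-slot onto $\ft$.

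Next I would establish the unconditional inclusion $\RR^m\otimes\fg + \Lam^3\RR^m \subseteq \ker\Y'$. For $A\in\RR^m\otimes\fg$ each $A_{i_0}\in\fg$ stabilises $\Y$, so $A_{i_0}(\Y)=0$ and in particular $\Y'(A)=0$. For $A\in\Lam^3\RR^m$ the expression $A_{(i_0}{}^m{}_{i_1}$ is antisymmetric in $i_0,i_1$ whereas $\Y'$ symmetrises precisely those indices, so $\Y'(A)=0$ again. Thus one inclusion of the asserted kernel identity is automatic, and the Lemma reduces to proving that the geometric implication \textbf{(}(\ref{eq-nearly}) $\Rightarrow$ existence of skew torsion\textbf{)} is equivalent to the subspace inclusion $\ker\Y'\cap(\RR^m\otimes\ft)\subseteq\pr_\ft(\Lam^3\RR^m)$.

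I would then close the equivalence of these two subspace conditions by a short projection argument along the orthogonal splitting $\RR^m\otimes\Lam^2\RR^m = (\RR^m\otimes\fg)\oplus(\RR^m\otimes\ft)$. If $\ker\Y'=\RR^m\otimes\fg+\Lam^3\RR^m$, any $A\in\ker\Y'\cap(\RR^m\otimes\ft)$ splits as $A=B+C$ with $B\in\RR^m\otimes\fg$ and $C\in\Lam^3\RR^m$; projecting the $\Lam^2$-slot to $\ft$ kills $B$ and leaves $A=\pr_\ft(C)\in\pr_\ft(\Lam^3\RR^m)$, so the inclusion holds. Conversely, given the inclusion, for $A\in\ker\Y'$ its $\ft$-component $A_\ft$ lies in $\ker\Y'$ too (because $\RR^m\otimes\fg\subseteq\ker\Y'$), hence $A_\ft=\pr_\ft(C)$ for some $C\in\Lam^3\RR^m$, and then $A=(A-A_\ft)+C-\pr_\fg(C)\in\RR^m\otimes\fg+\Lam^3\RR^m$, supplying the missing inclusion.

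The one point demanding care — the main obstacle — is matching the quantified geometric implication with the purely algebraic inclusion in the forward direction: I must know that every element of $\ker\Y'\cap(\RR^m\otimes\ft)$ is realised as the intrinsic torsion at some point of an actual nearly-integrable geometry. I would settle this by prescribing the $1$-jet at a single point, building a metric in geodesic normal coordinates together with a tensor field whose frame components agree with $\Y$ to zeroth order and have a prescribed symmetric-free first derivative, so that its intrinsic torsion at that point equals any preassigned $A^\ft\in\RR^m\otimes\ft$; when $A^\ft\in\ker\Y'$ the geometry is automatically nearly-integrable by the first reformulation. With realisability in hand, the pointwise inclusion is equivalent to the geometric implication, and combining this with the projection argument of the previous paragraph gives the stated kernel identity.
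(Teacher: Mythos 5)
Your pointwise reformulations and the linear algebra relating the two kernel conditions are correct, and your backward direction (kernel equality implies the geometric implication) is sound; it differs from the paper only in routing existence through the intrinsic torsion and Proposition \ref{pro-exc} together with the projection $\pr_\ft|_{\Lam^3\RR^m}$, where the paper instead decomposes $\Gamma^{LC}$ directly into a $\fg$-valued form plus a skew tensor and invokes Lemma \ref{lem-skewa}. That direction is also the only one the paper actually uses later (in Theorem \ref{thm-nearly}).

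The genuine gap is in your forward direction, at exactly the step you flag as the main obstacle. The hypothesis (\ref{eq-nearly}) is a condition at \emph{every} point of $M$, whereas your $1$-jet construction forces $\Y'(A^\ft)=0$ only \emph{at the chosen point}: a generic extension of the prescribed jet violates nearly-integrability away from that point, so the quantified implication cannot be applied to the geometry you build, and the desired conclusion $A^\ft\in\pr_\ft(\Lam^3\RR^m)$ does not follow. There is a secondary issue that the extended tensor field must lie in the $\sO(m)$-orbit of $\Y$ at every point in order to define a geometry at all; this is fixable by setting $\YM(y)=g(y)(\Y)$ for a map $g$ into $\sO(m)$ with prescribed differential at the base point, but then nearly-integrability everywhere becomes the condition that $g^{-1}dg(y)\in\ker\Y'$ for all $y$, and already matching this at second order runs into the brackets $[A_i,A_j]$ of the prescribed first-order data: their $\ft$-components are antisymmetric in the two tensor slots, cannot be absorbed by the (symmetric) second-order freedom in $g$, and need not lie in $\ker\Y'$. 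So realizing an arbitrary element of $\ker\Y'\cap(\RR^m\otimes\ft)$ as the intrinsic torsion of a geometry that is nearly-integrable on a whole neighbourhood is a genuinely nontrivial claim, and your sketch does not establish it; it would need either a real integration argument (exterior differential systems, or a homogeneous construction in the spirit of the paper's Lemma \ref{lem-red0}) or a reformulation of the lemma as a pointwise algebraic statement. For fairness: the paper's own proof of this direction is a single sentence that tacitly assumes precisely this realizability, so your instinct to address it was right, but the proposed fix does not close it.
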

\begin{proof} Choose locally an adapted frame $e.$ We have, at each $x\in M,$
\begin{eqnarray*}
\left[e_x^{-1} (\nabla^{LC}\YM)(x)\right]_{(i_0\dots i_p)} &=& \left[ e_x^{-1} \Gamma^{LC}(x) \right]_{(i_0}{}^m{}_{i_1} \Y_{i_2\dots i_p)m}
\\
&=& \Y'(e_x^{-1} \Gamma^{LC}(x))_{i_0\dots i_p}.
\end{eqnarray*}
Thus, $(\nabla^{LC}_X\YM)(X,\dots,X)=0$ for each $X\in TM$ iff $ e_x^{-1} \Gamma^{LC}(x) \in \ker\Y' $
for each $x\in M.$ 

Now, if (\ref{eq-nearly}) implies exsistence of a compatible connection with skew torsion, then $\ker\Y'$ must be contained in $\RR^m\otimes\fg + \Lam^3\RR^m,$ as the Levi-Civita connection
can be expressed as a sum of the compatible connection and half of its torsion. However, Proposition
\ref{pro-nearly} implies that $\RR^m\otimes\fg + \Lam^3 \RR^m \subset \ker\Y'.$ Thus these must be equal.

Conversely, if the kernel of $\Y'$ is $\RR^m\otimes\fg+\Lam^3\RR^m,$ then (\ref{eq-nearly}) implies
that $\Gamma^{LC}$ can be decomposed (not necessarily in a unique way) into a $\fg$-valued local connection form and a skew difference tensor. Lemma (\ref{lem-skewa}) completes the proof.
\end{proof}

The condition (\ref{eq-nearly}) will be referred to as \emph{nearly-integrability} of the
geometric structure defined by $\YM$ (or of the tensor itself). There exists an interesting
geometric interpretation:
\begin{lem}
A $G$-structure defined by $\YM$ is nearly-integrably iff for each parametrised geodesic $\gamma:\RR\supset I \to M$ the value of $\YM$ evaluated on $\dot\gamma$ is constant along the geodesic. \end{lem}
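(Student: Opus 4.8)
The plan is to translate the nearly-integrability condition, which is an identity about the symmetrized Levi-Civita derivative of $\YM$, into a statement about the behaviour of $\YM$ along geodesics. The key observation is that along a parametrized geodesic $\gamma$ one has $\nabla^{LC}_{\dot\gamma}\dot\gamma = 0$, so the derivative of the scalar function $t \mapsto \YM(\dot\gamma,\dots,\dot\gamma)$ only sees the derivative of $\YM$ itself (the derivatives hitting the $\dot\gamma$ arguments vanish). This immediately couples the symmetrized covariant derivative appearing in equation (\ref{eq-nearly}) to the quantity we want to control.

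First I would fix a geodesic $\gamma:\RR\supset I\to M$, write $X=\dot\gamma$, and compute the derivative along $\gamma$ of the function $h(t) = \YM(X,\dots,X)$ (with $p$ copies of $X$). Applying the Leibniz rule for $\nabla^{LC}$ and using $\nabla^{LC}_X X = 0$, all terms in which the connection differentiates an argument drop out, leaving
\begin{equation}
\frac{d}{dt}\,\YM(X,\dots,X) = (\nabla^{LC}_X \YM)(X,\dots,X).
\end{equation}
This is the central computation. The right-hand side is precisely the expression that nearly-integrability (\ref{eq-nearly}) asks to vanish for every $X\in TM$; restricting to geodesic tangent vectors is no loss, since at any point every tangent vector is the initial velocity of some geodesic.

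From here both implications follow directly. If $\YM$ is nearly-integrable, then the right-hand side vanishes identically, so $h(t)$ is constant along every geodesic, which is the claimed conclusion. Conversely, if $\YM(\dot\gamma,\dots,\dot\gamma)$ is constant along every parametrized geodesic, then the right-hand side vanishes at the initial point of each geodesic; since every $X\in T_xM$ arises as $\dot\gamma(0)$ for a suitable geodesic, we recover $(\nabla^{LC}_X\YM)(X,\dots,X)=0$ for all $X$, i.e. nearly-integrability. I would phrase the statement at a single point to emphasize that the condition is pointwise and that the geodesic through $x$ with prescribed initial velocity supplies the required test vector.

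I do not expect any serious obstacle here; the only point demanding a little care is the Leibniz-rule computation establishing the displayed identity, specifically the bookkeeping that confirms every term differentiating a $\dot\gamma$ slot is killed by $\nabla^{LC}_{\dot\gamma}\dot\gamma=0$, leaving only the term where $\nabla^{LC}$ acts on $\YM$. Since $\YM$ is symmetric, $(\nabla^{LC}_X\YM)(X,\dots,X)$ already equals its own symmetrization over all $p+1$ slots, so this matches (\ref{eq-nearly}) verbatim without any combinatorial factor subtlety. The remainder is the elementary fact that a smooth function on an interval is constant iff its derivative vanishes, combined with the surjectivity of the exponential map onto initial velocities.
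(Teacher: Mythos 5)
Your proposal is correct and follows essentially the same route as the paper: the single key computation $\frac{d}{dt}\,\YM(\dot\gamma,\dots,\dot\gamma) = (\nabla^{LC}_{\dot\gamma}\YM)(\dot\gamma,\dots,\dot\gamma)$, valid because $\nabla^{LC}_{\dot\gamma}\dot\gamma=0$ kills all Leibniz terms hitting the velocity slots, followed by the observation that every tangent vector arises as an initial geodesic velocity. Your version is in fact slightly more explicit than the paper's on the converse direction (the pointwise surjectivity onto initial velocities) and on the absence of combinatorial factors due to symmetry of $\YM$, but there is no substantive difference.
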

\begin{proof}
Let $\gamma$ be such a geodesic, $\nabla^{LC}_{\dot\gamma}\dot\gamma=0$. We have
$$ \frac{d}{dt} \YM(\dot\gamma(t),\dots,\dot\gamma(t)) = \nabla^{LC}_{\dot\gamma}[\YM(\dot\gamma,\dots,\dot\gamma)](t) = (\nabla^{LC}_{\dot\gamma} \YM)(\dot\gamma,\dots,\dot\gamma)(t).$$
Now, $\YM$ is nearly integrable iff the latter expression vanishes for every parametrised geodesic, at every point. This in turn is equivalent to the evaluation of $\YM$ on $\dot\gamma$ being constant.
\end{proof}

It follows that for a nearly-integrable $G$-structure there is a well-defined notion of geodesics which are null with respect to the tensor. The spaces of such geodesics seem to be interesting in their own right.

\section{$\fg(\KK,\KK')-$geometries with characteristic torsion}

We return now to the $\fg(\KK,\KK')$-geometries defined in Section \ref{sec-kkgeo},
and collect results on
skew-torsion connections compatible with related $G(\KK,\KK')$-structures.

The problem of uniqueness of a skew-torsion connection for the first family has been already
investigated by Nurowski \cite{nurowski-2006}, while uniqueness for the other two families can
be readily estabilished once on knows an analogous result for almost-K\"ahler and almost-quaternion-K\"ahler structures. Currently however, we can present it as a simple corollary of Nagy's general result:
\begin{pro}[Corollary of Proposition \ref{pro-nagy}]
Let $(M,g,\YM)$ be a $\fg(\KK,\KK')$-geometry admitting a skew-torsion compatible connection. The such a
connection is unique, unless $\KK=\CC$ and $\KK'=\CC$. In the latter case, there is a
one-parameter family of such connections, whose torsion differ by a section of a one-dimensional
$G(\CC,\CC)$-invariant section of $\Lam^3 TM$.
\end{pro}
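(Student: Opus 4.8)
The plan is to reduce the statement to the representation-theoretic dichotomy provided by Nagy (Proposition \ref{pro-nagy}), via the uniqueness criterion of Proposition \ref{pro-inter}. Writing $V=V(\KK,\KK')$ and $\fg=\fg(\KK,\KK')$, and recalling that existence of a compatible skew-torsion connection is assumed, Proposition \ref{pro-inter} tells us the connection is unique precisely when $(V\otimes\fg)\cap\Lam^3 V = 0$. By Corollary \ref{cor-reps} the action of $\fg$ on $V$ is faithful and irreducible, and $\fg$ is a proper subalgebra of $\fso(V)$ since it stabilizes the defining tensor $\Y$, which $\fso(V)$ does not. Thus Proposition \ref{pro-nagy} applies verbatim: the intersection vanishes --- giving uniqueness --- unless $\fg$ is compact simple and $V\simeq\fg$ as $\fg$-modules, in which case it is one-dimensional and spanned by the structure constants of $\fg$.

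The second and third families are dispatched immediately. By Proposition \ref{pro-isoreps} one has $\fg(\KK,\HH)=\cg_2(\KK)\oplus\fu(1)$ and $\fg(\KK,\OO)=\cg_3(\KK)\oplus\fsp(1)$, each a direct sum of two nonzero ideals and hence never simple. The exceptional alternative of Nagy's theorem therefore cannot arise, and uniqueness holds for every $\KK$ in these two families.

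For the first family $\fg(\KK,\CC)=\cg_1(\KK)$ is simple, exponentiating to $\sSO(3),\sSU(3),\sSp(3),\sF_4$, so the module comparison $V\simeq\fg$ must genuinely be examined. Here I would argue by dimension: $\dim V(\KK,\CC)=3\kappa+2$ equals $5,8,14,26$, whereas $\dim\fg(\KK,\CC)$ equals $3,8,21,52$, for $\KK=\RR,\CC,\HH,\OO$ respectively. These agree only for $\KK=\CC$ (both equal $8$); for $\KK=\RR,\HH,\OO$ the modules cannot be isomorphic, the intersection is trivial, and the connection is unique.

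It remains to handle $\KK=\CC,\KK'=\CC$. The associated symmetric space is the group manifold $\frac{\sSU(3)\times\sSU(3)}{\sSU(3)}$, whose isotropy representation is the adjoint representation of $\sSU(3)$; hence $V(\CC,\CC)\simeq\fg(\CC,\CC)=\fsu(3)$ as $\fsu(3)$-modules, and $\fsu(3)$ is compact simple. Nagy's theorem then yields a one-dimensional intersection spanned by the $\fsu(3)$ structure constants, and Corollary \ref{cor-nagy} converts this into the geometric conclusion: the compatible skew-torsion connections form a one-parameter family whose torsions differ by a section of a one-dimensional $G(\CC,\CC)$-invariant subbundle $\mathcal{T}_0\subset\Lam^3 TM$, with fibre spanned by the structure-constant $3$-form. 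The only point requiring real care is the module isomorphism $V(\CC,\CC)\simeq\fg(\CC,\CC)$ in this last case --- everything else is merely reading off simplicity and comparing dimensions from the algebraic results already established.
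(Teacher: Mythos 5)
Your proposal is correct and follows essentially the same route as the paper's own (sketched) proof: apply Nagy's Proposition \ref{pro-nagy} to $\fg(\KK,\KK')\subset\End V(\KK,\KK')$, observe that the exceptional case occurs only for $\KK=\KK'=\CC$, and convert it into the one-parameter family via Corollary \ref{cor-nagy}. The only difference is in how the exceptional case is isolated: you rule out the second and third families by non-simplicity of $\fg(\KK,\HH)$ and $\fg(\KK,\OO)$, the first family (except $\KK=\CC$) by dimension count, and establish $V(\CC,\CC)\simeq\fsu(3)$ via the group-manifold structure of $\frac{\sSU(3)\times\sSU(3)}{\sSU(3)}$, whereas the paper appeals to the fact that $\sSU(3)$ has a unique $8$-dimensional irreducible representation -- your version is simply a more detailed verification of the same claim.
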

\begin{proof}[Proof (sketch)]
We apply Proposition \ref{pro-nagy} to $\fg(\KK,\KK')$ as subalgebras of $\End V(\KK,\KK')$. These algebras
are clearly proper and act irreducibly. The only case when $V(\KK,\KK')\simeq\fg(\KK,\KK')$ is the
geometry modelled after $\frac{\sSU(3)\times\sSU(3)}{\sSU(3)}$, i.e. $\KK=\CC$ and $\KK'=\CC$ (it suffices
to notice that there is a single 8-dimensional irreducible representation of $\sSU(3)$, and thus both the isotropy and adjoint representations are equivalent). The, we apply Corollary \ref{cor-nagy}
and the Proposition follows.
\end{proof}

As we have mentioned, such unique connection is called \emph{the} characteristic
connection, and its torsion tensor -- the characteristic torsion. Indeed, one may consider
the latter as characterising the geometry in a manner more convenient than the intrinsic torsion (however the two can be clearly obtained from each other). 

Of course, these notions make sense only if the
connection exists. Bobienski and Nurowski \cite{bobienski-2005} proposed nearly integrability
as a candidate for an existence condition, and checked that it is one indeed for the geometry they
considered -- i.e. the one modelled after $ \sSU(3)/\sSO(3).$ It then turned out \cite{nurowski-2006}
that it also works for the next two geometries in the first column, failing however in case of
the last one, i.e. $\sE_6 / \sF_4$:
\begin{pro}[Nurowski \cite{nurowski-2006}]
Let $(M,g,\UpsilonM)$ be a $\fg(\KK,\CC)$-geometry with $\KK\neq\OO.$ Then $M$ admits a skew-torsion
compatible connection iff $\UpsilonM$ is nearly-integrable.
\end{pro}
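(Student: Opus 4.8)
The plan is to apply the general machinery built up in the preceding section, specifically Lemma \ref{lem-nearly}, which reduces the question to a purely algebraic identity. By Proposition \ref{pro-nearly}, nearly-integrability is \emph{necessary} for the existence of a skew-torsion compatible connection; what remains is sufficiency. By Lemma \ref{lem-nearly}, for a $\fg(\KK,\CC)$-geometry the condition (\ref{eq-nearly}) implies existence of such a connection precisely when
$$ \ker\Upsilon' = \cV_1(\KK) \otimes \fg(\KK,\CC) + \Lam^3 \cV_1(\KK), $$
where $\Upsilon' : \cV_1(\KK)\otimes\Lam^2\cV_1(\KK) \to \Sym^4 \cV_1(\KK)$ is the symmetrization map $\Upsilon'(A)_{i_0 i_1 i_2 i_3} = A_{(i_0}{}^m{}_{i_1}\Upsilon_{i_2 i_3)m}$. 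Since one inclusion ($\supset$) is automatic from Proposition \ref{pro-nearly}, the task is to prove the reverse inclusion $\ker\Upsilon' \subset \cV_1(\KK)\otimes\fg(\KK,\CC) + \Lam^3\cV_1(\KK)$ exactly when $\KK\neq\OO$, and to exhibit its failure when $\KK=\OO$.

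First I would fix the orthogonal decomposition $\cV_1(\KK)\otimes\Lam^2\cV_1(\KK) = \Lam^3\cV_1(\KK)\oplus \cV_1(\KK)\oplus\mathcal{T}$ as in the introduction, and further split the $\mathcal{T}$ piece, together with the ``vectorial'' copy of $\cV_1(\KK)$, into $G(\KK,\CC)$-irreducibles. The point is that $\cV_1(\KK)\otimes\fg(\KK,\CC)$ and $\Lam^3\cV_1(\KK)$ are both $G(\KK,\CC)$-submodules, so their sum is determined by which irreducible types appear and with what multiplicity. I would then compute $\Upsilon'$ on each irreducible summand; because $\Upsilon'$ is $G(\KK,\CC)$-equivariant, Schur's lemma forces it to act as a scalar on each isotypic component (and to intertwine any repeated types). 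The kernel is thus the sum of those components annihilated by $\Upsilon'$, and the identity to be checked is that this kernel coincides with $\cV_1(\KK)\otimes\fg(\KK,\CC)+\Lam^3\cV_1(\KK)$.

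The concrete tool for the computation is the operator $\cD_\Upsilon$ of Lemma \ref{lemdy}, whose eigenvalues $-\tfrac12$ on $\cg_1(\KK)=\fg(\KK,\CC)$ and $\tfrac{3}{3\kappa+4}$ on $\fg(\KK,\CC)^\bot = \ft$ encode exactly the contraction data $\Upsilon_{iqm}\Upsilon_{jpm}$ that appears in $\Upsilon'$ after symmetrization; together with the quadratic identities of Corollary \ref{cor-idy} these let me evaluate the relevant scalars explicitly. The kernel of $\Upsilon'$ restricted to the ``torsion type'' $\mathcal{T}$ is governed by whether a certain combination of these eigenvalues and the cubic self-contraction $\Upsilon_{i}{}^{rp}\Upsilon_j{}^{pq}\Upsilon_k{}^{qr} = \frac{-3\kappa}{6\kappa+8}\Upsilon_{ijk}$ vanishes. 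I expect this vanishing condition to reduce, after substituting $\kappa=\dim\KK\in\{1,2,4\}$ versus $\kappa=8$, to a numerical dichotomy: the scalars conspire so that the kernel is no larger than $\cV_1(\KK)\otimes\fg+\Lam^3\cV_1(\KK)$ precisely for $\kappa\neq 8$.

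The main obstacle — and the crux of the whole statement — is the octonionic case. For $\KK=\OO$ the extra invariant structure of $\ff_4\subset\fso(26)$ makes $\Upsilon'$ acquire an additional element of its kernel beyond $\cV_1(\OO)\otimes\fg(\OO,\CC)+\Lam^3\cV_1(\OO)$; equivalently, one of the equivariant scalars that is nonzero for $\kappa=1,2,4$ degenerates to zero at $\kappa=8$. Identifying \emph{which} irreducible type in $\mathcal{T}$ produces this surplus kernel, and verifying the scalar genuinely vanishes only at $\kappa=8$, is the delicate part; this is the representation-theoretic reason that nearly-integrability fails to guarantee a skew-torsion connection for $\sE_6/\sF_4$. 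I would therefore organize the proof so that all four cases are treated uniformly in $\kappa$ via $\cD_\Upsilon$ and Corollary \ref{cor-idy}, and isolate the single scalar whose sign/vanishing controls the result, reading off $\KK\neq\OO$ as the condition for the desired kernel equality — and hence, by Lemma \ref{lem-nearly}, for nearly-integrability to imply existence of the compatible skew-torsion connection.
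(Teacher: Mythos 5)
Your reduction of the problem is the right one and matches the paper's framework: necessity follows from Proposition \ref{pro-nearly}, and by Lemma \ref{lem-nearly} sufficiency is equivalent to the purely algebraic identity $\ker\Upsilon' = \cV_1(\KK)\otimes\fg(\KK,\CC) + \Lam^3\cV_1(\KK)$. But be aware that the paper itself does not prove this Proposition --- it is quoted from Nurowski --- and the paper explains, just after Theorem \ref{thm-nearly}, why no clean argument of the kind you sketch is available in the first family: the structural device that makes the second-family proof work (the invariant complex structure, which splits linear and antilinear indices and allows separate contractions) is absent here, ``and thus Nurowski had to resort to explicit calculations using computer algebra.'' That computation is exactly what your proposal defers: everything after the reduction is phrased as ``I would compute,'' ``I expect the scalars to conspire,'' ``identifying which irreducible type produces the surplus kernel is the delicate part.'' The entire mathematical content of the statement lives in that deferred step, so what you have is a correct setup plus a plan, not a proof.

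Moreover, the tools you designate for the deferred step are not sufficient as stated. The operator $\cD_\Upsilon$ of Lemma \ref{lemdy} acts on $\Lam^2\cV_1(\KK)$, whereas $\Upsilon'$ has domain $\cV_1(\KK)\otimes\Lam^2\cV_1(\KK)$, a far larger module whose $G(\KK,\CC)$-decomposition contains many irreducible types, several with multiplicity; on a multiplicity space an equivariant map is a matrix, not a Schur scalar, and the handful of quadratic and cubic contractions in Corollary \ref{cor-idy} do not determine the rank of $\Upsilon'$ on all of those components. A treatment ``uniform in $\kappa$'' governed by ``a single controlling scalar'' is also implausible, because the decompositions in Section \ref{sec-dec} differ qualitatively from case to case: $\Lam^3\cV_1(\CC)$ contains an invariant line (since $\cV_1(\CC)\simeq\fsu(3)$ is the adjoint module), the adjoint algebras $\fso(3)$, $\fsu(3)$, $\fsp(3)$ occur inside $\Lam^3\cV_1(\KK)$ for $\kappa=1,2,4$, while $\ff_4$ does not occur in $\Lam^3\cV_1(\OO)$ at all --- so the kernel identity must be verified case by case against genuinely different lists of irreducibles, and your proposed mechanism for the octonionic failure (``extra invariant structure of $\ff_4\subset\fso(26)$'') is an unsubstantiated guess rather than an identified obstruction. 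In short: the strategy agrees with the paper's general machinery, but the decisive verification for $\kappa=1,2,4$ (and the demonstration of failure at $\kappa=8$) is missing, and the identities you cite cannot by themselves supply it.
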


In the following, we extend the equivalence between nearly-integrability and existence of a characteristic connection onto the second family.
\begin{thm}\label{thm-nearly}
Let $(M,g,\XiM)$ be a $\fg(\KK,\HH)$-geometry. Then $M$ admits a characteristic connection iff
$\XiM$ is nearly-integrable.
\end{thm}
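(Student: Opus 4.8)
The statement has two halves of very unequal difficulty. The forward implication is immediate: if $(M,g,\XiM)$ carries a characteristic connection it carries, in particular, a compatible connection with skew torsion, and Proposition \ref{pro-nearly} (applied to the local $G(\KK,\HH)$-structures, whose subbundles $\fg_M$ and $\ft_M$ are globally defined) forces the symmetrized Levi-Civita derivative of $\XiM$ to vanish, i.e. nearly-integrability. Conversely one must produce a skew-torsion compatible connection from nearly-integrability; the \emph{uniqueness} that upgrades "skew-torsion compatible" to "characteristic" is automatic here, since $\fg(\KK,\HH)=\cg_2(\KK)\oplus\fu(1)$ is not simple, so Corollary \ref{cor-nagy} (via Proposition \ref{pro-nagy}) gives $(\RR^m\otimes\fg(\KK,\HH))\cap\Lam^3\RR^m=0$ with $m=6\kappa+6$. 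Thus the theorem rests entirely on the reverse implication, and by Lemma \ref{lem-nearly} this is equivalent to the purely algebraic identity
\begin{equation}
\ker\Y' = \RR^m\otimes\fg(\KK,\HH)\ \oplus\ \Lam^3\RR^m, \label{plan-main}
\end{equation}
for the map $\Y':\RR^m\otimes\Lam^2\RR^m\to\Sym^7\RR^m$ attached to $\Y=\Xi$ (so $p=6$), the sum being direct by the triviality of the intersection just quoted.

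The inclusion $\supseteq$ in \eqref{plan-main} holds for formal reasons, which I would dispose of first. For $A=X\otimes B$ with $B\in\fg(\KK,\HH)$ the invariance relation $B^m{}_{(j_1}\Xi_{j_2\dots j_6)m}=0$, together with the fact that the total symmetrizer on seven indices factors through the symmetrizer on the last six, gives $\Y'(A)=0$; for $A\in\Lam^3\RR^m$ the antisymmetry of $A_{i_0}{}^m{}_{i_1}$ in $i_0,i_1$ is annihilated by the symmetrization. The content is the reverse inclusion, which — given the forward one and the trivial intersection — is equivalent to the dimension count $\mathrm{rank}\,\Y'=m\dim\ft-\binom{m}{3}$, i.e. to injectivity of $\Y'$ on a $G(\KK,\HH)$-invariant complement of the right-hand side of \eqref{plan-main}. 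Splitting $\Lam^2\RR^m=\fg(\KK,\HH)\oplus\ft$ it suffices to treat $\Y'|_{\RR^m\otimes\ft}$ and to prove
\[
\ker\bigl(\Y'|_{\RR^m\otimes\ft}\bigr)=\pr_\ft\,\Lam^3\RR^m,
\]
where the right-hand side has dimension $\binom{m}{3}$ because $\pr_\ft$ is injective on $\Lam^3\RR^m$ (once more by the trivial intersection).

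The plan for this last step is representation-theoretic, carried out in the Hermitian language of Remark \ref{not-cpx}, where $\RR^m$ is the realification of the complex space $\cV_2(\KK)$ of complex dimension $N=3\kappa+3$ and $\ft=(\ft\cap\Lam^{1,1})\oplus\Re(\Lam^{2,0}\oplus\Lam^{0,2})$ as in Proposition \ref{pro-tor2}. I would decompose $\RR^m\otimes\ft$ and the target $\Sym^7\RR^m$ into $G(\KK,\HH)$-irreducibles by bidegree, identify the few irreducible types occurring in $\pr_\ft\Lam^3\RR^m$, and evaluate $\Y'$ on each remaining component. Because $\Xi=\Lambda\cdot\bar\Lambda$ with $\Lambda\in\Sym^{3,0}$, every contraction in $\Y'$ collapses, via the identities of Corollary \ref{cor-id2} (the quadratic relation $\Lambda_{\alpha\mu\nu}\bar\Lambda_{\bar\beta}{}^{\mu\nu}=h_{\bar\beta\alpha}$ and its quartic companion), to an expression in $\Lambda,\bar\Lambda,h$ and the operator $\cD_\Lambda$ of Lemma \ref{lemdl}, whose eigenvalues on $\fu(1)$, $\cg_2(\KK)$ and the two $\ft$-summands are known. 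Schur's lemma then reduces injectivity on each component to the nonvanishing of an explicit rational function of $\kappa$, to be verified at $\kappa=1,2,4,8$.

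The main obstacle is precisely this final computation: arranging the sevenfold symmetrization of $\Y'$ against $\Xi=\Lambda\bar\Lambda$ so that the contractions can actually be performed, while tracking the several summands of $\RR^m\otimes\ft$ and, crucially, the multiplicities with which a given irreducible can recur (so that injectivity must be read off a \emph{matrix} of Schur constants, not a single scalar). Once these constants are shown to be nonzero for all four values of $\kappa$, injectivity on the complement follows, \eqref{plan-main} is established, and with it the theorem.
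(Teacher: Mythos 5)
Your framework is correct, and up to the point where the real work begins it coincides with the paper's: the forward implication follows from Proposition \ref{pro-nearly}; uniqueness is automatic for $\KK'=\HH$ since $\fg(\KK,\HH)=\cg_2(\KK)\oplus\fu(1)$ is not simple, so Nagy's theorem gives $(\RR^m\otimes\fg(\KK,\HH))\cap\Lam^3\RR^m=0$ (note, though, that this is Proposition \ref{pro-nagy} itself; Corollary \ref{cor-nagy} concerns the opposite case of a \emph{nontrivial} intersection); Lemma \ref{lem-nearly} reduces everything to the identity $\ker\Xi'=\cV_2(\KK)\otimes\fg(\KK,\HH)+\Lam^3\cV_2(\KK)$; the inclusion $\supseteq$ is formal, exactly as you argue; and your further reduction to $\ker(\Xi'|_{\cV_2(\KK)\otimes\ft})=\pr_\ft\Lam^3\cV_2(\KK)$ is valid.

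The gap is that the reverse inclusion --- which is the entire content of the theorem --- is left as a plan rather than proved. Your proposed route (decompose $\cV_2(\KK)\otimes\ft$ and $\Sym^7\cV_2(\KK)$ into $G(\KK,\HH)$-irreducibles, locate $\pr_\ft\Lam^3$ inside, and test injectivity componentwise by Schur's lemma) is never executed, and as stated it runs into the obstacle you name but do not resolve: the irreducibles occur with multiplicity --- already for $\KK=\RR$ the module $V_{15}$ appears both in $\fu^\bot(\cV_2(\RR))\subset\ft$ and in $\cW_3\subset\Lam^3\cV_2(\RR)$ --- so one needs full matrices of Schur constants relative to explicit equivariant projections, which are constructed nowhere (Section \ref{sec-dec} records only dimensions), evaluated against a sevenfold symmetrization in a space of real dimension $6\kappa+6$. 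There is no evidence this terminates favorably by hand; indeed, in the first family, where no better tool exists, the analogous verification required computer algebra. The paper proves the inclusion by a different and far more economical device that your plan misses: complexify $\Xi'$ and split a general element by bidegree, $A=O+B+C+\bar O+\bar B+\bar C$ with $O\in\Lam^{1,0}\otimes\Lam^{2,0}$, $B\in\Lam^{0,1}\otimes\Lam^{2,0}$, $C\in\Lam^{1,0}\otimes\Lam^{1,1}$. Because $\Xi=\Lambda\cdot\bar\Lambda$ with $\Lambda\in\Sym^{3,0}$, the images $\Xi'(O)\in\Sym^{5,2}$ and $\Xi'(B+C)\in\Sym^{4,3}$ (plus conjugates) land in independent bidegree components and must vanish separately; contracting the two resulting equations with $\Lambda^{\phi\kappa\nu}$ and $\bar\Lambda^{\alpha\beta\gamma}$, using $\Lambda_{\alpha\mu\nu}\bar\Lambda_{\bar\beta}{}^{\mu\nu}=h_{\bar\beta\alpha}$, solves them outright: $O\in\Lam^{3,0}$ and the remaining data assemble into $\Re[\Lam^{2,1}\oplus\Lam^{1,2}\oplus\Lam^{1,0}\otimes\fg(\KK,\HH)\oplus\Lam^{0,1}\otimes\fg(\KK,\HH)]$ --- no decomposition into irreducibles, no multiplicity bookkeeping, no case-by-case check in $\kappa$. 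Until you either carry out your Schur-matrix computation or replace it by such a direct argument, the implication that nearly-integrability yields a compatible skew-torsion connection remains unestablished.
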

\begin{proof} That nearly-integrability is implied by existence of the characteristic connection follows
immediately from Proposition \ref{pro-nearly}. To prove the converse, we will apply Lemma \ref{lem-nearly} to the map
$$ \Xi' : \cV_2(\KK) \otimes \Lam^2 \cV_2(\KK) \to \Sym^{7} \cV_2(\KK) $$
$$ \Xi'(A)_{a_0a_1\dots a_6} = A_{(a_0}{}^m{}_{a_1} \XiM_{a_2\dots a_6)m}. $$
It is clear that its kernel contains $\cV_2(\KK)\otimes\fg(\KK,\HH) + \Lam^3 \cV_2(\KK).$
In order to check that these are actually equal, we extend $\Xi'$ by complex linearity and consider
a decomposition of a generic element $A \in \cV_2(\KK) \otimes \Lam^2 \cV_2(\KK):$
$$ A = O + B + C + \bar O + \bar B + \bar C $$
$$ O \in \Lam^{1,0} \otimes \Lam^{2,0},\quad B \in \Lam^{0,1}\otimes\Lam^{2,0},
\quad C \in \Lam^{1,0} \otimes \Lam^{1,1} $$
where $ C_{abc} = c_{\alpha\bar\beta\gamma} - c_{\alpha\bar\gamma\beta}. $
The action of extended $\Xi'$ gives then
$$ \Xi'(O) \in\Sym^{5,2},\quad \Xi'(B + C) \in \Sym^{4,3} $$
$$ \Xi'(\bar O) \in\Sym^{2,5},\quad \Xi'(\bar B + \bar C) \in \Sym^{3,4} $$
and $\Xi'(A)=0$ iff each of these vanishes separately.
Applying the isomorphisms (\ref{sym-split}), we find $\Xi'(O) = 0$ and $\Xi'(B+C)=0$ to be equivalent to
\begin{eqnarray*}
\Lambda_{(\alpha\beta\gamma} O_{\delta}{}^{\bar\mu}{}_{\epsilon)} \bar\Lambda_{\bar\phi\bar\kappa\bar\mu} &=& 0 \\
\bar\Lambda_{(\bar\mu\bar\phi\bar\kappa} B_{\bar\epsilon)}{}^{\bar\mu}{}_{(\alpha} \Lambda_{\beta\gamma\delta)}
+ \Lambda_{\mu(\alpha\beta} c_{\gamma}{}^{\mu}{}_{\delta)} -  \Lambda_{(\alpha\beta\gamma} c_{\delta)(\bar\epsilon}{}^{\bar\mu} \bar\Lambda_{\bar\phi\bar\kappa)\bar\mu} &=& 0.
\end{eqnarray*}
Contracting the first equation with $\Lambda^{\phi\kappa\nu}$ and the second one with $\bar\Lambda^{\alpha\beta\gamma},$ we obtain
\begin{eqnarray}
\Lambda_{(\alpha\beta\gamma} O_{\delta}{}^{\bar\nu}{}_{\epsilon)} &=& 0 \label{e1} \\
X_\delta{}^{\bar\mu}{}_{(\bar\epsilon} \bar\Lambda_{\bar\phi\bar\kappa)\bar\mu} + \xi_\delta \bar\Lambda_{\bar\epsilon\bar\phi\bar\kappa} &=& 0 \label{e2}
\end{eqnarray}
where
$$ \xi_\delta = \frac{1}{N+3} [ c_\delta{}^{\mu}{}_\mu + c_\mu{}^{\mu}{}_\delta +
2 c_\alpha{}^\mu{}_\beta \bar\Lambda^{\alpha\beta\gamma} \Lambda_{\gamma\mu\delta} ] $$
$$ X_{\delta\mu\bar\epsilon} = B_{\bar\epsilon\mu\delta} - c_{\delta\bar\epsilon\mu}. $$
Equation (\ref{e1}) gives simply $O_{(\alpha}{}^{\bar\mu}{}_{\beta)}=0,$ so that $ O \in \Lam^{3,0}.$
Writing $X$ in the form
$$ X_{\delta\mu\bar\nu} = -\xi_\delta h_{\bar\nu\mu} + X'_{\delta\mu\bar\nu} $$
equation (\ref{e2}) simplifies to $ X'_\delta(\bar\Lambda) = 0, $ so that 
$$X'_{\delta\mu\bar\nu} - X'_{\delta\nu\bar\mu} \in\Lam^{1,0}_\delta \otimes \cg_2(\KK)_{mn}. $$
and thus
$$X_{\delta\mu\bar\nu} - X_{\delta\nu\bar\mu} \in\Lam^{1,0}_\delta \otimes \fg(\KK,\HH)_{mn}. $$
Now, using $X$ and $B$ to eliminate $c,$ we get
\begin{eqnarray*}
A_{abc} 
&=& O_{\alpha\beta\gamma} + B_{\bar\alpha\beta\gamma} + B_{\bar\beta\gamma\alpha} + B_{\bar\gamma\alpha\beta} + X_{\alpha\beta\bar\gamma} - X_{\alpha\gamma\bar\beta} 
\\
&+& \bar O_{\bar\alpha\bar\beta\bar\gamma} + \bar B_{\alpha\bar\beta\bar\gamma} + \bar B_{\beta\bar\gamma\bar\alpha} + \bar B_{\gamma\bar\alpha\bar\beta} + \bar X_{\bar\alpha\bar\beta\gamma} - \bar X_{\bar\alpha\bar\gamma\beta},
\end{eqnarray*}
where the terms involving $B,$ due to $B_{\bar\gamma\alpha\beta} = -B_{\bar\gamma\beta\alpha},$ give
an element of $\Lam^{2,1} \oplus \Lam^{1,2}.$ We thus finally obtain
$$ A \in \Re [\Lam^{3,0} \oplus\Lam^{2,1} \oplus \Lam^{1,2} \oplus \Lam^{0,3} \oplus \Lam^{1,0} \otimes
\fg(\KK,\HH) \oplus \Lam^{0,1} \otimes \fg(\KK,\HH) ], $$
so that $ \ker\Xi' = \cV_2(\KK)\otimes\fg(\KK,\HH) + \Lam^3\cV_2(\KK),$
and the theorem follows by Lemma \ref{lem-nearly}.
\end{proof}

It is instructive to note that the latter proof relies on the complex structure of $\cV_2(\KK):$
indeed, in spite of complete symmetrization in the definition of $\Xi',$ using the projections (\ref{sym-split}) splits linear and antilinear indices and allows for separate contraction.
That did not occur in the first family, and thus Nurowski had to resort to explicit calculations using computer algebra. Unfortunately, this method fails also for the third family, since none of the complex structures is invariant. The question whether nearly-integrability guarantees existence of a characteristic connection for $\fg(\KK,\OO)$-geometries remains open.

\section{Decompositions of two- and three-forms} \label{sec-dec}
In order to classify possible $\fg(\KK,\KK')$-geometries with skew torsion,
we need to decompose the space of three-forms into $G(\KK,\KK')-$irreducibles.
This is easily done using the computer algebra package
{\tt LiE} by Marc van Leeuwen et al. \cite{leeuwen}.

One needs to note that we are
ultimately interested in \emph{real} representations, while the package we use operates on complex ones.
Thus in the first and third family, where the representations are respectively real and quaternionic, we consider complexifications of $\cV_1(\KK)$ and $\cV_3(\KK),$ and, after decomposing, 
take the real section (recalling that $\Re(V_n \oplus \bar V_n),$ with $V_n$ being some complex irrep, does not decompose).
On the other hand, the representations in the second column are already unitary, and can be dealt with directly.

We only provide here the dimensions of the irreducible subspaces. 
The invariants can be used to construct suitable projection operators.

\subsection{First family}
The following was already done by Nurowski \cite{nurowski-2006}.

\begin{pro} With $W_n$ denoting an $n-$dimensional real representation, we have the following decompositions
into:
\begin{enumerate}
\item $\sSO(3)$ irreps:
\begin{eqnarray*}
\Lam^2 \cV_1(\RR) &\simeq& \fso(3) \oplus W_7 \\
\Lam^3 \cV_1(\RR) &\simeq& \fso(3) \oplus W_7
\end{eqnarray*}
\item $\sSU(3)$ irreps:
\begin{eqnarray*}
\Lam^2 \cV_1(\CC) &\simeq& \fsu(3) \oplus W_{20} \\
\Lam^3 \cV_1(\CC) &\simeq& \fsu(3) \oplus W_{20} \oplus W_{27} \oplus \RR.
\end{eqnarray*}
\item $\sSp(3)$ irreps:
\begin{eqnarray*}
\Lam^2 \cV_1(\HH) &\simeq& \fsp(3) \oplus W_{70} \\
\Lam^3 \cV_1(\HH) &\simeq& \fsp(3) \oplus W_{70} \oplus W_{84} \oplus W_{189}.
\end{eqnarray*}
\item $\sF_4$ irreps:
\begin{eqnarray*}
\Lam^2 \cV_1(\OO) &\simeq& \ff_4 \oplus W_{273} \\
\Lam^3 \cV_1(\OO) &\simeq& W_{273} \oplus W_{1274} \oplus W_{1053}.
\end{eqnarray*}
\end{enumerate}\end{pro}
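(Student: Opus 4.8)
The plan is to reduce each assertion to the computation of exterior powers of a single explicit irreducible module and to carry those out with the package \texttt{LiE}, as the text indicates, supplementing the mechanical output with dimension counts and the two structural facts already available. First I would pin down each representation by its highest weight. By the identifications of Proposition~\ref{pro-isoreps} in the first family ($\KK'=\CC$) the module $\cV_1(\KK)$ is $\sxk$ with the natural action of $\cG_1(\KK)=\Aut(\hxk)$: concretely $\cV_1(\RR)$ is the five-dimensional spin-two module of $\sSO(3)$ (highest weight $2\omega_1$), $\cV_1(\CC)$ the adjoint $\mathbf 8$ of $\sSU(3)$ (highest weight $\omega_1+\omega_2$), $\cV_1(\HH)$ the fourteen-dimensional $\Lam^2_0$ of $\sSp(3)$ (highest weight $\omega_2$), and $\cV_1(\OO)$ the twenty-six-dimensional fundamental module of $\sF_4$ (highest weight $\omega_4$ in Bourbaki's labelling). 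All four are self-dual and of real type, so their complexifications stay irreducible and $\Lam^k$ commutes with complexification; this is what justifies computing over $\CC$ and reading off the real irreducibles at the end, as explained in the preamble to this section.

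Two general remarks cut down the $\Lam^2$ work. Via the invariant scalar product one has $\Lam^2\cV_1(\KK)\simeq\fso(\cV_1(\KK))$, and the isotropy algebra $\cg_1(\KK)=\fg(\KK,\CC)$ sits inside as a subalgebra, whence $\Lam^2\cV_1(\KK)=\fg(\KK,\CC)\oplus\ft$ with $\dim\ft=\binom{\dim\cV_1(\KK)}{2}-\dim\cg_1(\KK)$. This already produces the summands $\fso(3),\fsu(3),\fsp(3),\ff_4$ on the $\Lam^2$ lines and fixes the dimensions $7,20,70,273$ of the complements; it then remains only to check that each complement is a single irreducible, which the \texttt{LiE} output confirms. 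For $\sSO(3)$ I would verify this by hand as a check: writing $V$ for the spin-two module of $\sSU(2)$, one has $\Lam^2 V=V_{\mathrm{spin}\,3}\oplus V_{\mathrm{spin}\,1}=W_7\oplus\fso(3)$.

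For $\Lam^3\cV_1(\KK)$ I would feed the highest weights above to \texttt{LiE}, apply its alternating-power routine, and match the list of irreducibles against the asserted dimensions: $10=3+7$, $56=8+20+27+1$, $364=21+70+84+189$, and $2600=273+1274+1053$. Two consistency checks accompany the machine output. First, the trivial summand $\RR$ occurs only for $\sSU(3)$; this is forced because $\cV_1(\CC)\simeq\fsu(3)$ as a module, so $\Lam^3$ carries the Cartan three-form built from the structure constants, whereas for the other three groups $\cV_1(\KK)\not\simeq\fg$ and no invariant skew three-form exists, in agreement with Nagy's Proposition~\ref{pro-nagy}. Second, a copy of $\fg$ appears inside $\Lam^3\cV_1(\KK)$ for $\KK=\RR,\CC,\HH$ but disappears for $\KK=\OO$, a feature directly visible in the output and consistent with the later breakdown of the octonionic case noted in the geometric part.

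The main obstacle is computational rather than conceptual: the $\sF_4$ third exterior power is a $2600$-dimensional module, far beyond hand decomposition, so the argument ultimately rests on the correctness of the \texttt{LiE} plethysm for $\ff_4$. The small cases offer a safeguard here, since the $\sSO(3)$ and $\sSU(3)$ decompositions are reproducible independently from $\sSU(2)$ and $\sSU(3)$ character theory; the $\sSp(3)$ case can likewise be cross-checked by decomposing $\Lam^3$ of the fundamental $\mathbf 6$ and stripping the pieces that enter $\Lam^3$ of the trace part, but for $\sF_4$ one essentially has to trust (or re-run in a second system) the plethysm routine.
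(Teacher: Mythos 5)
Your proposal is correct and takes essentially the same route as the paper, which likewise identifies the first-family modules, complexifies them (noting they are of real type, so decomposing over $\CC$ and taking real sections is legitimate), runs the $\Lam^2$ and $\Lam^3$ decompositions through {\tt LiE}, and credits Nurowski for this family. One caveat on a side remark only: your appeal to Nagy's Proposition conflates the trivial summand of $\Lam^3\cV_1(\KK)$ (invariant three-forms) with the intersection $(\cV_1(\KK)\otimes\fg)\cap\Lam^3\cV_1(\KK)$ --- these are different spaces in general, as the example $\sG_2\subset\sO(7)$ shows, where an invariant three-form exists even though $\RR^7\not\simeq\fg_2$ --- but since you use this only as a consistency check on the machine output, it does not affect the argument.
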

The $\sSU(3)$ case is special, since the isotropy representation is equivalent to the adjoint one,
$\cV_1(\CC) \simeq \fsu(3).$ In particular, there exists an invariant three-form (spanning the $\RR\subset\Lam^3 \cV_1(\CC)$ subspace) corresponding to the structure constants of $\fsu(3).$ 

\subsection{Second family}
Recall first, that the spaces of two- and three-forms on the second family spaces decompose under $\sU(\cV_2(\KK))$
into
\begin{eqnarray*}
\Lam^2 \cV_2(\KK) &\simeq& \Re \Lam^{1,1} \oplus \Re(\Lam^{2,0} \oplus \Lam^{0,2})  \\
&=& [\RR \oplus \fsu(\cV_2(\KK))] \oplus \fu^\bot(\cV_2(\KK)),
\end{eqnarray*}
\begin{eqnarray*} \Lam^3 \cV_3(\KK) &\simeq& \Re (\Lam^{2,1}\oplus\Lam^{1,2}) \oplus \Re (\Lam^{3,0} \oplus \Lam^{0,3}) \\
&=& [\cW_3 + \cW_4](\cV_2(\KK)) \oplus \cW_1(\cV_2(\KK)), \end{eqnarray*}
where $\cW_1$, $\cW_3$ and $\cW_4$ are 
completely skew analogues of the usual spaces introduced in the Gray-Hervella 
classification of almost-Hermitian structures \cite{gray-1980}.
These are further decomposed when the unitary group is reduced to one of our gropus $G(\KK,\HH).$
\begin{pro} With $V_n$ denoting (a realification of) an $n-$dimensional complex representation, we have the following decompositions into:
\begin{enumerate}
\item $\sU(3)$ irreps:
\begin{eqnarray*}
\fsu(\cV_2(\RR)) &\simeq& \fsu(3) \oplus V_{27} \\
\fu^\bot(\cV_2(\RR)) &\simeq& V_{15} \\
\cW_1(\cV_2(\RR)) &\simeq& V_{10}\oplus \bar V_{10} \\
\cW_3(\cV_2(\RR)) &\simeq& V_3 \oplus V_{15} \oplus V_{24} \oplus V_{42}
\end{eqnarray*}
\item $S(\sU(3)\times\sU(3))$ irreps:
\begin{eqnarray*}
\fsu(\cV_2(\CC)) &\simeq& \{\fsu(3)\oplus\fsu(3)\} \oplus V_{64} \\
\fu^\bot(\cV_2(\CC)) &\simeq& \{V_{18} \oplus V_{18}\} \\
\cW_1(\cV_2(\CC)) &\simeq& \{V_{10}\oplus V_{10}\} \oplus V_{64} \\
\cW_3(\cV_2(\CC)) &\simeq& \cV_2(\CC)\oplus \{V_{18}\oplus V_{18}\} \oplus 
\{ V_{45}\oplus V_{45} \}  \oplus \{ V_{90}\oplus V_{90} \},
\end{eqnarray*}
where in $\{ V_{n} \oplus V_{n} \}$ the first (resp. second) copy of $V_{n},$ being a $\sSU(3)$ irrep, is affected only by the first (resp. second) $\sSU(3)$ in $\sSU(3)\times\sSU(3).$
\item $\sU(6)$ irreps:
\begin{eqnarray*}
\fsu(\cV_2(\HH)) &\simeq& \fsu(6) \oplus V_{189} \\
\fu^\bot(\cV_2(\HH)) &\simeq& V_{105} \\
\cW_1(\cV_2(\HH)) &\simeq& V_{175} \oplus V_{280} \\
\cW_3(\cV_2(\HH)) &\simeq&  V_{21} \oplus \bar V_{105} \oplus V_{384} \oplus V_{1050}
\end{eqnarray*}
\item $\sE_6\cdot\sU(1)$ irreps:
\begin{eqnarray*}
\fsu(\cV_2(\OO)) &\simeq& \fe_6 \oplus V_{650} \\
\fu^\bot(\cV_2(\OO)) &\simeq& V_{351} \\
\cW_1(\cV_2(\OO)) &\simeq& V_{2925} \\
\cW_3(\cV_2(\OO)) &\simeq& \bar V_{351} \oplus V_{1728} \oplus V_{7371}
\end{eqnarray*}

\end{enumerate}
\end{pro}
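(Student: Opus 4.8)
The plan is to reduce every line of the Proposition to a plethysm of the defining module $\cV_2(\KK)$ that can be handed to {\tt LiE}, organizing the computation through the $(p,q)$-bigrading of complex-valued forms from Remark~\ref{not-cpx}. Since each $G(\KK,\HH)$ acts unitarily on $\cV_2(\KK)$, I would work throughout in the complex category and pass to real sections only at the very end. Writing $\V\simeq\cV_2(\KK)$ for the holomorphic module, the bigrading gives
\begin{eqnarray*}
\Lam^2 \cV_2(\KK) &\simeq& \Re\Lam^{1,1}\opluss\Re(\Lam^{2,0}\oplus\Lam^{0,2}), \\
\Lam^3 \cV_2(\KK) &\simeq& \Re(\Lam^{2,1}\oplus\Lam^{1,2})\opluss\Re(\Lam^{3,0}\oplus\Lam^{0,3}),
\end{eqnarray*}
and the four building blocks are identified $G(\KK,\HH)$-equivariantly as $\Lam^{1,1}\simeq\V\otimes\V^*$, $\Lam^{2,0}\simeq\Lam^2\V^*$, $\Lam^{3,0}\simeq\Lam^3\V^*$ and $\Lam^{2,1}\simeq\Lam^2\V^*\otimes\V$ (using $\bar\V^*\simeq\V$ via the hermitian metric $h$). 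The entire statement thus reduces to branching these four explicit modules.

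The second step is to record $\V=\cV_2(\KK)$ as a concrete module of the compact group $\cg_2(\KK)$ (the extra $\fu(1)$ acting as the scalar complex structure $I$), which follows from the identification $\cV_2(\KK)=\CC\otimes\hxk$ together with the description of the $\hxk$ as modules over the reduced structure group: $\V$ is $\Sym^2$ of the standard $\sU(3)$-module (dimension $6$), the external tensor product $\mathbf{3}\boxtimes\overline{\mathbf{3}}$ for $S(\sU(3)\times\sU(3))$ (dimension $9$), $\Lam^2$ of the standard $\sU(6)$-module (dimension $15$), and the minuscule $\mathbf{27}$ for $\sE_6\cdot\sU(1)$ (dimension $27$). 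These highest weights are the only external input; everything else is a mechanical plethysm.

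With $\V$ pinned down I would compute in {\tt LiE} the decompositions of $\V\otimes\V^*$, $\Lam^2\V^*$, $\Lam^3\V^*$ and $\Lam^2\V^*\otimes\V$. The trace and contraction maps organize the output: $\V\otimes\V^*$ yields $\RR\opluss\fsu(\cV_2(\KK))$ and so recovers $\fu(\cV_2(\KK))$, with the adjoint $\fg(\KK,\HH)=\cg_2(\KK)\oplus\fu(1)$ appearing exactly once (a useful internal check against Corollary~\ref{cor-reps}) and its orthogonal complement furnishing the large summands $V_{27}$, $V_{64}$, $V_{189}$, $V_{650}$; the contraction $\Lam^2\V^*\otimes\V\to\V^*$ splits off the $\cW_4$-part (itself $\simeq\cV_2(\KK)$, hence omitted from the list), leaving $\cW_3$. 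On passing to real sections a $\cg_2(\KK)$-irreducible $\V_n$ that is not self-conjugate contributes $V_n\oplus\bar V_n$, which accounts for the paired entries such as $V_{10}\oplus\bar V_{10}$. Matching complex dimensions $\binom{N}{2}$, $\binom{N}{3}$ and $N^2$ against the listed summands then confirms each line.

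The main obstacle is not any single hard computation but the bookkeeping that guarantees the real irreducibles are grouped correctly. One must track the $\sU(1)$-weight (equivalently the behaviour under $I$) to decide which $\cg_2(\KK)$-summands are self-conjugate and which occur in conjugate pairs, and to separate $\Lam^{2,1}$ from $\Lam^{1,2}$; and for $\KK=\OO$ one must use the explicit embedding $\fe_6\oplus\fu(1)\subset\fu(27)\subset\fso(54)$ rather than a reductive-subgroup shortcut, so that the $\sE_6$-plethysms $\Lam^2\mathbf{27}=\mathbf{351}$ and $\Lam^3\mathbf{27}=\mathbf{2925}$ are read with the correct $\sU(1)$-charges. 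Once $\V$ is fixed these ambiguities are resolved purely by dimension and conjugation, and the four lists follow.
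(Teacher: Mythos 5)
Your proposal is correct and follows essentially the same route as the paper: organize $\Lam^2$ and $\Lam^3$ of $\cV_2(\KK)$ by the $(p,q)$-bigrading (the Gray--Hervella-type splitting stated just before the Proposition), feed the resulting plethysms of the explicitly identified modules $\Sym^2\CC^3$, $\mathbf{3}\boxtimes\overline{\mathbf{3}}$, $\Lam^2\CC^6$, $\mathbf{27}$ to {\tt LiE}, and handle realification/conjugation bookkeeping at the end, exploiting that the second-family representations are already unitary. Your write-up merely makes explicit the module identifications and contraction maps that the paper leaves implicit, so it is a faithful (and somewhat more detailed) rendering of the paper's own argument.
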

The space $\cW_4(\cV_2(\KK)) = \cV_2(\KK) \wedge \theta, $
where $\theta_{ab} = i h_{\bar\alpha\beta} - i h_{\bar\beta\alpha},$
is already isomorphic to $\cV_2(\KK)$ and thus irreducible as a $G(\KK,\HH)$-module.

\subsection{Third family}
\begin{pro} With $W_n$ denoting an $n-$dimensional real representation, we have the following decompositions
into:
\begin{enumerate}
\item $\sSp(3)\sSp(1)$ irreps:
\begin{eqnarray*}
\Lam^2 \cV_3(\RR) &\simeq& \fsp(3) \oplus \fsp(1) \oplus W_{84} \oplus W_{270} \\
\Lam^3 \cV_3(\RR) &\simeq& \cV_3(\RR) \oplus W_{56} \oplus W_{128} \oplus W_{432} \oplus W_{1232} \oplus W_{1400}
\end{eqnarray*}
\item $\sSU(6)\sSp(1)$ irreps:
\begin{eqnarray*}
\Lam^2 \cV_3(\CC) &\simeq& \fsu(6) \oplus \fsp(1) \oplus W_{175} \oplus W_{576} \\
\Lam^3 \cV_3(\CC) &\simeq& \cV_3(\CC) \oplus W_{80} \oplus W_{280} \oplus W_{1080} \oplus W_{3920} \oplus W_{4480}
\end{eqnarray*}
\item $\sSO(12)\sSp(1)$ irreps:
\begin{eqnarray*}
\Lam^2 \cV_3(\HH) &\simeq& \fso(12) \oplus \fsp(1) \oplus W_{462} \oplus W_{1485} \\
\Lam^3 \cV_3(\HH) &\simeq& \cV_3(\HH) \oplus W_{128} \oplus W_{704} \oplus W_{3456} \oplus W_{17600} \oplus W_{19712}
\end{eqnarray*}
\item $\sE_7\sSp(1)$ irreps:
\begin{eqnarray*}
\Lam^2 \cV_3(\OO) &\simeq& \fe_7 \oplus \fsp(1) \oplus W_{1463} \oplus W_{4617} \\
\Lam^3 \cV_3(\OO) &\simeq& \cV_3(\OO) \oplus W_{224} \oplus W_{1824} \oplus W_{12960} \oplus W_{102144} \oplus W_{110656}
\end{eqnarray*}

\end{enumerate}\end{pro}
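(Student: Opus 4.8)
Since the statement only records the dimensions of $\cG_3(\KK)\!\cdot\!\sSp(1)$-irreducibles, the plan is to turn it into a branching computation and run it through {\tt LiE}, the genuinely useful input being the product structure of the complexified isotropy module. By Proposition~\ref{pro-isoreps} the extra $\sSp(1)$ acts through the quaternionic structure, one generator of which is the complex structure while the other two are antilinear; complexifying the real $2N$-dimensional module therefore yields
$$ \CC\otimes_\RR \cV_3(\KK) \;\simeq\; V\otimes H, $$
where $H=\CC^2$ is the standard representation of $\sSp(1)$ and $V$ is the complex $N$-dimensional fundamental symplectic representation of $\cG_3(\KK)$ --- concretely the quaternionic $14$ of $\sSp(3)$, the $\Lam^3\CC^6=20$ of $\sSU(6)$, a half-spin $32$ of $\sSO(12)$, and the minuscule $56$ of $\sE_7$. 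Each such $V$ is of quaternionic type, and since $H$ is too, the product $V\otimes H$ is of real type, consistent with $\cV_3(\KK)$ being a real module.

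First I would reduce exterior powers of $V\otimes H$ to plethysms of $V$ alone by the Cauchy formula $\Lam^n(V\otimes H)=\bigoplus_{\lambda\vdash n}S^\lambda V\otimes S^{\lambda'}H$. Because $\dim H=2$, every Schur functor indexed by a partition with more than two rows vanishes, so this collapses to
\begin{eqnarray*}
\Lam^2(V\otimes H) &=& \bigl(\Lam^2 V\otimes\Sym^2 H\bigr)\oplus\Sym^2 V, \\
\Lam^3(V\otimes H) &=& \bigl(S^{(2,1)}V\otimes H\bigr)\oplus\bigl(\Lam^3 V\otimes\Sym^3 H\bigr),
\end{eqnarray*}
using $\Lam^2 H\simeq\CC$, $\Sym^2 H\simeq\fsp(1)$, $S^{(2,1)}H\simeq H$, and $\Sym^3 H$ the four-dimensional $\sSp(1)$-module. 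The invariant symplectic form on $V$ splits a trivial summand off $\Lam^2 V$, which tensored with $\Sym^2 H$ gives the $\fsp(1)$ summand of $\Lam^2\cV_3(\KK)$, and a copy of $V$ off $S^{(2,1)}V$, which tensored with $H$ gives the distinguished $\cV_3(\KK)$ summand of $\Lam^3\cV_3(\KK)$, matching the stated lists.

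Next I would feed the highest weight of $V$ for each of the four groups into {\tt LiE} and decompose the four plethysms $\Lam^2 V$, $\Sym^2 V$, $S^{(2,1)}V$, $\Lam^3 V$ into $\cG_3(\KK)$-irreducibles, then tensor with $\Sym^2 H$, $H$, $\Sym^3 H$ and reassemble into $\cG_3(\KK)\!\times\!\sSp(1)$-modules. The final step is to pass from complex to real representations: I would record the Frobenius--Schur indicator of each factor, noting that a quaternionic $\cG_3(\KK)$-summand tensored with the quaternionic $H$ becomes real, that a real summand tensored with the odd-dimensional (hence real) $\Sym^2 H$ stays real, and that mutually conjugate complex summands must be grouped so that $\Re(U\oplus\bar U)$ is counted once; the real dimension then reads off each $W_n$.

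The main obstacle is computational and clerical rather than conceptual: the modules $V$ are not the standard representations (the $14$ instead of the $6$ of $\sSp(3)$, a spinor of $\sSO(12)$, the $56$ of $\sE_7$), so their symmetric, exterior and mixed-symmetry powers are large, and the reality bookkeeping needed to convert {\tt LiE}'s complex output into the real irreducibles $W_n$ is exactly where slips are most likely to occur. Because the target numbers are rigid, the decisive safeguard is that each decomposition sums to $\binom{2N}{2}$ or $\binom{2N}{3}$ on the nose.
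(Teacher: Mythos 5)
Your proposal is correct, and its computational core is the same as the paper's own (very terse) proof: the paper simply runs the complexified module through {\tt LiE} and then takes real sections, recalling that $\Re(V_n\oplus\bar V_n)$ does not decompose when $V_n$ is complex or quaternionic. What you do differently is to insert the skew Cauchy identity $\Lam^n(V\otimes H)=\bigoplus_{\lambda\vdash n}S^\lambda V\otimes S^{\lambda'}H$ \emph{before} invoking the computer, whereas the paper decomposes $\Lam^2$ and $\Lam^3$ of the full $2N$-dimensional product module directly. Your factorization buys genuine structure that the paper leaves implicit: the computation reduces to plethysms of the single module $V$ (your identification of $V$ as the quaternionic $14$ of $\sSp(3)$, $\Lam^3\CC^6$ of $\sSU(6)$, a half-spin of $\sSO(12)$ and the $56$ of $\sE_7$ is correct); the $\fsp(1)$ summand of $\Lam^2$ and the $\cV_3(\KK)$ summand of $\Lam^3$ are forced a priori by the symplectic contractions $\Lam^2 V\supset\CC$ and $S^{(2,1)}V\supset V$, in agreement with the paper's closing remark that $\cV_3(\KK)\subset\Lam^3\cV_3(\KK)$ via the invariant four-form $I\wedge I+J\wedge J+K\wedge K$; and the Frobenius--Schur bookkeeping becomes mechanical (quaternionic $\otimes$ quaternionic real, real $\otimes$ real real, conjugate complex pairs grouped).

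Moreover, your ``decisive safeguard'' is not idle: it detects an error in the proposition as stated. In item 2 the second-family line sums to $35+3+175+576=789$, whereas $\dim\Lam^2\cV_3(\CC)=\binom{40}{2}=780$. The last summand should be $W_{567}$: it is the real form of $\Lam^2_0 V\otimes\Sym^2 H$, where $\Lam^2_0(\Lam^3\CC^6)$ is the irreducible $189$ of $\sSU(6)$ (the kernel of the symplectic contraction), so its real dimension is $3\cdot 189=567$ and indeed $35+3+175+567=780$. The remaining seven lines of the proposition do sum to $\binom{2N}{2}$ or $\binom{2N}{3}$ as they should, so this is an isolated misprint which your method corrects.
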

Note that one always has $\cV_3(\KK) \subset \Lam^3 \cV_3(\KK)$ with the intertwiner given
by the natural $\sSp(\cV_3(\KK))\sSp(1)$-invariant 4-form $I\wedge I + J\wedge J + K\wedge K.$

\subsection{A classification}
Let $M$ be a $\fg(\KK,\KK')$-geometry. Then the
 decomposition of $\Lam^3 \cV_n(\KK)$ 
 into $G(\KK,\KK')$-irreducibles
 $$ \Lam^3 \cV_n(\KK) = \bigoplus_{r=1}^s W^{(r)} $$
 (where  $n=1,2,3$ for $\KK'$ being, respectively, $\CC,\HH,\OO$)
 gives rise to a decomposition of the bundle of three-forms 
 $$ \Lam^3 TM = \bigoplus_{r=1}^s \mathcal{T}^{(r)} $$
 into subbundles such that in an adapted frame $e_x : \cV_n(\KK) \to T_x M$ one has $e_x(W^{(r)}) = \mathcal{T}^{(r)}_x.$ As the spaces $W^{(r)}$ are $G(\KK,\KK')$-invariant, the latter decomposition does
 not depend on the choice of a frame.

Assume now that $M$ admits a characteristic connection, with a characteristic torsion
$T^c \in \Omega^0(M,\Lam^3 TM).$ It follows that $M$ falls into one of $2^s$ classes, numbered by 
$$ t(M) = \sum_{r=1}^s 2^{r-1} t_r(M),\quad t_r(M)=\left\lbrace\begin{matrix}
\ 0 &\ & \pr_{\mathcal{T}^{(r)}} T^c = 0 \\
\ 1 &\ & \textrm{otherwise}
\end{matrix}\right.$$
This way we obtain: 
\begin{itemize}
\item 4 classes of $\fg(\RR,\CC)$-geometries with characteristic torsion.
\item 16 classes of $\fg(\HH,\CC)$-geometries with characteristic torsion.
\item 8 classes of $\fg(\OO,\CC)$-geometries with characteristic torsion.
\item 128 classes of $\fg(\KK,\HH)$-geometries with characteristic torsion, where $\KK=\RR,\CC,\HH.$
\item 32 classes of $\fg(\OO,\HH)$-geometries with characteristic torsion.
\item 64 classes of $\fg(\KK,\OO)$-geometries with characteristic torsion.
\end{itemize}
A connection with skew torsion is not unique for a $\fg(\CC,\CC)$-geometry, due to the one-dimensional
subspace
$\RR \subset \Lam^3 \cV_1(\CC)$ which is also contained in $\cV_1(\CC) \otimes \fg(\CC,\CC).$ There is
thus no genuine characteristic torsion -- instead, one has a one-parameter family of connections
whose torsion differs by a section of \emph{the} one-dimensional $G(\CC,\CC)-$invariant subbundle $\mathcal{T}^{(4)} \subset \Lam^3 TM.$
(Nurowski \cite{nurowski-2006} introduces the notion of restricted nearly-integrability to rule this subbundle out).  Anyway, we can still perform analogous classification, projecting the skew torsion
of any compatible connection onto the complement of $\mathcal{T}^{(4)}$. We thus have in addition:
\begin{itemize}
\item 8 classes of $\fg(\CC,\CC)$-geometries admitting a skew-torsion compatible connection.
\end{itemize}

\section{Locally reductive $\fg(\KK,\KK')$-geometries}
While symmetric spaces provide integrable models for $\fg(\KK,\KK')$-geometries, it is the reductive spaces that give homogeneous examples with nontrivial characteristic torsion (cf. \cite{agricola-2002,kndg},). We will first show how
certain locally 
reductive spaces become equipped with a $G(\KK,\KK')$-structure, and how such reductive spaces
can be obtained from the symmetric models. Then, we shall perform a construction of such
spaces at the Lie-algebraic level, having previously shown how to extend results derived for a
single pair $(\KK,\KK')$ onto all the `larger' cases.

\subsection{The existence theorem}
In what follows, we construct a manifold equipped with a $G$-structure from adequate Lie-algebraic data.
\begin{lem}\label{lem-red0}
Let $G$ be a subgroup of $\sO(m)$ with $\fg\subset\fso(m)$ its Lie algebra. Let $\fk$ be a Lie algebra admitting a reductive, non-symmetric decomposition
$$ \fk= \fh \oplus \fl$$
$$ [\fh,\fh]\subset\fh,\quad[\fh,\fl]\subset\fl,\quad \pr_\fl[\fl,\fl]\neq 0 $$
such that $\fl$ possesses a $\fk$-invariant scalar product. Let there moreover exist a Lie algebra
monomorphism $\varphi:\fh\to \fg$ and an orthogonal isomorphism $\psi:\fl\to\RR^m$ satisfying
$$ \varphi(A)\circ\psi = \psi \circ \ad_A $$
for each $A\in\fh.$

Then there exists an $m$-dimensional manifold equipped with a $G$-structure admitting a compatible connection with nontrivial skew torsion.
\end{lem}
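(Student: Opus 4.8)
The plan is to realize the desired manifold as a (locally) reductive homogeneous space $M=K/H$ and to equip it with its canonical connection. First I would integrate $\fk$ to a Lie group $K$ and $\fh$ to the connected subgroup $H\subset K$; where $H$ fails to be closed I would pass to a sufficiently small $H$-invariant neighbourhood of the base point so that the quotient is a genuine $m$-dimensional manifold (this is the only place the word ``locally'' enters). The reductive decomposition $\fk=\fh\oplus\fl$ identifies the tangent space $T_oM$ at the base point $o=eH$ with $\fl$, the linear isotropy action of $\fh$ being $\ad|_\fl$. Since the scalar product on $\fl$ is $\ad_\fh$-invariant (a consequence of its $\fk$-invariance), it transports to a unique $K$-invariant Riemannian metric $g$ on $M$, and the orthogonal isomorphism $\psi:\fl\to\RR^m$ turns the canonical identification $\RR^m\to T_oM$ into an orthonormal frame $u_0$.

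Next I would produce the $G$-structure. The hypothesis $\varphi(A)\circ\psi=\psi\circ\ad_A$ says precisely that $\psi\circ\ad_A\circ\psi^{-1}=\varphi(A)\in\fg$ for $A\in\fh$; exponentiating, the linear isotropy representation $\rho:H\to\sO(m)$, $\rho(h)=\psi\circ\Ad(h)|_\fl\circ\psi^{-1}$, takes values in (the identity component of) $G$. Writing the orthonormal frame bundle of $(M,g)$ as the associated bundle $P=K\times_\rho\sO(m)$, the inclusion $G\hookrightarrow\sO(m)$ then yields the reduction $Q=K\times_\rho G\subset P$, a principal $G$-subbundle, i.e. a $G$-structure. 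The canonical connection of the reductive homogeneous space is the principal connection on $K\to M$ whose connection form is $\pr_\fh$ applied to the Maurer--Cartan form; because its structure-group action is exactly $\rho$, it restricts to a connection on $Q$ and hence induces a metric connection $\nabla^{can}$ on $TM$ compatible with the $G$-structure (all $K$-invariant tensors, in particular $g$, being $\nabla^{can}$-parallel).

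It remains to analyse the torsion. By the standard formula for the canonical connection, its torsion at $o$ is $T(X,Y)=-\pr_\fl[X,Y]$ for $X,Y\in\fl$, so $T$ is nonzero exactly because $\pr_\fl[\fl,\fl]\neq 0$, and by $K$-invariance it is nonzero everywhere. To see that $T$ is totally skew I would read the $\fk$-invariance of the scalar product as the naturally reductive condition $\langle\pr_\fl[X,Y],Z\rangle+\langle Y,\pr_\fl[X,Z]\rangle=0$ for $X,Y,Z\in\fl$ (equivalently, the restriction to $\fl$ of an $\ad_\fk$-invariant form with $\fl\perp\fh$); then $g(T(X,Y),Z)=-\langle\pr_\fl[X,Y],Z\rangle$ is antisymmetric in $X,Y$ trivially and in $Y,Z$ by this identity, hence completely skew. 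Thus $\nabla^{can}$ is a compatible connection with nontrivial skew torsion, as required.

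The main obstacle is the correct bookkeeping of ``$\fk$-invariance'': everything hinges on reading it strongly enough to force natural reductivity, and hence skewness of $T$, while still demanding only $\ad_\fh$-invariance for the metric to descend to $M$. The secondary technical point is the possible non-closedness of $H$, handled by the local reduction to an honest manifold; neither issue affects the nonvanishing of the torsion, which is guaranteed pointwise by the hypothesis $\pr_\fl[\fl,\fl]\neq 0$ together with homogeneity.
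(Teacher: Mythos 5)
Your homogeneous-space construction contains the right algebra and geometry, and it is essentially a global repackaging of the paper's argument: the canonical-connection torsion formula $T(X,Y)=-\pr_\fl[X,Y]$ at the base point, skewness obtained by reading the $\fk$-invariance of the scalar product as the naturally reductive identity $\langle\pr_\fl[X,Y],Z\rangle=-\langle Y,\pr_\fl[X,Z]\rangle$, and nonvanishing from $\pr_\fl[\fl,\fl]\neq 0$ are exactly the ingredients the paper uses; likewise your reduction of the orthonormal frame bundle via the linear isotropy representation $\rho(h)=\psi\circ\Ad(h)|_\fl\circ\psi^{-1}\in G$ is a correct global version of the paper's construction of the $G$-structure from an adapted coframe.

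The genuine gap is precisely the point you dispose of in one clause: the case where the connected subgroup $H$ with Lie algebra $\fh$ is not closed in $K$. Passing to ``a sufficiently small $H$-invariant neighbourhood of the base point'' does not repair this. An $H$-invariant neighbourhood $U\subset K$ is a union of whole cosets $gH$; if $H$ is not closed, these cosets are dense in the cosets of the closure $\bar H$, so the quotient $U/H$ is non-Hausdorff (no saturated open set can separate $eH$ from $gH$ for $g\in(\bar H\setminus H)\cap U$), and shrinking $U$ within the class of invariant sets does not help -- think of the irrational line in a torus. This is exactly the trap the paper flags (``it is not trivial to guarantee that the subgroup is closed'') and avoids by never forming $K/H$: it integrates the left-invariant distribution $\fh^L$ (integrable because $\fh$ is a subalgebra) to a foliation of a small, \emph{non-invariant} neighbourhood $Q_0$ of $e\in K$, takes $M$ to be the local leaf space, chooses a section $\sigma_0:M\to Q_0$, and splits the pulled-back Maurer--Cartan form into $\underline\theta=\psi\circ\pr_\fl\,\sigma_0^*\vartheta_{MC}$ (an orthonormal coframe, hence a metric and a $G$-structure) and $\underline\omega=\varphi\circ\pr_\fh\,\sigma_0^*\vartheta_{MC}$ (a compatible connection form), the structure equation then yielding the same skew, nonvanishing torsion. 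Your argument is complete as written only under the extra assumption that $H$ is closed (which does hold in the paper's later applications, where $\fh$ is compact semisimple); to prove the lemma as stated you need the foliation/leaf-space construction or an equivalent local substitute for the quotient.
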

\begin{proof}
Let $K$ be a Lie group with Lie alebra $\fk$ (for example the simply-connected one). While
one is tempted to produce a subgroup from $\fh\subset\fk$ and form a reductive homogeneous space by
taking a quotient, it is not trivial to guarantee that the subgroup is closed. Instead, we shall perform
a local construction.

Let $\fh^L\subset TK$ be the left-invariant distribution such that $\fh^L(g) = (T_e L_g)\fh$ 
for each $g\in K.$ Since $\fh$ is a subalgebra in $\fk,$ it follows that $\fh^L$ is integrable
and there exists a neighbourhood $Q_0\subset K$ of identity, such that $\fh^L$ gives rise to a foliation
of $Q_0,$ together with a projection $\pi_0 : Q_0\to M$ onto the space $M$ of leaves, a (smooth) manifold.

Possibly choosing smaller $Q_0,$ consider a global section $\sigma_0 : M\to Q_0\subset K,$ i.e.
a map satisfying $\pi_0\circ\sigma_0 = \id_M.$ The Maurer-Cartan form $\vartheta_{MC}\in\Omega^1(K)\otimes\fk$ pulls back to
$$ \vartheta = \sigma_0^* \vartheta_{MC} \in \Omega^1(M)\otimes\fk, $$
and the latter is further decomposed into
$$\underline\theta\in\Omega^1(M)\otimes\RR^m,\quad \underline\omega\in\Omega^1(M)\otimes\fg$$
$$ \underline\theta = \psi\circ\pr_\fl\vartheta,\quad\underline\omega=\varphi\circ\pr_\fh\vartheta.$$

Moreover $\underline\theta(\pi_0(e)): T_{\pi_0(e)}M \to \RR^m$ is an isomorphism and extends to
a coframe on entire $M$. The structure equation $d\vartheta_{MC} + \frac{1}{2}[\vartheta_{MC},\vartheta_{MC}] = 0$ pulls back to
\begin{eqnarray*}
\underline\Omega(X,Y) &=& -(\varphi\circ\pr_\fh)[\psi^{-1}\underline\theta(X),\psi^{-1}\underline\theta(Y)] \\
\underline\Theta(X,Y) &=& -(\psi\circ\pr_\fl)[\psi^{-1}\underline\theta(X),\psi^{-1}\underline\theta(Y)]
\end{eqnarray*}
for $X,Y\in T_xM$, where $\underline\Omega=d\underline\omega+\underline\omega\wedge\underline\omega$
and $\underline\Theta=d\underline\theta+\underline\omega\wedge\underline\theta.$ Since $\psi$ is
orthogonal, it follows that
\begin{equation}\label{eq-pi}
\langle\underline\Theta(X,Y),Z\rangle = -\langle\underline\Theta(X,Z),Y\rangle.
\end{equation}

Considering $\underline\theta$ as an adapted orthogonal coframe on $M$, we equip the latter with
a compatible metric, and a $G$-structure $\pi:Q\to M$. The fibre over $x$ of
the latter consists of all frames $e_x:\RR^m\to T_x M$ such that there exists $g\in G$ such
that $e_x^{-1}(g\underline\theta(X))=X$ for all $X\in T_xM.$

The frame dual to $\underline\theta$ is a section $\sigma:M\to Q$ with $\sigma^*\theta=\underline\theta$,
where $\theta$ is the soldering form on $Q$. Considering
$$ \Gamma^\omega\in\Omega^1(M,\fg_M),\quad \Gamma^\omega(X)=\sigma_x(\underline\omega(X))\quad\textrm{for}\ X\in T_xM$$
as a connection form relative to the frame $\sigma,$ we obtain a $Q$-compatible connection $\nabla^\omega$ in the tangent bundle, whose torsion tensor
$$ T^\omega\in\Omega^2(M,TM),\quad T^\omega(X,Y)=\sigma_x(\underline\Theta(X,Y))\quad\textrm{for}\ X,Y\in T_x M$$
is completely skew due to (\ref{eq-pi}). Moreover, it is nontrivial, since $\pr_\fl[\fl,\fl]\neq0.$
\end{proof}

The latter lemma reduces the problem of producing locally reductive $\fg(\KK,\KK')$-geometries
with characteristic torsion
to an algebraic one. This in turn can be first dealt with on a Lie-algebraic level. 
Recall first the notation introduced when decomposing the Magic Square algebras into symmetric pairs,
Subsection \ref{ss-symd} of the previous Chapter.
The idea 
is, being given an original symmetric decomposition $\fm(\KK,\KK')= \fg(\KK,\KK') \oplus V(\KK,\KK'),$ to produce a reductive
pair by reducing $\fg(\KK,\KK')$ to a subalgebra $\fh$ and twisting
the $[V(\KK,\KK'),V(\KK,\KK')]$ bracket. 

The following proposition performs such a construction
whenever the decomposition of $V(\KK,\KK')$ into $\fh$-irreducibles includes $\fh$ itself.

\begin{lem} \label{lem-red}
Let $\fm = \fg \oplus V$ be a symmetric pair, with $\fm$ semisimple.
Let $\fh$ be a subalgebra of $\fg$
such that there exists an orthogonal (with respect to the Killing form) map
$$ B : \fh \to V $$
equivariant under the adjoint action of $\fh.$

Let us equip the space $\fk = \fh\oplus V$ with the following bracket:
\begin{eqnarray*}
[A,A']_\fk &=& [A,A'] \\ {}
[A,X]_\fk &=& [A,X] \\ {}
[X,Y]_\fk &=& \pr_\fh[X,Y] \\ 
&+& B(\pr_\fh[X,Y]) + [B^*(X),Y] - [B^*(Y),X] \\&-& B([B^*(X),B^*(Y)])
\end{eqnarray*}
for $A,A'\in\fh$ and $X,Y\in V,$
where the commutators on the r.h.s are those in $\fm$ and $B^*$ is the adjoint of $B$
with respect to the Killing form.

Then $\fk$ becomes a Lie algebra, and 
the decomposition $\fk=\fh\oplus V$ is reductive pair. Moreover,
$\fk$ posesses an invariant nondegenerate quadratic form which,
restricted to $V,$ coincides with the Killing form of $\fm.$
\end{lem}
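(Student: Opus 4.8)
The plan is to verify directly that the trilinear bracket $[\cdot,\cdot]_\fk$ is a Lie bracket, and then to exhibit the invariant form. Antisymmetry is immediate from the defining formula: each of the four terms in the $V$-component of $[X,Y]_\fk$ is manifestly antisymmetric in $X,Y$, as are the remaining brackets. The reductive conditions $[\fh,\fh]_\fk=[\fh,\fh]\subset\fh$ and $[\fh,V]_\fk=[\fh,V]\subset V$ hold because on those slots the new bracket agrees with that of $\fm$, and $\fh$ is a subalgebra of $\fg$ acting on the module $V$; so the reductive decomposition is immediate and only the Jacobi identity and the form require argument.

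For the Jacobi identity I would organize the check by the number of arguments lying in $V$. With no or one $V$-argument, every bracket that appears coincides with the corresponding bracket of $\fm$ (a bracket of two $\fh$-elements stays in $\fh$, a bracket of an $\fh$- and a $V$-element is the $\fm$-action), so the cyclic sum collapses to the Jacobi identity of $\fm$ restricted to the subalgebra--module structure $\fh\oplus V$. With two $V$-arguments $(A,X,Y)$, $A\in\fh$, the Jacobi identity is exactly the statement that $\ad_A$ is a derivation of $[\cdot,\cdot]_\fk$; since $\ad_A$ acts through the $\fm$-action and each building block $\pr_\fh$, $B$, $B^*$ of the bracket is $\fh$-equivariant, $\ad_A$ distributes over the whole expression and the identity follows formally.

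The genuine work, and the main obstacle, is the all-$V$ case $(X,Y,Z)$. Here I would expand $[[X,Y]_\fk,Z]_\fk$ into its $\fh$-part contribution (fed through the $\fm$-action on $V$) and its $V$-part contribution (fed through the full $[V,V]_\fk$ formula), and then form the cyclic sum. Closing this requires the Jacobi identity of $\fm$ on the relevant triples (using $[V,V]\subset\fg$, $[\fg,V]\subset V$), the $\fh$-equivariance of $B$, $B^*$ and $\pr_\fh$, the isometry relation $B^*B=\id_\fh$ coming from orthogonality of $B$, and $\ad$-invariance of the Killing form. I expect the terms to cancel in cyclically matched groups; conceptually, the correction terms $[B^*X,Y]-[B^*Y,X]$ and $-B([B^*X,B^*Y])$ are present precisely to compensate for the part $\pr_{\fh^\perp}[X,Y]$ discarded in passing from $\fm$ to $\fk$, which is why the naive projected bracket $\pr_\fh[X,Y]$ alone would fail Jacobi. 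This is the step I expect to consume most of the computation.

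Finally, for the invariant form I would take $\langle\cdot,\cdot\rangle_\fk := \mathcal{K}|_\fh \oplus \mathcal{K}|_V$ with $\fh\perp V$, where $\mathcal{K}$ is the Killing form of $\fm$; by construction it restricts to $\mathcal{K}|_V$ on $V$, and it is nondegenerate because $\fm$ is semisimple and the symmetric decomposition $\fg\perp V$ is $\mathcal{K}$-orthogonal (nondegeneracy of $\mathcal{K}|_\fh$ is already implicit, being needed to define $\pr_\fh$ and $B^*$). Its $\ad_\fk$-invariance I would check by the same case split: the $\fh$-slot and one-$V$-argument cases are immediate from $\ad$-invariance of $\mathcal{K}$ together with $\fh$-equivariance, while the all-$V$ case reduces, after using the adjointness $\mathcal{K}(B(A),X)=\mathcal{K}(A,B^*(X))$ and invariance of $\mathcal{K}$, to a clean cancellation in which the four correction terms pair off. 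I expect this last verification to be short, so the form is the easy half and the all-$V$ Jacobi identity the hard core of the proof.
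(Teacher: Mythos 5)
Your proof architecture is the same as the paper's: split the Jacobi identity according to how many arguments lie in $V$, dispatch every case except the all-$V$ one through the subalgebra/module structure and the $\fh$-equivariance of $\pr_\fh$, $B$ and $B^*$, and exhibit a block-diagonal invariant form at the end. Those parts of your proposal are correct, and they correspond exactly to cases 1--3 of the paper's proof, which are likewise dismissed by equivariance.

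The genuine gap is that the all-$V$ Jacobi identity --- which is the entire content of the lemma, and occupies essentially the whole of the paper's proof as a term-by-term index computation --- is never carried out; you only record the expectation that the terms ``cancel in cyclically matched groups.'' They do not, for the bracket exactly as printed, which is why this step cannot be waved through. Set $W=B(\fh)$ and note two consequences of invariance of the Killing form $m$ and equivariance of $B$: the algebra $\fh$ preserves $W^{\perp}$, and for $U\in V$, $c\in\fh$,
$$\pr_\fh[U,B(c)]=[B^*(U),c],\qquad\text{since}\quad m([U,B(c)],A)=-m(B^*U,[A,c])=m([B^*U,c],A)\ \ \text{for all}\ A\in\fh,$$
so in particular $\pr_\fh[W^\perp,W]=0$. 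Now take $X,Y,Z\in W^{\perp}$, so that $B^*X=B^*Y=B^*Z=0$. The printed bracket gives $[X,Y]_\fk=\pr_\fh[X,Y]+B(\pr_\fh[X,Y])$, and using the two facts above together with $B^*B=\id_\fh$ one finds $[B(\pr_\fh[X,Y]),Z]_\fk=[\pr_\fh[X,Y],Z]$, hence $[[X,Y]_\fk,Z]_\fk=2\,[\pr_\fh[X,Y],Z]$. The cyclic sum is therefore
$$2\sum_{\mathrm{cyc}}[\pr_\fh[X,Y],Z]\;=\;-2\sum_{\mathrm{cyc}}[\pr_{\fh^\perp\cap\fg}[X,Y],Z],$$
which has no reason to vanish; similarly, taking $X,Y\in W^\perp$ and $Z=B(c)$ leaves the residue $2\,B([\pr_\fh[X,Y],c])$, which is provably nonzero under the hypotheses of Theorem \ref{thm-red} ($\fh$ simple, acting nontrivially on $W^\perp$). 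The Jacobi identity holds only when the $\fh$-component enters with the opposite sign, $-\pr_\fh[X,Y]$, and that is in fact what the paper's computation establishes: in its index conventions the $\fm$-bracket is declared as $[X,Y]_a=-\R_{aij}X^iY^j$, while the $\fk$-bracket is assigned $c_{\alpha ij}=+\R_{\alpha ij}$, i.e.\ the $\fh$-component opposite to the one in the statement. So an honest execution of your plan would end in a leftover term, not a cancellation; detecting and repairing that relative sign is precisely the work your proposal defers, and it is why ``all the needed ingredients are available'' is not a proof here.

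The same coupling undercuts your easy half. Your mixed-case computation is right as a piece of algebra: an invariant block-diagonal form forces the sign of $k|_\fh$ to agree with the sign of the $\fh$-component of $[V,V]_\fk$, and this is why the printed bracket leads you to $k=m|_\fh\oplus m|_V$. But for the bracket that actually satisfies Jacobi, the same computation forces $k|_\fh=-m|_\fh$, which is exactly the form the paper writes down. In short, both halves of your proposal are consistent with each other and with the statement as printed, but both change once the decisive all-$V$ computation --- the step you postponed --- is actually performed.
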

The proof is by an explicit verification of the Jacobi identities, and can be found
in Section \ref{sec-xprf}.
We finally arrive at the following result, which we shall use in the next section.
\begin{thm}\label{thm-red} Let $\fh\subset\fg(\KK,\KK')$ be a simple subalgebra 
and $W \subset V(\KK,\KK')$
a $\fh$-submodule equivalent to $\fh.$
Let moreover the action of $\fh$ on 
the orthogonal complement of $W$ be nontrivial.

Then there exists a $\fg(\KK,\KK')$-geometry admitting a compatible connection with nonvanishing
skew torsion.
\end{thm}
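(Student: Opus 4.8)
The plan is to reduce Theorem~\ref{thm-red} to the hypotheses of Lemma~\ref{lem-red0} by way of Lemma~\ref{lem-red}. First I would observe that the subalgebra $\fh\subset\fg(\KK,\KK')$ together with the submodule $W\subset V(\KK,\KK')$ furnishes exactly the data needed to apply Lemma~\ref{lem-red}. The symmetric pair in question is $\fm(\KK,\KK') = \fg(\KK,\KK')\oplus V(\KK,\KK')$, which is semisimple since each entry of the Magic Square is a compact (hence reductive, and in all the relevant cases semisimple) Lie algebra. The hypothesis that $W$ is an $\fh$-submodule of $V(\KK,\KK')$ \emph{equivalent} to $\fh$ (as an $\fh$-module, via the adjoint action) provides, by Schur's lemma, an $\fh$-equivariant isomorphism $\fh\to W\subset V(\KK,\KK')$; composing with the inclusion gives an $\fh$-equivariant map $B:\fh\to V(\KK,\KK')$. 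Since $\fh$ is simple, its adjoint representation is irreducible, so $B$ may be rescaled to be orthogonal (with respect to the Killing form) onto its image, as required by Lemma~\ref{lem-red}.

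With $B$ in hand, Lemma~\ref{lem-red} produces a Lie algebra structure on $\fk = \fh\oplus V(\KK,\KK')$ for which $\fk = \fh\oplus V(\KK,\KK')$ is a reductive pair carrying an invariant nondegenerate quadratic form whose restriction to $V(\KK,\KK')$ is the Killing form of $\fm(\KK,\KK')$. I would then verify that this decomposition is genuinely \emph{non-symmetric}, i.e. $\pr_{V(\KK,\KK')}[X,Y]_\fk \neq 0$ for some $X,Y\in V(\KK,\KK')$. This is precisely the point where the twisting terms $B(\pr_\fh[X,Y]) + [B^*(X),Y] - [B^*(Y),X] - B([B^*(X),B^*(Y)])$ in the bracket of Lemma~\ref{lem-red} matter: in the original symmetric pair $[V(\KK,\KK'),V(\KK,\KK')]\subset\fg(\KK,\KK')$, so the $V$-component is entirely generated by these extra terms, which are nonzero as soon as $B\neq 0$ and the action of $\fh$ on $W^\bot$ is nontrivial. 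This nontriviality of the $\fh$-action on the orthogonal complement of $W$ is exactly the remaining hypothesis of the theorem, and it is what prevents the twisted bracket from degenerating back to a symmetric one.

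Next I would set $\fl = V(\KK,\KK')$, let $G = G(\KK,\KK')\subset\sO(2N)$ (or the appropriate orthogonal group) with Lie algebra $\fg = \fg(\KK,\KK')$, and identify the monomorphism $\varphi:\fh\to\fg(\KK,\KK')$ with the given inclusion and the orthogonal isomorphism $\psi:\fl\to\cV_n(\KK)\simeq\RR^m$ with the identity on $V(\KK,\KK')\simeq\cV_n(\KK)$. The compatibility relation $\varphi(A)\circ\psi = \psi\circ\ad_A$ for $A\in\fh$ then holds because the action of $\fg(\KK,\KK')$ on $V(\KK,\KK')$ is by definition the adjoint (isotropy) representation, restricted to $\fh$. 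The $\fk$-invariant scalar product on $\fl$ required by Lemma~\ref{lem-red0} is supplied by the invariant quadratic form of Lemma~\ref{lem-red}, which is positive definite on $V(\KK,\KK')$ since the symmetric space is of compact type. Lemma~\ref{lem-red0} then delivers an $m$-dimensional manifold carrying a $G(\KK,\KK')$-structure with a compatible connection whose skew torsion is nontrivial, and this $G(\KK,\KK')$-structure is exactly (a connected component of the one underlying) a $\fg(\KK,\KK')$-geometry in the sense of Section~\ref{sec-kkgeo}.

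The main obstacle I expect is not in assembling the implications---those are essentially bookkeeping once the two lemmas are available---but in confirming that the twisted bracket really yields a \emph{non-symmetric} reductive pair, i.e. that $\pr_\fl[\fl,\fl]_\fk\neq 0$. One must check that the cubic and quadratic expressions in $B$ and $B^*$ do not conspire to vanish identically; here the assumption that $\fh$ acts nontrivially on $W^\bot$ is crucial, and I would make the argument precise by evaluating $\pr_{V}[X,Y]_\fk$ on suitable elements---for instance taking $X = B(A)$, $Y\in W^\bot$ with $[\,A,Y\,]\neq 0$---to exhibit a nonzero $V$-component directly. A secondary point worth a remark is that Lemma~\ref{lem-red0} is a \emph{local} construction (the subgroup integrating $\fh$ need not be closed), so the conclusion is properly a \emph{locally} reductive example; this matches the theorem's phrasing and requires no simple-connectedness or closedness hypotheses.
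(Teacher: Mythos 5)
Your proposal is correct and follows essentially the same route as the paper: apply Lemma~\ref{lem-red} to $\fm(\KK,\KK')=\fg(\KK,\KK')\oplus V(\KK,\KK')$ with $B$ the (Schur-rescaled, hence orthogonal) intertwiner $\fh\to W$, verify that the twisted reductive pair is non-symmetric using the nontrivial $\fh$-action on $W^\bot$, then feed the result into Lemma~\ref{lem-red0} and transport the defining tensor via an adapted frame. The only cosmetic difference is the non-symmetry witness: the paper takes both $X,Y\in W^\bot$, where $B^*(X)=B^*(Y)=0$ gives $\pr_{V(\KK,\KK')}[X,Y]_\fk=(B\circ\pr_\fh)[X,Y]\neq 0$, while you take $X=B(A)$, $Y\in W^\bot$ with $[A,Y]\neq 0$, so that the $W^\bot$-component $[A,Y]$ of $\pr_{V(\KK,\KK')}[X,Y]_\fk$ is nonzero --- both choices work.
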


\begin{proof}
We first apply Lemma \ref{lem-red} to the symmetric decomposition $\fm(\KK,\KK')
= \fg(\KK,\KK') \oplus V(\KK,\KK')$ and the subalgebra $\fh$ with the map $B$ being
the intertwiner between $\fh$ and $W$ (due to semisimplicity both $\fh$ and $W$ are
irreducible, and thus $B$ is automatically orthogonal).

This way we obtained a reductive pair $\fh \oplus V(\KK,\KK')=\fk$ 
such that the action on $\fh$ on $V(\KK,\KK')$ in $\fk$ is the same as in $\fm(\KK,\KK').$
Moreover, for $X,Y\in W^\bot$ (the orthogonal complement of $W$), we have
$$ \pr_{V(\KK,\KK')} [X,Y]_{\fk} = (B\circ\pr_\fh) [X,Y]. $$
Then, since the action on $\fh$ on $W^\bot$ is nontrivial, there exist such 
$X,Y\in W^\bot$ that 
$ \pr_{V(\KK,\KK')} [X,Y]_{\fk} \neq 0. $ It thus follows that the reductive pair we have obtained
is not symmetric.

We can finally apply Lemma \ref{lem-red0} to obtain a Riemannian manifold $(M,g)$ equipped
with a $G(\KK,\KK')$-structure admitting a compatible connection with nontrivial skew torsion. Then,
choosing an adapted frame $e$, we can equip $M$ wich a corresponding tensor $\YM(x) = e_x(\Y)$,
where $\Y$ is one of $\Upsilon,\Xi,\mho,$ depending on $\KK'$. Clearly, $\YM$ does not depend on the 
choice of a frame.
\end{proof}

\subsection{A generalizing proposition}
We are now going to
give several examples of subalgebras $\fh\subset\fg(\KK,\KK')$ satisfying the
conditions of Theorem \ref{thm-red}. While we tried to keep previous sections
of the present chapter possibly independent of the Jordan-algebraic constructions,
here we will need the structures introduced in Sections \ref{sec-tits} and \ref{sec-fts} of
the previous chapter.

It useful to note that an example for a
pair $(\KK,\KK')$ gives rise to examples for all `further' such pairs, with the ordering
given by inclusions:
\begin{pro} \label{pro-incl}
Let $\tilde\KK\supset\KK$ and $\tilde\KK'\supset\KK'$ where $\KK,\tilde\KK$ are chosen
from $\RR,\CC,\HH,\OO$ and $\KK',\tilde\KK'$ from $\CC,\HH,\OO.$ Assume there is a subalgebra $\fh\subset\fg(\KK,\KK')$ and a subspace $W\subset V(\KK,\KK')$
satisfying the conditions of Theorem \ref{thm-red}. 

Then there exist natural inclusions $\fh\hookrightarrow\fg(\tilde\KK,\tilde\KK')$ and $W\hookrightarrow V(\KK,\KK')$ such that their images satisfy the conditions of Theorem \ref{thm-red} for $\fg(\tilde\KK,\tilde\KK')$-geometries.
\end{pro}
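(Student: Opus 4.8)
The plan is to reduce everything to a single \emph{elementary} enlargement of the pair $(\KK,\KK')$---either replacing $\KK$ by the next algebra in the chain $\RR\subset\CC\subset\HH\subset\OO$ with $\KK'$ fixed (move~A), or replacing $\KK'$ by the next one with $\KK$ fixed (move~B)---and to build, for each such move, a \emph{homomorphism of isotropy data}: an orthogonal linear injection $\iota_V\colon V(\KK,\KK')\hookrightarrow V(\tilde\KK,\tilde\KK')$ together with an injective Lie-algebra homomorphism $\iota_\fg$, defined at least on $\fh$, such that $\iota_V(A(X))=\iota_\fg(A)(\iota_V(X))$ for $A\in\fh$, $X\in V(\KK,\KK')$. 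A general pair of inclusions $\KK\subset\tilde\KK$, $\KK'\subset\tilde\KK'$ is a composite of elementary moves, so composing the corresponding $\iota$'s settles the Proposition once each elementary case is in hand.

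Given such an $\iota$, I would verify the three hypotheses of Theorem~\ref{thm-red} for the images abstractly. Since $\iota_\fg$ is an injective homomorphism, $\iota_\fg(\fh)$ is a subalgebra of $\fg(\tilde\KK,\tilde\KK')$ isomorphic to $\fh$, hence simple. Equivariance makes $\iota_V(W)$ an $\iota_\fg(\fh)$-submodule of $V(\tilde\KK,\tilde\KK')$ isomorphic to $W\simeq\fh$. Finally, choose an $\fh$-submodule $U\subseteq W^\bot\subset V(\KK,\KK')$ carrying a nontrivial $\fh$-action (one exists by hypothesis); orthogonality of $\iota_V$ gives $\iota_V(U)\bot\,\iota_V(W)$, so $\iota_V(U)$ lies in the orthogonal complement of $\iota_V(W)$ and $\fh$ acts nontrivially on it. Thus $\iota_\fg(\fh)$ and $\iota_V(W)$ satisfy all conditions of Theorem~\ref{thm-red}.

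It remains to produce the inclusions, for which I would use the concrete models of Proposition~\ref{pro-isoreps} and Corollary~\ref{cor-reps}: under $F_n^V$ one has $V(\KK,\KK')\simeq\cV_n(\KK)$ and $\fg(\KK,\KK')=\cg_n(\KK)\oplus\mathfrak z$ with $\mathfrak z\in\{0,\fu(1),\fsp(1)\}$ for $\KK'=\CC,\HH,\OO$, where $\cV_1(\KK)=\frak{sh}_3\KK$, $\cV_2(\KK)=\CC\otimes\frak{h}_3\KK$, $\cV_3(\KK)=\CC\otimes\FF(\frak{h}_3\KK)$. In move~A the family index $n$ is unchanged: the first-column magic-square inclusion $\Aut\frak{h}_3\KK\hookrightarrow\Aut\frak{h}_3\tilde\KK$ (equivalently the Jordan-algebra inclusion $\frak{h}_3\KK\hookrightarrow\frak{h}_3\tilde\KK$) induces isometric injections $\frak{sh}_3\KK\hookrightarrow\frak{sh}_3\tilde\KK$ and $\FF(\frak{h}_3\KK)\hookrightarrow\FF(\frak{h}_3\tilde\KK)$, hence $\cV_n(\KK)\hookrightarrow\cV_n(\tilde\KK)$ and $\cg_n(\KK)\hookrightarrow\cg_n(\tilde\KK)$; taking the identity on $\mathfrak z$ (the structures $I,J,K$ live in the common $\CC$- resp.\ $\HH$-factor and simply restrict) yields the desired $\iota$. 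In move~B the index rises by one; since $\fh$ is simple and $\mathfrak z$ is either absent or the abelian $\fu(1)$, necessarily $\fh\subseteq\cg_n(\KK)$, and I would realise $\cV_n(\KK)\hookrightarrow\cV_{n+1}(\KK)$ through the embedding of a single Jordan slot $\frak{h}_3\KK\hookrightarrow\FF(\frak{h}_3\KK)$ and $\cg_n(\KK)\hookrightarrow\cg_{n+1}(\KK)$ through the inclusion $\str_0(\frak{h}_3\KK)=\der\frak{h}_3\KK\oplus L_{\frak{sh}_3\KK}\hookrightarrow\der\FF(\frak{h}_3\KK)$ supplied by the map $\cH_0$ of Lemma~\ref{lem-derfts} (cf.\ (\ref{def-str0})), passing to antihermitian (compact) real forms.

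The main obstacle is the equivariance of the family-raising map in move~B: one must check that $\cH_0$, restricted to the compact form $\cg_n(\KK)\hookrightarrow\cg_{n+1}(\KK)$, genuinely intertwines the action on $\frak{h}_3\KK$ with the action of its image on the corresponding slot of $\FF(\frak{h}_3\KK)$, under the identifications $F_n^V$. This is a direct computation with the explicit operators $\cH_0,\cH_1$ of Lemma~\ref{lem-derfts}; once it is settled, orthogonality is automatic, since every map involved is assembled from trace-form isometries, and move~A is entirely transparent as it merely restricts pre-existing structures.
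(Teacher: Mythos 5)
Your proposal is correct and is essentially the paper's own proof: the paper establishes precisely your two elementary moves as a two-part lemma (raising $\KK$ via the Jordan-algebra inclusion $\hxk\hookrightarrow\hx\tilde\KK$, and raising $\KK'$ via $X\mapsto 1\otimes X$, the slot embedding $z\otimes Y\mapsto (0,Y)\otimes(z,0)$, and the map $\cH_0$ of Lemma \ref{lem-derfts} on the compact real form), composes them into equivariant monomorphisms $(m_\fg,m_V)$, restricts to $(\fh,W)$, and checks the hypotheses of Theorem \ref{thm-red} by the same abstract argument you give, including nontriviality of the action on $m_V(W^\bot)\subset m_V(W)^\bot$. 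The only deviations are presentational: the paper carries out the $\KK$-raising step inside the Tits-construction decomposition of $\fg(\KK,\KK')$ rather than in the $\cV_n(\KK)$ models, and its $\KK'$-raising map is defined on all of $\fg(\tilde\KK,\KK')$ (the centre is also sent into $\cg_{n+1}(\tilde\KK)$ by $\cH_0$), so your auxiliary observation that simplicity forces $\fh\subseteq\cg_n(\KK)$ is not needed there.
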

We need the following Lemma, deriving form the properties of the Magic Square:
\begin{lem} $ $ \begin{enumerate}
\item
Let $\KK,\tilde\KK$ and $\KK'$ be as in Proposition \ref{pro-incl}. 
Then there exist natural monomorphisms (of Lie algebras and of vector spaces):
\begin{eqnarray*}
i_\fg &:& \fg(\KK,\KK') \to \fg(\tilde\KK,\KK') \\
i_V &:& V(\KK,\KK') \to V(\tilde\KK,\KK')
\end{eqnarray*}
such that
$$ [i_\fg(E), i_V (X)] = i_V [E,X] $$
for $E\in\fg(\KK,\KK')$ and $X\in V(\KK,\KK'),$
with the l.h.s bracket taken in $\fm(\KK,\KK')$ and the r.h.s. one in $\fm(\tilde\KK,\KK').$
\item
Let $\tilde\KK$ and $\KK',\tilde\KK'$ be as in Proposition \ref{pro-incl}. 
Then there exist natural monomorphisms (of Lie algebras and of vector spaces):
\begin{eqnarray*}
j_\fg &:& \fg(\tilde\KK,\KK') \to \fg(\tilde\KK,\tilde\KK') \\
j_V &:& V(\tilde\KK,\KK') \to V(\tilde\KK,\tilde\KK')
\end{eqnarray*}
such that
$$ [j_\fg(E), j_V (X)] = j_V [E,X] $$
for $E\in\fg(\tilde\KK,\KK')$ and $X\in V(\tilde\KK,\KK'),$
with the l.h.s bracket taken in $\fm(\tilde\KK,\KK')$ and the r.h.s. one in $\fm(\tilde\KK,\tilde\KK').$
\end{enumerate}
\end{lem}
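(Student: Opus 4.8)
The plan is to deduce both statements from a single structural fact: in each case it suffices to produce a Lie algebra monomorphism of the ambient Magic Square algebras — call it $i:\fm(\KK,\KK')\to\fm(\tilde\KK,\KK')$ in part 1 and $j:\fm(\tilde\KK,\KK')\to\fm(\tilde\KK,\tilde\KK')$ in part 2 — which carries $\fg$ into $\fg$ and $V$ into $V$. Indeed, granting such a map, for $E\in\fg$ and $X\in V$ one has $[E,X]\in V$ (symmetric pair), so applying the homomorphism and restricting gives $[i_\fg(E),i_V(X)]=i([E,X])=i_V([E,X])$, which is exactly the required intertwining. Thus everything reduces to building the monomorphisms out of inclusions of the Tits building blocks and checking compatibility with the brackets (\ref{bktdx}) and (\ref{bkt-tits}) and with the symmetric decompositions of Subsection \ref{ss-symd}.

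For part 1 the building block is the entrywise Jordan inclusion $\hxk\hookrightarrow\frak{h}_3\tilde\KK$ induced by $\KK\subset\tilde\KK$; it preserves $\circ$, the trace, the scalar product and hence the splitting $\RR1\oplus\sxk$ and the product $\times$, giving an isometric $\iota:\sxk\hookrightarrow\frak{sh}_3\tilde\KK$. I would set $i$ to be the identity on $\der\KK'$, the map $\cD_{X,Y}\mapsto\cD_{X,Y}$ on $\der\hxk$, and $\id_{\Im\KK'}\otimes\iota$ on $\Im\KK'\otimes\sxk$. Because $\KK'$ and its Cayley--Dickson grading are untouched, this visibly sends $\fg(\KK,\KK')$ into $\fg(\tilde\KK,\KK')$ and $V(\KK,\KK')$ into $V(\tilde\KK,\KK')$. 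The homomorphism property then unwinds, through (\ref{bkt-tits}), to the three facts: $\langle\cdot,\cdot\rangle$ and $\times$ are preserved by $\iota$ (immediate), $\cD_{p,q}$ on $\KK'$ is unchanged, and $\cD_{X,Y}$ is carried to $\cD_{X,Y}$ in the larger algebra.

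For part 2 the building block is an inclusion $\KK'\hookrightarrow\tilde\KK'$ of normed division algebras, but here the essential subtlety appears: the decompositions of Subsection \ref{ss-symd} are cut out by the Cayley--Dickson gradings of $\KK'$ and of $\tilde\KK'$, and these differ. The naive nested embedding is fatal, since it places the odd part $\KK'_1$ inside $\tilde\KK'_0$, sending the summand $\KK'_1\otimes\frak{sh}_3\tilde\KK\subset V$ into the even summand $\Im\tilde\KK'_0\otimes\frak{sh}_3\tilde\KK\subset\fg$. I would instead choose the \emph{grading-respecting} embedding, sending the Cayley--Dickson generator of $\KK'$ to that of $\tilde\KK'$, so that $\KK'_0\hookrightarrow\tilde\KK'_0$ and $\KK'_1\hookrightarrow\tilde\KK'_1$ (concretely $\CC\cong\RR\oplus\RR j\subset\HH$, and $\HH\cong\langle i,l\rangle\subset\OO$). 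By naturality of $\cD$ under a graded embedding this yields $\deg\KK'\to\deg\tilde\KK'$ and $\dev\KK'\to\dev\tilde\KK'$ (using (\ref{der-grad-act}) and Lemma \ref{lem-der-grad}), $\Im\KK'_0\to\Im\tilde\KK'_0$ and $\KK'_1\to\tilde\KK'_1$, so now $\fg\to\fg$ and $V\to V$ as needed; the only nontrivial derivation piece $\der\HH\hookrightarrow\der\OO$ is already supplied by the map $\cE$ of Corollary \ref{cor-ee}. The homomorphism check is then identical to part 1.

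The main obstacle, common to both parts, is exactly the assertion that $\cD^{\mathcal A}_{X,Y}\mapsto\cD^{\mathcal B}_{X,Y}$ is a \emph{well-defined} Lie algebra homomorphism $\der\mathcal A\to\der\mathcal B$ (with $\mathcal A\hookrightarrow\mathcal B$ either $\hxk\hookrightarrow\frak{h}_3\tilde\KK$ or $\KK'\hookrightarrow\tilde\KK'$); this is not automatic, because $\cD$ has a kernel and $\mathcal A$ fails to generate $\mathcal B$ as an algebra, so a derivation of $\mathcal B$ can vanish on $\mathcal A$ without vanishing identically. The clean way around it: first, for $X,Y\in\mathcal A$ the operator $\cD^{\mathcal B}_{X,Y}$ preserves $\mathcal A$ and restricts there to $\cD^{\mathcal A}_{X,Y}$, giving $\ker(\cD^{\mathcal B}|_{\Lam^2\mathcal A})\subseteq\ker\cD^{\mathcal A}$. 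For the reverse inclusion I would use the $\Aut$-equivariance of $\cD$ (Lemmas \ref{lem-dmap} and \ref{lem-dmapj}) under the involutive automorphisms of $\mathcal B$ that fix $\mathcal A$ pointwise: since $X,Y$ are fixed, $\cD^{\mathcal B}_{X,Y}$ commutes with these involutions and therefore lies in the subalgebra $\der\mathcal A\subseteq\der\mathcal B$; as $\der\mathcal A$ acts faithfully on $\sxk$ (respectively on $\Im\KK'$), vanishing on $\mathcal A$ already forces $\cD^{\mathcal B}_{X,Y}=0$. This establishes $\ker\cD^{\mathcal A}=\ker(\cD^{\mathcal B}|_{\Lam^2\mathcal A})$, hence well-definedness, and the homomorphism property then follows once more from the equivariance of $\cD$.
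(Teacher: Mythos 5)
Your global strategy (build a monomorphism of the ambient $\fm$'s carrying $\fg$ into $\fg$ and $V$ into $V$, then restrict) is reasonable, and your part 1 coincides with the paper's construction; your part 2, done at the level of the composition algebras via a grading-respecting embedding $\KK'\hookrightarrow\tilde\KK'$, is a genuinely different route from the paper, which instead chains the concrete models of Proposition \ref{pro-isoreps} via $\mu(X)=1\otimes X$, $\nu(z\otimes Y)=(0,Y)\otimes(z,0)$, $\alpha(D)=D$, $\beta(D+iL_X)=\cH_0(D+iX)$, and verifies only the intertwining relation (note the Lemma never requires compatibility with the $[V,V]$ bracket, which your $\fm$-level approach obliges you to check). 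However, your treatment of what you yourself single out as the main obstacle --- well-definedness of $\cD^{\mathcal A}_{X,Y}\mapsto\cD^{\mathcal B}_{X,Y}$ --- is incorrect. Commuting with the automorphisms of $\mathcal B$ that fix $\mathcal A$ pointwise does \emph{not} force an operator into a copy of $\der\mathcal A$ restricting faithfully to $\mathcal A$: the centralizer of those automorphisms in $\der\mathcal B$ in general contains nonzero derivations that kill $\mathcal A$. Concretely, for $\mathcal A=\hx\CC\subset\mathcal B=\hx\HH$ the derivation $D_0$ acting entrywise by $z\mapsto[i,z]$ commutes with \emph{every} automorphism of $\hx\HH$ fixing $\hx\CC$ pointwise (these are the entrywise conjugations $z\mapsto e^{i\theta}ze^{-i\theta}$, in particular the involution $z\mapsto -izi$), yet $D_0$ vanishes on $\hx\CC$ and not on $\hx\HH$. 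On the composition-algebra side, for $\HH\subset\OO$ the derivations $\cE'(\Im\HH)$ commute with the involution $x+yl\mapsto x-yl$ and vanish on $\HH$ --- this is precisely the content of Corollary \ref{cor-ee}. So from ``$T$ commutes with the involutions and vanishes on $\mathcal A$'' you cannot conclude $T=0$; you would still have to show that the span of the $\cD^{\mathcal B}_{X,Y}$ with $X,Y\in\mathcal A$ meets these extra derivations trivially, which is the original problem restated. (The statement is true, but needs a different argument; e.g.\ when $\tilde\KK\neq\OO$ one can use $\cD_{X,Y}=\frac14\ad_{[X,Y]}$ in the associative envelope, where $\sum_i\cD^{\mathcal A}_{X_i,Y_i}=0$ forces $\sum_i[X_i,Y_i]=0$ as matrices.)

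There is a second error, in part 2: the derivation piece is \emph{not} ``supplied by the map $\cE$''. With your graded embedding $\iota(\HH)=\langle i,l\rangle\subset\OO$, so $\iota(j)=l$, $\iota(k)=il$, the $\cD$-natural image of $\ad_i=\frac12\cD^\HH_{j,k}$ is $\frac12\cD^\OO_{l,il}$; since $[l,il]=2i$ and $[l,il,j]=-2k$, one computes $\frac12\cD^\OO_{l,il}(j)=\frac12(4k+6k)=5k$, whereas $\cE(i)(j)=\ad_i(j)=2k$, and likewise $\frac12\cD^\OO_{l,il}(l)=2il$ versus $\cE(i)(l)=il$. So the induced map $\deg\HH\to\deg\OO$ is neither $\cE$ nor proportional to it --- it has a nonzero $\cE'$-component. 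This matters: if you literally define $j_\fg$ on $\deg\HH$ by $\cE$, the bracket (\ref{bkt-tits}) is not preserved, since it produces $\cD^\OO_{\iota p,\iota q}$ and not $\cE([p,q])$ (and using $\cE$ on all of $\der\HH$ would moreover send $\dev\HH\subset V$ into $\deg\OO\subset\fg$); if instead you define $j_\fg$ by $\cD$-naturality, as your opening sentence suggests, the construction can be completed, but then well-definedness and injectivity must be established directly rather than imported from Corollary \ref{cor-ee}.
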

\begin{proof} $ $
\begin{enumerate}
\item Recall that
\begin{eqnarray*}
\fg(\KK,\KK') =& \der\hxk\ \oplus & \deg\KK' \oplus \KK'_0 \otimes \sxk \\
V(\KK,\KK') =& & \dev\KK' \oplus \KK'_1 \otimes \sxk,
\end{eqnarray*}
while
\begin{eqnarray*}
\fg(\tilde\KK,\KK') =& \der\hx\tilde\KK\ \oplus & \deg\KK' \oplus \KK'_0 \otimes \sx\tilde\KK \\
V(\tilde\KK,\KK') =& & \dev\KK' \oplus \KK'_1 \otimes \sx\tilde\KK.
\end{eqnarray*}
One easily checks that the inclusion $\KK\hookrightarrow\tilde\KK$ induces a Jordan algebra
monomorphism $$\mu : \hxk\hookrightarrow\hx\tilde\KK.$$
Recalling that the derivations of $\hxk$
are defined as an image of the $\der\hxk-$equivariant map $\cD$ defined in Lemma \ref{lem-dmapj}: 
$$\der\hxk = \{ \cD_{X,Y}\ |\ X,Y\in \sxk\},$$
we see that $\mu$ induces a map 
\begin{equation}\label{map-ddt}
\der\hxk \ni \cD_{X,Y} \mapsto \cD_{\mu(X),\mu(Y)} \in \der\hx\tilde\KK
\end{equation}
such that 
\begin{equation}\cD_{\mu(X),\mu(Y)}(\mu(Z)) = \mu(\cD_{X,Y}(Z)) \label{eq-i0} \end{equation}
for $X,Y\in\sxk$ and $Z\in\hxk.$
Due to equivariance of $\cD,$ the map (\ref{map-ddt}) gives a monomorphism of Lie algebras, 
$$\gamma : \der\hxk \to \der\hx\tilde\KK,$$ and (\ref{eq-i0}) translates to $ \gamma(D)(\mu(X)) = \mu(D(X)) $
for $D\in\der\hxk$ and $X\in\sxk.$

It then follows that the maps
\begin{eqnarray*}
i_\fg =& \gamma\ \oplus& \id \oplus \id \otimes \mu|_{\sxk} \\
i_V =& & \id \oplus \id\otimes \mu|_{\sxk}
\end{eqnarray*}
are such as claimed by the Lemma.
\item Recalling Proposition \ref{pro-isoreps}, we have the following identifications:
\begin{eqnarray*}
V(\tilde\KK,\CC) &\simeq& \cV_1(\tilde\KK) = \sx\tilde\KK \\
V(\tilde\KK,\HH)&\simeq&\cV_2(\tilde\KK) = \CC\otimes\hx\tilde\KK \\
V(\tilde\KK,\OO)&\simeq&\cV_3(\tilde\KK) \simeq (\RR \oplus \hx\tilde\KK)\otimes\CC^2 
\end{eqnarray*}
and the algebras $\fg(\tilde\KK,\KK')$ seen as endomorphisms of the latter spaces are
\begin{eqnarray*}
\fg(\tilde\KK,\CC) &=& \der\hx\tilde\KK \\
\fg(\tilde\KK,\HH) &=& \der\hx\tilde\KK \oplus i L_{\hx\tilde\KK} \\
\fg(\tilde \KK,\OO) &=& \cH_0(\der\hx\tilde\KK \oplus i\hx\tilde\KK) \\
&\oplus& \cH_1(1\otimes\hx\tilde\KK \oplus i\otimes\hx\tilde\KK) \oplus \fsp(1),
\end{eqnarray*}
where $\cH_1$ and $\cH_2$ are the maps defined in Lemma \ref{lem-derfts} and extended by complex linearity, and $\fsp(1)$ denotes the additional algebra related to a quaternion-hermitian structure, as described in Proposition \ref{pro-isoreps}. In $\fg(\tilde\KK,\HH)$ we have included the additional $\fu(1)$
as spanned by $i L_1 \in i L_{\hx\tilde\KK}.$

Let us now consider monomorphisms
$$ \begin{CD} \cV_1(\tilde\KK) @>{\mu}>> \cV_2(\tilde\KK) @>{\nu}>> \cV_3(\tilde\KK) \end{CD} $$
$$ \begin{CD} \fg(\tilde\KK,\CC) @>{\alpha}>> \fg(\tilde\KK,\HH) @>{\beta}>> \fg(\tilde\KK,\OO) \end{CD} $$
given by
$$ \mu(X) = 1 \otimes X,\quad \nu(z\otimes Y) = (0,Y) \otimes (z,0) $$
for $X\in\sx\tilde\KK,$ $Y\in\hx\tilde\KK$ and $z\in\CC,$ and
$$
\alpha(D) =  D,\quad \beta( D + i L_X ) = \cH_0(D + i X) $$
for $D\in\der\hx\tilde\KK$ and $ X\in\hx\tilde\KK$.

It is then readily checked that $\alpha(D)(\mu(X)) = \mu(D(X))$ for $D\in\der\hx\tilde\KK$ and $X\in\sx\tilde\KK.$ Comparing with the formula for $\cH_0$ given in Lemma \ref{lem-derfts}, one
also easily verifies $\beta(E)(\nu(Z)) = \nu(E(Z))$ for $E\in\fg(\tilde\KK,\HH)$ and $Z\in\cV_2(\tilde\KK).$

Finally, the map $j_\fg$ is given by either $\id$, $\alpha$, $\beta$ or $\beta\circ\alpha$ (depending
on $\KK'$ and $\tilde\KK'$),
while $j_V$ is respectively $\id$, $\mu$, $\nu$ or $\nu\circ\mu.$ It follows from the previous paragraph, that $j_\fg$ and $j_V$ are
such as claimed by the Lemma.
\end{enumerate}
\end{proof}
By composing $m_\fg=j_\fg\circ i_\fg$ and $m_v=j_V\circ i_V$ we have the obvious
\begin{cor}\label{cor-incl}
Let $\KK,\tilde\KK,\KK',\tilde\KK'$ be as in Proposition \ref{pro-incl}. 
Then there exist natural monomorphisms (of Lie algebras and of vector spaces):
\begin{eqnarray*}
m_\fg &:& \fg(\KK,\KK') \to \fg(\tilde\KK,\tilde\KK') \\
m_V &:& V(\KK,\KK') \to V(\tilde\KK,\tilde\KK')
\end{eqnarray*}
such that
\begin{equation} [m_\fg(E), m_V (X)] = m_V [E,X] \label{eq-itw} \end{equation}
for $E\in\fg(\KK,\KK')$ and $X\in V(\KK,\KK'),$
with the l.h.s bracket taken in $\fm(\KK,\KK')$ and the r.h.s. one in $\fm(\tilde\KK,\tilde\KK').$
\end{cor}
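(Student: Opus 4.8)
The statement is an immediate consequence of the two parts of the preceding Lemma, obtained by composition, which is why it is recorded as a corollary. The plan is to set
$$ m_\fg = j_\fg \circ i_\fg, \qquad m_V = j_V \circ i_V, $$
where $i_\fg, i_V$ are the monomorphisms furnished by part 1 of the Lemma (applied with the inclusion $\KK\subset\tilde\KK$ and $\KK'$ held fixed) and $j_\fg, j_V$ are those furnished by part 2 (applied with $\tilde\KK$ held fixed and the inclusion $\KK'\subset\tilde\KK'$). First I would observe that these composites are well-defined: the codomain $\fg(\tilde\KK,\KK')$ of $i_\fg$ is exactly the domain of $j_\fg$, and likewise $V(\tilde\KK,\KK')$ is the common space linking $i_V$ and $j_V$. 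Since a composite of injective linear maps is injective, and a composite of Lie-algebra monomorphisms is again a Lie-algebra monomorphism, $m_\fg$ is a monomorphism of Lie algebras and $m_V$ a monomorphism of vector spaces, as required.

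It then remains only to verify the intertwining identity (\ref{eq-itw}). For $E\in\fg(\KK,\KK')$ and $X\in V(\KK,\KK')$ I would compute the outer bracket inside the largest algebra $\fm(\tilde\KK,\tilde\KK')$:
$$ [m_\fg(E), m_V(X)] = [\,j_\fg(i_\fg(E)),\, j_V(i_V(X))\,]. $$
Applying part 2 of the Lemma to the elements $i_\fg(E)\in\fg(\tilde\KK,\KK')$ and $i_V(X)\in V(\tilde\KK,\KK')$ rewrites the right-hand side as $j_V\big([\,i_\fg(E), i_V(X)\,]\big)$, with the inner bracket now evaluated in $\fm(\tilde\KK,\KK')$. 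Applying part 1 of the Lemma to $E$ and $X$ replaces that inner bracket by $i_V([E,X])$, the bracket $[E,X]$ being taken in $\fm(\KK,\KK')$. Hence
$$ [m_\fg(E), m_V(X)] = j_V\big(i_V([E,X])\big) = m_V([E,X]), $$
which is precisely (\ref{eq-itw}).

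There is no genuine obstacle here, as the mathematical content is entirely carried by the Lemma; the only point demanding care is the bookkeeping, namely keeping track of which ambient Magic Square algebra each bracket is evaluated in as one passes from $\fm(\KK,\KK')$ through the intermediate $\fm(\tilde\KK,\KK')$ to $\fm(\tilde\KK,\tilde\KK')$. The chain of inclusions $\KK\subset\tilde\KK$ and $\KK'\subset\tilde\KK'$ guarantees that both factorizing steps of the Lemma are simultaneously available, so the composition is legitimate for every admissible pair of indices, and the naturality of $m_\fg$ and $m_V$ follows from that of their two factors.
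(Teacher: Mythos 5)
Your proposal is correct and is exactly the paper's argument: the paper defines $m_\fg = j_\fg\circ i_\fg$ and $m_V = j_V\circ i_V$ and records the corollary as "obvious," while you merely spell out the chained intertwining computation through the intermediate algebra $\fm(\tilde\KK,\KK')$. Nothing is missing; your bookkeeping of which bracket lives in which ambient Magic Square algebra is precisely the verification the paper leaves implicit.
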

This finally leads us to a proof of the Proposition:
\begin{proof}[Proof of Proposition \ref{pro-incl}]
The inclusions are simply the restrictions $ m_\fg|_\fh $ and $m_V|_W$ of the maps described by Corollary \ref{cor-incl}. That $m_V(W) \simeq m_\fg(\fh)$ as a $m_\fg(\fh)$-module is guaranteed by the intertwining
property (\ref{eq-itw}) of $m_V.$ Finally, $m_\fg(\fh)$ acts nontrivially at least on $m_V(W^\bot) \subset m_V(W)^\bot.$
\end{proof}

\subsection{Examples}
We will now give
a couple of examples of subalgebras satisfying the conditions
of Theorem \ref{thm-red}. As the examples of $\fg(\KK,\CC)$-geometries
have been given by Nurowski in \cite{nurowski-2006}, we concentrate on constructions applicable
to $\fg(\KK,\HH)$- and $\fg(\KK,\OO)$-geometries.

\begin{eg}[Subalgebra of type $\fsu(2)$]
Set $$\fg = \fg(\RR,\HH) = \der\hx\RR \oplus iL_{\hx\RR},\quad V=V(\RR,\HH)=\CC\otimes\hx\RR.$$ 
Define $X,Y\in\hx\RR:$
$$ X = \left(\begin{matrix} 0 & 1 & 0 \\ 1 & 0 & 0 \\ 0 & 0 & 0 \end{matrix}\right),
\quad
 Y = \left(\begin{matrix} -1 &0  & 0 \\ 0 & 1 & 0 \\ 0 & 0 & 0 \end{matrix}\right) $$
and $Z=X^2 = Y^2.$ We have:
$$ Z\circ X = X,\quad Z\circ Y = Y,\quad X\circ Y = 0. $$
Let $$\fh = \Span\{ D, iL_X, -iL_Y \} \subset\fg, $$ where $D=[L_X,L_Y].$ 
One easily checks that
$\fh$ is a subalgebra of $\fg$ isomorphic to $\fsu(2).$ 
Let $$ W = \Span\{1\otimes X,1\otimes Y,i\otimes Z\}\subset V. $$ One easily checks that $\fh$ preserves
$W$ (and $W^\bot$) and that the map
$$ B : \fh \to W,\quad B(ix L_X - iy L_Y + zD) = x Y + y X + iz Z $$
satisfies $ B([A',A]) = A'(B(A)) $ for $A,A' \in \fh.$
The action of $\fh$ on $W^\bot$ is evidently nontrivial.

Theorem \ref{thm-red} for $\fh$ and $W$, extended by Proposition \ref{pro-incl} yields an example
of a $\fg(\KK,\KK')$-geometry with characteristic torsion for all $\KK=\RR,\CC,\HH,\OO$ and $\KK'=\HH,\OO.$
\end{eg}

\begin{eg}[Subalgebra of type $\fsu(3)$]
Set $$\fg = \fg(\CC,\HH) = \der\hx\CC \oplus iL_{\hx\CC},\quad V=V(\CC,\HH)=\CC\otimes\hx\CC.$$ 
Let $$\fh = \der\hx\CC \subset \fg,$$ being of course isomorphic to $\fsu(3).$ 
Let $$W = 1 \otimes \sx\CC \subset V.$$ One easily checks, that $\fh$ preserves $W$ (and $W^\bot$) and the map
$$ B : \fh \to W,\quad B([L_X,L_Y]) = 1 \otimes i(XY-YX) $$
is bijective and satisfies $ B([A',A]) = A'(B(A)) $ for $A,A' \in \fh.$
The action of $\fh$ on $W^\bot$ is evidently nontrivial.

Theorem \ref{thm-red} for $\fh$ and $W$, extended by Proposition \ref{pro-incl} yields an example
of a $\fg(\KK,\KK')$-geometry with characteristic torsion for all $\KK=\CC,\HH,\OO$ and $\KK'=\HH,\OO.$
\end{eg}

\begin{eg}[Subalgebra of type $\fso(8)$]
Set
$$ \fg = \fg(\OO,\OO) \simeq \fe_7\oplus\fsp(1),\quad V = V(\OO,\OO) = \CC \otimes \FF(\hx\OO). $$
The compact algebra $\fe_7$ has $\fsu(8)$ among its maximal subalgebras, and one can check (e.g. using
{\tt LiE} \cite{leeuwen}) that
the complex 56-dimensional module $V$ decomposes under the action of $\fsu(8)\subset\fe_7$ into
$$ V \simeq \Lam^2 \CC^8 \oplus \overline{\Lam^2 \CC^8}, $$
where $\CC^8$ is the defining representation of $\fsu(8).$

Regarding now the usual inclusion $\fso(8)\subset\fsu(8)$ (given by some generic nondegenerate quadratic form on $\CC^8),$ we find that $V$ decomposes under $\fso(8)\subset\fe_7$ into two copies of the complexified adjoint
representation:\footnote{
Since $\fso(8,\CC) \simeq \Lam^2 \CC^8 \simeq \overline{\Lam^2 \CC^8}$ as $\fso(8)$-modules.
}
$$ V \simeq \fso(8,\CC) \oplus \fso(8,\CC) $$
The explicit form of the intertwiner is rather complicated \cite{yokota-1982}.
Nevertheless, taking $$\fh = \fso(8) \subset \fsu(8) \subset \fe_7 \subset \fg$$
and $$ W = \fso(8) \subset \fso(8,\CC) \subset V, $$ 
the real part of one of the two copies of $\fso(8,\CC)$, we can apply Theorem \ref{thm-red} to
obtain an example of a $\fg(\OO,\OO)$-geometry with characteristic torsion.

\end{eg}

\section{Proofs of Lemmas 34, 35 \& 41}
\label{sec-xprf}

\begin{proof}[Proof of Lemma \ref{lem-dxi}] To find correct combinatorial factors, note that
$$ \Xi_{abcdef} =  \frac{1}{20} (
\Lambda_{\alpha\beta\gamma} \bar\Lambda_{\bar\epsilon\bar\phi\bar\kappa}
+ \Lambda_{\beta\gamma\epsilon} \bar\Lambda_{\bar\alpha\bar\phi\bar\kappa} + \dots ) $$
with ${6\choose 3}=20$ terms on the r.h.s.,
so that we have
\begin{eqnarray*}
20^2\ \Xi_{\alpha defkl} \Xi^{\beta defkl} &=& 10\ \Lambda_{\alpha\delta\epsilon}\bar\Lambda_{\bar\phi\bar\kappa\bar\lambda} \Lambda^{\bar\phi\bar\kappa\bar\lambda} \bar\Lambda^{\beta\delta\epsilon} \\
&=& 10\  N\delta^\beta_\alpha,
\end{eqnarray*}
so that $\Xi_{adefkl}\Xi^{bdefkl} = \frac{N}{40} \delta^b_a.$ Moreover,
\begin{eqnarray*}
20^2\ \Xi_{\alpha}{}^{\delta efkl} \Xi^{\gamma}{}_{\beta efkl} 
&=&6\ \Lambda_{\alpha}{}^{\bar\epsilon\bar\phi} \bar\Lambda^{\delta\kappa\lambda} \Lambda_{\beta\kappa\lambda} \bar\Lambda^\gamma{}_{\bar\epsilon\bar\phi} \\
&=&6\ \delta^\gamma_\alpha \delta^\delta_\beta
\end{eqnarray*}
\begin{eqnarray*}
20^2\ \Xi_{\alpha}{}^{\bar\delta efkl} \Xi^{\gamma}{}_{\bar\beta efkl}
&=& 4\ \Lambda_{\alpha}{}^{\bar\delta\bar\epsilon} \bar\Lambda^{\phi\kappa\lambda} \Lambda_{\phi\kappa\lambda} \bar\Lambda^\gamma{}_{\bar\beta\bar\epsilon} \\
&=& 4\ N\Lambda_{\alpha}{}^{\bar\delta\bar\epsilon} \bar\Lambda^\gamma{}_{\bar\beta\bar\epsilon}
\end{eqnarray*}
\begin{eqnarray*}
20^2\ \Xi_{\alpha}{}^{\delta efkl} \Xi^{\bar\gamma}{}_{\bar\beta efkl}
&=& 6\ \Lambda_\alpha{}^{\bar\epsilon\bar\phi} \bar\Lambda^{\delta\kappa\lambda} \Lambda^{\bar\gamma}{}_{\kappa\lambda}\bar\Lambda_{\bar\beta\bar\epsilon\bar\phi} \\
&=& 6\ h_{\bar\beta\alpha} h^{\delta\bar\gamma}
\end{eqnarray*}
$$ \Xi_{\alpha}{}^{\bar\delta efkl} \Xi^{\bar\gamma}{}_{\beta efkl} = 0, $$
so that
\begin{eqnarray*}
\cD_\Xi|_{\Re(\Lam^{2,0}\oplus\Lam^{0,2})} &=& \frac{3}{200} \\
\cD_\Xi|_{\Re\Lam^{1,1}} &=& \frac{N}{100} \cD_\Lam + \frac{3}{200} \pr_0,
\end{eqnarray*}
where $\pr_0$ is the orthonormal projection onto $\fu(1).$ The Lemma is then proved recalling the formula for $\cD_\Lam$ given at the beginning of the present chapter. 
\end{proof}

\begin{proof}[Proof of Lemma \ref{lem-dmho}]
Note first that the map $\cD_\mho,$ being by construction $G(\KK,\OO)$-equivariant, is given
by a multiple of identity when restricted to each of the irreducible subspaces of $\Lam^2 \cV_3(\KK),$
namely:
$$ \Lam^2 \cV_3(\KK) = \fsp(1) \oplus \cg_3(\KK) \oplus [\ft\cap\fsp(\cV_3(\KK,\omega))] \oplus 
\bot. $$
Distinguishing as usually one of the complex structures of $\fsp(1),$ by convention $I,$ to  consider
$\cV_3(\KK)$ as a complex vector space, one notes that each of the former subspaces has a nonzero
intersection with $\Lam^{1,1}\simeq \fu(\cV_3(\KK)).$ It is therefore sufficient to check
the eigenvalue of $\cD_\mho$ on  each of the following:
$$ \Re\Lam^{1,1} = \fu(1) \oplus \cg_3(\KK) \oplus [\ft\cap\fsp(\cV_3(\KK),\omega)] \oplus \fsp^\bot_0(\cV_3(\KK),\omega), $$
where $\fu(1)$ is generated by the distinguished complex structure and $\fsp^\bot_0(\cV_3(\KK),\omega)$
is the complement of $\fsp(\cV_3(\KK),\omega)$ in $\fsu(\cV_3(\KK)).$

Recall now the formula for the orthorgonal projection of $\mho \in \Sym^{4,4}$ onto $\Sym^{4,0}\otimes\Sym^{0,4}:$
$$
\mho_{\mu_1\dots\mu_4\bar\mu_5\dots\bar\mu_8} = \frac{256}{70} P_{44}(q\otimes q)_{\mu_1\dots\mu_4\sigma_1\dots\sigma_4} \bar\omega^{\sigma_1}{}_{\bar\mu_5}\cdots\bar\omega^{\sigma_4}{}_{\bar\mu_4},
$$
where the operator $P_{44} : \Sym^{4,0}\otimes\Sym^{4,0} \to \Sym^{4,0} \otimes \Sym^{4,0}$ 
is such that the image in $\Sym^{4,0}_{\mu_1\dots\mu_4}\otimes\Sym^{4,0}_{\nu_1\dots\nu_4}$
of a tensor
$t_{\tau_1\dots\tau_1\sigma_1\dots\sigma_4} \in\Sym^{4,0}_{\tau_1\dots\tau_4}\times\Sym^{4,0}_{\sigma_1\dots\sigma_4}$ is:
$$ P_{44}(t)_{\mu_1\dots \mu_4 \nu_1 \dots \nu_4} =
\delta^{[\tau_1}_{(\mu_1} \delta^{\sigma_1]}_{(\nu_1}
\delta^{[\tau_2}_{\mu_2} \delta^{\sigma_2]}_{\nu_2}
\delta^{[\tau_3}_{\mu_3} \delta^{\sigma_3]}_{\nu_3}
\delta^{[\tau_4}_{\mu_4)} \delta^{\sigma_4]}_{\nu_4)}
\ t_{\tau_1\dots\tau_4\sigma_1\dots\sigma_4},
$$
where the index sets $\mu_1\dots\mu_4$ and $\nu_1\dots\nu_4$ are symmetrized \emph{separately}.
Note that, introducing $c = \frac{256}{70}$ to avoid clutter,
\begin{eqnarray*}
c^{-1} \mho_{m_1\dots m_8} &=& 
P_{44}(q\otimes q)_{\mu_1\mu_2\mu_3\mu_4 \sigma_1\sigma_2\sigma_3\sigma_4} 
\bar\omega^{\sigma_1}{}_{\bar\mu_5}\bar\omega^{\sigma_2}{}_{\bar\mu_6}\bar\omega^{\sigma_3}{}_{\bar\mu_7}\bar\omega^{\sigma_4}{}_{\bar\mu_8}
\\ &+&
P_{44}(q\otimes q)_{\mu_2\mu_3\mu_5 \sigma_1\sigma_2\sigma_3\sigma_4} 
\bar\omega^{\sigma_1}{}_{\bar\mu_1}\bar\omega^{\sigma_2}{}_{\bar\mu_6}\bar\omega^{\sigma_3}{}_{\bar\mu_7}\bar\omega^{\sigma_4}{}_{\bar\mu_8}
\\ &+& \dots
\end{eqnarray*}
with ${8\choose 4}=70$ terms on the r.h.s., so that we have
\begin{eqnarray*}
c^{-2} \mho_{\alpha m_2\dots m_8} \mho^{\beta m_2\dots m_8} &=& 35\ P_{44}(q\otimes q)_{\alpha \mu_2\dots\mu_8} P_{44}(\bar q\otimes \bar q)^{\beta\mu_2\dots \mu_8}.
\end{eqnarray*}
Moreover,
\begin{eqnarray*} 
 c^{-2}\ \mho_{\alpha\bar\delta m_3\dots m_8} \mho^{\bar\gamma\beta m_3\dots m_8} &=& 20\ 
 P_{44}(q\otimes q)_{\alpha\mu\nu\rho\xi\eta\zeta\sigma} P_{44}(\bar q\otimes \bar q)^{\beta\mu\nu\rho\xi\eta\zeta\tau} \bar\omega^\sigma{}_{\bar\delta} \omega_\tau{}^{\bar\gamma} 
\\
 c^{-2}\ \mho_{\alpha\delta m_3\dots m_8} \mho^{\gamma\beta m_3\dots m_8} &=& 15\ 
P_{44}(q\otimes q)_{\alpha\delta\mu\nu\rho\xi\eta\zeta} P_{44}(\bar q\otimes \bar q)^{\gamma\beta\mu\nu\rho\xi\eta\zeta}
\end{eqnarray*}
Using computer algebra to keep track of the combinatorial factors, we expand the symmetrizers and
find that:
\begin{eqnarray*}
&& 16^2 P_{44}(q\otimes q)_{\alpha\mu_1\mu_2\mu_3\nu_1\dots\nu_4} P_{44}(\bar q\otimes \bar q)^{\beta\mu_1\mu_2\mu_3\nu_1\dots\nu_4} 
\\ &=& -2 (K_0)_\alpha^\beta + 12 (K_1)_\alpha^\beta -72 (K_2)_\alpha^\beta + 62 (K_3)_\alpha^\beta
\end{eqnarray*}
\begin{eqnarray*}
&& 16^2 P_{44}(q\otimes q)_{\alpha\beta\mu_1\mu_2\nu_1\dots\nu_4} P_{44}(\bar q\otimes \bar q)^{\gamma\delta\mu_1\mu_2\nu_1\dots\nu_4} 
\\ &=& 
 4 (H_5)_{\alpha\beta}{}^{\gamma\delta}
+4 (\bar H_5)^{\gamma\delta}{}_{\alpha\beta}
-4 (H_3)_{\beta\alpha}{}^{\gamma\delta}
-4 (\bar H_3)^{\delta\gamma}{}_{\alpha\beta}
\\ &+& 32 (H_1)_{\alpha\beta}{}^{\gamma\delta}
-32 (H_7)_{\alpha\beta}{}^{\gamma\delta}
+2 (H_6)_{\alpha\beta}{}^{\gamma\delta}
-24 (H_4)_{\alpha\beta}{}^{\gamma\delta}
+22 (H_2)_{\alpha\beta}{}^{\gamma\delta}
\end{eqnarray*}
\begin{eqnarray*}
&& 16^2 P_{44}(q\otimes q)_{\alpha\mu_1\mu_2\mu_3\nu_1\nu_2\nu_3\beta} P_{44}(\bar q\otimes \bar q)^{\gamma\mu_1\mu_2\mu_3\nu_1\nu_2\nu_4\delta}
\\ &=& 
 29 (H_1)_{\alpha\beta}{}^{\gamma\delta}
+ 21 (H_7)_{\beta\alpha}{}^{\gamma\delta}
+ 9 (H_2)_{\alpha\beta}{}^{\gamma\delta}
\\ &-& \frac{31}{2} (H_1)_{\beta\alpha}{}^{\gamma\delta}
-\frac{69}{2} (H_7)_{\alpha\beta}{}^{\gamma\delta}
-9 (H_4)_{\alpha\beta}{}^{\gamma\delta}
\end{eqnarray*}
where the contractions of $q,q,\bar q,\bar q$ are:
$$ (K_0)_\alpha^\beta = q_{\alpha\epsilon\phi\kappa} q_{\lambda\mu\nu\rho} \bar q^{\beta\mu\nu\rho} \bar q^{\epsilon\phi\kappa\lambda} = \left(\frac{N+1}{2}\right)^2 \delta_\alpha^\beta $$
$$ (K_1)_\alpha^\beta =  q_{\alpha\xi\epsilon\phi} q_{\kappa\lambda\mu\nu} \bar q^{\beta\xi\mu\nu} \bar q^{\epsilon\phi\kappa\lambda} = (1+\chi^2)\frac{N+1}{2}\delta^\beta_\alpha  $$
$$ (K_2)_\alpha^\beta =  q_{\alpha\xi\eta\epsilon} q_{\phi\kappa\lambda\mu}\bar q^{\beta\xi\eta\mu} \bar q^{\epsilon\phi\kappa\lambda}  = \left(\frac{N+1}{2}\right)^2 \delta_\alpha^\beta $$
$$ (K_3)_\alpha^\beta =  q_{\alpha\xi\eta\zeta} q_{\epsilon\phi\kappa\lambda} \bar q^{\beta\xi\eta\zeta} \bar q^{\epsilon\phi\kappa\lambda} = N\left(\frac{N+1}{2}\right)^2 \delta^\beta_\alpha $$
\begin{eqnarray*} (H_1)_{\alpha\beta}{}^{\gamma\delta} =
q_{\alpha\mu\nu\rho} q_{\beta\xi\eta\zeta} \bar q^{\gamma\mu\nu\rho} \bar q^{\delta\xi\eta\zeta} 
&= \frac{N+1}{2} \delta^\gamma_\alpha \delta^\delta_\beta \end{eqnarray*}
\begin{eqnarray*} (H_2)_{\alpha\beta}{}^{\gamma\delta} &=&
q_{\alpha\beta\mu\nu} q_{\rho\xi\eta\zeta} \bar q^{\gamma\delta\mu\nu} \bar q^{\rho\xi\eta\zeta} 
\\ &=& \frac{N(N+1)}{2} \left[\frac{1}{2}( \delta^\gamma_\alpha \delta^\delta_\beta + \delta^\delta_\alpha \delta^\gamma_\beta ) + \chi q_{\alpha\beta}{}^{\gamma\delta} \right] \end{eqnarray*}
\begin{eqnarray*} (H_3)_{\alpha\beta}{}^{\gamma\delta} &=&
q_{\alpha\mu\nu\rho} q_{\beta\xi\eta\zeta} \bar q^{\gamma\delta\mu\nu} \bar q^{\rho\xi\eta\zeta} 
\\ &=& \frac{N+1}{2} \left[\frac{1}{2}( \delta^\gamma_\alpha \delta^\delta_\beta + \delta^\delta_\alpha \delta^\gamma_\beta ) + \chi q_{\alpha\beta}{}^{\gamma\delta} \right] \end{eqnarray*}
\begin{eqnarray*} (H_4)_{\alpha\beta}{}^{\gamma\delta} &=&
q_{\alpha\beta\mu\nu} q_{\phi\kappa\lambda\rho} \bar q^{\gamma\delta\mu\rho} \bar q^{\phi\kappa\lambda\nu} 
\\ &=& \frac{N+1}{2} \left[\frac{1}{2}( \delta^\gamma_\alpha \delta^\delta_\beta + \delta^\delta_\alpha \delta^\gamma_\beta ) + \chi q_{\alpha\beta}{}^{\gamma\delta} \right] \end{eqnarray*}
\begin{eqnarray*} (H_5)_{\alpha\beta}{}^{\gamma\delta} &=&
q_{\alpha\mu\nu\rho} q_{\beta\xi\eta\zeta} \bar q^{\gamma\delta\zeta\mu} \bar q^{\nu\rho\xi\eta} 
\\ &=&
(1+\chi^2) \left[\frac{1}{2} (\delta^\gamma_\alpha \delta^\delta_\beta + \delta^\delta_\alpha \delta^\gamma_\beta ) + \chi q_{\alpha\beta}{}^{\gamma\delta} \right] - \frac{\chi}{2} q_{\alpha\beta}{}^{\gamma\delta} \end{eqnarray*}
\begin{eqnarray*} (H_6)_{\alpha\beta}{}^{\gamma\delta} &=&
q_{\alpha\beta\mu\nu} q_{\epsilon\phi\kappa\lambda} \bar q^{\gamma\delta\epsilon\phi} \bar q^{\kappa\lambda\mu\nu} 
\\&=& (1+\chi^2) \left[\frac{1}{2} (\delta^\gamma_\alpha \delta^\delta_\beta + \delta^\delta_\alpha \delta^\gamma_\beta ) + \chi q_{\alpha\beta}{}^{\gamma\delta} \right] + \chi q_{\alpha\beta}{}^{\gamma\delta} \end{eqnarray*}
\begin{eqnarray*} (H_7)_{\alpha\beta}{}^{\gamma\delta} &=&
q_{\alpha\mu\nu\rho} q_{\beta\xi\eta\zeta} \bar q^{\gamma\zeta\mu\nu} \bar q^{\delta\rho\xi\eta} 
\\&=&\frac{N+2}{4} \delta^\gamma_\alpha \delta^\delta_\beta + 
\frac{1+2\chi^2}{4} \delta^\delta_\alpha \delta^\gamma_\beta
- \frac{\chi^2}{2} \omega_{\alpha\beta} \bar\omega^{\gamma\delta} 
+ \chi(1-\chi^2) q_{\alpha\beta}{}^{\gamma\delta}\end{eqnarray*}
and by slight abuse of notation $ q_{\alpha\beta}{}^{\gamma\delta} := q_{\alpha\beta\mu\nu} \bar\omega^{\mu\gamma} \bar\omega^{\nu\delta}.$
Collecting terms yields
\begin{eqnarray*}
&& 16^2 P_{44}(q\otimes q)_{\alpha\mu_1\mu_2\mu_3\nu_1\dots\nu_4} P_{44}(\bar q\otimes \bar q)^{\beta\mu_1\mu_2\mu_3\nu_1\dots\nu_4} 
\\ &=&  \frac{N+1}{2} [ 25(N-1) + 12\chi^2 ]\ \delta^\beta_\alpha
\end{eqnarray*}
and
\begin{eqnarray*}
&& 16^2 P_{44}(q\otimes q)_{\alpha\beta\mu_1\mu_2\nu_1\dots\nu_4} P_{44}(\bar q\otimes \bar q)^{\gamma\delta\mu_1\mu_2\nu_1\dots\nu_4} 
\\ &=& \frac{11N(N+1)-6+10\chi^2}{2}\ \delta^\gamma_\alpha\delta^\delta_\beta
\\ &+& \frac{N(11N-5)-22(1+\chi^2)}{2}\ \delta^\delta_\alpha\delta^\gamma_\beta
\\&+& 16\chi^2\ \omega_{\alpha\beta}\bar\omega^{\gamma\delta}
+ \left[N(11N-5) -40+42\chi^2\right]\ \chi q_{\alpha\beta}{}^{\gamma\delta}
\end{eqnarray*}
and
\begin{eqnarray*}
&& 16^2 P_{44}(q\otimes q)_{\alpha\mu_1\mu_2\mu_3\nu_1\nu_2\nu_3\beta} P_{44}(\bar q\otimes \bar q)^{\gamma\mu_1\mu_2\mu_3\nu_1\nu_2\nu_4\delta}
\\ &=& \frac{N(18N+47)+2+84\chi^2}{8} \ \delta^\gamma_\alpha\delta^\delta_\beta
\\ &+& \frac{2N(9N-10)-65+138\chi^2}{8}\ \delta^\delta_\alpha\delta^\gamma_\beta
\\&+& \frac{111\chi^2}{4}\ \omega_{\alpha\beta}\bar\omega^{\gamma\delta}
+ \frac{9}{2}\left[N^2-4+3\chi^2\right]\ \chi q_{\alpha\beta}{}^{\gamma\delta}.
\end{eqnarray*}
Thus applying $ \mho_a{}^{dm_3\dots m_8} \mho^c{}_{bm_3\dots m_8} $ to $ \Re\Lam^{1,1}_{ab} \ni F_{ab} = f_{\alpha\bar\beta}
+ \bar f_{\bar\alpha\beta},$ with $ \bar f_{\bar\alpha\beta} = - f_{\beta\bar\alpha},$ we have
$\cD_\mho(F)_{ab} = f'_{\alpha\bar\beta} + \bar f'_{\bar\alpha\beta}$ where
\begin{eqnarray*}  f'_{\alpha\bar\beta} &=& 
\mho_\alpha{}^{\bar\delta m_3\dots m_8} \mho^\gamma{}_{\bar\beta m_3\dots m_8} f_{\gamma\bar\delta}
+
\mho_\alpha{}^{\delta m_3\dots m_8} \mho^{\bar\gamma}{}_{\bar\beta m_3\dots m_8} \bar f_{\bar\gamma\delta}
\\&=& \left(
\mho_\alpha{}^{\bar\delta m_3\dots m_8} \mho^\gamma{}_{\bar\beta m_3\dots m_8} -
\mho_\alpha{}^{\gamma m_3\dots m_8} \mho^{\bar\delta}{}_{\bar\beta m_3\dots m_8}
\right) f_{\gamma\bar\delta}
\\&=& \frac{c^2} {16^2} f_{\gamma\bar\delta} \left\lbrace
15 \left[
\frac{11N(N+1)-6+10\chi^2}{2} \delta^\gamma_\alpha \delta^{\bar\delta}_{\bar\beta} \right.\right.
\\ &&\quad\quad + \frac{N(11N-5)-22(1+\chi^2)}{2} h_{\bar\beta\alpha} h^{\gamma\bar\delta}
\\ && \left. \quad\quad + 16\chi^2 \omega_\alpha{}^{\bar\delta} \bar\omega^\gamma{}_{\bar\beta}
+ (N(11N-5)-40+42\chi^2)\chi q_{\alpha\bar\beta}{}^{\gamma\bar\delta} \right]
\\& & -20 \left[
\frac{N(18N+47)+2+84\chi^2}{8} h_{\bar\beta\alpha} h^{\gamma\bar\delta}\right.
\\&&\quad\quad - \frac{2N(9N-10)-65+138\chi^2}{8} \omega_\alpha{}^{\bar\delta} \bar\omega^\gamma{}_{\bar\beta}
\\&&\left.\left.\quad\quad + \frac{111\chi^2}{4} \delta_\alpha^\gamma \delta_{\bar\beta}^{\bar\delta}
-\frac{9}{2} (N^2-4+3\chi^2) \chi q_{\alpha\bar\beta}{}^{\gamma\bar\delta}\right]
\right\rbrace.
\end{eqnarray*}
Again by abuse of notation we have introduced 
$ q_{\alpha\bar\beta}{}^{\gamma\bar\delta} := q_{\alpha\mu\nu}{}^{\bar\delta}
\bar\omega^\mu{}_{\bar\beta} \bar\omega^{\nu\gamma}.$
Recollecting terms  we finally have
\begin{eqnarray*} \frac{2}{5} 16^2 c^{-2} \cD_\mho|_{\fu(\cV_3(\KK))} 
&=& 3(11N^2+1N-6-84\chi^2)\ \id	\\
&+& (15N^2-62N-68-150\chi^2)\ \pr_0	\\
&+& (18N^2-20N-65+234\chi^2)\ \sigma	\\
&+& 6(17N^2-5N-64+60\chi^2) \cD_q	
\end{eqnarray*}
where $\cD_q$ is trivially extended to the unitary complement of $\fsp(\cV_3(\KK),\omega)$
and $\sigma(E) = J^{-1} EJ,$ so that it is $1$ on $\fsp(\cV_3(\KK))$ and $-1$ on
its unitary complement. Recalling the eigenvalues of $\cD_q$ on $\cg_3(\KK)$ and its
symplectic complement, the result is
\begin{eqnarray*}
\cD_\mho|_{\fu(1)} &=& \frac{5c^2}{512} \left(30N^2 - 9N - 21 - 636\chi^2\right) \\
\cD_\mho|_{\cg_3(\KK)} &=& \frac{5c^2}{512} \left(51N^2 + 13N -83 -18\chi^2 - \sqrt{\kappa+3}\right) \\
\cD_\mho|_{\ft\cap\fsp(\cV_3(\KK),\omega)} &=& \frac{5c^2}{512} \left(51N^2+13N-83-18\chi^2+\frac{1}{\sqrt{\kappa+3}}\right) \\
\cD_\mho|_{\fsp_0^\bot(\cV_3(\KK),\omega)} &=& \frac{5c^2}{512} (15N^2+53N+47-386\chi^2).
\end{eqnarray*}
We also have
$$ \cD_{am_2\dots m_8} \cD^{bm_2\dots m_8} = \frac{35 c^2}{256} \frac{N+1}{2} [25(N-1)+12\chi^2]\ \delta^b_a. $$
Substituting $c=\frac{256}{70},$ and using equivariance of $\cD_\mho$ and irreducibility of the subspaces the Lemma refers to, the proof is complete.
\end{proof}

\begin{proof}[Proof of Lemma \ref{lem-red}]

\def\E{\epsilon}
\def\R{\rho}

We use the Killing form $m :\fm\to\fm^*$ to identify $\fm$ with $\fm^*$ in what follows.
We will moreover, \emph{independently of the previous indexing conventions}, use $a,b,c,\dots$
to index $\fg,$ $i,j,k,\dots$ to index $V$ and $\alpha,\beta\,\gamma,\dots$ to index $\fh$
in such a way that greek letters denote subalgebras of the algebras indexed by corresponding
latin ones:
$$ \fh_\alpha \subset \fg_a,\quad \fh_\beta\subset\fg_\beta,\quad \dots $$
Let the symbols $\epsilon$ and $\rho$ express the
bracket on $\fm$:
\begin{eqnarray*}
[E,F]_c &=& \E_{abc} E^a F^b \\ {}
[E,X]_j &=& \R_{aij} E^a X^i \\ {}
[X,Y]_a &=& -\R_{aij} X^i Y^j
\end{eqnarray*}
for $E,F\in\fg$ and $X,Y\in V.$
The bracket on $\fk$ is then given by:
\begin{eqnarray*}
[E,F]_\gamma &=& \E_{\alpha\beta\gamma} E^\alpha E^\beta \\ {}
[E,X]_j &=& \R_{\alpha ij} E^\alpha X^i \\ {}
[X,Y]_\alpha &=& c_{\alpha ij} X^i Y^j \\ {}
[X,Y]_k &=& c_{ijk} X^i Y^j
\end{eqnarray*}
where $\R_{\alpha ij}$ denotes the restriction of $\R_{a ij} \in \fg_a \otimes (\Lam^2 V)_{ij}$ 
to $\fh_\alpha\otimes (\Lam^2 V)_{ij}$ and
\begin{eqnarray*}
c_{\alpha ij} &=& \R_{\alpha ij} \\
c_{ijk} &=& 3 B^\alpha_{[i} \R^\alpha_{jk]} 
-  \E_{\alpha\beta\gamma} B^\alpha_i B^\beta_j B^\gamma_k
\end{eqnarray*}
and $B(E)_i = B^\alpha_i E^\alpha $ satisfies
\begin{eqnarray*}
B^\alpha_m B^\beta_m &=& m^{\alpha\beta} \\
\E_{\alpha\beta\gamma} B^\gamma_i - \R^\alpha_{mi} B^\beta_m &=& 0. \end{eqnarray*}

We now investigate the Jacobi identities for $\fk:$
\begin{enumerate}
\item{$[[\fh,\fh],\fh]$ : ${\E^\delta}_{[\alpha\beta}{\E^\delta}_{\gamma]\varepsilon} = 0, $ satisfied since $\fh$ is a subalgebra in $\fg.$
}
\item{$[[\fh,\fh],V]$ : 
$\E_{\alpha\beta\gamma} \R^\gamma_{ik} + 2 \R^\alpha_{m[i}\R^\beta_{k]m} = 0,$
satisfied since $\rho$ is equivariant.}
\item{$[[V,V],\fh]$:
\begin{eqnarray*}
c^\beta_{ij} \E_{\beta\alpha\gamma} + 2 \R^\alpha_{m[i} c^\gamma_{j]m} &= 0&
\textrm{i.e.}\ c^\alpha_{ij}\ \textrm{equivariant}
\\
c_{m[ij} \R^\alpha_{k]m} &= 0& 
\textrm{i.e.}\ c_{ijk}\ \textrm{equivariant}
\end{eqnarray*}
}
\item{$[[V,V],V]$:
\begin{eqnarray*}
c_{m[ij}c^\alpha_{k]m} &=& 0 \\
c_{m[ij}c_{k]lm} - c^\alpha_{[ij} \R^\alpha_{k]l} &=& 0.
\end{eqnarray*}
}
\end{enumerate}

Equivariance of $B,\ \R$ and $\E$ automatically ensures satisfaction of all the equations except
for the last one, namely:
\[{c^m}_{[ij} {c^m}_{k]l} = \R^\alpha_{[ij}\R^\alpha_{k]l}. \]

We shall compute the first term. 
In the following formulas the indices $ijk$ are implicitly antisymmetrized (the parentheses
have been supressed for the sake of readability):
\begin{eqnarray*} 
 c_{mij} c_{mkl} &=&
(B^\alpha_m \R^\alpha_{ij} + 2 B^\alpha_{i} \R^\alpha_{jm}
- \E_{\alpha\beta\gamma} B^\alpha_m B^\beta_i B^\gamma_j)\\
&\times& (B^\lambda_m \R^\lambda_{kl} + B^\lambda_{k} \R^\lambda_{lm} - B^\lambda_{l} \R^\lambda_{km}
- \E_{\lambda\mu\nu} B^\lambda_m B^\mu_k B^\nu_l). \end{eqnarray*}
Calculating each term (using the antisymmetry in $ijk$) yields:
\begin{eqnarray*}
B^\alpha_m B^\beta_m \R^\alpha_{ij} \R^\lambda_{kl} &=& \R^\alpha_{ij} \R^\alpha_{kl},
\\
2 B^\alpha_i B^\lambda_k \R^\alpha_{jm}\R^\lambda_{lm} &=& \\
= 
B^\alpha_i B^\lambda_k \R^\alpha_{jm}\R^\lambda_{lm} -
B^\lambda_k B^\alpha_i \R^\lambda_{jm}\R^\alpha_{lm} 
&=& \\
= - 2 B^\alpha_i B^\lambda_k \R^\alpha_{m[j} \R^\lambda_{l]m} 
&=& \E_{\alpha\lambda\gamma} B^\alpha_i B^\lambda_k \R^\gamma_{jl},
\\ 
-2 B^\alpha_i B^\lambda_l \R^\alpha_{jm} \R^\lambda_{km} 
= 2 B^\alpha_i B^\lambda_l \R^\alpha_{m[j} \R^\lambda_{k]m} 
&=& -\E_{\alpha\lambda\gamma} B^\alpha_i B^\lambda_l \R^\gamma_{jk},
\\
2 B^\alpha_i \R^\alpha_{jm} B^\lambda_m \R^\lambda_{kl}
= - 2\E_{\alpha\lambda\gamma} B^\gamma_j B^\alpha_i \rho^\lambda_{kl} 
&=& - 2\E_{\alpha\lambda\gamma} B^\alpha_i B^\lambda_k \rho^\gamma_{jl},
\\
B^\lambda_k \R^\lambda_{lm} B^\alpha_m \R^\alpha_{ij}
= - \E_{\lambda\alpha\gamma} B^\gamma_l B^\lambda_k \R^\alpha_{ij} 
&=& \E_{\alpha\lambda\gamma} B^\alpha_i B^\lambda_l \R^\gamma_{jk},
\\
-B^\lambda_l \R^\lambda_{km} B^\alpha_m \R^\alpha_{ij}
= \E_{\lambda\alpha\gamma} B^\gamma_k B^\lambda_l \R^\alpha_{ij} 
&=& \E_{\alpha\lambda\gamma} B^\alpha_i B^\lambda_l \R^\gamma_{jk},
\\
- \E_{\alpha\beta\gamma} B^\alpha_m B^\beta_i B^\gamma_j B^\lambda_m \R^\lambda_{kl} 
= - \E_{\alpha\beta\gamma} B^\beta_i B^\gamma_j \R^\alpha_{kl} 
&=& \E_{\alpha\lambda\gamma} B^\alpha_i B^\lambda_k \R^\gamma_{jl},
\\
- \E_{\lambda\mu\nu} B^\lambda_m B^\mu_k B^\nu_l B^\alpha_m \R^\alpha_{ij}
= -\E_{\lambda\mu\nu} B^\mu_k B^\nu_l \R^\lambda_{ij}
&=& -\E_{\alpha\lambda\gamma} B^\alpha_i B^\lambda_l \R^\gamma_{jk},
\\
-2 B^\alpha_i \E_{\lambda\mu\nu} \R^\alpha_{jm} B^\lambda_m B^\mu_k B^\nu_l =
2 B^\alpha_i {\E^\lambda}_{[\alpha\gamma} {\E^\lambda}_{\mu]\nu} B^\gamma_j B^\mu_k B^\nu_l &=& 0,
\\
-B^\lambda_k \E_{\alpha\beta\gamma} \R^\lambda_{lm} B^\alpha_m B^\beta_i B^\gamma_j =
-B^\lambda_k \E^\alpha_{[\beta\gamma} \E^\alpha_{\lambda]\delta} B^\delta_l B^\beta_i B^\gamma_j &=& 0,
\\
B^\lambda_l \E_{\alpha\beta\gamma} \R^\lambda_{km} B^\alpha_m B^\beta_i B^\gamma_j =
-B^\lambda_l \E^\alpha_{[\beta\gamma} \E^\alpha_{\delta]\lambda} B^\delta_k B^\beta_i B^\gamma_j &=& 0,
\\
\E_{\alpha\beta\gamma} \E_{\lambda\mu\nu} B^\alpha_m B^\lambda_m B^\beta_{i} B^\gamma_j B^\mu_{k} B^\nu_l
= {\E^\alpha}_{[\beta\gamma} {\E^\alpha}_{\mu]\nu} B^\beta_i B^\gamma_j B^\mu_k B^\nu_l &=& 0.
\end{eqnarray*}
Summing the expressions on the r.h.s. we obtain $\R^\alpha_{[ij} \R^\alpha_{k]l}.$ 

Thus the Jacobi equations are all satisfied and $\fk$ is a Lie algebra. Then $\fh\oplus V$
is a reductive pair by construction.
Finally we note that the following quadratic form $k$ is invariant:
$$k(A,B) = -m(A,B),\quad k(A,X) = 0,\quad k(X,Y) = m(X,Y) $$
for $A,B\in\fh$ and $X,Y\in V.$ It clearly coincides with $m$ on $V.$
\end{proof}

\backmatter
\newpage
\bibliographystyle{utphys}
\addcontentsline{toc}{chapter}{Bibliography}
\bibliography{refs}{}

\end{document}